\newtheorem{thm}{Theorem}[section]
\newtheorem{cor}[thm]{Corollary}
\newtheorem{claim}[thm]{Claim}
\newtheorem{fact}[thm]{Fact}
\newtheorem{lemma}[thm]{Lemma}
\newtheorem{prop}[thm]{Proposition}
\theoremstyle{definition}
\newtheorem{definition}[thm]{Definition}
\newtheorem{ex}[thm]{Example}
\newtheorem{question}[thm]{Question}
\newtheorem{problem}[thm]{Problem}
\newcommand{\Conv}{\mathop{\scalebox{1.5}{\raisebox{-0.2ex}{$\ast$}}}}
\newcommand{\lk}{\mathrm{lk}}
\def\rquotient#1#2{%
	\makeatletter
	\raise.3ex\hbox{$#1$}/\lower.3ex\hbox{$#2$}%
	\makeatother
}	
\newcounter{ccomments}
\newcounter{ecomments}
\newcounter{acomments}
\newcommand{\subjclass}[2][2010]{%
	\let\@oldtitle\@title%
	\gdef\@title{\@oldtitle\footnotetext{#1 \emph{Mathematics subject classification.} #2}}%
}
\newcommand{\keywords}[1]{%
	\let\@@oldtitle\@title%
	\gdef\@title{\@@oldtitle\footnotetext{\emph{Key words and phrases.} #1.}}%
}
\newcommand{\Addresses}
{\bigskip
		\small

\noindent  \textsc{Brandeis University, Waltham, MA, USA} \par\noindent \nopagebreak
		\textit{E-mail address}: \texttt{carolynabbott@brandeis.edu}

\medskip
        
\noindent  \textsc{University of Montpellier,  
Institut Math\'ematiques Alexander Grothendieck.\\ 
Place Eug\`ene Bataillon, 
34090 Montpellier,  France} \newline \nopagebreak
		\textit{E-mail address}: \texttt{anthony.genevois@umontpellier.fr}
		
\medskip

\noindent
\textsc{Memorial University of Newfoundland, St. John's, NL, Canada} \newline \nopagebreak
		\textit{E-mail address}: \texttt{emartinezped@mun.ca}

}
\title{Homotopy types of complexes of hyperplanes in quasi-median graphs and applications to right-angled Artin groups}
\date{\today}
\author{Carolyn Abbott, Anthony Genevois, and Eduardo Mart\'inez-Pedroza}
\subjclass{Primary 20F65. Secondary 05C25, 57Q05.}
\keywords{Quasi-median graphs, median graphs, right-angled Artin groups, graph products of groups, homotopy of simplicial complexes}
\begin{document}

\maketitle

\begin{abstract}
In this article, we prove that, given two finite connected graphs $\Gamma_1$ and $\Gamma_2$, if the two right-angled Artin groups $A(\Gamma_1)$ and $A(\Gamma_2)$ are quasi-isometric, then the infinite pointed sums $\bigvee_\mathbb{N} \Gamma_1^{\bowtie}$ and $\bigvee_\mathbb{N} \Gamma_2^{\bowtie}$ are homotopy equivalent, where $\Gamma_i^{\bowtie}$ denotes the simplicial complex whose vertex-set is $\Gamma_i$ and whose simplices are given by joins. These invariants are extracted from a study, of independent interest, of the homotopy types of several complexes of hyperplanes in quasi-median graphs (such as one-skeleta of CAT(0) cube complexes). For instance, given a quasi-median graph $X$, the \emph{crossing complex} $\mathrm{Cross}^\triangle(X)$ is the simplicial complex whose vertices are the hyperplanes (or $\theta$-classes) of $X$ and whose simplices are collections of pairwise transverse hyperplanes. When $X$ has no cut-vertex, we show that $\mathrm{Cross}^\triangle(X)$ is homotopy equivalent to the pointed sum of the links of all the vertices in the prism-completion $X^\square$ of $X$.  
\end{abstract}

\tableofcontents

\newpage
\section{Introduction}

\noindent
Given a graph $\Gamma$, the \emph{right-angled Artin group} $A(\Gamma)$ is given by the presentation
$$\langle u \text{ a vertex of } \Gamma \mid [u,v]=1 \text{ whenever $u$ and $v$ are adjacent in $\Gamma$} \rangle.$$
Right-angled Artin groups interpolate between free groups (when ``nothing commutes'', i.e.\ $\Gamma$ has no edges) and free abelian groups (when ``everything commutes'', i.e.\ $\Gamma$ is a complete graph). {Despite the early appearance of similar interpolations for other algebraic structures, such as monoids and algebras (see for instance \cite{MR239978, MR575780, Bergman}), as well as a few mentions of the groups in some articles \cite{MR300879, MR567067, MR629331}, the study of right-angled Artin groups  started in earnest with the work of Baudisch \cite{MR463300, MR634562} and Droms \cite{MR2633165, MR880971, MR891135, MR910401}.} Since then, many articles have been dedicated to right-angled Artin groups from various perspectives. In addition to being an instructive source of examples, right-angled Artin groups also led to interesting applications. Two major contributions in this direction include the Morse theory developed in \cite{MR1465330} and the theory of special cube complexes introduced in \cite{MR2377497}. 

\medskip \noindent
Despite the fact that right-angled Artin groups have been extensively studied, a few very natural questions remain open, including the embedding problem, the classification up to commensurability, and, from the perspective of geometric group theory, the following question to which our article is dedicated:
\begin{description}
    \item[(Quasi-isometry classification)] Given two graphs $\Gamma_1$ and $\Gamma_2$, when are $A(\Gamma_1)$ and $A(\Gamma_2)$ quasi-isometric?
\end{description}
Two early major contributions to the subject include \cite{MR2376814}, which proves that right-angled Artin groups given by finite trees of diameters $\geq 3$ are all quasi-isometric; and \cite{MR2421136}, which shows that two right-angled Artin groups given by \emph{atomic} graphs are quasi-isometric if and only if they are isomorphic. Thus, right-angled Artin groups may exhibit both some flexibility and some rigidity up to quasi-isometry. So far, no global picture seems to be accessible. It is worth noticing that \cite{MR2376814} has been generalised in \cite{MR2727658}; and that important works of Huang \cite{HuangI, HuangII} also generalise \cite{MR2421136}, determining exactly when two right-angled Artin groups are quasi-isometric for a wide range of graphs. Many quasi-isometric invariants have also been computed for right-angled Artin groups, including, for instance, divergence \cite{MR2874959, MR3073670}, cut-points in asymptotic cones \cite{MR2874959}, splittings over cyclic subgroups \cite{MR4186478}, and Morse boundaries \cite{MR4563334}. 

\medskip \noindent
In this article, we contribute to the quasi-isometry classification of right-angled Artin groups by constructing new quasi-isometric invariants. More precisely, the main result of this article is:

\begin{thm}\label{thm:IntroQI}
Let $\Gamma_1,\Gamma_2$ be two finite connected graphs. If the right-angled Artin groups $A(\Gamma_1)$ and $A(\Gamma_2)$ are quasi-isometric, then the infinite pointed sums $\bigvee_{\mathbb{N}} \Gamma_1^{\bowtie}$ and $\bigvee_\mathbb{N} \Gamma_2^{\bowtie}$ are homotopy equivalent, where $\Gamma_i^{\bowtie}$ denotes the simplicial complex whose vertex-set is $\Gamma_i$ and whose simplices are given by  subsets of vertices contained in  joins.
\end{thm}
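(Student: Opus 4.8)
The plan is to attach to each $A(\Gamma_i)$ a quasi-median graph $X_i$ on which $A(\Gamma_i)$ acts geometrically, to identify the homotopy type of an appropriate complex of hyperplanes of $X_i$ with $\bigvee_{\mathbb{N}}\Gamma_i^{\bowtie}$ by means of the structural results of the paper, and then to show that this homotopy type is a quasi-isometry invariant of $A(\Gamma_i)$. Concretely, I would take $X_i$ to be the quasi-median graph associated with the graph-product structure of $A(\Gamma_i)$ (copies of $\mathbb{Z}$ over $\Gamma_i$), or the corresponding CAT(0) prism complex; since $\Gamma_i$ is connected, $X_i$ has no cut-vertex, so the crossing-complex result quoted above (or the analogous statement for whichever complex of hyperplanes is used) gives $\mathrm{Cross}^\triangle(X_i)\simeq\bigvee_{v\in V(X_i^\square)}\lk(v,X_i^\square)$. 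As $A(\Gamma_i)$ is infinite and acts vertex-transitively, $V(X_i^\square)$ is countable and all the vertex-links are isomorphic, so it remains to compute one link and match its homotopy type with $\Gamma_i^{\bowtie}$: a vertex $v$ lies in one maximal cell for each maximal ``join subset'' of $\Gamma_i$, the link of $v$ inside such a cell is a join of contractible simplices, and a nerve argument over these cells recovers $\Gamma_i^{\bowtie}$. Altogether this yields $\mathrm{Cross}^\triangle(X_i)\simeq\bigvee_{\mathbb{N}}\Gamma_i^{\bowtie}$.

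The crux is the second half: the homotopy type of $\mathrm{Cross}^\triangle(X_i)$, hence of $\bigvee_{\mathbb{N}}\Gamma_i^{\bowtie}$, must be shown to depend only on the quasi-isometry class of $A(\Gamma_i)$. My plan is to argue that a quasi-isometry $\phi\colon A(\Gamma_1)\to A(\Gamma_2)$ (equivalently $X_1\to X_2$) induces a correspondence between the hyperplanes, or $\theta$-classes, of $X_1$ and those of $X_2$ through their coarse avatars --- the codimension-one coarsely separating subspaces, equivalently the maximal product regions, that they carry --- compatible with the transversality relation, so that $\phi$ induces a homotopy equivalence $\mathrm{Cross}^\triangle(X_1)\simeq\mathrm{Cross}^\triangle(X_2)$; together with the first half this proves the theorem. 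What makes this plausible, although quasi-isometries are far from respecting hyperplanes pointwise, is that only the equivalence at the level of the infinite pointed sum is required: passing to $\bigvee_{\mathbb{N}}$ forgets basepoints, the precise matching of individual hyperplanes, and all local cell structure, retaining only the quasi-isometry-stable pattern of pairwise transverse codimension-one subspaces, which $\phi$ transports.

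The main obstacle I anticipate is precisely this transport. Even granting that $A(\Gamma_i)$ carries a quasi-isometry-invariant coarse-median / product-region structure and that coarse-separation techniques detect individual codimension-one subspaces, one must still single out, in a quasi-isometry-stable way, those subspaces that arise from hyperplanes among all coarsely separating subsets, and show that the combinatorial transversality relation of $X_i$ is captured by a coarse crossing condition preserved by $\phi$. Doing this uniformly, and upgrading the resulting map of crossing complexes to an honest homotopy equivalence (rather than merely a map inducing an isomorphism on some coarser invariant), is where the real work lies. The degenerate cases, where $\Gamma_i$ has at most one vertex (so $A(\Gamma_i)\in\{1,\mathbb{Z}\}$) or is small, can be treated directly, since there $\Gamma_i^{\bowtie}$ and $\bigvee_{\mathbb{N}}\Gamma_i^{\bowtie}$ are contractible.
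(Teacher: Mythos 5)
There is a genuine gap, and it occurs already in the first half of your plan: the crossing complex is the wrong complex of hyperplanes for this theorem. For $X=\mathrm{QM}(\Gamma,\mathcal{G})$, a simplex of $\mathrm{Cross}^\triangle(X)$ is a set of pairwise transverse hyperplanes, and transverse hyperplanes have \emph{adjacent} labels; consequently the link of a vertex of $X^\square$ is the flag completion $\Gamma^\triangle$, and $\mathrm{Cross}^\triangle(X)\simeq\bigvee_{\mathbb{N}}\Gamma^\triangle$, not $\bigvee_{\mathbb{N}}\Gamma^{\bowtie}$. These differ: for $\Gamma=C_4$ one has $C_4^\triangle=C_4\simeq\mathbb{S}^1$ while $C_4^{\bowtie}$ is a $3$-simplex, hence contractible. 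To land on $\Gamma^{\bowtie}$ the paper does not use the crossing complex at all; it uses the coset intersection complex of $A(\Gamma)$ with respect to \emph{maximal join} subgroups, identifies it with a skewering complex of the gated subgraphs $g\langle\Lambda\rangle$ ($\Lambda$ a maximal join), and shows via the relative contact complex and Theorem~\ref{thm:RelContact} that this is homotopy equivalent to $\bigvee_{x}\mathrm{sL}_{\mathbb{G}}(x)=\bigvee_{\mathbb{N}}\Gamma^{\bowtie}$, since the simplices of $\mathrm{sL}_{\mathbb{G}}(x)$ are exactly the sets of vertices of $\Gamma$ contained in a common join.

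The second half of your plan is also not salvageable as stated. Individual hyperplanes of $\mathrm{QM}(\Gamma,\mathcal{G})$ correspond to conjugates of vertex groups, with carriers the cosets of $\langle\mathrm{star}(u)\rangle$; these are not the maximal product regions (a star need not be a maximal join, and distinct hyperplanes may sit inside the same maximal product region), and they are \emph{not} preserved by quasi-isometries in general --- the paper only obtains quasi-isometry invariance of the crossing complex under the extra hypothesis that no two non-adjacent vertices $u,v$ satisfy $\mathrm{link}(u)\subset\mathrm{star}(v)$ (Theorem~\ref{thm:RAAGCrossingQI}, via Huang). What \emph{is} quasi-isometry-stable, by Huang's stability machinery, is precisely the collection of cosets of maximal join subgroups (Theorem~\ref{thm:RAAGmaxJoin}); combined with the fact that maximal join subgroups are self-commensurated, this upgrades the coarse correspondence of cosets to a genuine simplicial isomorphism of the coset intersection complexes (Corollary~\ref{cor:CosetInterComplexJoinQI}), which is what feeds into the homotopy identification above. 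So the correct route replaces your hyperplane-transport step by a product-region-transport step, and replaces the crossing complex by the skewering/relative contact complex attached to those product regions.
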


\noindent
 Recall that a graph $\Gamma$ is a \emph{join} if its vertex set decomposes as the disjoint union of two non-empty subsets $A \sqcup B$ such that every vertex of $A$ is adjacent to every vertex of $B$.  

\medskip \noindent
As a baby example illustrating the theorem, notice that the right-angled Artin groups $A(C_4)$ and $A(C_5)$, associated to cycles of lengths $4$ and $5$, are not quasi-isometric. Indeed, $C_4^{\bowtie}$ is contractible, while $C_5^{\bowtie}$ is homotopy equivalent to a circle. As more interesting examples, let $\Gamma_1,\Gamma_2,\Gamma_3,\Gamma_4$ be the graphs given below. By noticing that $\Gamma_1^{\bowtie}$ is homotopy equivalent to a point, $\Gamma_2^{\bowtie}$ to $\mathbb{S}^1$, $\Gamma_3^{\bowtie}$ to $\mathbb{S}^2 \vee \mathbb{S}^1$, and $\Gamma_4^{\bowtie}$ to $\mathbb{S}^2$, it follows from Theorem~\ref{thm:IntroQI} that the right-angled Artin groups $A(\Gamma_1), A(\Gamma_2), A(\Gamma_3), A(\Gamma_4)$ are pairwise not quasi-isometric\footnote{Part of this conclusion can also be deduced from \cite{HuangI, HuangII}, because we tried to present examples that are as simple as possible. However, we can slightly perturb  our examples  so that \cite{HuangI, HuangII} no longer applies while Theorem~\ref{thm:IntroQI} still does, for instance by gluing an edge to each graph along a vertex.}.


\begin{center}
\includegraphics[width=0.5\linewidth]{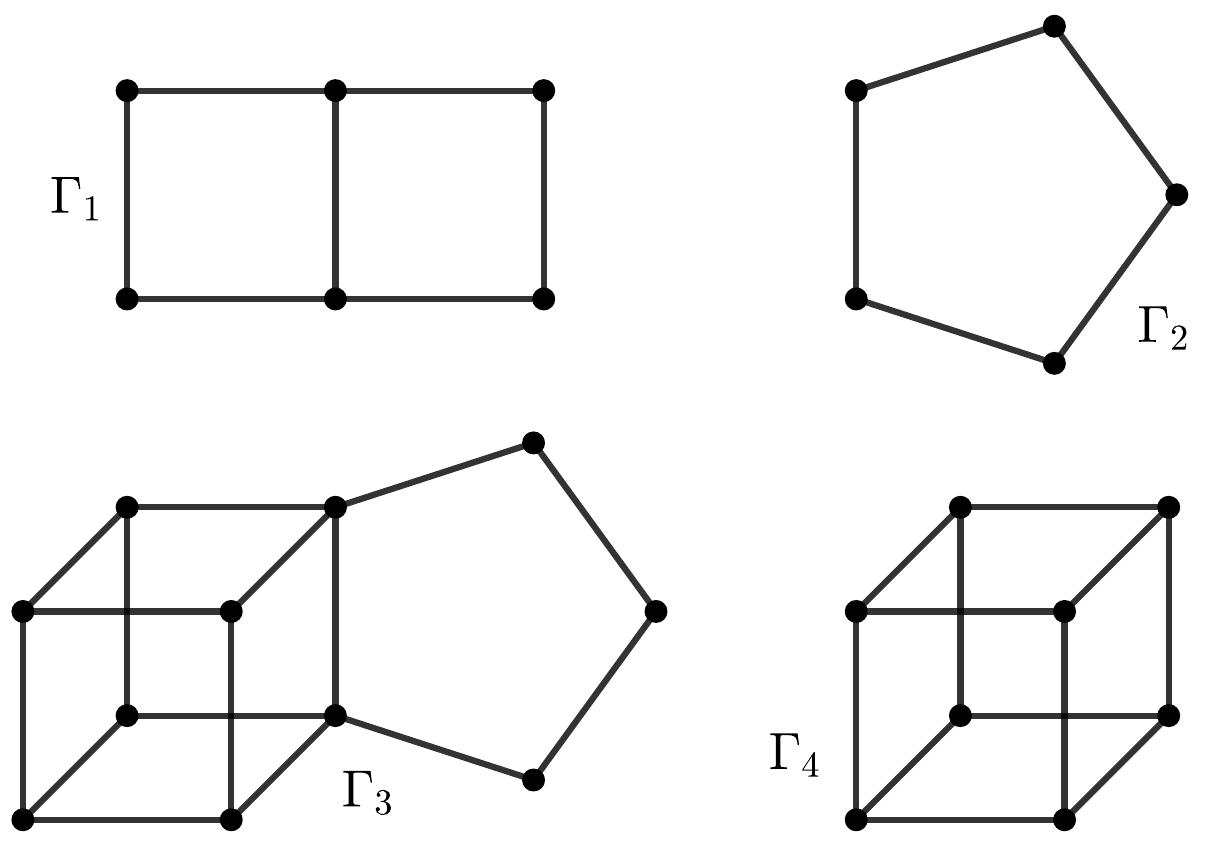}
\end{center}

\noindent
The quasi-isometry invariant given in Theorem~\ref{thm:IntroQI} expands the classification of right-angled Artin groups up to quasi-isometry.  In particular, it can be used to distinguish between infinitely many right-angled Artin groups that cannot be distinguished using \cite{HuangI,HuangII}.

\begin{cor}\label{cor:InjMap}
    There is an injective map from the set of isomorphism classes of finitely presented, one-ended groups to the set of quasi-isometry classes of right-angled Artin groups.  Moreover, for each one-ended finitely presented group, this map produces a right-angled Artin group that does not fit into the framework of  \cite{HuangI, HuangII}. 
\end{cor}

\noindent
See Section~\ref{section:ManyBowtie}, and especially Examples~\ref{ex:BowtieOne} and~\ref{ex:BowtieTwo}, for more details.

\paragraph{Strategy of the proof.} In order to prove Theorem~\ref{thm:IntroQI}, we start by proving that a specific \emph{coset intersection complex} \cite{CIC} is preserved by quasi-isometries. More precisely, given a graph $\Gamma$, define $J(\Gamma)$ as the simplicial complex  
\begin{itemize}
	\item whose vertices are the cosets $g \langle \Lambda \rangle$ where $g \in A(\Gamma)$ and where $\Lambda \subset \Gamma$ is a join;
	\item and whose simplices are given by cosets $g_1 \langle \Lambda_1 \rangle, \ldots, g_n \langle \Lambda_n \rangle$ such that the intersection $g_1 \langle \Lambda_1 \rangle g_1^{-1} \cap \cdots \cap g_n\langle \Lambda_n \rangle g_n^{-1}$ is infinite.
\end{itemize}
One can deduce from the techniques introduced in \cite{HuangI} that, given two finite connected graphs $\Gamma_1$ and $\Gamma_2$, every quasi-isometry $A(\Gamma_1) \to A(\Gamma_2)$ naturally induces an isomorphism $J(\Gamma_1) \to J(\Gamma_2)$. However, determining whether two such complexes are isomorphic may be quite tricky in practice. Our main observation is that it is much simpler to compare them up to homotopy equivalence. More precisely, we show the complex $J(\Gamma)$ is homotopy equivalent to the infinite pointed sum $\bigvee_\mathbb{N} \Gamma^{\bowtie}$, as defined in Theorem~\ref{thm:IntroQI}. 

\medskip \noindent
The key observation in order to identify the homotopy type of our complex is that $J(\Gamma)$ has a very convenient geometric interpretation. Roughly, it is homotopy equivalent to a simplicial complex that can be described using the \emph{hyperplanes} of the \emph{quasi-median graph}
$$\mathrm{QM}(\Gamma):= \mathrm{Cayl} \left( A(\Gamma), \bigcup\limits_{u \in \Gamma} \langle u \rangle \backslash \{1\} \right),$$
as introduced in \cite{QM}. This provides a good geometric framework in which various tools are available in order to study the homotopy type of the simplicial complex we are interested in. As we now discuss, our results in this direction are of independent interest.

\paragraph{Complexes of hyperplanes in quasi-median graphs.} In the same way that \emph{median graphs} (also known as one-skeleta of \emph{CAT(0) cube complexes} in geometric group theory) can be defined as retracts of hypercubes, \emph{quasi-median graphs} can be defined as retracts of \emph{Hamming graphs} (i.e.\ products of complete graphs). See Section~\ref{section:QM} for some background on the subject from the perspective of metric graph theory and geometric group theory. 

\medskip \noindent
It is worth mentioning that, similarly to median graphs, quasi-median graphs can be endowed with a higher-dimensional cellular structure that turns them into CAT(0) complexes. More specifically, given a quasi-median graph $X$, a \emph{prism} refers to a product of \emph{cliques} (i.e.\ maximal complete subgraphs). The \emph{prism-completion} $X^\square$ of $X$, which is obtained by filling all the prisms with products of simplices, can be naturally endowed with a CAT(0) metric \cite{QM}. 

\medskip \noindent
A fundamental tool in the study of quasi-median graphs is the notion of a \emph{hyperplane} (or, in the language of metric graph theory, of a \emph{$\theta$-class} with respect to the \emph{Djokovi\'c-Winkler relation} $\theta$). In a quasi-median graph, a hyperplane is an equivalence class of edges with respect to the reflexive-transitive closure of the relation that identifies two edges whenever they are opposite in a $4$-cycle or belong to a common $3$-cycle; see Figure~\ref{figure3Intro}. The key idea to keep in mind is that the geometry of a quasi-median graph reduces to the combinatorics of its hyperplanes; see, for instance, Theorem~\ref{thm:BigQM} below. 

\begin{figure}[ht!]
\begin{center}
\includegraphics[width=0.5\linewidth]{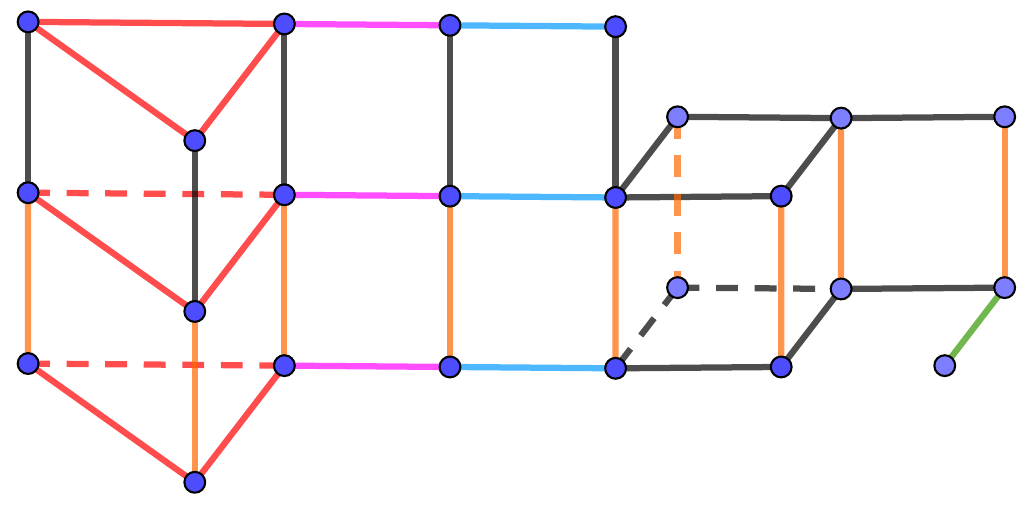}
\caption{A quasi-median graph and some of its hyperplanes. The orange hyperplane is transverse to the red, blue, and purple hyperplanes. The purple hyperplane is in contact with the red, blue, and orange hyperplanes. The orange and green hyperplanes are in contact. The red and blue hyperplanes are not in contact.}
\label{figure3Intro}
\end{center}
\end{figure}

\noindent
Given a quasi-median graph $X$, we refer vaguely to a \emph{complex of hyperplanes} as a simplicial complex whose vertices are the hyperplanes of $X$ and whose simplices correspond to collections of hyperplanes that interact in some specific way. For instance,  two hyperplanes $J$ and $H$ are \emph{in contact} (resp.\ \emph{transverse}) whenever there exist two distinct edges in $J$ and $H$ that intersect (resp.\ that intersect and span a $4$-cycle). The \emph{contact complex} $\mathrm{Cont}^\triangle(X)$ (resp.\ the \emph{crossing complex} $\mathrm{Cross}^\triangle(X)$) is the simplicial complex whose vertices are the hyperplanes of $X$ and whose simplices are given by hyperplanes pairwise in contact (resp.\ transverse). See Section~\ref{section:ComplexesHyp} for a more detailed account on contact and crossing complexes. Of course, other natural constructions are possible, including the \emph{contiguity complex} we introduce in Section~\ref{section:RelativeContact}. {In order to be able to deal with all these complexes at once, we introduce the following notion of \emph{contact complexes relative to collections of gated subgraphs}. We refer the reader to Section~\ref{section:QM} for a precise definition of \emph{gated subgraphs}; for now, they can be thought of as subgraphs satisfying a strong convexity condition.}

\begin{figure}[ht!]
\begin{center}
\includegraphics[width=0.7\linewidth]{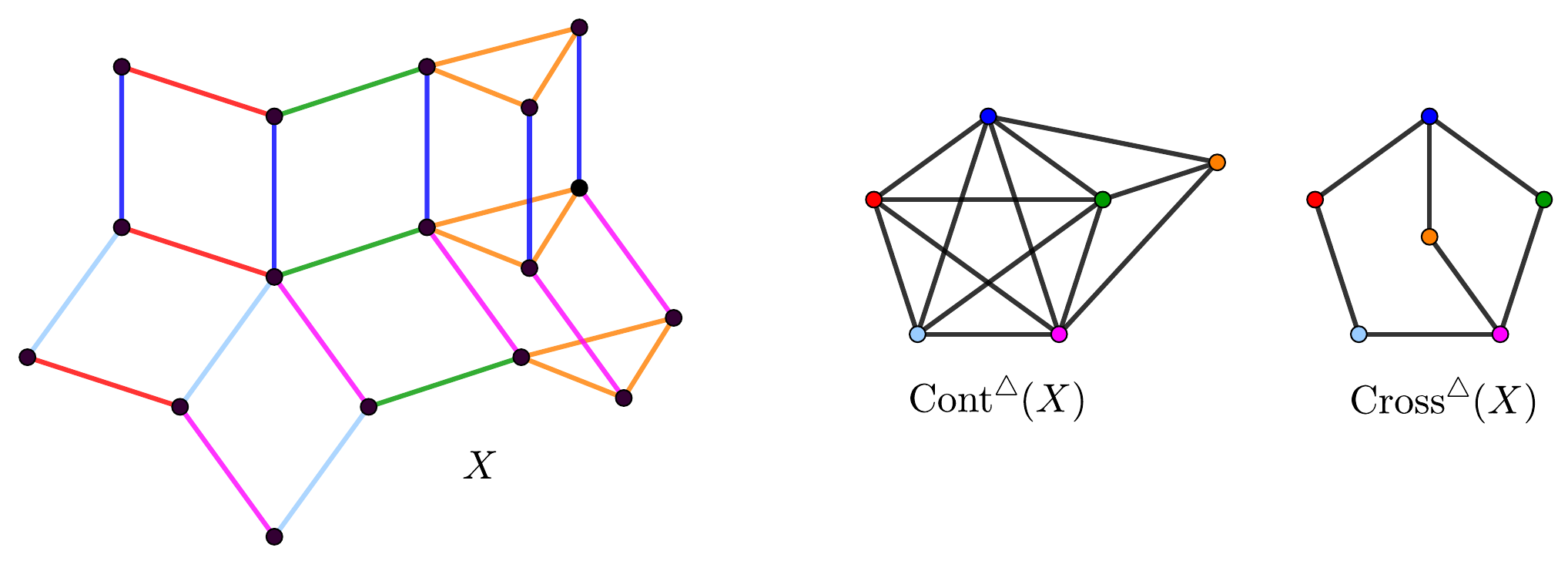}
\caption{A quasi-median graph, and its contact and crossing complexes.}
\label{ContCrossIntro}
\end{center}
\end{figure}

\begin{definition}
Let $X$ be a quasi-median graph and $\mathbb{G}$ a collection of gated subgraphs. The \emph{$\mathbb{G}$-contact complex} $\mathrm{Cont}^\triangle(X,\mathbb{G})$ is the graph whose vertices are the hyperplanes of $X$ and whose simplices are given by hyperplanes pairwise in contact that all cross a common subgraph from $\mathbb{G}$. 
\end{definition}

\noindent
The contact (resp.\ crossing) complex of a quasi-median graph $X$ then coincides with the $\mathbb{G}$-contact complex of $X$ for $\mathbb{G} = \{X\}$ (resp.\ $\mathbb{G}= \{ \text{prisms} \}$). See Figure~\ref{RelativeContIntro} for a simple example of a relative contact complex.

\begin{figure}[ht!]
\begin{center}
\includegraphics[width=0.6\linewidth]{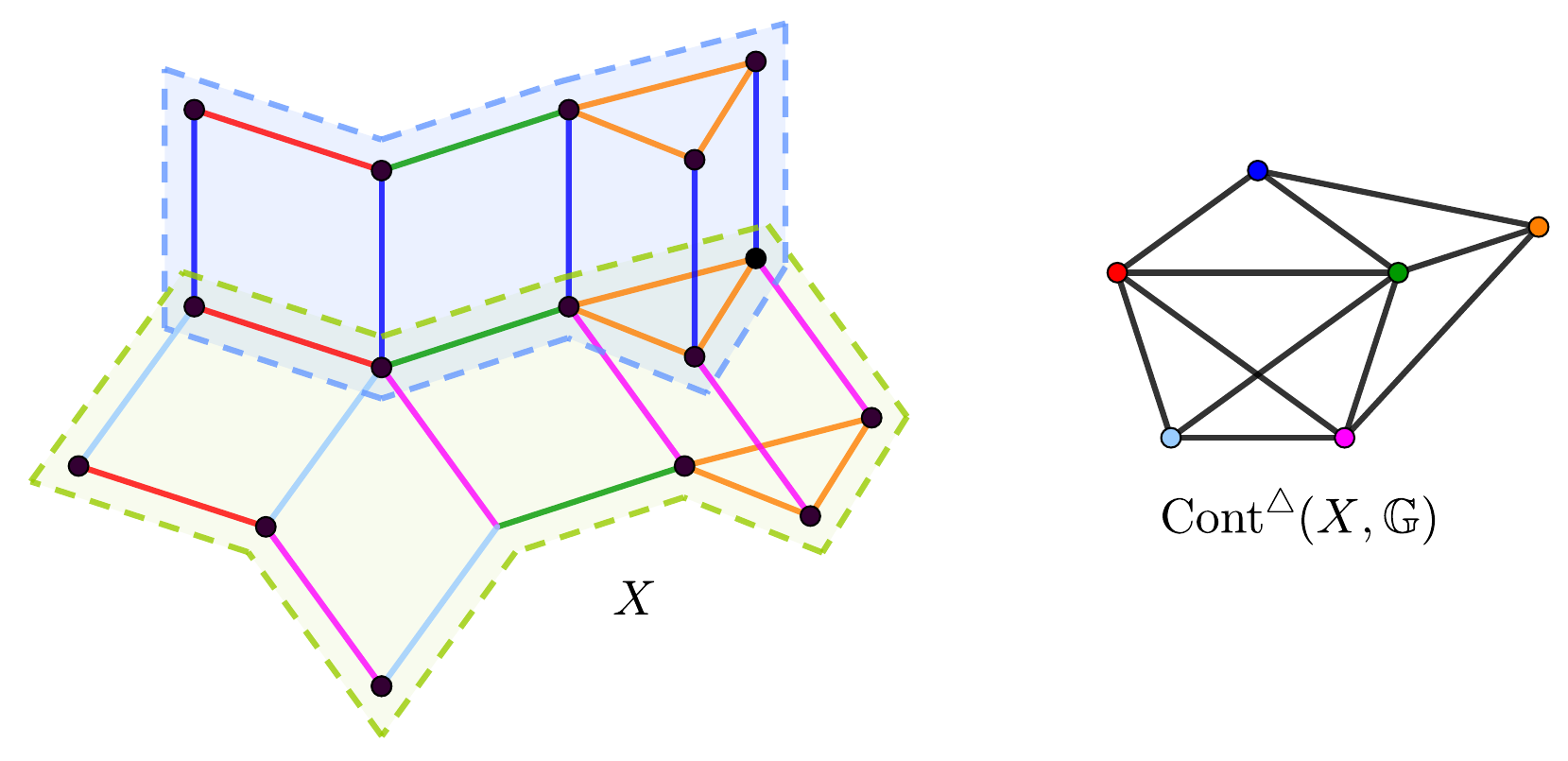}
\caption{A quasi-median graph $X$ endowed with a collection of gated subgraphs $\mathbb{G}$ and the corresponding $\mathbb{G}$-contact complex.}
\label{RelativeContIntro}
\end{center}
\end{figure}

\medskip \noindent
In this article, we identify the homotopy type of various relative contact complexes under weak assumptions. The picture to keep in mind is that, given a quasi-median graph $X$ and a collection of gated subgraphs $\mathbb{G}$ that covers $X$, typically the relative contact complex $\mathrm{Cont}^\triangle(X,\mathbb{G})$ is homotopy equivalent to the pointed sums of all the links of the vertices of $X$ in the cellular complex $\mathrm{ConeOff}(X^\square,\mathbb{G})$ obtained from the prism-completion $X^\square$ by gluing cones over the subgraphs of $\mathbb{G}$.  Given a vertex $x \in X$, we have a simple model for the homotopy type of the link of $x$ in our cone-off, namely the simplicial complex $\mathrm{sL}_\mathbb{G}(x)$ whose vertices are the \emph{cliques} (i.e.\ maximal complete subgraphs) of $X$ containing $x$ and whose simplices are given by collections of cliques contained in a common subgraph from $\mathbb{G}$.  

\medskip \noindent
As a simple case, when $\mathbb{G}= \{X\}$, the simplified links $\mathrm{sL}_\mathbb{G}(x)$ are all simplices. Following our leitmotiv, we therefore expect the corresponding relative contact complex, namely the contact complex $\mathrm{Cont}^\triangle(X)$, to be contractible. This turns out to be the case; 
see Section~\ref{section:Contact} for a short proof. A similar argument can be found in \cite{MR4554674} for finite median graphs. 

\medskip \noindent
The next statement records Theorems~\ref{thm:HomotopyCrossingComplex} and~\ref{thm:RelContact}, which constitute our main contribution to the study of complexes of hyperplanes in quasi-median graphs up to homotopy. 

\begin{thm}\label{thm:BigIntro}
Let $X$ be a quasi-median graph with no cut-vertex and $\mathbb{G}$ a collection of gated subgraphs. Assume that $\mathbb{G}= \{ \text{prisms}\}$ or that $\mathbb{G}$ is star-covering. The $\mathbb{G}$-contact complex $\mathrm{Cont}^\triangle(X,\mathbb{G})$ is homotopy equivalent to the pointed sum $\bigvee_{x \in X} \mathrm{sL}_\mathbb{G}(x)$. 
\end{thm}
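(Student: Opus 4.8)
The plan is to realise $\mathrm{Cont}^\triangle(X,\mathbb{G})$, up to homotopy, as a cover whose nerve is governed by the contractible complex
\[
Y \ := \ \mathrm{ConeOff}(X^\square,\mathbb{G}),
\]
obtained from the prism-completion $X^\square$ by gluing, for every $G\in\mathbb{G}$, a cone over the subcomplex spanned by $G$. First I would record that $Y$ is contractible: by \cite{QM} the complex $X^\square$ carries a $\mathrm{CAT}(0)$ metric and is therefore contractible, while each $G\in\mathbb{G}$, being gated, spans a convex --- hence contractible --- subcomplex; coning off a contractible subcomplex is a homotopy equivalence, so adding the cones (one at a time, or as a single homotopy pushout, taking a filtered colimit if $\mathbb{G}$ is infinite) does not change the homotopy type. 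Next, for each vertex $x\in X$ I would introduce the natural simplicial map $\phi_x\colon \mathrm{sL}_\mathbb{G}(x)\to\mathrm{Cont}^\triangle(X,\mathbb{G})$ sending a clique $C$ containing $x$ to the unique hyperplane $J_C$ crossing it: if cliques $C_1,\dots,C_k$ at $x$ lie in a common $G\in\mathbb{G}$, then the $J_{C_i}$ contain edges of $G$ that pairwise meet at $x$, hence are pairwise in contact and all cross $G$, so $\{J_{C_1},\dots,J_{C_k}\}$ is a simplex. Finally I would identify $\mathrm{sL}_\mathbb{G}(x)$ with the link $\lk(x,Y)$: when $\mathbb{G}=\{\text{prisms}\}$ this is essentially an equality, because the prism-link of $x$ in $X^\square$ has as its simplices exactly the families of pairwise-orthogonal cliques at $x$, i.e.\ the families contained in a common prism, and the coned-off prisms are already contractible; when $\mathbb{G}$ is star-covering one shows that coning off the $\mathbb{G}$-pieces of $\lk(x,X^\square)$ fills, up to homotopy, precisely the simplices of $\mathrm{sL}_\mathbb{G}(x)$.

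With this in place, the heart of the argument is a nerve-type computation for the cover $\{K_x\}_{x\in X}$ of $\mathrm{Cont}^\triangle(X,\mathbb{G})$, where $K_x:=\phi_x(\mathrm{sL}_\mathbb{G}(x))$. I would check three things: (i) the $K_x$ cover $\mathrm{Cont}^\triangle(X,\mathbb{G})$, since every hyperplane crosses a clique at one of its endpoints and, by the Helly property for the gated carriers $N(J)$, finitely many pairwise in contact hyperplanes share a carrier vertex, which the case hypothesis turns into a common vertex together with cliques lying in a common member of $\mathbb{G}$; (ii) every nonempty multiple intersection $K_{x_1}\cap\dots\cap K_{x_m}$ with $m\ge 2$ is contractible, being the $\phi$-image of the hyperplanes crossing a common gated ``piece between the $x_i$'', whose combinatorics are those of a prism or of an intersection of stars, hence contractible; (iii) the nerve of $\{K_x\}$ is contractible, which I would deduce from the fact that it is governed by the carrier cover $\{N(J)\}$ of the contractible complex $X^\square$, the hypothesis that $X$ has no cut-vertex being used to guarantee that the relevant intersections are connected so that the nerve lemma applies. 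Granting (i)--(iii), a cover all of whose multiple intersections are contractible and whose nerve is contractible realises its ambient space as the pointed sum of its members, which yields
\[
\mathrm{Cont}^\triangle(X,\mathbb{G}) \ \simeq\ \bigvee_{x\in X} K_x \ \simeq\ \bigvee_{x\in X}\mathrm{sL}_\mathbb{G}(x),
\]
the last step using that each $\phi_x$ is a homotopy equivalence onto $K_x$.

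The main obstacle is item (i) above, or rather the precise statement underlying it: that a family of hyperplanes forming a simplex of $\mathrm{Cont}^\triangle(X,\mathbb{G})$ comes from cliques at a common vertex that lie in a common member of $\mathbb{G}$. This fails for arbitrary $\mathbb{G}$, and its proof must be carried out separately in the two allowed cases, using the geometry of prisms for the crossing complex and the gatedness of stars in the star-covering case, via the gate/projection machinery for gated subgraphs recalled in Section~\ref{section:QM}. The other delicate points are verifying that the nerve of $\{K_x\}$ is contractible --- and locating exactly where ``no cut-vertex'' enters, so as to rule out spurious wedge summands coming from the block decomposition of $X$ --- together with the homotopy identification $\lk(x,Y)\simeq\mathrm{sL}_\mathbb{G}(x)$ in the star-covering case, which is short but genuinely requires an argument about iterated cones over the link of $x$ in $X^\square$.
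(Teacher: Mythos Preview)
Your approach is genuinely different from the paper's, and unfortunately it contains a real gap: claim (ii) is false. Take $X=P_2\times P_2\times P_2$, where $P_2$ is the path on vertices $\{0,1,2\}$, and $\mathbb{G}=\{\text{prisms}\}$. Write $A_1,A_2,B_1,B_2,C_1,C_2$ for the six hyperplanes (two per factor). For $x_1=(1,1,0)$, the complex $K_{x_1}$ is the cone with apex $C_1$ over the $4$-cycle $A_1\!-\!B_1\!-\!A_2\!-\!B_2$; for $x_2=(1,1,2)$, $K_{x_2}$ is the cone with apex $C_2$ over the same $4$-cycle. Hence $K_{x_1}\cap K_{x_2}$ is that $4$-cycle, which is homotopy equivalent to $S^1$. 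So double intersections need not be contractible, and the nerve-type deduction you invoke at the end cannot be applied. The vague phrase ``combinatorics of a prism or an intersection of stars'' does not survive this example: the set $\{J : x_1,x_2\in N(J)\}$ records only those hyperplanes whose carriers contain both points, and the induced subcomplex of the crossing complex on such a set has no reason to be contractible.

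A second, smaller issue is that the principle ``contractible nerve plus contractible multiple intersections implies the space is the wedge of the cover members'' is not a standard named lemma; it can be justified via a homotopy-colimit argument, but you would need to supply it, and in any case it is moot once (ii) fails.

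For contrast, the paper goes in the \emph{opposite} direction: it builds an auxiliary geometric space out of $X^\square$ (namely $X^\square\setminus X$ in the prism case, and the perforated complex $X^\mathbb{G}$ in the star-covering case) and covers it by \emph{hyperplane}-indexed pieces $N^\square_-(J)$, respectively $S(J)$, all of whose nonempty intersections are shown to be contractible. The nerve theorem then identifies the auxiliary space with $\mathrm{Cont}^\triangle(X,\mathbb{G})$ itself. Separately, a spanning-tree/thickening argument inside the contractible $X^\square$ shows that the auxiliary space is homotopy equivalent to $\bigvee_{x}\mathrm{L}_\mathbb{G}(x)$, and a short nerve computation reduces $\mathrm{L}_\mathbb{G}(x)$ to $\mathrm{sL}_\mathbb{G}(x)$. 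The key difference is that carriers and their intersections are gated, hence prism-completions of quasi-median graphs, hence contractible --- a robustness that the vertex-indexed pieces $K_x$ and their intersections simply do not enjoy.
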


\noindent
An immediate consequence of Theorem~\ref{thm:HomotopyCrossingComplex} is that the crossing complex of a quasi-median graph $X$ is connected if and only if $X$ has no cut-vertex. This fact has been observed several times in various situations \cite{MR1920184, MR2382356, MR1881707}. In other words, $\mathrm{Cross}^\triangle(X)$ is $0$-connected if and only if the vertices of the prism-completion $X^\square$ have $0$-connected links. One can think of Theorem~\ref{thm:BigIntro}, when applied to crossing complexes, as a generalisation of this phenomenon to higher dimensions. For instance, $\mathrm{Cross}^\triangle(X)$ is contractible if and only if links of vertices in $X^\square$ are contractible.

\paragraph{Acknowledgements.}

The authors thank Jingyin Huang for pointing out how Theorem~\ref{thm:RAAGmaxJoin} follows from results in~\cite{HuangI}. The authors thank Sangrok Oh for proof reading.
 The first author was partially supported by NSF grants DMS-2106906 and DMS-2340341.The third author  acknowledges funding by the Natural Sciences and Engineering Research Council of Canada NSERC.

\section{Preliminaries on quasi-median geometry}\label{section:QM}

\noindent
A central idea in geometric group theory is that, in order to study a given group, it is often fruitful to make it act on metric spaces with rich geometries. From this perspective, geometries with a combinatorial flavour are especially efficient because they are easier to construct in practice. Notable examples include buildings, small cancellation polygonal complexes, CAT(0) cube complexes (which coincide with cube-completions of median graphs), systolic complexes (which coincide with flag completions of bridged graphs). In this article, we are interested in another such combinatorial geometry: \emph{quasi-median graphs}. 

\medskip \noindent
Median graphs, better known as one-skeleta of CAT(0) cube complexes in geometric group theory, have a long history in metric graph theory. Formally introduced during the 1960-70s \cite{MR125807, MR0286705, MR0522928},  they have deep roots in graph theory and related fields; see the surveys \cite{MR2405677, MR2798499}. Since then, many families of graphs generalising median graphs have been investigated. Inspired by the characterisation of median graphs as retracts of hypercubes, one can introduce \emph{quasi-median graphs} \cite{MR0605838, MR1297190} as retracts of \emph{Hamming graphs} (i.e.\ products of complete graphs). From the perspective of geometric group theory, it was shown that median graphs coincide with one-skeleta of CAT(0) cube complexes \cite{MR1663779, MR1748966, Roller}. CAT(0) cube complexes have been enlightened in \cite{MR0919829} as a convenient source of CAT(0) spaces and then in \cite{MR1347406} as a relevant higher-dimensional generalisation of trees. Since then, CAT(0) cube complexes have played a prominent role in geometric group theory. Quasi-median graphs were introduced in geometric group theory by the second-named author in \cite{QM} and turn out to have applications in graph products of groups \cite{QM, GPacyl, Excentric, QMspecial} and their automorphism groups \cite{GMautGP, GPautacyl}, in Thompson's groups \cite{QM, QMandThompson}, and in wreath products of groups \cite{QM, GTbig, GTnote}. 

\medskip \noindent
It is worth mentioning that, similarly to median graphs, quasi-median graphs can be endowed with a higher-dimensional cellular structure that turns them into CAT(0) complexes. More precisely, prism-completions of quasi-median graphs, as defined below, can be endowed with a CAT(0) metric. 

\medskip \noindent
The definition of a quasi-median graph as a retract of a Hamming graph is not the simplest definition to work with nor to verify. In practice, there are other equivalent definitions, which are easier to deal with but also more technical and more difficult to digest. We refer for instance to \cite{MR1297190} for more details. Nevertheless, similarly to median graphs, the key point is to understand how hyperplanes behave. This is what we describe now.

\begin{definition}
Let $X$ be a graph. A \emph{hyperplane} $J$ is an equivalence class of edges with respect to the reflexive-transitive closure of the relation that identifies two edges whenever they belong to a common $3$-cycle or are opposite in some $4$-cycle.  A hyperplane $J$ \emph{crosses} a subgraph $K$ if there is an edge of $K$ that belongs to $J$.  We denote by $X \backslash \backslash J$ the graph obtained from $X$ by removing (the interiors of) all the edges of $J$. A connected component of $X \backslash \backslash J$ is a \emph{sector}. The \emph{carrier} of $J$, denoted by $N(J)$, is the subgraph induced by all the edges of $J$. The \emph{fibres} of $J$ are the connected components of $N(J) \backslash\backslash J$. Two distinct hyperplanes $J_1$ and $J_2$ are \emph{transverse} if there exist two edges $e_1 \subset J_1$ and $e_2 \subset J_2$ spanning a $4$-cycle in $X$; and they are \emph{tangent} if they are not transverse but $N(J_1) \cap N(J_2) \neq \emptyset$. 
\end{definition}
\begin{figure}
\begin{center}
\includegraphics[width=0.5\linewidth]{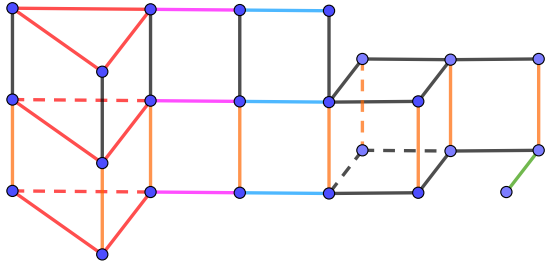}
\caption{A quasi-median graph and some of its hyperplanes. The orange hyperplane is transverse to the red, blue, and purple hyperplanes. The green and orange hyperplanes are tangent but not contiguous. The purple hyperplane is contiguous to the red and blue hyperplanes. The red and blue hyperplanes are neither transverse, nor tangent, nor contiguous.  See Section~\ref{section:RelativeContact} for the definition of \emph{contiguity}. }
\label{figure3}
\end{center}
\end{figure}

\noindent
See Figure~\ref{figure3} for examples of hyperplanes in a quasi-median graph. The (strong) connection between the graph-metric and hyperplanes is motivated by the following statement (which can be found verbatim in \cite[Propositions~2.15 \& 2.30]{QM}, but also in the literature of metric graph theory using a different terminology):

\begin{thm}\label{thm:BigQM}
Let $X$ be a quasi-median graph. 
\begin{itemize}
	\item[(i)] Every hyperplane $J$ separates $X$, i.e.\ $X \backslash \backslash J$ contains at least two connected components.
	\item[(ii)] Carriers, fibres, and sectors are gated subgraphs.
	\item[(iii)] A path in $X$ is geodesic if and only if it crosses every hyperplane at most once, i.e.\ it contains at most one edge from each hyperplane.
	\item[(iv)] The distance between two vertices coincides with the number of hyperplanes separating them.
\end{itemize}
\end{thm}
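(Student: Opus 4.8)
The statement collects the basic structure theory, so the plan is to reduce everything to the behaviour of a single hyperplane together with its sectors. I would work from the local description of quasi-median graphs (connected, weakly modular — i.e.\ satisfying the triangle and quadrangle conditions — and with no induced $K_{2,3}$ and no $K_4$ minus an edge; see \cite{MR1297190}) rather than from the retract definition: $X$ need not be gated in the ambient Hamming graph, so its hyperplane structure does not simply restrict, and the same local analysis is needed in any case. The engine is that every clique $C$ (maximal complete subgraph) is gated. For $x\in X$ the nearest vertex of $C$ is unique: if $c_1,c_2\in C$ were both nearest, the triangle condition on the edge $c_1c_2$ gives a common neighbour $z$ one step closer to $x$, and $z$ must then be adjacent to every vertex of $C$ (otherwise some $\{z,c_1,c_2,c_3\}$ induces $K_4$ minus an edge), contradicting maximality of $C$; that this nearest vertex $g_C(x)$ is a genuine gate follows by induction on $d(x,C)$ via the quadrangle condition.

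Next I would analyse the sectors of a fixed clique $C$: set $S_c=\{x:g_C(x)=c\}$, so that $V(X)=\bigsqcup_{c\in C}S_c$. The plan is to show (a) each $S_c$ is gated, hence connected and convex; (b) no elementary move (opposite edges of a $4$-cycle, or two edges of a triangle) joins two vertices of the same sector, while every edge joining $S_c$ to $S_{c'}$ has endpoints gated to $c$ and $c'$ and is equivalent to the edge $cc'$ (the needed $4$-cycles come from the quadrangle condition); and (c) following the elementary relation, the hyperplane $J$ of $cc'$ is precisely the set of these cross-sector edges. Since any hyperplane contains an edge, which extends to a maximal clique all of whose edges are equivalent, every hyperplane arises this way; hence the connected components of $X\backslash\backslash J$ are exactly the nonempty sectors $S_c$, of which there are at least two, giving (i).

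For (ii) I would turn to carriers and fibres. Put $F_c=S_c\cap N(J)$ for each $c$ met by $J$. The key geometric fact to establish is that $N(J)$ is isomorphic to a product $F\times K$, where $K$ is a clique with one vertex per sector met by $J$ and each $F_c$ is a slice $F\times\{c\}$, the various $F_c$ being identified by a parallelism isomorphism; this is where the quadrangle condition together with the exclusion of $K_{2,3}$ is used, to propagate gates across the carrier. Granting this, $N(J)$ is gated — gates in a product are computed coordinatewise, and a vertex of $X$ is shown to admit a gate in $N(J)$ using gatedness of cliques and convexity of sectors — and the fibres $F_c=N(J)\cap S_c$ are intersections of gated subgraphs, hence gated; together with (a) this is (ii). This step is the main obstacle: the sector decomposition and the separation statement are comparatively soft, whereas pinning down the product structure of the carrier is where all three defining features of quasi-median graphs are needed simultaneously, and where essentially all of the combinatorial bookkeeping lives.

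Finally, (iii) and (iv) fall out formally from (i), (ii) and the product structure of carriers. If a geodesic crossed a hyperplane $J$ twice, its first two $J$-edges $ab$ and $cd$ would bound a subpath lying in a single sector and, being geodesic, inside $N(J)\cong F\times K$; the product structure then produces a strictly shorter $a$--$d$ detour, a contradiction, so geodesics cross each hyperplane at most once. For (iv): any path from $x$ to $y$ uses an edge of every hyperplane separating them, distinct such hyperplanes on distinct edges, so $d(x,y)$ is at least the number of separating hyperplanes; and a geodesic crosses each hyperplane at most once and crosses exactly the separating ones, so its length equals that number.
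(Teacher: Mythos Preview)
The paper does not prove this theorem at all: it is stated as background and referenced to \cite[Propositions~2.15 \& 2.30]{QM} (and to the metric-graph-theory literature). So there is no ``paper's own proof'' to compare your proposal against; you have supplied more than the paper does.

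As for the outline itself, the overall architecture is the standard one and is sound: gatedness of cliques, the sector decomposition $V(X)=\bigsqcup_c S_c$ via gates, identification of the hyperplane through $C$ with the set of cross-sector edges, the product decomposition $N(J)\cong F\times K$, and then (iii)--(iv) as formal consequences. Two places deserve a flag. First, step~(a) --- that each sector $S_c$ is gated --- is asserted without argument; this is not automatic from gatedness of $C$ and requires its own inductive proof using weak modularity and the forbidden subgraphs, at roughly the same level of difficulty as the carrier analysis you single out. Second, in your proof of (iii) you use that the geodesic subpath between the two $J$-crossings lies in $N(J)$; this needs convexity of $N(J)$, which you have from (ii), but the dependency should be made explicit. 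You also only argue the ``only if'' direction of (iii) (geodesics cross each hyperplane at most once); the ``if'' direction is then a consequence of (iv), which you prove using only that direction, so there is no circularity, but it would be cleaner to say so.
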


\noindent
Recall that, in a graph $X$, a subgraph $Y \subset X$ is \emph{gated} if every vertex $x \in X$ admits a \emph{gate} (or a \emph{projection}) in $Y$, i.e.\ a vertex $y \in Y$ such that, for every vertex $z \in Y$, at least one geodesic connecting $x$ to $z$ passes through $y$. Notice that, if the gate exists, it coincides with the unique vertex of $Y$ at minimal distance. One can think of gatedness as a strong convexity condition. Notice that the intersection of gated subgraphs is gated. Also, as gated subgraphs are retracts of $X$, gated subgraphs in quasi-median graphs are also quasi-median graphs in their own right. 

\medskip \noindent
Gated subgraphs in quasi-median graphs are characterized in terms of local properties as follows. Recall that a \emph{clique} is a maximal complete subgraph. Cliques are gated in quasi-median graphs \cite{QM}.  

\begin{prop}[{\cite[Proposition 2.6]{QM}\label{lem:LocallyGated}}]
 Let $X$ be a quasi-median graph.  A connected subgraph $Y$ is gated  if and only if it satisfies the following properties:
 \begin{description}
     \item[(clique absorption)] if $C$ is a clique with an edge in $Y$, then $C$ is contained in $Y$; and 
     \item[(local convexity)] any $4$-cycle in $X$ with two adjacent edges in $Y$ is contained in $Y$.
\end{description}
 \end{prop}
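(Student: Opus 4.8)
The plan is to prove the two implications separately; the forward one is a short direct check and essentially all the content lies in the converse.

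\textbf{Gated $\Rightarrow$ the two conditions.} Suppose $Y$ is gated (and, as is standard for gated subgraphs, induced). For \textbf{clique absorption}, let $C$ be a clique with an edge $\{a,b\}\subseteq Y$ and suppose some vertex $c\in C$ does not lie in $Y$. As $C$ is complete, $c$ is adjacent to both $a$ and $b$, so the only geodesics from $c$ to $a$ and from $c$ to $b$ are the corresponding edges; since the gate of $c$ in $Y$ must lie on a geodesic from $c$ to \emph{every} vertex of $Y$, it would have to equal both $a$ and $b$, which is absurd. For \textbf{local convexity}, given a $4$-cycle with two adjacent edges $\{a,b\},\{b,c\}$ in $Y$, the fourth vertex $d$ is adjacent to both $a$ and $c$; if $d\notin Y$, its gate in $Y$ would again be forced to be simultaneously $a$ and $c$, so $d\in Y$, and since $Y$ is induced the whole $4$-cycle lies in $Y$.

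\textbf{The two conditions $\Rightarrow$ gated.} Fix $x\in X$; we must exhibit a gate. The useful device is a hyperplane reformulation of ``being a gate''. Since $Y$ is connected, any hyperplane separating two vertices of $Y$ must cross $Y$, while a hyperplane not crossing $Y$ has all of $Y$ in a single sector. Using Theorem~\ref{thm:BigQM}(iii)--(iv) one then checks that a vertex $y\in Y$ is a gate for $x$ if and only if, for every hyperplane crossing $Y$, the vertices $x$ and $y$ lie in the same sector: if $y$ has this property, then for any $z\in Y$ the hyperplanes separating $x$ from $z$ are exactly those separating $x$ from $y$ together with those separating $y$ from $z$, these two families are disjoint, and so concatenating a geodesic $x\to y$ with a geodesic $y\to z$ gives a path crossing every hyperplane at most once, hence geodesic by Theorem~\ref{thm:BigQM}(iii). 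The same reformulation forces the gate to be unique. So the statement reduces to showing: $(\star)$ there exists $y\in Y$ lying, for every hyperplane crossing $Y$, in the same sector as $x$.

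I would prove $(\star)$ by induction on $d(x,Y)$, the case $d(x,Y)=0$ being trivial. For the inductive step it suffices to find a neighbour $x'$ of $x$ with $d(x',Y)=d(x,Y)-1$ whose edge $\{x,x'\}$ belongs to a hyperplane \emph{not} crossing $Y$: applying the inductive hypothesis to $x'$ and carrying the resulting vertex of $Y$ back across the unique hyperplane of $\{x,x'\}$ proves $(\star)$ for $x$. Taking a geodesic from $x$ to a nearest vertex $y_0\in Y$, its first edge does the job unless the hyperplane $J$ of that edge crosses $Y$.

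\textbf{The main obstacle} is exactly this last case, and it is where clique absorption and local convexity are used. When $J$ crosses $Y$, it contains an edge $\{a,b\}$ of $Y$ with $a$ on the side of $x$ and $b$ on the side of $y_0$; the strategy is then to use the quasi-median structure --- weak modularity (the triangle and quadrangle conditions) together with the standard fact that quasi-median graphs contain no induced $K_{2,3}$ and no induced $K_4$ minus an edge --- to locate, near $\{a,b\}$ and the chosen geodesic, either a triangle or a $4$-cycle having two adjacent edges already in $Y$, and then to invoke clique absorption (respectively local convexity) to push a vertex of the geodesic into $Y$, contradicting the minimality of $d(x,y_0)$. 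An essentially equivalent but perhaps cleaner organisation first establishes that $Y$ is convex --- every geodesic between two of its vertices stays in $Y$, by the same triangle/square-pushing, now by induction on the length of the geodesic --- and then deduces gatedness from convexity plus clique absorption via the hyperplane bookkeeping above; here the delicate subcase is a pair of vertices of $Y$ at distance $2$, again handled using weak modularity and the forbidden subgraphs. Either way, I expect everything apart from this ``pushing'' step --- the tracking of which hyperplanes separate which vertices --- to be purely formal manipulation based on Theorem~\ref{thm:BigQM}.
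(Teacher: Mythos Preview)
The paper does not supply a proof of this proposition: it is quoted verbatim from \cite[Proposition~2.6]{QM} and used as a black box. So there is nothing in the present paper to compare your argument to.

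On the merits of your sketch itself: the forward direction is fine. For the converse, your hyperplane reformulation of ``gate'' is correct and pleasant, but be aware of a possible circularity. You are invoking Theorem~\ref{thm:BigQM} (sectors are gated, geodesics cross each hyperplane once, etc.), which in \cite{QM} is Propositions~2.15 and~2.30 --- results that appear \emph{after} Proposition~2.6 and whose proofs may well use it. If you want a self-contained argument you should instead work directly from the defining local conditions of quasi-median graphs (weak modularity together with the forbidden $K_4^-$ and $K_{2,3}$), which you allude to but do not actually use in your write-up.

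More importantly, the part you label ``the main obstacle'' is not proved, only described. You say the strategy is to ``locate, near $\{a,b\}$ and the chosen geodesic, either a triangle or a $4$-cycle having two adjacent edges already in $Y$'', but you do not explain how to locate it, and this is exactly where the work lies. The alternative route you mention --- first prove $Y$ is convex by a triangle/square pushing induction, then upgrade convexity plus clique absorption to gatedness --- is indeed how such arguments usually go, and is closer to the proof in \cite{QM}; but again you stop short of carrying out the distance-$2$ case. As written, the proposal is an outline with the substantive step missing.
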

  
\noindent 
Another interesting property satisfied by gated subgraphs is the so-called Helly property. The following statement is well-known, see for instance \cite[Proposition 2.8] {QM}.

\begin{lemma}\label{lem:Helly}
In a graph, any finite collection of pairwise intersecting gated subgraphs has a non-empty intersection. 
\end{lemma}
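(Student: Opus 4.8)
The plan is to argue by induction on the number $n$ of gated subgraphs in the collection. The cases $n \le 2$ are immediate: the hypothesis directly provides a point in the (single) pairwise intersection. The heart of the matter is the case $n = 3$, and the inductive step from $n$ to $n+1$ (for $n \ge 3$) will follow from it with little extra effort, using the fact recalled in the excerpt that an intersection of gated subgraphs is gated.

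First I would record the standard observation that a gated subgraph $Y$ is convex: if $x, y \in Y$ and $z$ lies on a geodesic from $x$ to $y$, let $z'$ be the gate of $z$ in $Y$; since $x, y \in Y$ we get $d(x,z) + d(z,y) = \big(d(x,z') + d(z',z)\big) + \big(d(z',z) + d(z',y)\big) \ge d(x,y) + 2\, d(z,z')$, whereas the left-hand side equals $d(x,y)$ because $z$ is on a geodesic; hence $d(z,z') = 0$, i.e.\ $z = z' \in Y$.

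Next, the case $n = 3$. Let $A, B, C$ be pairwise intersecting gated subgraphs. Fix a vertex $x \in B \cap C$ and let $a$ be the gate of $x$ in $A$. Choose $y \in A \cap B$ and $z \in A \cap C$. By definition of the gate, some geodesic from $x$ to $y$ passes through $a$, and some geodesic from $x$ to $z$ passes through $a$; the key point is that this is the \emph{same} vertex $a$ in both cases, since a single gate works simultaneously for all targets in $A$. As $x, y \in B$ and $B$ is convex, the first geodesic lies in $B$, so $a \in B$; as $x, z \in C$ and $C$ is convex, the second geodesic lies in $C$, so $a \in C$; and $a \in A$ by construction. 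Hence $a \in A \cap B \cap C \neq \emptyset$.

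Finally, the inductive step. Assume the statement for collections of $n$ gated subgraphs, and let $Y_1, \dots, Y_{n+1}$ be pairwise intersecting gated subgraphs with $n \ge 3$. Put $Y' := Y_n \cap Y_{n+1}$, which is gated and non-empty. I claim $Y_1, \dots, Y_{n-1}, Y'$ is pairwise intersecting: for indices $i, j \le n-1$ this is the hypothesis, while $Y_i \cap Y' = Y_i \cap Y_n \cap Y_{n+1} \neq \emptyset$ for each $i \le n-1$ by the case $n = 3$ applied to $Y_i, Y_n, Y_{n+1}$. This is a collection of $n$ gated subgraphs, so by the inductive hypothesis $Y_1 \cap \dots \cap Y_{n-1} \cap Y' = Y_1 \cap \dots \cap Y_{n+1}$ is non-empty, completing the induction. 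The argument is essentially routine; the only step needing a little care is the $n = 3$ case, and in particular the remark that the single gate $a$ of $x$ in $A$ lies on geodesics from $x$ to \emph{every} vertex of $A$, so that convexity of $B$ and of $C$ can both be invoked for the same point $a$.
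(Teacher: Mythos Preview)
Your argument is correct and is essentially the standard proof of the Helly property for gated subgraphs. Note that the paper does not actually supply a proof of this lemma: it simply records the statement as well-known and refers to \cite[Proposition~2.8]{QM}. So there is no ``paper's own proof'' to compare against; your write-up would serve perfectly well as a self-contained justification.
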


\noindent
It is useful to understand how projections to gated subgraphs interact with hyperplanes. The following summarises  results from \cite[Proposition 2.33,  Lemmas~2.34 \& 2.36, and Corollary 2.37]{QM}.

\begin{thm}\label{thm:ProjQM}
Let $X$ be a quasi-median graph and $Y,Z \subset X$ two gated subgraphs.
\begin{itemize}
	\item For every vertex $x \in X$, a hyperplane separating $x$ from its projection to $Y$ separates $x$ from $Y$.
	\item For all vertices $x,y \in X$, the hyperplanes separating their projections to $Y$ coincide with the hyperplanes separating $x,y$ and crossing $Y$. As a consequence, the projection to $Y$ is $1$-Lipschitz.
	\item For all vertices $y \in Y$ and $z \in Z$ minimising the distance between $Y$ and $Z$, the hyperplanes separating $y$ and $z$ coincide with the hyperplanes separating $Y$ and $Z$.
     \item The hyperplanes crossing the projection of $Y$ to $Z$ are exactly the hyperplanes crossing both $Y$ and $Z$.
\end{itemize}
\end{thm}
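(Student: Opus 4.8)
The four bullets all follow from Theorem~\ref{thm:BigQM}, together with two elementary facts it packages: a hyperplane partitions the vertex-set of $X$ into its sectors, and a path is geodesic exactly when it uses at most one edge from each hyperplane, so $d(a,b)$ equals the number $|\mathcal{H}(a,b)|$ of hyperplanes separating $a$ and $b$. Throughout, write $\pi_Y$ for the gate projection onto a gated subgraph $Y$. The plan is to prove the first bullet and deduce the other three from it; the only genuine subtlety is that hyperplanes in a quasi-median graph may have more than two sectors, so one must phrase everything in terms of ``lying in the same sector versus lying in different sectors'' rather than ``same side'', as one would do in a CAT(0) cube complex.

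\textbf{First bullet.} Let $x \in X$, put $x' := \pi_Y(x)$, and let $H$ separate $x$ from $x'$. Pick any $z \in Y$. By the defining property of the gate there is a geodesic from $x$ to $z$ through $x'$; concatenating its two halves yields a geodesic which, by Theorem~\ref{thm:BigQM}(iii), crosses $H$ exactly once. Since $H$ already separates $x$ from $x'$, that unique edge of $H$ lies on the subpath from $x$ to $x'$, so the subpath from $x'$ to $z$ avoids $H$, and $x'$ and $z$ lie in a common sector of $H$. As this holds for every $z \in Y$ and $Y$ is connected, $Y$ lies entirely in one sector of $H$ (in particular $H$ does not cross $Y$) while $x$ lies in a different one; thus $H$ separates $x$ from $Y$.

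\textbf{Second bullet.} Fix $x,y$ and set $x' = \pi_Y(x)$, $y' = \pi_Y(y)$. If $H$ crosses $Y$, the first bullet forces $x$ and $x'$ into a common sector of $H$, and likewise $y$ and $y'$; hence $H$ separates $x$ from $y$ if and only if it separates $x'$ from $y'$. Conversely, if $H$ separates $x'$ from $y'$ then, $Y$ being gated and hence convex, a geodesic of $Y$ joining $x'$ to $y'$ must cross $H$, so $H$ crosses $Y$. Combining these, $\mathcal{H}(x',y') = \mathcal{H}(x,y) \cap \{H : H \text{ crosses } Y\}$, which is the claim; taking cardinalities gives $d(x',y') \le d(x,y)$, i.e.\ $\pi_Y$ is $1$-Lipschitz.

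\textbf{Third and fourth bullets.} For the third, if $y \in Y$ and $z \in Z$ realise $d(Y,Z)$ then $d(y,Z) = d(y,z)$, so $z = \pi_Z(y)$, and symmetrically $y = \pi_Y(z)$; applying the first bullet to each shows that any $H \in \mathcal{H}(y,z)$ separates $y$ from $Z$ and $z$ from $Y$, whence (as $Y,Z$ are connected and $H$ crosses neither) $Y$ and $Z$ lie in distinct sectors of $H$, i.e.\ $H$ separates $Y$ from $Z$; the converse is immediate since $y \in Y$, $z \in Z$. For the fourth, note $\pi_Z(Y)$ is connected because $\pi_Z$ is $1$-Lipschitz. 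If $H$ crosses $\pi_Z(Y)$, pick an edge of $H$ in $\pi_Z(Y)$ with endpoints $\pi_Z(y_a),\pi_Z(y_b)$ for $y_a,y_b \in Y$; by the second bullet (for $Z$), $H$ crosses $Z$ and $H \in \mathcal{H}(y_a,y_b)$, so $H$ crosses $Y$ along a geodesic of $Y$ from $y_a$ to $y_b$. Conversely, if $H$ crosses both $Y$ and $Z$, take an edge of $H$ in $Y$ with endpoints $p,q$; the second bullet gives $H \in \mathcal{H}(\pi_Z(p),\pi_Z(q))$, so a path in $\pi_Z(Y)$ from $\pi_Z(p)$ to $\pi_Z(q)$ crosses $H$, i.e.\ $H$ crosses $\pi_Z(Y)$. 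The main obstacle is really the first bullet; once it is in hand the rest is bookkeeping with sectors, the one point to watch being that every comparison must be ``same sector vs.\ different sectors'', never presuming that hyperplanes are two-sided.
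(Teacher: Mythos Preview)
Your argument is correct. The paper does not actually prove this statement: it is recorded as a summary of \cite[Proposition~2.33, Lemmas~2.34 \& 2.36, Corollary~2.37]{QM} with no argument given, so there is no ``paper's proof'' to compare against. Your self-contained derivation from Theorem~\ref{thm:BigQM} is the standard one and goes through cleanly; the only cosmetic point is that in the fourth bullet you implicitly treat $\pi_Z(Y)$ as the subgraph induced on the image vertex-set, which is the intended reading, and your $1$-Lipschitz observation (adjacent $p,q$ in $Y$ map to equal or adjacent vertices in $Z$) is exactly what makes the converse direction work without needing to know in advance that $\pi_Z(Y)$ is gated.
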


\noindent  
In a quasi-median graph, given a vertex $x$ contained in an edge of a hyperplane $J$, the unique clique containing this edge is called the \emph{clique of $J$ containing $x$.} Note that, as a consequence of the definition of a hyperplane, all the edges of this clique belong to $J$, so the clique is contained in $N(J)$. These cliques are central to most of our arguments.

\begin{cor}\label{cor:CrossAndClique}
Let $X$ be a quasi-median graph, $Y \subset X$ a gated subgraph, $J$ a hyperplane  crossing $Y$, and $x \in N(J) \cap Y$ a vertex. Then the clique of $J$ containing $x$ is contained in~$Y$. 
\end{cor}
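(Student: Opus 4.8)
The plan is to reduce the claim, using the clique absorption property of gated subgraphs, to finding a single additional vertex of the relevant clique inside $Y$, and to produce that vertex as the gate of a suitable neighbour of $x$ by a hyperplane-separation argument.

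Write $C$ for the clique of $J$ containing $x$; recall that $x\in C$ and that every edge of $C$ lies in $J$. Since $Y$ is gated, it has the clique absorption property by Proposition~\ref{lem:LocallyGated}, so it suffices to show that $C$ contains an edge lying in $Y$; as $C$ is complete, this amounts to exhibiting a vertex of $C$ in $Y$ distinct from $x$. Fix $c\in C$ with $c\neq x$. If $c\in Y$ we are done, so assume $c\notin Y$ and let $c'$ be the gate of $c$ in $Y$. Since $x\in Y$ and $c$ is adjacent to $x$ (both lie in the clique $C$), we get $d(c,c')\le d(c,x)=1$, and $c\notin Y$ forces $d(c,c')=1$. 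Let $H$ be the hyperplane containing the edge $cc'$. As $cc'$ is the single edge of a geodesic from $c$ to $c'$, Theorem~\ref{thm:BigQM}(iii)--(iv) shows $H$ is the unique hyperplane separating $c$ from $c'=\pi_Y(c)$; by the first item of Theorem~\ref{thm:ProjQM}, $H$ then separates $c$ from $Y$, in particular from $x\in Y$. On the other hand, $c$ and $x$ are adjacent, so Theorem~\ref{thm:BigQM}(iv) gives exactly one hyperplane separating them, namely the one carrying $cx$, which is $J$ since $cx$ is an edge of $C$. Hence $H=J$ and $cc'\in J$, so $c'\in N(J)$ and $cc'$ lies in the clique of $J$ through $c$; as $c\in C$ and $C$ is a clique all of whose edges lie in $J$, that clique is $C$, and therefore $c'\in C\cap Y$.

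It remains to check that $c'\neq x$: granting this, $x$ and $c'$ are two distinct vertices of $C$ in $Y$, so clique absorption forces $C\subseteq Y$, finishing the proof. I expect this final point to be the main obstacle. Note that, by the argument above, if $C\cap Y\neq\{x\}$ we are already done; the only dangerous situation is $C\cap Y=\{x\}$, in which case the gate in $Y$ of \emph{every} vertex of $C\setminus Y$ equals $x$, and hence (first item of Theorem~\ref{thm:ProjQM} once more) $J$ separates each such vertex from the whole of $Y$, i.e.\ $Y$ is trapped in a single sector of $J$. Excluding this is precisely where the hypothesis $x\in N(J)\cap Y$ must be used in a genuine way: $x$ lies simultaneously on the carrier $N(J)$ — hence on the clique $C$ of $J$, whose vertices point into distinct sectors — and inside the one-sector set $Y$; reconciling these should yield a contradiction after a closer look at how $Y$ meets $N(J)$ and its fibres (the last two items of Theorem~\ref{thm:ProjQM}). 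The remaining verifications are routine manipulations with Theorems~\ref{thm:BigQM} and~\ref{thm:ProjQM} together with clique absorption.
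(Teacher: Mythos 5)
Your argument is sound up to its last step, and it runs essentially parallel to the paper's own proof: both reduce, via clique absorption, to showing that $Y$ cannot lie entirely in one sector of $J$ --- you by computing the gate $c'=\mathrm{proj}_Y(c)$ and identifying the hyperplane dual to the edge $cc'$ with $J$, the paper by a direct argument by contradiction. However, the gap you flag at the end (ruling out $c'=x$) is genuine and, for the statement as literally printed, cannot be closed: the corollary is \emph{false} without an additional hypothesis. Take $X$ to be a single edge $\{a,b\}$ (or a $4$-cycle, or a triangle), $J$ its unique hyperplane, so $N(J)=X$, and $Y=\{a\}$, which is gated. Then $a\in N(J)\cap Y$, but the clique of $J$ containing $a$ is all of $X$ and is not contained in $Y$. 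Here $Y$ sits inside a single sector of $J$ and the gate of $b$ in $Y$ is $a$ itself --- exactly your ``dangerous situation'' --- and no further juggling with Theorems~\ref{thm:BigQM} and~\ref{thm:ProjQM} will produce the contradiction you hope for, because there is none.

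The missing hypothesis is that $J$ \emph{crosses} $Y$, i.e.\ some edge of $J$ lies in $Y$. This is how the corollary is invoked throughout the paper (e.g.\ in the proof of Proposition~\ref{prop:GContactSimplices}: ``Since $J_i$ crosses $Y$, the clique of $J_i$ containing $x$ is contained in $Y$\dots''), and the paper's own proof of the corollary concludes by deriving the contradiction that ``$J$ does not cross $Y$,'' so the hypothesis is clearly intended even though it was dropped from the statement. With it, your argument closes in one line: in the case $C\cap Y=\{x\}$, your computation shows that $J$ separates every $c\in C\setminus\{x\}$ from all of $Y$, so $Y$ lies in the sector of $J$ containing $x$; but the endpoints of any edge of $J$ lie in distinct sectors (a one-edge path is a geodesic, so by Theorem~\ref{thm:BigQM} the unique hyperplane separating its endpoints is the one carrying that edge), hence $J$ could not cross $Y$, a contradiction. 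So: add the crossing hypothesis, and your proof is complete and matches the paper's in substance; without it, the statement itself needs correcting rather than your argument.
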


\begin{proof}
Indeed, suppose that the clique $C$ of $J$ containing $x$ is not contained in $Y$. Since $Y$ is gated, Proposition~\ref{lem:LocallyGated} implies that no edge of $C$ is contained in $Y$. Let $z$ be a vertex of $C$ distinct from $x$. Then $J$ separates $x$ and $z$ by  Theorem~\ref{thm:BigQM}. By Theorem~\ref{thm:ProjQM}, since $Y$ is gated, $J$ separates $z$ and $Y$. Hence $J$ does not cross $Y$, which is a contradiction.  
\end{proof}

\noindent
In the same way that one can think of median graphs as made of (hyper)cubes, one can think of quasi-median graphs as being made of \emph{prisms}. More precisely, a \emph{prism} is a subgraph that decomposes as a product of cliques. It is worth mentioning that  prisms are also gated in quasi-median graphs \cite[Lemma~2.80]{QM}.  

\begin{lemma}\label{lem:PrismAbsorption}
Let $X$ be a quasi-median graph. If $J$ is a hyperplane crossing a prism $P$, then $P\subset N(J)$. 
\end{lemma}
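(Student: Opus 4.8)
The plan is to follow the edge of $P$ that lies in $J$ across all of the ``parallel'' cliques of the prism. Write $P = C_1 \times \cdots \times C_k$ as a product of cliques. Since $J$ crosses $P$, it contains some edge $e_0$ of $P$; because an edge of a product changes exactly one coordinate, after relabelling the factors we may assume $e_0$ is parallel to $C_1$, and in particular $|C_1| \geq 2$. I will identify the vertex set of $P$ with $C_1 \times Q$, where $Q := C_2 \times \cdots \times C_k$, so that $e_0$ joins $(a,q_0)$ to $(b,q_0)$ for some distinct $a,b \in C_1$ and some $q_0 \in Q$.

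The core claim is that for every $q \in Q$ and all distinct $c,c' \in C_1$, the edge $\{(c,q),(c',q)\}$ belongs to $J$. First, for a fixed $q$, all edges of the complete subgraph $C_1 \times \{q\}$ lie in a common hyperplane $J_q$: any two of them sharing a vertex span a $3$-cycle (when $|C_1| \geq 3$), and the graph on these edges with adjacency given by sharing a vertex is connected, so this follows from the definition of a hyperplane (the same observation that the text records for cliques). Next, if $q$ and $q'$ are adjacent in $Q$, i.e.\ differ in exactly one coordinate, then for any distinct $a,b \in C_1$ the four vertices $(a,q),(b,q),(b,q'),(a,q')$ span a $4$-cycle in $X$, in which $\{(a,q),(b,q)\}$ and $\{(a,q'),(b,q')\}$ are opposite; hence $J_q = J_{q'}$. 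Since $Q$ is connected (a product of connected graphs, or a point when $k=1$), $J_q$ does not depend on $q$, and as it equals $J_{q_0}$, which contains $e_0$, we conclude $J_q = J$ for every $q \in Q$. This proves the claim.

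It then remains to deduce $P \subseteq N(J)$. Every vertex $v = (c,q)$ of $P$ is incident to the edge $\{(c,q),(c',q)\}$ for any $c' \in C_1 \setminus \{c\}$ (which exists because $|C_1| \geq 2$), and this edge lies in $J$ by the claim; hence $v$ is incident to an edge of $J$, i.e.\ $v \in N(J)$. Thus every vertex of $P$ lies in $N(J)$, and since $N(J)$ is gated by Theorem~\ref{thm:BigQM}(ii), it is a convex, hence full, subgraph of $X$; therefore every edge of $P$ also lies in $N(J)$, giving $P \subseteq N(J)$.

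I do not expect a genuine obstacle here. The only step requiring a little care is the propagation: one must transport the hyperplane-equivalence from one fibre clique of $P$ to an adjacent one using a $4$-cycle of $X$ contained in $P$, and then invoke connectivity of the position space $Q$ to reach every fibre. Everything else is routine bookkeeping with the definition of a hyperplane and with gatedness of carriers.
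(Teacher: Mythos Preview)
Your proof is correct and follows essentially the same strategy as the paper's: both exploit the product structure $P = C_1 \times Q$ and propagate the hyperplane across the prism using $4$-cycles. The only organisational difference is that you first show every $C_1$-parallel edge lies in $J$ (via connectivity of $Q$) and then invoke gatedness of $N(J)$ to absorb the remaining edges, whereas the paper inducts on the distance to $C_1$ and uses the local-convexity clause of Proposition~\ref{lem:LocallyGated} to push edges of $P$ into $N(J)$ one $4$-cycle at a time; both arguments rest on the same $4$-cycle mechanism.
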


\begin{proof}
Suppose that $P$ decomposes as a finite product of cliques $C_1\times \cdots \times C_n$, and fix a vertex $x_i$ of $C_i$ for each $i$. For convenience, we identify $C_i$, in the product, with all vertices of the form $(x_1,\ldots,x_{j-1},y , x_{j+1},\ldots,x_{n})$ with $y\in C_i$. Since $J$ crosses $P$, we can assume that $J$ contains an edge of $C_1$, and hence all edges of $C_1$ belong to $J$. To prove that $P\subset N(J)$, we show that, for any  vertex $\bar y=(y_1,y_2, \ldots, y_j ,\ldots ,y_n)$ and any edge $e$ between $\bar y$ and a vertex $\bar y'=(y_1,y_2, \ldots, y_j' ,\ldots, y_n)$ with $y_j\neq y_j'$, the edge $e$ belongs to $J$. We argue by induction on the distance between $\bar y$ and $C_1$ in $P$.  We have already addressed distance zero. Suppose the distance between $\bar y$ and $C_1$ is positive (and larger than or equal to the distance between $\bar y'$ and $C_1$). Then there is an index $i\neq 1$ such that $x_i\neq y_i$. Assume $i\neq j$; the case $i=j$ is analogous modulo the observation that $x_j\neq y_j, y_j'$. Let $y_i'$ be a vertex of $C_i$ distinct from $y_i$. Then the vertices
$\bar y_0=(y_1,\ldots, x_i,\ldots ,y_j\ldots, y_n)$,
$\bar y_1=(y_1,\ldots, x_i, \ldots, y_j', \ldots, y_n)$,
$\bar y' =(y_1,\ldots, y_i,\ldots ,y_j',\ldots, y_n)$, and $\bar y=(y_1,\ldots, y_i,\ldots ,y_j,\ldots, y_n)$ form a $4$-cycle in $P$. By induction, the edges $\bar y_0 \bar y_1$ and $\bar y_0 \bar y$ belong to $J$, and then Proposition~\ref{lem:LocallyGated} implies that the edge $\bar y \bar y'$ belongs to $J$. 
\end{proof}
 
\noindent
The \emph{prism-completion} $X^\square$ of a quasi-median graph $X$ refers to the cellular complex obtained from $X$ by filling every prism with an appropriate product of simplices. The \emph{cubical dimension} of a quasi-median graph is the maximal number of factors in a prism. Prism-completions of quasi-median graphs can be naturally endowed with a CAT(0) geometry \cite{QM}, like median graphs. In this article, we will only require the following weaker assertion:

\begin{prop}\label{prop:QMcontractible}
Prism-completions of quasi-median graphs are contractible.
\end{prop}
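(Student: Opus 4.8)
The plan is to prove that $X^\square$ is contractible by building an explicit deformation retraction onto a single vertex, using the combinatorial structure of hyperplanes established in Theorem~\ref{thm:BigQM}. First I would fix a basepoint $x_0 \in X$ and, for a vertex $x \in X$, let $\mathcal{H}(x_0,x)$ denote the set of hyperplanes separating $x_0$ from $x$; by Theorem~\ref{thm:BigQM}(iv) this set is finite, with cardinality $d(x_0,x)$. The idea is to retract ``one hyperplane at a time.'' More precisely, I would argue by a (transfinite, if $X$ is infinite) induction that $X^\square$ deformation retracts onto the full subcomplex spanned by a ball $B(x_0,n)$, and then that $X^\square$ itself — as a CW complex, it is the increasing union of these balls — is contractible because each inclusion $B(x_0,n)^\square \hookrightarrow B(x_0,n+1)^\square$ is a deformation retraction, so the colimit is contractible (a mapping-telescope argument).

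The heart of the matter is the single step: showing that for a vertex $x$ at distance $n+1$ from $x_0$, the ``outer'' part of $X^\square$ near $x$ collapses onto the ball of radius $n$. Here I would use the hyperplanes through $x$ and an edge $e$ from $x$ towards $x_0$ (i.e.\ $d(x_0, \cdot)$ decreases across $e$): let $J$ be the hyperplane containing $e$. The carrier $N(J)$ is gated by Theorem~\ref{thm:BigQM}(ii), and $N(J)$ splits as a product $J \times (\text{clique direction})$ in the appropriate sense; combining Lemma~\ref{lem:PrismAbsorption} (prisms meeting $J$ lie in $N(J)$) with the gatedness of sectors, one sees that the prisms of $X^\square$ lying in the sector of $X \backslash\backslash J$ on the far side of $x_0$ can be pushed across $J$ into $N(J)$ and then collapsed towards the nearer side. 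Concretely, the sector $\sigma$ not containing $x_0$ retracts onto the fibre of $J$ in $N(J)$ by projecting along the clique direction, and this extends to a deformation retraction of $X^\square$ onto $(X \setminus \sigma)^\square \cup N(J)^\square$, which in turn collapses onto $(X \setminus \sigma)^\square$; iterating over all hyperplanes with an endpoint-clique meeting the sphere of radius $n+1$ reduces the radius. This is essentially the standard hyperplane-collapsing argument for CAT(0) cube complexes (Niblo--Reeves style), adapted to prisms via the product structure of carriers.

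Alternatively — and this is probably the cleaner write-up — I would invoke the fact already cited in the excerpt that $X^\square$ carries a CAT(0) metric \cite{QM}; a CAT(0) space is uniquely geodesic and geodesically contractible (geodesic contraction to any point is a homotopy), hence contractible. If the authors want to keep this section self-contained and avoid quoting the full CAT(0) statement, then the hyperplane-collapsing argument above is the way to go.

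The main obstacle is the infinite case: a general quasi-median graph need not be locally finite or finite-dimensional, so one must be careful that the ``union of balls'' $X^\square = \bigcup_n B(x_0,n)^\square$ really is a CW complex with each $B(x_0,n)^\square$ a subcomplex (true since prisms have bounded combinatorial diameter, so each prism lies in some ball), and that a direct limit of contractible spaces along cofibrations/deformation retractions is contractible. The other delicate point is verifying that the sector-collapse is a genuine deformation retraction of the \emph{completed} complex and not just of the $1$-skeleton — this requires knowing that the projection $X \to N(J)$ (gated, hence cellular on prism-completions by Theorem~\ref{thm:ProjQM}) and the clique-direction collapse within $N(J)^\square \cong J^\square \times \Delta$ are compatible with the product cell structure, which follows from Lemma~\ref{lem:PrismAbsorption} and Proposition~\ref{lem:LocallyGated} but needs to be stated carefully.
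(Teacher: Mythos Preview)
The paper does not supply a proof of this proposition at all: it is stated immediately after the sentence asserting that prism-completions of quasi-median graphs carry a CAT(0) metric (with a citation to \cite{QM}), and contractibility is taken as the ``weaker assertion'' that follows. This is exactly your second alternative, so in that sense your proposal already matches the paper's treatment.

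Your first, combinatorial approach is a natural idea, but as written the core step is circular. You assert that a sector $\sigma$ (not containing $x_0$) deformation retracts onto its bounding fibre $F \subset N(J)$ ``by projecting along the clique direction.'' But the clique direction only exists inside $N(J) \cong F \times C$; the sector $\sigma$ has no such global product structure. What remains is the claim that $\sigma^\square$ deformation retracts onto the gated subcomplex $F^\square$, and since $\sigma$ is itself an arbitrary quasi-median graph (being gated in $X$) with $F$ an arbitrary gated subgraph, this is no easier than the proposition itself---indeed, taking $F$ to be a single vertex recovers exactly the statement you are trying to prove. The same circularity affects the ball-by-ball induction: showing that $B(x_0,n+1)^\square$ retracts onto $B(x_0,n)^\square$ is again ``prism-completion retracts onto a gated subcomplex.''

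A genuinely self-contained combinatorial proof can be made to work, but it has to go through finiteness rather than through a single hyperplane collapse: the gated hull of any finite vertex set is finite; a finite quasi-median prism-completion is collapsible (repeatedly find a hyperplane one of whose sectors \emph{is} a fibre and collapse that prism direction, reducing the vertex count); and $X^\square$ is then the directed union of these finite contractible subcomplexes along cofibrant inclusions, hence contractible by the telescope argument you mention. That route does avoid CAT(0), at the cost of a somewhat longer argument than the one-line citation the paper opts for.
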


\noindent
There is a close connection between prisms and pairwise transverse hyperplanes. Hyperplanes all crossing the same prism must be pairwise transverse, and the converse may also hold:

\begin{thm}[{\cite[Proposition~2.79]{QM}}]\label{thm:PrismsHyp}
Let $X$ be a quasi-median graph of finite cubical dimension. The map
$$\{J_1, \ldots, J_n\} \mapsto \bigcap\limits_{i=1}^n N(J_i)$$
induces a bijection between the maximal collections of pairwise transverse hyperplanes and the maximal prisms of $X$.
\end{thm}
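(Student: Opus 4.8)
The plan is to produce the inverse map explicitly and to reduce the whole statement to a single structural fact about carriers. For a prism $P$, write $\mathcal{W}(P)$ for the collection of hyperplanes crossing $P$. I would first record the elementary observations: if $P=C_1\times\cdots\times C_m$, then each clique-factor $C_i$ lies in a single hyperplane $J_i$ (a clique spans one hyperplane), distinct factors give distinct hyperplanes, and every hyperplane crossing $P$ has an edge in some factor, so $\mathcal{W}(P)=\{J_1,\dots,J_m\}$; these hyperplanes are pairwise transverse, since for $i\neq j$ an edge of $C_i$ and an edge of $C_j$ sharing a vertex span a $4$-cycle in $P$; and finally $P\subseteq\bigcap_i N(J_i)$ by Lemma~\ref{lem:PrismAbsorption}. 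Using the finite cubical dimension hypothesis to keep every collection of pairwise transverse hyperplanes finite, all of this reduces the theorem to the claim $(\star)$: if $\{J_1,\dots,J_n\}$ is a maximal collection of pairwise transverse hyperplanes, then $Y:=\bigcap_{i=1}^n N(J_i)$ is a prism whose crossing hyperplanes are exactly $J_1,\dots,J_n$.

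Granting $(\star)$, the bijection follows formally. Given a maximal prism $P$: the collection $\mathcal{W}(P)$ is pairwise transverse, and it is maximal, for otherwise a hyperplane $H$ transverse to every member of $\mathcal{W}(P)$ would, by the prism-extension step below, enlarge $P$; then $P\subseteq\bigcap_{J\in\mathcal{W}(P)}N(J)$, which is a prism by $(\star)$, so maximality of $P$ forces equality. Conversely, given a maximal collection $\{J_i\}$: by $(\star)$, $\bigcap_i N(J_i)$ is a prism with crossing set $\{J_i\}$, and it is a maximal prism, since a strictly larger prism would contain an edge from it to a vertex outside, whose hyperplane is none of the $J_i$ (by Corollary~\ref{cor:CrossAndClique}, applied to the gated subgraph $\bigcap_i N(J_i)$) yet is transverse to all of them (all hyperplanes crossing a common prism are pairwise transverse), contradicting maximality. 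The two assignments are then mutually inverse, which is the content of the theorem.

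To prove $(\star)$: the set $Y$ is gated, being an intersection of gated carriers (Theorem~\ref{thm:BigQM}), and non-empty, since transverse hyperplanes have intersecting carriers and one invokes the Helly property (Lemma~\ref{lem:Helly}). The core is to build inside $Y$ a prism $C_1\times\cdots\times C_n$ with $C_i$ a clique of $J_i$, by induction, the inductive step being the prism-extension statement: if $Q$ is a prism whose crossing hyperplanes are all transverse to a hyperplane $H\notin\mathcal{W}(Q)$, then there is a clique $C$ of $H$ with $Q\times C$ a prism, so that $Q$ lies in the strictly larger prism $Q\times C$ with crossing set $\mathcal{W}(Q)\cup\{H\}$. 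For this step I would take a $4$-cycle witnessing transversality of $H$ with one hyperplane crossing $Q$, arrange it (after a projection argument using Theorem~\ref{thm:ProjQM} to place a witnessing edge $e\subset H$ at a vertex of $Q$), and then propagate $e$ across $Q$ by repeatedly filling $4$-cycles via local convexity and absorbing cliques via clique absorption (Proposition~\ref{lem:LocallyGated}); this is formally the same induction on distance inside $Q$ as in the proof of Lemma~\ref{lem:PrismAbsorption}. Applying this step successively with $Q=C_1\times\cdots\times C_k$ and $H=J_{k+1}$, and using Lemma~\ref{lem:PrismAbsorption} to see each output lies in $\bigcap_{i\le k+1}N(J_i)$, produces a prism $P_0=C_1\times\cdots\times C_n\subseteq Y$ with $\mathcal{W}(P_0)=\{J_1,\dots,J_n\}$.

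It remains to show $P_0=Y$. If not, since $Y$ is connected there is an edge $uv$ with $u\in P_0$ and $v\in Y\setminus P_0$; let $H$ be its hyperplane. If $H=J_i$ for some $i$, then Corollary~\ref{cor:CrossAndClique} applied to the gated subgraph $P_0$ forces the clique of $J_i$ through $u$, which contains $v$, to lie in $P_0$ --- impossible. So $H$ crosses $Y$ but is none of the $J_i$; as $uv\subset Y\subseteq N(J_i)$ for every $i$, $H$ has an edge in each carrier $N(J_i)$, and by the standard fact that a hyperplane with an edge in $N(J)$ is either $J$ or transverse to $J$ (a consequence of the product structure of carriers; see \cite{QM}), $H$ is transverse to every $J_i$, contradicting maximality of $\{J_1,\dots,J_n\}$. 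Hence $Y=P_0$ is the desired prism. The main obstacle is the prism-extension step: upgrading a single transversality $4$-cycle to a full parallel copy of a clique of $H$ over the whole prism $Q$, consistently; this is where local convexity and clique absorption do the real work and is the technically delicate point, the remainder being bookkeeping with the Helly property, gatedness, and Lemma~\ref{lem:PrismAbsorption}.
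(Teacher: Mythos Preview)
The paper does not prove this statement; it is quoted from \cite[Proposition~2.79]{QM}, so there is no proof to compare against. Your overall strategy is sound: reducing to $(\star)$, building $P_0\subseteq Y$ inductively via the product structure of Proposition~\ref{prop:InterHyperplanesProduct}, and then showing $P_0=Y$ all go through, because at each stage of the induction the partial prism already lies in $Y\subseteq N(J_{k+1})$, so a clique of $J_{k+1}$ is available at any of its vertices.

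The gap is in the prism-extension step as you state it. The assertion ``if every hyperplane of $\mathcal{W}(Q)$ is transverse to $H\notin\mathcal{W}(Q)$ then $Q\times C$ is a prism for some clique $C$ of $H$'' is false without the additional hypothesis $Q\cap N(H)\neq\emptyset$; for a counterexample take $Q$ a single vertex in a tree and $H$ any non-incident edge. Your ``projection argument using Theorem~\ref{thm:ProjQM}'' does not supply this hypothesis: projecting an $H$-edge onto the gated subgraph $Q$ collapses it to a single vertex (since $H$ does not cross $Q$), and nothing forces that vertex to lie in $N(H)$. This matters precisely when you invoke the step to show that $\mathcal{W}(P)$ is maximal for a maximal prism $P$. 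The repair is to first establish $P=\bigcap_{J\in\mathcal{W}(P)}N(J)$: applying Proposition~\ref{prop:InterHyperplanesProduct} at any vertex of $P$ gives $\bigcap_{J} N(J)\cong F\times\prod_{J} C_J$ with $P$ sitting as $\{f\}\times\prod_J C_J$; an edge in $F$ at $f$ would yield a strictly larger prism, so maximality of $P$ forces $F=\{f\}$ and hence equality. Now Lemma~\ref{lem:Helly} applied to $N(H)$ together with the carriers $N(J)$, $J\in\mathcal{W}(P)$, yields $N(H)\cap P\neq\emptyset$, and from any vertex in that intersection Proposition~\ref{prop:InterHyperplanesProduct} produces the enlarging prism you want. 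With this insertion your argument is complete.
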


\noindent
For infinite collections of pairwise transverse hyperplanes, it may not be possible to find an infinite-dimensional prism that is crossed by all the hyperplanes. This is why the quasi-median graph in Theorem~\ref{thm:PrismsHyp} is assumed to have finite cubical dimension.

\medskip \noindent
Related to Theorem~\ref{thm:PrismsHyp}, one can describe the intersection of the carriers of a collection of pairwise transverse hyperplanes (when non-empty). In the next statement, given a collection of pointed graphs $\{(X_i,x_i), i \in I\}$, we denote by $\prod_{i \in I} (X_i,x_i)$ the graph whose vertices are the sequences $(a_i)_{i \in I}$ for which $a_i=x_i$ for all but finitely many $i \in I$ and whose edges connect two sequences whenever they differ at a single index $j$ of $I$ and the corresponding vertices of $X_j$ are connected by an edge. This definition is motivated by the fact that, even though every $X_i$ is connected, the product $\prod_{i \in I} X_i$ cannot be connected if $I$ is infinite (assuming that the $X_i$ are all non-empty). In this case, $\prod_{i \in I} (X_i,x_i)$ coincides with the connected component of $\prod_{i \in I} X_i$ containing the vertex $(x_i)_{i \in I}$.

\begin{prop}\label{prop:InterHyperplanesProduct}
Let $X$ be a quasi-median graph and $\mathcal{J}$ a collection of pairwise transverse hyperplanes. Suppose there exists a vertex $o \in \bigcap_{J \in \mathcal{J}} N(J)$, and let $C_J$ be the clique of $J$ containing $o$ for every hyperplane $J \in \mathcal{J}$. Then the map
$$\left\{ \begin{array}{ccc} \bigcap\limits_{J \in \mathcal{J}} N(J) & \to & F \times \prod\limits_{J \in \mathcal{J}} (C_J,o) \\ x & \mapsto & \left(\mathrm{proj}_F(x), (\mathrm{proj}_{C_J}(x))_{J \in \mathcal{J}} \right) \end{array} \right.$$
is a graph isomorphism, where $F$ denotes the intersection of all the fibres of the hyperplanes from $\mathcal{J}$ containing $o$. 
\end{prop}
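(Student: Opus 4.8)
The plan is to verify that the stated map $\Phi$ is a well-defined graph isomorphism by checking, in order, that it lands in the claimed product, that it is injective, that it is surjective, and that it (together with its inverse) preserves adjacency. I will rely throughout on the gatedness of carriers and fibres (Theorem~\ref{thm:BigQM}(ii)), the fact that cliques are gated, Corollary~\ref{cor:CrossAndClique}, and the projection formulas in Theorem~\ref{thm:ProjQM}.

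\emph{Well-definedness.} Fix $x \in \bigcap_{J} N(J)$. First I would observe that for each $J \in \mathcal{J}$, the projection $\mathrm{proj}_{C_J}(x)$ is a well-defined vertex of $C_J$ since $C_J$ is gated, and similarly $\mathrm{proj}_F(x)$ makes sense once one knows $F$ is gated (an intersection of gated subgraphs, hence gated by the remark after Theorem~\ref{thm:BigQM}). The key point is that for all but finitely many $J$, $\mathrm{proj}_{C_J}(x) = o$: indeed, by Theorem~\ref{thm:ProjQM} the hyperplanes separating $x$ from $\mathrm{proj}_{C_J}(x)$ are exactly those separating $x,o$ and crossing $C_J$; since $C_J$ is a clique, the only hyperplane crossing it is $J$ itself, so $\mathrm{proj}_{C_J}(x) = o$ unless $J$ separates $x$ from $o$. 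As $x$ and $o$ are separated by only finitely many hyperplanes (they are at finite distance, using Theorem~\ref{thm:BigQM}(iv)), only finitely many coordinates differ from $o$, so $\Phi(x)$ indeed lies in the restricted product $F \times \prod_{J}(C_J, o)$.

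\emph{Bijectivity.} For injectivity, suppose $\Phi(x) = \Phi(y)$. Then $x$ and $y$ have the same projection to $F$ and to each $C_J$; I would argue that the set of hyperplanes separating $x$ from $y$ is empty. A hyperplane $H$ separating $x,y$ is either transverse to all of $\mathcal{J}$ or not. Using the decomposition philosophy (every hyperplane crossing $\bigcap_J N(J)$ either equals some $J \in \mathcal{J}$ or crosses every fibre $F$), one sees $H$ must cross some $C_J$ — forcing $H = J$ and contradicting equal $C_J$-projections — or $H$ crosses $F$, contradicting equal $F$-projections; here I use Theorem~\ref{thm:ProjQM} to translate "same projection to a gated subgraph" into "no separating hyperplane crosses that subgraph." For surjectivity, given a target tuple $(p, (c_J)_J)$ with $c_J = o$ for $J \notin \mathcal{J}_0$ (finite), I would produce a preimage by successively "moving" from $o$: across each $J \in \mathcal{J}_0$ to reach the clique-coordinate $c_J$, then inside $F$ to reach $p$, and check the resulting vertex lies in every $N(J)$ — this is where I would invoke pairwise transversality of $\mathcal{J}$ (via Theorem~\ref{thm:PrismsHyp} / the product structure $F \times \prod(C_J, o)$ of intersections of carriers, or directly by repeatedly filling $4$-cycles using local convexity, Proposition~\ref{lem:LocallyGated}).

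\emph{Graph isomorphism.} Finally I would check that $\Phi$ and $\Phi^{-1}$ send edges to edges. An edge of $\bigcap_J N(J)$ lies on some clique, hence belongs to exactly one hyperplane of $\mathcal{J}$ or to none; in the first case it changes exactly the corresponding $C_J$-coordinate (and fixes $F$ and the other cliques, since projections are $1$-Lipschitz and cannot change across a single edge unless the relevant hyperplane is crossed), and in the second case it changes only the $F$-coordinate. Conversely, an edge of the product changes one coordinate, and unwinding the surjectivity construction shows it corresponds to an edge upstairs. \textbf{The main obstacle} I anticipate is the surjectivity step: showing that the vertex obtained by concatenating the "moves" genuinely lies in $\bigcap_{J \in \mathcal{J}} N(J)$ requires controlling how carriers of pairwise transverse hyperplanes interlock, and doing this cleanly for a possibly infinite family $\mathcal{J}$ (where no single prism crosses all of them) is the delicate part; I would handle it by reducing to the finite sub-family of coordinates that actually move and then iterating the local-convexity argument of Lemma~\ref{lem:PrismAbsorption}.
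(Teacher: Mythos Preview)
Your outline is essentially correct, but it diverges from the paper's argument in two organisational respects that are worth noting.

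First, the paper does not separate injectivity from edge-preservation. Instead it shows in one stroke that $\Phi$ is an \emph{isometric embedding}: using Theorem~\ref{thm:ProjQM} and the partition $\mathcal{H}\big(\bigcap_J N(J)\big) = \mathcal{H}(F) \sqcup \bigsqcup_J \mathcal{H}(C_J)$, one computes
\[
d(\Phi(x),\Phi(y)) = \sum_i |\mathcal{H}(x|y)\cap \mathcal{H}(Y_i)| = |\mathcal{H}(x|y)| = d(x,y).
\]
This is cleaner than your two-step argument and also handles the well-definedness of the target (finitely many nonzero coordinates) automatically, since $d(\Phi(x),\Phi(o))$ is finite.

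Second, and more substantively, the paper's surjectivity argument avoids your ``successive moves'' construction entirely. Given a target tuple with finitely many nontrivial coordinates $y_1,\dots,y_n$, it considers for each hyperplane $J$ separating $o$ from some $y_i$ the sector $J^+$ containing $y_i$, checks via the Helly property (Lemma~\ref{lem:Helly}) that $Q := \bigcap_J J^+$ is nonempty, and then takes $x := \mathrm{proj}_Q(o)$. A short computation with $\mathcal{H}(o|x)$ then shows $\Phi(x)$ is the desired tuple. This sidesteps precisely the obstacle you flagged: your iterative scheme requires checking at each step that the intermediate vertex remains in every $N(J)$, and the final ``move inside $F$'' step is awkward because after the clique moves you are generally \emph{not} in $F$, so ``moving inside $F$ to reach $p$'' needs reinterpretation (you would need a parallel copy of the path in $F$, which in turn requires the product structure you are trying to establish). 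The sector-intersection approach is both shorter and avoids this circularity.

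The paper also packages the whole argument as a general decomposition lemma (Lemma~\ref{lem:ProductDecomposition}) and then simply verifies its hypotheses for the $C_J$ and $F$; your direct approach would work but reinvents that lemma in situ.
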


\noindent
This proposition is a rather straightforward consequence of the following general decomposition criterion. In its statement and  proof, we use $\mathcal{H}(\cdot)$ to denote the set of all the hyperplanes crossing a given subgraph.

\begin{lemma}\label{lem:ProductDecomposition}
Let $X$ be a quasi-median graph and $\{Y_i \,:\, i \in I\}$ a collection of gated subgraphs. Assume that the following conditions hold:
\begin{itemize}
	\item the intersection $Y:= \bigcap_{i \in I} Y_i$ is non-empty;
	\item $\mathcal{H}(X) = \bigsqcup_{i \in I} \mathcal{H}(Y_i)$;
	\item for all distinct $i,j \in I$, every hyperplane in $\mathcal{H}(Y_i)$ is transverse to every hyperplane in $\mathcal{H}(Y_j)$.
\end{itemize}
Then, for every vertex { $o\in \bigcap_{i\in I} Y_i$} , the map
$$\left\{ \begin{array}{ccc} X & \to & \prod\limits_{i \in I} (Y_i,o) \\ x & \mapsto & \left( \mathrm{proj}_{Y_i}(x) \right)_{i \in I} \end{array} \right.$$
defines a graph isomorphism.
\end{lemma}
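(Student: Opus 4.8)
\textbf{Proof plan for Lemma~\ref{lem:ProductDecomposition}.}
The strategy is to verify directly that the proposed map $\Phi : x \mapsto (\mathrm{proj}_{Y_i}(x))_{i \in I}$ is a well-defined graph isomorphism by checking, in order: (1) the image really lies in the restricted product $\prod_{i \in I}(Y_i,o)$; (2) $\Phi$ is injective; (3) $\Phi$ is surjective; and (4) $\Phi$ and $\Phi^{-1}$ preserve adjacency. Throughout, the hypothesis $\mathcal{H}(X) = \bigsqcup_{i} \mathcal{H}(Y_i)$ together with Theorem~\ref{thm:BigQM}(iv) will be the main engine: the distance between two vertices is the number of separating hyperplanes, and each such hyperplane crosses exactly one $Y_i$, so distances should add up across the factors once we know that the separating hyperplanes of $x,y$ crossing $Y_i$ are detected by $\mathrm{proj}_{Y_i}(x),\mathrm{proj}_{Y_i}(y)$ — which is precisely the second bullet of Theorem~\ref{thm:ProjQM}.

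For (1): given $x \in X$, I would show that $\mathrm{proj}_{Y_i}(x) = o$ for all but finitely many $i$. Indeed, by the first bullet of Theorem~\ref{thm:ProjQM} a hyperplane separating $x$ from $\mathrm{proj}_{Y_i}(x)$ must cross $Y_i$; since there are only finitely many hyperplanes separating $x$ from $o$ (as $o \in Y_i$, the gate $\mathrm{proj}_{Y_i}(x)$ lies ``between'' $x$ and $o$, so any hyperplane separating $x$ from $\mathrm{proj}_{Y_i}(x)$ separates $x$ from $o$), and the sets $\mathcal{H}(Y_i)$ are pairwise disjoint, only finitely many indices $i$ can have $\mathrm{proj}_{Y_i}(x) \neq o$. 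For (2): if $\Phi(x) = \Phi(y)$, then by the second bullet of Theorem~\ref{thm:ProjQM} no hyperplane separating $x$ and $y$ crosses any $Y_i$; since $\mathcal{H}(X) = \bigsqcup_i \mathcal{H}(Y_i)$, no hyperplane separates $x$ and $y$, so $x = y$ by Theorem~\ref{thm:BigQM}(iv). For the adjacency statement, if $x,y$ are adjacent, exactly one hyperplane $J$ separates them; it crosses a unique $Y_i$, hence $\mathrm{proj}_{Y_j}(x) = \mathrm{proj}_{Y_j}(y)$ for $j \neq i$ and $\mathrm{proj}_{Y_i}(x),\mathrm{proj}_{Y_i}(y)$ are separated by the single hyperplane $J$, so they are adjacent (distance $1$). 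Conversely, if $\Phi(x),\Phi(y)$ differ in a single coordinate $i$ by an edge, then only hyperplanes crossing $Y_i$ can separate $x$ and $y$, and by the second bullet of Theorem~\ref{thm:ProjQM} there is exactly one, so $x,y$ are adjacent.

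The crux is surjectivity (3), and this is where I expect the real work. Given a target sequence $(y_i)_{i \in I}$ with $y_i \in Y_i$ and $y_i = o$ for $i \notin S$, $S$ finite, I want a vertex $x$ with $\mathrm{proj}_{Y_i}(x) = y_i$ for all $i$. The natural approach is induction on $|S|$ (the case $|S| \le 1$ being trivial, taking $x = y_i$), peeling off one index $k \in S$: by induction find $x'$ realizing the sequence that agrees with $(y_i)$ off $k$ and has $k$-th entry $o$, and then ``move'' $x'$ across $Y_k$ to adjust its projection to $Y_k$ from $o$ to $y_k$ without disturbing the other projections. To do this I would use the fourth bullet of Theorem~\ref{thm:ProjQM}: the hyperplanes crossing $\mathrm{proj}_{Y_k}(Y_i)$ are those crossing both $Y_k$ and $Y_i$, of which there are none for $i \neq k$ by disjointness of the $\mathcal{H}(Y_i)$; this should let me realize $y_k$ as a projection while keeping $\mathrm{proj}_{Y_i}$ fixed. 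Alternatively — and perhaps more cleanly — I would build $x$ as the gate of $o$ in the intersection $\bigcap_{i} \mathrm{proj}_{Y_i}^{-1}(y_i)$ of gated subgraphs (preimages of gates under the gate map are gated, and these intersect since $o$-based Helly-type reasoning via Lemma~\ref{lem:Helly} applies once we check pairwise intersection), then verify via hyperplane-counting that its projections are the $y_i$. The subtlety to be careful about is exactly that the moves/intersections do not perturb already-fixed coordinates — which ultimately reduces, again, to the disjointness $\mathcal{H}(X) = \bigsqcup_i \mathcal{H}(Y_i)$ and the transversality hypothesis ensuring the ``directions'' $Y_i$ are genuinely independent. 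Finally, Proposition~\ref{prop:InterHyperplanesProduct} then follows by applying the lemma inside $\bigcap_{J} N(J)$ with the gated subgraphs being the carriers $N(J)$ themselves (and noting the leftover hyperplanes, those not of the form of a $C_J$-hyperplane, are exactly those crossing the common fibre $F$).
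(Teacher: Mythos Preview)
Your plan is essentially sound, but there is a slip in step~(1): you invoke the first bullet of Theorem~\ref{thm:ProjQM} to say that a hyperplane separating $x$ from $\mathrm{proj}_{Y_i}(x)$ \emph{crosses} $Y_i$, whereas that bullet says exactly the opposite --- such a hyperplane separates $x$ from $Y_i$ and hence does \emph{not} cross it. The repair is immediate: if $\mathrm{proj}_{Y_i}(x)\neq o$, pick a hyperplane separating these two vertices of $Y_i$; by the \emph{second} bullet of Theorem~\ref{thm:ProjQM} it crosses $Y_i$ and separates $x$ from $o$, and then disjointness of the $\mathcal H(Y_i)$ finishes the argument as you intended.

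Beyond that, your route and the paper's diverge in packaging rather than substance. The paper replaces your four separate checks (well-definedness, injectivity, adjacency both ways) by a single computation showing $\Phi$ is an \emph{isometric embedding}: using the second bullet of Theorem~\ref{thm:ProjQM} and the partition $\mathcal H(X)=\bigsqcup_i\mathcal H(Y_i)$, one gets $d(\Phi(x),\Phi(y))=\sum_i|\mathcal H(x|y)\cap\mathcal H(Y_i)|=|\mathcal H(x|y)|=d(x,y)$ in one line. For surjectivity, the paper implements what is essentially your second (Helly-based) suggestion, but with \emph{sectors} rather than gate-preimages: given $(y_i)$, let $\mathcal H=\bigcup_i\mathcal H(o|y_i)$ and for each $J\in\mathcal H$ let $J^+$ be the sector containing the relevant $y_i$; the $J^+$ pairwise intersect (either $J_1,J_2$ are transverse, or they lie in a common $\mathcal H(o|y_i)$ and then $y_i\in J_1^+\cap J_2^+$), so Lemma~\ref{lem:Helly} gives $Q:=\bigcap_{J}J^+\neq\emptyset$, and one takes $x=\mathrm{proj}_Q(o)$. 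This sidesteps the bookkeeping in your inductive ``peel off one coordinate'' approach --- the subtlety you flag (that adjusting one projection must not disturb the others) simply does not arise. Your induction can be made to work, but the sector-intersection argument is cleaner and uses the transversality hypothesis exactly once, at the Helly step.
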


\begin{proof}
Let $\Psi$ denote the map given by the lemma. We start by proving that $\Psi$ is an isometric embedding. We will use the following notation: for all vertices $a,b \in X$, we denote by $\mathcal{H}( a | b)$ the set of the hyperplanes separating $a$ and $b$. For all vertices $x,y \in X$, we deduce from Theorem~\ref{thm:ProjQM} that
$$\begin{array}{lcl} d(\Psi(x),\Psi(y)) & = & \displaystyle \sum\limits_{i \in I} d \left( \mathrm{proj}_{Y_i}(x), \mathrm{proj}_{Y_i}(y) \right) = \sum\limits_{i \in I} | \mathcal{H}(x|y) \cap \mathcal{H}(Y_i) | \\ \\ & = & \displaystyle \left| \bigsqcup\limits_{i \in I} \mathcal{H}(x|y) \cap \mathcal{H}(Y_i) \right| = |\mathcal{H}(x|y)| = d(x,y). \end{array}$$
Thus $\Psi$ is indeed an isometric embedding. It remains to verify that $\Psi$ is surjective. 

\medskip \noindent
Fix a vertex $(y_i)_{i \in I} \in \prod_{i \in I}(Y_i,o)$. Let $y_1, \ldots, y_n$ denote the finitely many coordinates $y_i$ satisfying $\mathcal{H}(o|y_i) \neq \emptyset$. (Recall that $y_i=o$ for all but finitely many $i \in I$.) Let $\mathcal{H}$ denote the set of the hyperplanes separating $o$ from some $y_i$, i.e.\ $\mathcal{H}:= \bigcup_i \mathcal{H}(o|y_i)$. Notice that $\mathcal{H}$ is finite. For all $1 \leq i \leq n$ and $J \in \mathcal{H}(o|y_i)$, let $J^+$ denote the sector delimited by $J$ that contains $y_i$. 

\begin{claim}\label{claim:InterNotEmpty}
The intersection $Q:= \bigcap_{J \in \mathcal{H}} J^+$ is non-empty.
\end{claim}

\noindent
By Lemma~\ref{lem:Helly}, it suffices to verify that, for all hyperplanes $J_1,J_2 \in \mathcal{H}$, the intersection $J_1^+ \cap J_2^+$ is non-empty. If $J_1$ and $J_2$ are transverse, there is nothing to prove, so assume that they are not transverse. As a consequence, there must exist some $1 \leq i \leq n$ such that $J_1$ and $J_2$ both belong to $\mathcal{H}(o|y_i)$. But then $y_i \in J_1^+ \cap J_2^+$. This proves Claim~\ref{claim:InterNotEmpty}.

\medskip \noindent
Now, set $x:= \mathrm{proj}_Q(o)$. We want to prove that $\Psi(x)=(y_i)_i$. This will be a straightforward consequence of the following observation:

\begin{claim}\label{claim:Hypseparating}
The equality $\displaystyle \mathcal{H}(o|x) = \bigsqcup\limits_{i=1}^n \mathcal{H}(o|y_i)$ holds.
\end{claim}

\noindent  
Suppose  $J\not\in  \bigsqcup_{i=1}^n \mathcal{H}(o|y_i)$, and let $J^+$ denote the sector containing $o$ and the $y_i$. Then, for all $1 \leq i \leq n$ and $K\in \mathcal{H}(o|y_i)$, we have $y_i \in K^+\cap J^+$.
By the Helly property for gated subgraphs, Lemma~\ref{lem:Helly}, it follows that $J^+\cap Q$ is non-empty. Since $o\in J^+$, we have that $J$ does not separate $o$ and $x= \mathrm{proj}_Q(o)$ as a consequence of   Theorem~\ref{thm:ProjQM}.  Conversely, if  $J\in  \mathcal{H}(o|y_i)$ for some $i$, then $J$ separates $o$ from $Q$, and hence $J$ separates $o$ from $x$, completing the proof of Claim~\ref{claim:Hypseparating}.  

\medskip \noindent
To conclude, note that, for every $1 \leq i \leq n$, $\mathcal{H}(o|y_i) \subset \mathcal{H}(Y_i)$. Indeed, by Theorem~\ref{thm:BigQM}, for all $1 \leq i \leq n$ and $J\in\mathcal{H}(o|y_i)$, since $Y_i$ is gated and $o,y_i\in Y_i$, any geodesic in $Y$ between $o$ and $y_i$ contains an edge of $J$ and is contained in $Y_i$;  therefore $J\in \mathcal{H}(Y_i)$.  We deduce from Claim~\ref{claim:Hypseparating} and Theorem~\ref{thm:ProjQM} that
$$\mathcal{H}\left( \mathrm{proj}_{Y_i}(x) | o \right) = \mathcal{H}(o|x) \cap \mathcal{H}(Y_i) = \mathcal{H}(o|y_i)$$
for every $1 \leq i \leq n$. Since the $Y_i$ are gated, this implies that $\mathrm{proj}_{Y_i}(x)= y_i$ for every $i \in I$, and so $\Psi(x)=(y_i)_i$, as desired.
\end{proof}

\begin{proof}[Proof of Proposition~\ref{prop:InterHyperplanesProduct}.]
It suffices to verify that the assumptions of Lemma~\ref{lem:ProductDecomposition} apply to $\bigcap_{J \in \mathcal{J}} N(J)$ with respect to $F$ and the $C_J$, $J \in \mathcal{J}$. First, the intersection $F \cap \bigcap_{J \in \mathcal{J}} C_J$ is non-empty since it contains the vertex $o$. Next, given a hyperplane $K$  crossing $\bigcap_{J \in \mathcal{J}} N(J)$, either $K$ belongs to $\mathcal{J}$, in which case $K \in \mathcal{H}(C_K)$; or $K$ is transverse to all the hyperplanes in $\mathcal{J}$, which implies that it intersects $F$, i.e.\ $K \in \mathcal{H}(F)$. This shows that
$$\mathcal{H} \left( \bigcap\limits_{J \in \mathcal{J}} N(J) \right) = \mathcal{H}(F) \sqcup \bigsqcup\limits_{J \in \mathcal{J}} \mathcal{H}(C_J).$$
Finally, any two distinct hyperplanes in $\mathcal{J}$ are transverse by assumption, and it is clear that a hyperplane in $\mathcal{J}$ is transverse to all the hyperplanes crossing $F$. 
\end{proof}

\noindent
We conclude this section with a last preliminary lemma that will be used later.
 
\begin{lemma}\label{lem:GoodPrism}
Let $X$ be a quasi-median graph and $J$ a hyperplane. If $P$ is a prism such that $P\subset N(J)$, then there is a prism $Q$ such that $P\subset Q\subset N(J)$ and $Q$ is crossed by~$J$.
\end{lemma}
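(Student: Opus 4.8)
The plan is to treat separately the case where $J$ crosses $P$ and the case where it does not. If $J$ crosses $P$, there is nothing to prove: we take $Q=P$. So assume from now on that $J$ does not cross $P$, and fix a vertex $o\in P$. Since $P\subseteq N(J)$, the vertex $o$ is an endpoint of an edge of $J$, so the clique $C$ of $J$ containing $o$ is well-defined; recall that all edges of $C$ lie in $J$, hence $J$ crosses $C$, $C$ has an edge (so $C$ is not a single vertex), and $C\subseteq N(J)$.

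The heart of the argument is the product decomposition of the carrier. Applying Proposition~\ref{prop:InterHyperplanesProduct} to the (vacuously pairwise transverse) singleton $\mathcal{J}=\{J\}$ and the basepoint $o$ yields a graph isomorphism $N(J)\to F\times (C,o)$, induced by the projections, where $F$ is the fibre of $J$ containing $o$; under this isomorphism $o$ corresponds to $(o,o)$ and $J$ to the $C$-direction. I would then observe that, since $J$ is the unique hyperplane crossing $C$ and $J$ does not cross $P$, no hyperplane crosses both $P$ and $C$; by the last bullet of Theorem~\ref{thm:ProjQM} the subgraph $\mathrm{proj}_C(P)$ is therefore crossed by no hyperplane, and since it is connected (projections are $1$-Lipschitz, so they send connected subgraphs to connected subgraphs) and contains $o$, it equals $\{o\}$. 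Consequently $P$ lands, under the isomorphism, inside the sub-copy $F\times\{o\}$, so it may be regarded as a prism inside $F$. Now define $Q$ to be the subgraph of $N(J)$ corresponding under the isomorphism to $P\times C\subseteq F\times C$. Being a product of cliques, $Q$ is a prism; it contains $P=P\times\{o\}$; it is contained in $F\times C=N(J)$; and it is crossed by $J$ since it contains a copy of $C$. Transporting everything back through the isomorphism — which preserves prisms, inclusions, carriers, and crossing — gives the desired prism.

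The main obstacle is purely bookkeeping with the product structure: one has to make precise that $P$ genuinely lands in $F\times\{o\}$ (this is where Theorem~\ref{thm:ProjQM} together with Theorem~\ref{thm:BigQM} get used) and that ``$P\times C$'' is a bona fide sub-prism of the product graph $F\times C$, i.e.\ that the product of a prism of $F$ with the clique $C$ is again a prism. Everything else is routine. One could alternatively avoid isolating the case split by applying Proposition~\ref{prop:InterHyperplanesProduct} directly to the enlarged family $\{J\}\cup\mathcal{H}(P)$, after checking — again from $P\subseteq N(J)$ — that this family is pairwise transverse; but the two-case formulation above seems the most economical, using only Proposition~\ref{prop:InterHyperplanesProduct}, Theorem~\ref{thm:ProjQM}, and Theorem~\ref{thm:BigQM}, and in particular not even requiring Lemma~\ref{lem:PrismAbsorption}.
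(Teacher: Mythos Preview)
Your proof is correct and follows essentially the same approach as the paper: both handle the case $J$ crossing $P$ trivially, then invoke Proposition~\ref{prop:InterHyperplanesProduct} with $\mathcal{J}=\{J\}$ to decompose $N(J)\cong F\times C$, argue that $P$ projects to $\{o\}$ in $C$ (you via the last bullet of Theorem~\ref{thm:ProjQM}, the paper by the same implicit reasoning), and take $Q$ to be the image of $P\times C$. Your justification of the single-point projection is slightly more explicit than the paper's, but the argument is otherwise identical.
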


\begin{proof}
If $P$ is crossed by $J$, then it suffices to set $Q:=P$. Suppose that $J$ does not cross $P$. Let $o\in P\cap N(J)$, and let $C$ be the clique of $J$ containing $o$. By Proposition~\ref{prop:InterHyperplanesProduct}, there is a graph isomorphism $\Phi\colon (F,o)\times (C,o) \to N(J)$ where $F$ is the fibre of $J$ containing $o$. 
Observe that the projection of $P$ to $C$ does not contain edges of $C$, since otherwise $J$ would cross $P$. It follows that the projection of $P$ to $C$ is the single vertex $o$. In other words, $P$ is contained in $F$.
In particular, $P$ is contained in the subgraph $Q$ of $N(J)$ obtained by 
considering the image of the product $P\times C$ by the isomorphism $\Phi$. Since $P$ is a prism and $C$ is a clique, it follows $Q$ is a prism.
\end{proof}

\section{Homotopy types of complexes of hyperplanes}\label{section:ComplexesHyp}

\noindent
By a complex of hyperplanes, we vaguely refer to a simplicial complex whose vertices are the hyperplanes of some quasi-median graph and whose simplices are given by collections of hyperplanes that interact in some specific way. 

\medskip \noindent
Several graphs of hyperplanes have been already studied. For instance, crossing graphs appeared independently in the literature in several contexts, including \cite{MR1153934, MR1379364} (as \emph{incompatibility graphs}),  \cite{Roller} (as \emph{transversality graphs}), and \cite{MR1920184, MR3217625} (as crossing graphs). Tangency graphs also appear in \cite{MR2135469, MR2305571} as \emph{$\tau$-graphs}; and contact graphs appear in \cite{ThetaGraphs, MR2355723} (as $\Theta$-graphs) and in \cite{MR3217625} (as contact graphs). 

\medskip \noindent
However, very few higher-dimensional complexes of hyperplanes seem to have been studied. (With the exception of the crossing complex, that can also be found in \cite{MR4554674} for finite median graphs.) In addition to the examples previously mentioned, we introduce \emph{contiguity complexes} of quasi-median graphs in Section~\ref{section:RelativeContact}, which do not seem to have been considered elsewhere.

\paragraph{A word about crossing and contact complexes.} From the perspective of geometric group theory, crossing and contact complexes play a particular role compared to the other complexes of hyperplanes, due to the analogy between curve graphs of surfaces and contact graphs as motivated in \cite{MR3217625} (for median graphs). 

\medskip \noindent
Recall that, given a (say closed) surface $\Sigma$, the \emph{curve complex} $\mathscr{C}(\Sigma)$ is the graph whose vertices are the (non-trivial) simple closed curves of $\Sigma$ modulo homotopy and whose simplices are given by pairwise disjoint curves. Curve complexes have been quite useful in the study of mapping class groups from various perspectives. For instance, they provide convenient geometric models that can be used to extract some negative curvature \cite{MR1714338, MR2367021}; collectively, they can be used to recover the entire structure of a given mapping class group, as an isometry group \cite{MR1460387} or through a hierarchical structure \cite{MR1791145}; they are algorithmically friendly \cite{MR2705485, MR2970053, MR3378831}; and they have played an important role in computations of cohomology groups \cite{MR0830043}. In geometric group theory, curve complexes have been often used as a motivation for various constructions and results. Now, it is clear that the connection between mapping class groups and their curve complexes is just one instance of a much more general phenomenon that commonly occurs in geometric group theory.

\medskip \noindent
In the spirit of the previous paragraph, let us mention that contact graphs of (quasi-)median graphs are quasi-trees, and, in particular, hyperbolic. They can sometimes be used to construct hierarchical structures \cite{MR3650081}, to get Ivanov-type theorems \cite{MR4387161}, and they are algorithmically friendly \cite{GTranslation}. 

\medskip \noindent
Every graph (not necessarily connected) can be realised as the crossing graph of a median graph. This phenomenon has been noticed several times independently \cite{Roller, MR1920184, MR3217625}. For instance, given a graph $\Gamma$, the graph $\Delta(\Gamma)$ whose vertices are the complete (possibly empty) subgraphs  of $\Gamma$ and whose edges connect two subgraphs whenever one can be obtained from the other by adding a single vertex is a median graph whose crossing graph is isomorphic to $\Gamma$. On the other hand, contact graphs have a much more restricted structure. For instance, contact graphs of quasi-median graphs are always connected and are $3$-hyperbolic \cite{GTranslation} (see \cite{MR3217625} for the median case). This is a major difference between crossing and contact graphs. 

\medskip \noindent
However, as highlighted in \cite{MR4057355} (for median graphs), the difference between contact and crossing graphs becomes less apparent when one restricts oneself to $2$-connected graphs of bounded degrees. For instance, in this case contact and crossing graphs turn out to be quasi-isometric, and so indistinguishable from the point of view of large-scale geometry. 

\begin{thm}[\cite{GTranslation}]\label{thm:CrossQM}
Let $X$ be a quasi-median graph. 
\begin{itemize}
	\item[(i)] $\mathrm{Cont}(X)$ is connected, $3$-hyperbolic, and quasi-isometric to a tree.
	\item[(ii)] $\mathrm{Cross}(X)$ is connected if and only if $X$ is $2$-connected. 
	\item[(iii)] If $X$ is $2$-connected and has degree $\leq N$, then $\mathrm{Cross}(X)$ is $(3+N/2)$-hyperbolic and quasi-isometric to a tree.
	\item[(iv)] If $X$ is $2$-connected and has bounded degree, then $\mathrm{Cross}(X)$ and $\mathrm{Cont}(X)$ are quasi-isometric.
\end{itemize}
\end{thm}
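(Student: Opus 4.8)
The plan is to separate Theorem~\ref{thm:CrossQM} into the connectivity assertions of (i) and (ii), which follow quickly from the separation properties of hyperplanes, and the metric assertions of (i), (iii), (iv), which all rest on analysing how paths in these graphs trace out \emph{chains of carriers}, combined with Manning's bottleneck criterion. For connectedness of $\mathrm{Cont}(X)$: given hyperplanes $J$ and $H$, pick edges $e\subset J$, $f\subset H$ and a geodesic $\gamma$ in $X$ from an endpoint of $e$ to an endpoint of $f$; consecutive edges of $\gamma$ share a vertex, so their hyperplanes are in contact, and the list consisting of $J$, the hyperplanes crossed by $\gamma$ in order, and $H$ is a walk in $\mathrm{Cont}(X)$ — in particular $\mathrm{Cont}(X)$ is connected and $d_{\mathrm{Cont}}(J,H)$ is at most $2$ more than the length of $\gamma$. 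For (ii), if $v$ is a cut-vertex then every $4$-cycle of $X$, being $2$-connected, meets only one branch of $X$ at $v$, so hyperplanes with carriers in distinct branches are never transverse and $\mathrm{Cross}(X)$ is disconnected; conversely, if $X$ has no cut-vertex, then $\mathrm{Cross}(X)$ is connected by a ``staircase'' argument (slide hyperplanes across prisms, using $2$-connectedness to move between cliques along cycles) — or one may simply invoke the $0$-dimensional case of Theorem~\ref{thm:HomotopyCrossingComplex}.

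For the metric assertions I would verify Manning's bottleneck criterion for $\mathrm{Cont}(X)$; it is enough to treat $J,H$ with $N(J)\cap N(H)=\emptyset$ (otherwise $d_{\mathrm{Cont}}(J,H)=1$ and the criterion is immediate). The hyperplanes separating the disjoint gated subgraphs $N(J)$ and $N(H)$ are linearly ordered by nesting (Theorems~\ref{thm:BigQM} and~\ref{thm:ProjQM}); choose one, say $M$, splitting their number roughly in half, and check (via the crossing-sequence paths above) that $M$ lies within bounded $\mathrm{Cont}$-distance of a midpoint of some $\mathrm{Cont}$-geodesic $[J,H]$. The crucial lemma is that \emph{every} path $J=K_0,\dots,K_m=H$ in $\mathrm{Cont}(X)$ contains a hyperplane $K_i$ with $N(K_i)\cap N(M)\ne\emptyset$: indeed $\bigcup_i N(K_i)$ is connected and contains both $N(J)$ and $N(H)$, which $M$ separates, so $\bigcup_i N(K_i)$ meets two sectors of $X\backslash\backslash M$ and hence contains an edge of $M$; that edge lies in some $N(K_i)$, and an endpoint of it lies in $N(M)\cap N(K_i)$. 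Thus every path from $J$ to $H$ passes within $\mathrm{Cont}$-distance $1$ of $M$, so Manning's criterion shows $\mathrm{Cont}(X)$ is quasi-isometric to a tree; a direct inspection of geodesic triangles (again via chains of carriers and the linear order on separating hyperplanes) then sharpens this to $3$-hyperbolicity, completing (i).

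For (iii) and (iv) the only obstruction to transplanting the previous argument to $\mathrm{Cross}(X)$ is tangency: hyperplanes $J,H$ with $N(J)\cap N(H)\ne\emptyset$ that are not transverse. Fixing $x\in N(J)\cap N(H)$, the cliques of $J$ and $H$ at $x$ both lie in the star of $x$ (Corollary~\ref{cor:CrossAndClique}), which has at most $N$ edges; using $2$-connectedness together with this degree bound, one shows $J$ and $H$ are joined in $\mathrm{Cross}(X)$ by a path of length at most $1+N/2$. Consequently the identity on vertex sets is a quasi-isometry $\mathrm{Cross}(X)\to\mathrm{Cont}(X)$ with constants depending only on $N$, which gives (iv); and re-running the bottleneck and triangle arguments inside $\mathrm{Cross}(X)$, where each tangency near $M$ now costs an extra $N/2$, yields the $(3+N/2)$-hyperbolicity of (iii). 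I expect the tangency-resolution lemma to be the main obstacle: since no single hyperplane through $x$ need be transverse to both $J$ and $H$, one must build a global detour through $\mathrm{Cross}(X)$ whose length is nonetheless controlled purely by $N$; pinning down the sharp constants $3$ and $3+N/2$ is a further, more routine layer of bookkeeping.
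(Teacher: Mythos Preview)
The paper does not prove Theorem~\ref{thm:CrossQM}: it is quoted verbatim from \cite{GTranslation} and used as a black box, with no argument given. So there is no ``paper's own proof'' to compare your proposal against.

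For what it is worth, your sketch is broadly in line with how such results are typically proved (chains of carriers, Manning's bottleneck criterion, and a tangency-resolution step controlled by the degree bound), but since the present paper supplies nothing beyond the citation, any detailed comparison would have to be made against \cite{GTranslation} itself rather than against this paper.
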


\paragraph{Back to general complexes of hyperplanes.} Given a quasi-median graph $X$, our goal in this section is to identify the homotopy types of various complexes of hyperplanes. In each case, the general strategy that we will follow is the following: 
\begin{enumerate}
	\item For every hyperplane $J$ of $X$, we will introduce some contractible subspace $S(J)$ of the prism-completion $X^\square$ of $X$. 
	\item The nerve complex of $\{S(J), \text{ $J$ hyperplane}\}$ will coincide with the complex of hyperplanes we will be interested in.
	\item Applying the nerve theorem, we will be able to identify, up to homotopy, our complex of hyperplanes with a specific subspace of $X^\square$.
	\item Using the contractibility of $X^\square$, we will conclude that this subspace is homotopy equivalent to a pointed sum of specific spaces.
\end{enumerate}

\noindent
A key ingredient in this strategy is Leray's nerve theorem. Recall that, given a cover $\mathcal U=\{U_i\}_{i\in \mathcal I}$ of a simplicial complex $Z$ by subcomplexes, the \textit{nerve} of $\mathcal U$, denoted $\mathcal N(\mathcal U)$, is the simplicial complex whose vertex-set is $\mathcal I$ and whose simplices are the finite subsets $\sigma\subset \mathcal I$  such that the intersection $\cap_{i\in \sigma} U_i$ is non-empty.

\begin{thm}[Leray's Nerve Theorem, {\cite[Lemma 1.1]{MR607041}}]
Let $\mathcal U=\{U_i\}_{i\in \mathcal I}$ be a cover of a simplicial complex $Z$ by subcomplexes such that, for every finite $\mathcal J\subset \mathcal I$, the set $\bigcap_{i\in \mathcal J} U_i$ is either empty or contractible.  Then $\mathcal N(\mathcal U)$ is homotopy equivalent to $Z$.
\end{thm}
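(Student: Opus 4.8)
The plan is to interpolate between $Z$ and the realisation $|\mathcal{N}(\mathcal{U})|$ by a single ``blow-up'' space, a device that handles an arbitrary (possibly infinite) index set and arbitrary dimension at once. For $z \in Z$, put $\mathrm{supp}(z) := \{ i \in \mathcal{I} : z \in U_i \}$: this is non-empty because $\mathcal{U}$ covers $Z$, it depends only on the carrier of $z$ because each $U_i$ is a subcomplex, and the full simplicial complex $\Delta(\mathrm{supp}(z))$ on the vertex set $\mathrm{supp}(z)$ is a subcomplex of $\mathcal{N}(\mathcal{U})$ — indeed any finite $\sigma \subseteq \mathrm{supp}(z)$ has $z \in \bigcap_{i \in \sigma} U_i \neq \emptyset$ — hence $|\Delta(\mathrm{supp}(z))|$ is contractible. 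Set
$$ B := \bigl\{ (z,p) \in Z \times |\mathcal{N}(\mathcal{U})| \ :\ p \in |\Delta(\mathrm{supp}(z))| \bigr\} = \bigcup_{\sigma \in \mathcal{N}(\mathcal{U})} \Bigl( \bigcap_{i \in \sigma} U_i \Bigr) \times |\sigma|, $$
topologised as a subspace of the product, with the two coordinate projections $\pi_Z \colon B \to Z$ and $\pi_{\mathcal{N}} \colon B \to |\mathcal{N}(\mathcal{U})|$. The preimage under $\pi_{\mathcal{N}}$ of an open simplex $\sigma^{\circ}$ is $\bigl( \bigcap_{i \in \sigma} U_i \bigr) \times \sigma^{\circ}$, with contractible fibre by hypothesis, and the preimage under $\pi_Z$ of an open simplex $\tau^{\circ}$ of $Z$ is $\tau^{\circ} \times |\Delta(\mathrm{supp}(\tau))|$, again with contractible fibre. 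It therefore suffices to show that both projections are homotopy equivalences, for then $Z \simeq B \simeq |\mathcal{N}(\mathcal{U})|$.

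Both statements I would prove by the same scheme: a skeletal induction powered by the gluing lemma for homotopy equivalences along cofibrations. For $\pi_Z$: filter $Z$ by its skeleta $Z^{(n)}$ and set $B_n := \pi_Z^{-1}(Z^{(n)})$. Passing from level $n-1$ to level $n$ is a map of pushout squares: $Z^{(n)}$ is obtained from $Z^{(n-1)}$ by attaching the closed $n$-simplices $\bar\tau$ along their boundaries, while $B_n$ is obtained from $B_{n-1}$ by attaching, for each $n$-simplex $\tau$, the piece $\bar\tau \times |\Delta(\mathrm{supp}(\tau))|$ along $\partial\tau \times |\Delta(\mathrm{supp}(\tau))|$ (here one uses that $\mathrm{supp}$ can only shrink as one passes to larger simplices, so these attaching pieces really do sit inside $B$). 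On the three non-inductive corners of the comparison the relevant map is the projection of a product onto its first factor with contractible second factor, hence a homotopy equivalence; on $B_{n-1} \to Z^{(n-1)}$ it is one by induction; so the gluing lemma makes $B_n \to Z^{(n)}$ a homotopy equivalence, and passing to the colimit over $n$ (valid since the filtrations are by cofibrations) shows that $\pi_Z$ is a homotopy equivalence. The argument for $\pi_{\mathcal{N}}$ is the mirror image: filter $|\mathcal{N}(\mathcal{U})|$ by its skeleta, observe that attaching an $n$-simplex $\sigma$ corresponds over $B$ to attaching $\bigl( \bigcap_{i \in \sigma} U_i \bigr) \times \bar D^n$ along $\bigl( \bigcap_{i \in \sigma} U_i \bigr) \times \partial D^n$, and run the same induction, now using that $\bigcap_{i \in \sigma} U_i$ is contractible.

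The delicate part — and where I would put the real work — is the bookkeeping behind these inductions: giving $B$ a cell structure for which all the inclusions used are genuinely cofibrations, checking that the two projections are cellular, and justifying the colimit over the (possibly transfinite, if $Z$ or $\mathcal{N}(\mathcal{U})$ is infinite-dimensional) skeletal filtrations. None of this is deep, but it must be arranged carefully. Two fallback routes if it gets unwieldy: when $\mathcal{I}$ is finite, a direct Mayer--Vietoris induction on $|\mathcal{I}|$, gluing the subcomplexes $U_i$ two at a time and invoking that all pairwise and higher intersections are empty or contractible, followed by a colimit argument to descend from finite to infinite covers; and, for the $\pi_Z$ direction alone, constructing an explicit section $s \colon Z \to B$ from a suitable partition of unity subordinate to $\mathcal{U}$ and checking $s \circ \pi_Z \simeq \mathrm{id}_B$ by the straight-line homotopy inside the convex fibres of $\pi_Z$, which exhibits $s$ as a homotopy inverse of $\pi_Z$.
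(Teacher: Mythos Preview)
The paper does not prove this statement; it is quoted verbatim as a classical tool with a citation to \cite{MR607041} and then invoked as a black box throughout Section~\ref{section:ComplexesHyp}. So there is no ``paper's own proof'' to compare against.

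That said, your sketch is a correct and standard route to the nerve theorem: the space $B$ you build is exactly the homotopy colimit (or ``blow-up'') mediating between $Z$ and $|\mathcal{N}(\mathcal{U})|$, and the two skeletal inductions with the gluing lemma are the usual way to show both projections are homotopy equivalences. Your parenthetical about supports shrinking on larger simplices is the right observation to make the $\pi_Z$-side attaching maps land where they should. The honest caveats you flag --- giving $B$ a CW structure making all the inclusions cofibrations, and the colimit over a possibly transfinite skeletal filtration --- are real but routine. One small wrinkle worth tightening: you claim $B_n$ is obtained from $B_{n-1}$ by attaching $\bar\tau \times |\Delta(\mathrm{supp}(\tau))|$ along $\partial\tau \times |\Delta(\mathrm{supp}(\tau))|$; this is set-theoretically true, but you should check explicitly that the resulting square really is a pushout in the CW category, since $\pi_Z^{-1}(\bar\tau)$ is strictly larger than $\bar\tau \times |\Delta(\mathrm{supp}(\tau))|$ (the fibres over $\partial\tau$ are bigger). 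The point is that the extra material over $\partial\tau$ already lives in $B_{n-1}$, so the pushout description is still correct --- but say so.
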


\noindent
In Section~\ref{section:Contact} we apply this strategy to  contact complexes, and in Section~\ref{section:Crossing} to crossing complexes. Finally, in Section~\ref{section:RelativeContact}, we introduce and study a family of complexes that interpolate between crossing and contact graphs, including contiguity complexes.

\subsection{Contact complexes}\label{section:Contact}

\noindent
In this section, we focus on \emph{contact complexes} of quasi-median graphs. See Figure~\ref{ContCross} for an example. 

\begin{definition}
Let $X$ be a quasi-median graph. The \emph{contact complex} $\mathrm{Cont}^\triangle(X)$ is the simplicial complex whose vertices are the hyperplanes of $X$ and whose simplices are given by collections of hyperplanes pairwise in contact.
\end{definition}

\begin{figure}[h!]
\begin{center}
\includegraphics[width=0.7\linewidth]{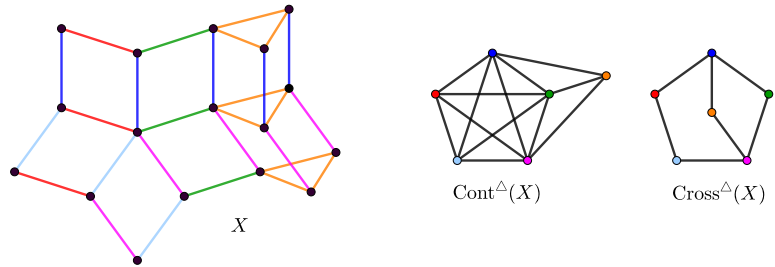}
\caption{A quasi-median graph, and its contact and crossing complexes.}
\label{ContCross}
\end{center}
\end{figure}
\noindent
Let us verify the following observation (which  can also be found in \cite{MR4554674} for finite median graphs, based on a similar argument):

\begin{prop}\label{prop:ContactContractible}
The contact complex $\mathrm{Cont}^\triangle(X)$ of every quasi-median graph $X$ is contractible.
\end{prop}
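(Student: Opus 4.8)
The plan is to follow the four-step strategy outlined in the paragraph preceding Section~\ref{section:Contact}, using Leray's nerve theorem. For each hyperplane $J$ of $X$, take $S(J)$ to be the carrier $N(J)$, viewed as a subcomplex of the prism-completion $X^\square$ (that is, $N(J)^\square$, which is well-defined since $N(J)$ is gated, hence a quasi-median graph in its own right, and its prisms are prisms of $X$). By Proposition~\ref{prop:QMcontractible}, each $N(J)^\square$ is contractible. The collection $\{N(J)^\square : J \text{ hyperplane}\}$ covers $X^\square$, since every edge of $X$ lies in a unique hyperplane and every prism lies in the carrier of any hyperplane it contains by Lemma~\ref{lem:PrismAbsorption}.

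The key computational step is to check that all finite nonempty intersections of the $N(J)^\square$ are contractible, so that the nerve theorem applies. First I would argue that a finite intersection $N(J_1) \cap \cdots \cap N(J_n)$ is nonempty if and only if the $J_i$ are pairwise in contact: the forward direction is immediate, and the converse follows from the Helly property (Lemma~\ref{lem:Helly}) applied to the carriers, which are gated by Theorem~\ref{thm:BigQM}(ii). Thus the nerve of the cover is exactly $\mathrm{Cont}^\triangle(X)$, giving step~(2). For step~(3)/contractibility of the intersections: when nonempty, $\bigcap_i N(J_i)$ is a gated subgraph, hence a quasi-median graph, so its prism-completion $\left(\bigcap_i N(J_i)\right)^\square$ is contractible by Proposition~\ref{prop:QMcontractible} — provided one checks that the subcomplex of $X^\square$ obtained by intersecting the $N(J_i)^\square$ coincides with the prism-completion of the graph $\bigcap_i N(J_i)$. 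This last point requires a small argument: a prism of $X$ lies in the intersection iff it lies in each $N(J_i)$, and one must confirm that this matches the prisms of the intersected graph; this follows from the clique-absorption and local-convexity characterization of gated subgraphs (Proposition~\ref{lem:LocallyGated}) together with Lemma~\ref{lem:PrismAbsorption}. Then Leray's nerve theorem yields that $\mathrm{Cont}^\triangle(X)$ is homotopy equivalent to $X^\square$, which is contractible by Proposition~\ref{prop:QMcontractible}, completing step~(4).

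I expect the main obstacle to be the bookkeeping in step~(3): making precise that the cellular intersection $\bigcap_i N(J_i)^\square$ inside $X^\square$ really is the prism-completion of the gated subgraph $\bigcap_i N(J_i)$, rather than some larger or smaller subcomplex. Concretely, one needs that a closed prism-cell of $X^\square$ is contained in $\bigcap_i N(J_i)^\square$ precisely when its underlying prism subgraph lies in $\bigcap_i N(J_i)$, and that every prism of the intersected graph arises this way — both of which are consequences of gatedness being characterized locally (clique absorption and local convexity) and of the fact that intersections of gated subgraphs are gated. Once this identification is in hand, contractibility of each intersection is immediate from Proposition~\ref{prop:QMcontractible}, and the rest is a direct application of the nerve theorem. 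A shortcut worth noting is that, even without fully formalizing the cell-structure matching, one can instead observe that $\bigcap_i N(J_i)$ deformation retracts onto it from the ambient complex, or simply invoke that gated subgraphs are retracts of $X$ and hence their prism-completions are retracts of $X^\square$, so contractibility is inherited; this may be the cleanest route and is the one I would write up.
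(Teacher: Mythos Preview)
Your proposal is correct and follows essentially the same approach as the paper: cover $X^\square$ by the prism-completions of the carriers $N(J)^\square$, use Helly to identify the nerve with $\mathrm{Cont}^\triangle(X)$, and apply Proposition~\ref{prop:QMcontractible} to see that every nonempty intersection (being the prism-completion of a gated subgraph) is contractible. The paper's proof is terser --- it asserts without comment both that the nerve is $\mathrm{Cont}^\triangle(X)$ (implicitly invoking Helly) and that $\bigcap_J N^\square(J)$ coincides with the prism-completion of $\bigcap_J N(J)$ --- but your extra bookkeeping is exactly what underlies those assertions.
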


\begin{proof}
Let $X$ be a quasi-median graph. Because two hyperplanes are in contact if and only if their carriers intersect, the nerve complex of the covering $\{ N^\square(J), \text{ $J$ hyperplanes}\}$ of $X^\square$ coincides with the contact complex $\mathrm{Cont}^\triangle(X)$. Given a collection $\mathcal{J}$ of hyperplanes, if $\bigcap_{J \in \mathcal{J}} N^\square(J)$ is non-empty, then it coincides with the prism completion of the gated subgraph $\bigcap_{J \in \mathcal{J}} N(J)$. By Proposition~\ref{prop:QMcontractible}, such an intersection has to be contractible. We deduce from the nerve theorem that $\mathrm{Cont}^\triangle (X)$ is homotopy equivalent to $X^\square$, and we conclude from Proposition~\ref{prop:QMcontractible} that our contact complex is contractible.  
\end{proof}

\subsection{Crossing complexes}\label{section:Crossing}

\noindent
In this section, we focus on \emph{crossing complexes} of quasi-median graphs. See Figure~\ref{ContCross} for an example. 

\begin{definition}
Let $X$ be a quasi-median graph. The \emph{crossing complex} $\mathrm{Cross}^\triangle(X)$ is the simplicial complex whose vertices are the hyperplanes of $X$ and whose simplices are given by collections of pairwise transverse hyperplanes. 
\end{definition}

\noindent 
Our goal is to identify, up to homotopy, crossing complexes of quasi-median graphs with pointed sums. For this purpose, we need to introduce some terminology. Given a quasi-median graph $X$, the \textit{link} of a vertex $x$ in the prism-completion $X^\square$, which will be denoted by $\mathrm{link}_{X^\square}(x)$, refers to the simplicial complex whose vertices are the edges of $X$ containing $x$ and whose simplices are given by collections of edges contained in a common prism. It can be thought of as the simplicial structure induced by the cellular structure of $X^\square$ on a small sphere around the vertex $x$. Up to homotopy, this simplicial complex can be simplified. The \textit{simplified link} $\mathrm{slink}_{X^\square}(x)$ of our vertex $x$ will refer to the simplicial complex whose vertices are the cliques of $X$ containing $x$ and whose simplices are given by collections of cliques contained in a common prism.  The rest of the section is dedicated to the proof of the following statement.

\begin{thm}\label{thm:HomotopyCrossingComplex}
Let $X$ be a quasi-median graph. The crossing complex $\mathrm{Cross}^\triangle(X)$ of $X$ is homotopy equivalent to the union of pointed sums 
$$\bigsqcup\limits_{Y \in\mathcal Y} \ \bigvee_{x \in Y} \mathrm{link}_{Y^\square}(x),$$ 
where $\mathcal Y$ is the set of 2-connected components of $X$;
or, equivalently, to the union
$$\bigsqcup\limits_{Y \in \mathcal Y} \ \bigvee_{x \in Y} \mathrm{slink}_{Y^\square}(x).$$
\end{thm}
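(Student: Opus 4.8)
The plan is to follow the four-step strategy outlined before the statement, taking $S(J)$ to be the \emph{union of the prisms crossed by $J$} rather than the carrier $N^\square(J)$ itself. So, for each hyperplane $J$, set $S(J)$ to be the subcomplex of $X^\square$ consisting of all prisms of $X$ that are crossed by $J$ (equivalently, by Lemma~\ref{lem:PrismAbsorption} and Lemma~\ref{lem:GoodPrism}, the prisms $Q \subseteq N(J)$ with $J$ crossing $Q$). First I would check that each $S(J)$ is contractible: using Proposition~\ref{prop:InterHyperplanesProduct}, $N(J) \cong F \times C$ where $F$ is a fibre and $C$ the clique of $J$ through a basepoint, and $S(J)$ is precisely (the prism-completion of) $F^\square \times C$ up to the identification that collapses... more carefully, $S(J)$ should deformation retract onto a single clique $C$ of $J$, hence onto a point, since every prism crossed by $J$ contains such a clique and these are all parallel copies glued along the $F$-directions. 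I would make this precise by exhibiting a deformation retraction of $S(J)$ onto one clique, or by invoking that $S(J)$ is a quasi-median graph's prism-completion that is "star-shaped" relative to $J$.

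\textbf{The nerve.} Next I would verify that the nerve of $\{S(J) : J \text{ a hyperplane}\}$ is exactly $\mathrm{Cross}^\triangle(X)$: a finite collection $\mathcal{J}$ of hyperplanes has $\bigcap_{J \in \mathcal{J}} S(J) \neq \emptyset$ if and only if there is a single prism crossed by all of them, which by Theorem~\ref{thm:PrismsHyp} (applied within the relevant finite-cubical-dimension subgraph, or directly for finite $\mathcal{J}$) happens if and only if the hyperplanes in $\mathcal{J}$ are pairwise transverse. I would also need: whenever $\bigcap_{J \in \mathcal{J}} S(J)$ is non-empty it is contractible — this again reduces, via Proposition~\ref{prop:InterHyperplanesProduct}, to the prism-completion of a product $F \times \prod_{J} C_J$ retracting to a point. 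Granting this, Leray's nerve theorem gives $\mathrm{Cross}^\triangle(X) \simeq \bigcup_J S(J)$, the subcomplex of $X^\square$ spanned by all prisms that are crossed by \emph{some} hyperplane — but every edge lies in a hyperplane, so this union is the subcomplex of $X^\square$ consisting of all prisms of dimension $\geq 1$, i.e.\ $X^\square$ minus the (open) top... no: it is all of $X^\square$ except that isolated vertices (vertices in no edge) contribute nothing. Since $X$ is connected, $\bigcup_J S(J) = X^\square$ unless $X$ is a single vertex. Wait — that would make $\mathrm{Cross}^\triangle(X)$ contractible always, which is false; so the subtlety is that $S(J)$ is the union of prisms crossed by $J$, and a vertex $x$ of $X$ lies in $S(J)$ only if $J$ has a clique through $x$, and the \emph{way} these $S(J)$ overlap at $x$ is governed by $\mathrm{slink}_{X^\square}(x)$. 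The correct conclusion is that $\bigcup_J S(J)$ is $X^\square$ but the nerve computation must be done carefully: I realize the cleaner route is to cover $X^\square$ itself and read off the homotopy type of $X^\square$ (contractible, Proposition~\ref{prop:QMcontractible}) against the nerve, using a Mayer–Vietoris / homotopy-pushout decomposition indexed by vertices.

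\textbf{From $X^\square$ to the pointed sum.} The final and most delicate step is extracting the wedge of links. The idea: consider the cover of $X^\square$ by the closed stars $\overline{\st}(x) = \bigcup\{\text{prisms containing } x\}$ of its vertices $x \in X$. Each such star is contractible (a cone on $\mathrm{link}_{X^\square}(x)$), intersections of stars of adjacent vertices correspond to prisms and are contractible, and the nerve of this cover is the prism-completion's own "vertex complex", which is again contractible; this just re-proves Proposition~\ref{prop:QMcontractible}. To get the wedge I would instead collapse: $X^\square$ is contractible, so $X^\square \simeq \mathrm{pt}$; now $\mathrm{Cross}^\triangle(X) = \mathcal{N}(\{S(J)\})$, and I would analyze this nerve by grouping hyperplanes according to a spanning structure. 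Concretely, I expect the honest argument runs: pick a spanning tree $T$ of $X$ (or a based structure), and show $\mathrm{Cross}^\triangle(X)$ is obtained from the contractible $X^\square$ by, at each vertex $x$, gluing in a cone on $\mathrm{slink}_{X^\square}(x)$ minus something — yielding, after the contractible pieces collapse, the pointed sum $\bigvee_{x} \mathrm{slink}_{X^\square}(x)$ over each $2$-connected component $Y$ separately (the $2$-connectedness is what makes the "global" part of $\bigcup_J S(J)$ on $Y$ connected and the wedge point well-defined; across different $2$-connected components nothing links up, giving the disjoint union $\bigsqcup_{Y \in \mathcal{Y}}$). The equivalence of the two formulations (links vs.\ simplified links) is a standard fact: $\mathrm{link}_{X^\square}(x)$ deformation retracts onto $\mathrm{slink}_{X^\square}(x)$ because within each prism through $x$ the full simplex of edges collapses onto the simplex of cliques, compatibly.

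\textbf{Main obstacle.} The hard part will be Step (4): carefully organizing the nerve $\mathcal{N}(\{S(J)\})$ — which lives inside the contractible space $X^\square$ — so that the homotopy type is visibly a pointed sum over vertices, and correctly tracking how cut-vertices split the sum into a disjoint union over $2$-connected components. I would handle this by induction on the number of $2$-connected components (peeling off a "leaf" $2$-connected component along a cut-vertex, where the hyperplanes crossing that component form a subcomplex of $\mathrm{Cross}^\triangle(X)$ that is a retract, glued to the rest along a point), reducing to the case of a single $2$-connected component $Y$, where the claim is $\mathrm{Cross}^\triangle(Y) \simeq \bigvee_{x \in Y} \mathrm{slink}_{Y^\square}(x)$; this last case is then the genuine content and would be proved by the nerve argument together with the contractibility of $Y^\square$ and an explicit identification of the homotopy cofibers.
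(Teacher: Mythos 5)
There is a genuine gap at the heart of your nerve argument: with $S(J)$ defined as the union of the prisms crossed by $J$, the nerve of $\{S(J)\}$ is the \emph{contact} complex, not the crossing complex. Indeed, by Lemma~\ref{lem:GoodPrism} every prism of $N(J)$ sits inside a prism crossed by $J$, so your $S(J)$ is just $N^\square(J)$; and if $J_1,J_2$ are tangent (in contact but not transverse), any vertex of $N(J_1)\cap N(J_2)$ lies in a clique of $J_1$ and in a clique of $J_2$, so $S(J_1)\cap S(J_2)\neq\emptyset$ even though $J_1,J_2$ span no simplex of $\mathrm{Cross}^\triangle(X)$. Your claim that the intersection is non-empty iff a single prism is crossed by all the hyperplanes is exactly where this fails. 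You do notice the resulting absurdity (``that would make $\mathrm{Cross}^\triangle(X)$ contractible always''), but the proposed repairs --- ``do the nerve computation more carefully,'' a Mayer--Vietoris decomposition indexed by vertices, or an induction on $2$-connected components --- do not supply the missing mechanism.

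The fix used in the paper is to puncture: one covers $X^\square\backslash X$ (the prism-completion minus the vertex set) by the sets $N^\square_-(J):=N^\square(J)\backslash\bigcup_F F^\square$, where $F$ ranges over the fibres of $J$. Removing the fibres is precisely what kills tangency: if $J_1,J_2$ are tangent, then $N(J_1)\cap N(J_2)$ is contained in a single fibre of $J_1$, so $N^\square_-(J_1)\cap N^\square_-(J_2)=\emptyset$, and the nerve becomes exactly $\mathrm{Cross}^\triangle(X)$. Contractibility of the non-empty intersections then follows from Proposition~\ref{prop:InterHyperplanesProduct}, which identifies them with $F^\square\times\prod_J(C_J^\square\backslash C_J)$, a product of contractible spaces. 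This gives $\mathrm{Cross}^\triangle(X)\simeq X^\square\backslash X$, after which your final step is essentially right and matches the paper: thicken a spanning tree $T$ to $T^+=T\cup\bigcup_x B(x,\epsilon)$, apply Fact~\ref{fact:AlgTopo} to the contractible pair $(X^\square,T^+)$ to retract $X^\square\backslash X$ onto $T^+\backslash X\simeq\bigvee_x S(x,\epsilon)\simeq\bigvee_x\mathrm{link}_{X^\square}(x)$, and invoke Lemma~\ref{lem:LinkVsSlink} for the passage to simplified links. The splitting over $2$-connected components is immediate from the corresponding decomposition of $\mathrm{Cross}^\triangle(X)$, with no induction needed. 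Without the puncturing idea, your Step (2) identifies the wrong complex and the argument cannot be completed as written.
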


\noindent
We begin by observing that links and simplified links are homotopy equivalent, which justifies why the two unions from our theorem are indeed homotopy equivalent.  This is a particular case of Lemma~\ref{lem:LinkVsSlink} below, so we do not repeat an argument here.

\begin{lemma}\label{lem:Loc}
Let $X$ be a quasi-median graph. For every vertex $x \in X$, the link $\mathrm{link}_{X^\square}(x)$ and the simplified link $\mathrm{slink}_{X^\square}(x)$ are homotopy equivalent.
\end{lemma}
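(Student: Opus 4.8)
The plan is to build an explicit deformation retraction from $\mathrm{link}_{X^\square}(x)$ onto $\mathrm{slink}_{X^\square}(x)$, using the fact that every edge $e$ containing $x$ lies in a unique clique $C(e)$ of $X$, together with the product structure of prisms. Recall that the vertices of $\mathrm{link}_{X^\square}(x)$ are the edges of $X$ at $x$, and a simplex is a collection of such edges spanning a common prism; by Lemma~\ref{lem:PrismAbsorption} (and the discussion of prisms as products of cliques), a prism $P \ni x$ decomposes as $C_1 \times \cdots \times C_n$ where each $C_i$ is the clique of a distinct hyperplane crossing $P$, and $x = (x_1,\dots,x_n)$. The edges of $P$ at $x$ are exactly the edges $x x_i'$ with $x_i'$ a neighbour of $x_i$ in $C_i$; two such edges span a square in $P$ precisely when they come from different factors, and more than two edges from the same factor $C_i$ are pairwise joined but do **not** together span a higher prism (they only span the clique $C_i$). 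This is the combinatorial picture I want to exploit.

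First I would set up a simplicial (or barycentric) retraction sending each vertex $e$ of $\mathrm{link}_{X^\square}(x)$ to the vertex $C(e)$ of $\mathrm{slink}_{X^\square}(x)$, and check it extends to a simplicial map $r$ on a suitable subdivision: a simplex of $\mathrm{link}_{X^\square}(x)$ spanned by edges $e_1,\dots,e_k$ inside a prism $P = C_1 \times \cdots \times C_n$ maps to the set $\{C(e_1),\dots,C(e_k)\}$, which is a collection of cliques at $x$ all contained in $P$, hence a simplex of $\mathrm{slink}_{X^\square}(x)$. Second, I would exhibit the homotopy $r \simeq \mathrm{id}$ after identifying $\mathrm{slink}_{X^\square}(x)$ with a subcomplex of (a subdivision of) $\mathrm{link}_{X^\square}(x)$ — concretely, choosing in each clique $C_i$ at $x$ a preferred edge, which realizes the vertex $C_i$ of the simplified link as the barycentre of the simplex of $\mathrm{link}_{X^\square}(x)$ spanned by all edges of $C_i$ at $x$. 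The contraction is then performed factor-by-factor: within the subcomplex corresponding to a single clique $C_i$, all edges at $x$ form a simplex, which is contractible, and one collapses it to its barycentre; the product structure of $P$ guarantees these collapses are compatible across prisms.

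Alternatively — and this may be cleaner to write — I would invoke the nerve theorem directly, which is the route the later Lemma~\ref{lem:LinkVsSlink} presumably takes. For each clique $C$ of $X$ containing $x$, let $U_C$ denote the (full) subcomplex of $\mathrm{link}_{X^\square}(x)$ spanned by the edges at $x$ lying in a prism that contains $C$; equivalently, $U_C$ is the union of the closed simplices of $\mathrm{link}_{X^\square}(x)$ corresponding to prisms $P \supseteq C$. Each $U_C$ is a union of simplices all sharing the common face spanned by the edges of $C$ at $x$, hence $U_C$ is contractible (it is star-shaped towards that face — one deformation retracts $U_C$ onto the simplex of $C$). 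A finite subfamily $U_{C_1},\dots,U_{C_k}$ has nonempty intersection iff there is a prism containing all of $C_1,\dots,C_k$, which by Theorem~\ref{thm:PrismsHyp} (applied to the finite-cubical-dimension situation at a single vertex) happens iff the $C_i$ are cliques of pairwise transverse hyperplanes contained in a common prism — that is, iff $\{C_1,\dots,C_k\}$ is a simplex of $\mathrm{slink}_{X^\square}(x)$; and when nonempty the intersection is again a union of simplices sharing a common face, hence contractible. Thus $\{U_C\}$ is a cover of $\mathrm{link}_{X^\square}(x)$ by subcomplexes with contractible finite intersections whose nerve is exactly $\mathrm{slink}_{X^\square}(x)$, and the nerve theorem gives the homotopy equivalence.

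The main obstacle I anticipate is the bookkeeping around intersections $\bigcap_i U_{C_i}$: one must argue carefully that this intersection, when nonempty, is not just "simplices sharing a vertex" but genuinely contractible — i.e. that it coincides with the union of the closed simplices of the prisms $P$ with $P \supseteq C_1 \cup \cdots \cup C_k$, all of which contain the face spanned by $C_1 \cup \cdots \cup C_k$ at $x$, so that the intersection deformation retracts onto that single simplex. This requires the clean local structure of prisms at a vertex (each prism at $x$ is determined by the set of cliques at $x$ it contains, and prisms containing a fixed set $\mathcal{C}$ of cliques at $x$ correspond to collections of pairwise-transverse hyperplanes containing those of $\mathcal{C}$), which follows from Lemma~\ref{lem:PrismAbsorption} and Theorem~\ref{thm:PrismsHyp}. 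Since the excerpt says this is a special case of Lemma~\ref{lem:LinkVsSlink}, I would in the final text simply defer the details to that later, more general argument, and here only record the statement and the nerve-theoretic idea.
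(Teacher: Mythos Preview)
Your nerve-theorem approach is the same route the paper takes (the proof of Lemma~\ref{lem:LinkVsSlink}, to which Lemma~\ref{lem:Loc} is deferred, covers $\mathrm{L}_\mathbb{G}(x)$ by pieces indexed by the cliques at $x$ and applies the nerve theorem), but your choice of cover has a genuine gap. With $U_C$ taken to be the \emph{closed} star of the simplex $\Delta(C)$ spanned by the edges of $C$ at $x$ --- i.e.\ the union of the closed simplices of prisms $P\supseteq C$ --- the assertion ``$U_{C_1}\cap\cdots\cap U_{C_k}\neq\emptyset$ iff the $C_i$ lie in a common prism'' is false. Concretely, take $\Gamma$ the path $a\!-\!b\!-\!c$ with vertex-groups of order $\geq 3$: then $U_{\langle a\rangle}$ is the closed simplex of the prism $\langle a,b\rangle$ and $U_{\langle c\rangle}$ the closed simplex of $\langle b,c\rangle$; these meet along the face $\Delta(\langle b\rangle)$, yet $\langle a\rangle$ and $\langle c\rangle$ span no prism. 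So the nerve of $\{U_C\}$ is strictly larger than $\mathrm{slink}_{X^\square}(x)$, and your identification of the nerve fails, not merely the contractibility bookkeeping you flagged.

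The paper's fix is to use the \emph{open} star: set $\Delta^+(C):=\Delta(C)\cup\{\text{interiors of simplices }\sigma\supseteq\Delta(C)\}$. Because interiors of distinct simplices are disjoint, any point in $\Delta^+(C_1)\cap\cdots\cap\Delta^+(C_n)$ (with the $C_i$ distinct) lies in the interior of a single simplex containing every $\Delta(C_i)$, forcing the $C_i$ into a common prism; and the intersection, being the union of the interiors of all simplices containing the join $\Delta(C_1)\ast\cdots\ast\Delta(C_n)$, retracts onto that join and is contractible. Your first approach (collapsing each $\Delta(C)$ to a point via the simplicial map $e\mapsto C(e)$) is a legitimate alternative and morally explains why the blow-up replacing each vertex of $\mathrm{slink}$ by a simplex does not change the homotopy type, but as written it is only a sketch.
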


\noindent
 Now, let us prove the following observation:

\begin{prop}\label{prop:PuncturedQM}
Let $X$ be a quasi-median graph. The crossing complex $\mathrm{Cross}^\triangle(X)$ is homotopy equivalent to $X^\square \backslash X$.
\end{prop}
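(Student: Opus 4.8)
The plan is to apply Leray's Nerve Theorem to a suitable cover of the punctured prism-completion $X^\square \backslash X$, chosen so that its nerve is exactly $\mathrm{Cross}^\triangle(X)$. For each hyperplane $J$, the natural candidate subspace is $N^\square(J) \backslash X$, the carrier $N^\square(J)$ with the vertex-set of $X$ removed; since $X = \bigcup_J N(J)$ and every prism lies in some carrier (indeed every edge does), the sets $N^\square(J)\backslash X$ cover $X^\square \backslash X$. The first step is therefore to check that this is a cover by subcomplexes of a suitable subdivision — or, more carefully, to pass to the barycentric subdivision of $X^\square$ so that ``$X^\square$ minus its original vertices'' becomes an honest subcomplex and each $N^\square(J)\backslash X$ a subcomplex of it, so that the simplicial-complex version of the nerve theorem quoted in the excerpt applies. (Alternatively one phrases everything with open carrier-neighbourhoods and uses the topological nerve lemma; either way this bookkeeping is routine but must be done.)

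The second and main step is to compute the intersections. Given a collection $\mathcal{J}$ of hyperplanes, $\bigcap_{J\in\mathcal{J}} (N^\square(J)\backslash X) = \left(\bigcap_{J\in\mathcal{J}} N^\square(J)\right)\backslash X$. By Theorem~\ref{thm:BigQM}(ii) the intersection $\bigcap_{J\in\mathcal{J}} N(J)$ is a gated subgraph, and when non-empty its prism-completion is $\bigcap_{J\in\mathcal{J}} N^\square(J)$ (the same observation used in the proof of Proposition~\ref{prop:ContactContractible}). So I must determine when $P\backslash X$ is contractible for $P$ the prism-completion of a non-empty gated subgraph $K = \bigcap_{J\in\mathcal J} N(J)$. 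The key dichotomy: if some hyperplane of $\mathcal{J}$ fails to cross $K$, then by the product decomposition of carriers (Proposition~\ref{prop:InterHyperplanesProduct}, applied as in Lemma~\ref{lem:GoodPrism}) $K$ splits as a direct product $F \times C$ with $C$ a clique of size $\geq 2$; then $K^\square \backslash X$ deformation retracts — using the cone structure in the $C$-direction — onto something contractible, in fact $K^\square\backslash X$ is contractible because puncturing a product one of whose factors is a simplex (the clique fills to a simplex) still leaves a contractible space (the simplex minus its vertices is contractible, and a product with a contractible space stays contractible). Conversely, if every $J\in\mathcal{J}$ crosses $K$, then the hyperplanes of $\mathcal{J}$ are pairwise transverse and, by Theorem~\ref{thm:PrismsHyp}/Theorem~\ref{thm:ProjQM}, $K$ contains a common prism crossed by all of them, so $\mathcal{J}$ is a simplex of $\mathrm{Cross}^\triangle(X)$; and conversely any simplex of $\mathrm{Cross}^\triangle(X)$ has non-empty (hence contractible) carrier-intersection for the same reason. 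Thus the non-empty intersections are precisely indexed by the simplices of $\mathrm{Cross}^\triangle(X)$, and all of them are contractible — exactly the hypotheses of Leray's theorem, whose conclusion is $\mathrm{Cross}^\triangle(X)\simeq X^\square\backslash X$.

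There is one gap in the previous paragraph that I expect to be the main obstacle: showing that $K^\square\backslash X$ is contractible (not merely non-empty) whenever $\mathcal{J}$ has a hyperplane not crossing $K$ — and, symmetrically, that $\bigcap N^\square(J)$ is empty exactly when $\mathcal{J}$ is not a simplex of $\mathrm{Cross}^\triangle(X)$. For the contractibility, the clean route is: if $J_0\in\mathcal{J}$ does not cross $K$, pick $o\in K$ and the clique $C$ of $J_0$ through $o$; Lemma~\ref{lem:ProductDecomposition} (the hypotheses are verified as in Lemma~\ref{lem:GoodPrism}, since every hyperplane crossing $K$ is transverse to $J_0$ as it must cross $N(J_0)$) gives $K \cong (C,o)\times (K',o)$ where $K'$ is a gated subgraph, and correspondingly $K^\square \cong \Delta^{|C|-1}\times (K')^\square$. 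Since the vertex-set of $K$ is $C^{(0)}\times (K')^{(0)}$, we get $K^\square\backslash X \cong \left(\Delta^{|C|-1}\times(K')^\square\right)\backslash\left(C^{(0)}\times (K')^{(0)}\right)$, which straight-line deformation retracts in the $\Delta$-factor to a point $p$ in the interior of the top face of $\Delta^{|C|-1}$, yielding $\{p\}\times (K')^\square$, which is contractible by Proposition~\ref{prop:QMcontractible}. One must check this retraction stays inside the punctured space; it does, because as soon as the $\Delta$-coordinate leaves the vertex set of $\Delta^{|C|-1}$ the point is no longer in $C^{(0)}\times(K')^{(0)}$, and the linear homotopy toward an interior point achieves this at all positive times. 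For the emptiness claim, if $\mathcal{J}$ is pairwise transverse and $\bigcap_{J\in\mathcal{J}} N(J)\neq\emptyset$, then Proposition~\ref{prop:InterHyperplanesProduct} forces a common prism, so $\mathcal{J}$ spans a simplex; contrapositively, non-simplices have empty carrier-intersection, so the corresponding $\bigcap(N^\square(J)\backslash X)$ is empty as required. Assembling these pieces and invoking Leray's Nerve Theorem completes the proof.
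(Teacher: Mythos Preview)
Your cover $\{N^\square(J)\backslash X\}_J$ does not have $\mathrm{Cross}^\triangle(X)$ as its nerve. Consider the $3\times 2$ grid (two squares sharing an edge): it has a horizontal hyperplane $H$ and two vertical hyperplanes $V_1,V_2$. The hyperplanes $V_1,V_2$ are tangent, not transverse, yet $N(V_1)\cap N(V_2)$ is the middle edge, so $(N^\square(V_1)\cap N^\square(V_2))\backslash X$ is an open interval --- non-empty. Thus the nerve of your cover contains the edge $\{V_1,V_2\}$ (in fact the full $2$-simplex $\{H,V_1,V_2\}$), whereas $\mathrm{Cross}^\triangle(X)$ is only the path $V_1\text{--}H\text{--}V_2$. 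Your conclusion ``non-empty intersections are precisely indexed by the simplices of $\mathrm{Cross}^\triangle(X)$'' is therefore false, and the contrapositive you invoke in the last paragraph is not the contrapositive of what you proved (you proved ``pairwise transverse and non-empty $\Rightarrow$ simplex'', whose contrapositive does not say that non-simplices have empty carrier-intersection).

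The attempted product decomposition in Case~1 is also broken: if $J_0\in\mathcal{J}$ does \emph{not} cross $K=\bigcap_{J\in\mathcal J}N(J)$, then by Corollary~\ref{cor:CrossAndClique} the clique $C$ of $J_0$ through $o$ is \emph{not} contained in $K$, so $K$ cannot split as $C\times K'$. What actually happens is that $K$ lies in a single fibre of $J_0$ --- the opposite of having $C$ as a factor. This is precisely why the paper does not use $N^\square(J)\backslash X$: it instead takes $N^\square_-(J):=N^\square(J)\setminus\bigcup_{F\text{ fibre of }J}F^\square$, removing entire fibre prism-completions rather than just vertices. With that choice, tangent hyperplanes genuinely have disjoint pieces (Claim~\ref{claim:SimplexTransversality}), so the nerve really is $\mathrm{Cross}^\triangle(X)$; the contractibility of non-empty intersections (Claim~\ref{claim:ContractibleIntersection}) then uses the product decomposition of Proposition~\ref{prop:InterHyperplanesProduct} applied to a collection of \emph{pairwise transverse} hyperplanes, where the clique factors are legitimately present.
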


\noindent
A similar statement can be found in \cite{MR4554674} for finite median graphs. Our proof follows essentially the same strategy. 

\begin{proof}[Proof of Proposition~\ref{prop:PuncturedQM}.]
Given a hyperplane $J$, we set
$$N^\square_-(J) := N^\square(J) \backslash \bigcup\limits_{F \text{ fibre of } J} F^\square.$$
The proposition will follow by applying the nerve theorem to $\{ N^\square_-(J) \mid J \text{ hyperplane}\}$.

\begin{claim}\label{claim:SmallCarriersCover}
We have $X^\square \backslash X = \bigcup\limits_{J \text{ hyperplane}} N^\square_-(J).$
\end{claim}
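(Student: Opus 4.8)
The plan is to prove the claimed equality of subspaces of $X^\square$ by a double inclusion, working cell by cell. Recall that a point of $X^\square$ lies in the interior of a unique prism $P = C_1 \times \cdots \times C_k$ (where a vertex corresponds to the degenerate case $k=0$), and that a hyperplane $J$ crosses $P$ precisely when one of the factors $C_i$ is a clique of $J$. The subspace $N_-^\square(J)$ is obtained from the carrier $N^\square(J)$ by deleting the prism-completions of all fibres of $J$; since the fibres of $J$ are exactly the connected components of $N(J) \backslash\backslash J$, a prism $P \subset N(J)$ survives in $N^\square_-(J)$ (at least partially) if and only if $P$ is crossed by $J$, i.e.\ $P$ meets an edge of $J$.

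First I would prove the inclusion $\bigcup_J N_-^\square(J) \subseteq X^\square \backslash X$. Suppose $p \in N_-^\square(J)$ for some hyperplane $J$; let $P$ be the prism whose interior contains $p$. Then $P \subset N(J)$ (since $N(J)$ is gated, hence its prism-completion is a subcomplex, and $p \in N^\square(J)$), and $P$ is not contained in any fibre of $J$, so by the observation above $J$ crosses $P$. In particular $P$ has a clique factor, so $P$ is not a single vertex, and therefore $p \notin X$. This gives $p \in X^\square \backslash X$.

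For the reverse inclusion, take $p \in X^\square \backslash X$ and let $P = C_1 \times \cdots \times C_k$ with $k \geq 1$ be the prism whose interior contains $p$. Pick any clique factor, say $C_1$, and let $J$ be the hyperplane containing the edges of $C_1$ (all edges of a clique lie in a common hyperplane). Then $J$ crosses $P$, so by Lemma~\ref{lem:PrismAbsorption} we have $P \subset N(J)$, hence $p \in N^\square(J)$. It remains to check $p \notin F^\square$ for every fibre $F$ of $J$: since $P$ is crossed by $J$ it is not contained in $N(J)\backslash\backslash J$, so $P \not\subset F$ for any fibre $F$; but $F^\square$ is a subcomplex of $N^\square(J)$ and $p$ lies in the interior of $P$, so $p \in F^\square$ would force $P \subseteq F$, a contradiction. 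Hence $p \in N_-^\square(J)$, completing the proof.

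The main subtlety — though not really an obstacle — is keeping straight the distinction between a prism being \emph{contained in} $N(J)$ versus being \emph{crossed by} $J$, and correctly translating ``$p$ lies in the interior of $P$'' into ``any subcomplex containing $p$ contains all of $P$.'' One should also be slightly careful that the decomposition of a prism into clique factors is not unique, but the argument only uses the existence of one clique factor, so this causes no trouble. Everything else is a direct bookkeeping of which cells of $X^\square$ appear in which $N^\square_-(J)$, using Lemma~\ref{lem:PrismAbsorption} and the definition of fibres.
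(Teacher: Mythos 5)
Your proof is correct and follows essentially the same strategy as the paper's (the paper only writes out the inclusion $X^\square \backslash X \subseteq \bigcup_J N^\square_-(J)$, locating $J$ via a prism of minimal cubical dimension whose completion contains the point and using that minimality exactly where you use the relative-interior/subcomplex dichotomy; the reverse inclusion is immediate since every vertex of $N(J)$ lies in some fibre). One imprecision worth fixing: the cells of $X^\square$ are products of simplices over complete subgraphs that need not be maximal, so the unique cell whose relative interior contains $p$ is in general only a face of a prism and its factors need not be cliques of $X$; the argument survives because all edges of any non-degenerate factor of that cell still lie in a common hyperplane $J$, and $P\subset N(J)$ follows by applying Lemma~\ref{lem:PrismAbsorption} to the prism spanned by that cell --- but it is essential that $J$ be chosen from a factor of this \emph{minimal} cell, since a clique factor of a larger prism containing $p$ could leave $p$ inside a fibre-completion of the corresponding hyperplane.
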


\noindent
Given a point $x \in X^\square\backslash X$, let $P$ be a prism of minimal cubical dimension whose prism completion contains $x$. Because $x$ is not a vertex, $P$ must have cubical dimension $\geq 1$. This implies that $P$ is crossed by at least one hyperplane, say $J$. If $x$ does not belong to $N^\square_-(J)$, then there must exist a fibre $F$ of $J$ such that $x \in (P \cap F)^\square$, contradicting the minimality of $P$. 

\begin{claim}\label{claim:SimplexTransversality}
For all distinct hyperplanes $J_1$ and $J_2$, $N^\square_-(J_1)$ and $N^\square_-(J_2)$ intersect if and only if $J_1$ and $J_2$ are transverse. 
\end{claim}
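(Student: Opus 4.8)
The plan is to prove Claim~\ref{claim:SimplexTransversality} by unpacking what it means for a point of $X^\square$ to lie in $N^\square_-(J)$, and then translating the condition ``$N^\square_-(J_1) \cap N^\square_-(J_2) \neq \emptyset$'' into a statement about prisms crossed by both hyperplanes.

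First I would record the following description of $N^\square_-(J)$: a point $x \in X^\square$ lies in $N^\square(J) \setminus \bigcup_F F^\square$ if and only if the minimal prism $P$ with $x \in P^\square$ is contained in $N(J)$ (by definition of the carrier and Lemma~\ref{lem:PrismAbsorption}) and is \emph{not} contained in any fibre of $J$; by Lemma~\ref{lem:GoodPrism} the latter can be upgraded to: $P$ is contained in a prism $Q \subset N(J)$ that is crossed by $J$. Combining these, $x \in N^\square_-(J)$ precisely when the minimal prism $P \ni x$ is contained in a prism crossed by $J$, equivalently (using that carriers of transverse hyperplanes through a common vertex split as products, Proposition~\ref{prop:InterHyperplanesProduct}) when $J$ is transverse to every hyperplane crossing $P$, together with $P \subset N(J)$ — actually the cleanest reformulation is simply: $x \in N^\square_-(J)$ iff the minimal prism $P \ni x$ lies in a prism that $J$ crosses.

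Now for the claim. Suppose $J_1$ and $J_2$ are transverse. Then there are edges $e_i \subset J_i$ spanning a $4$-cycle, hence a prism $P$ of cubical dimension $2$ (the clique product $C_1 \times C_2$ through the shared vertex) that is crossed by both $J_1$ and $J_2$; Lemma~\ref{lem:PrismAbsorption} gives $P \subset N(J_1) \cap N(J_2)$. Any point $x$ in the interior of a maximal face of $P^\square$ not contained in a fibre of either hyperplane — for instance the barycenter of $P^\square$ — has minimal prism $P$, which is crossed by both $J_1$ and $J_2$, so $x \in N^\square_-(J_1) \cap N^\square_-(J_2)$. Conversely, suppose $x \in N^\square_-(J_1) \cap N^\square_-(J_2)$, and let $P$ be the minimal prism with $x \in P^\square$. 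By the description above, $P$ is contained in a prism $Q_1 \subset N(J_1)$ crossed by $J_1$, and in a prism $Q_2 \subset N(J_2)$ crossed by $J_2$. Since $P$ is not contained in any fibre of $J_1$, it is crossed by $J_1$ (an edge of $P$ lies in $J_1$); likewise $P$ is crossed by $J_2$. So $P$ is a prism crossed by both $J_1$ and $J_2$, which forces $J_1$ and $J_2$ to be transverse: pick edges $e_1, e_2$ of $P$ in $J_1, J_2$ respectively through a common vertex of $P$ (possible since in a prism $C_1 \times \cdots \times C_n$ the edges crossed by distinct hyperplanes lie in distinct factors and one can slide to a common vertex), and these span a $4$-cycle.

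I expect the main obstacle to be the bookkeeping in the reformulation of $N^\square_-(J)$: one must carefully argue that removing the prism-completions of \emph{fibres} of $J$ from $N^\square(J)$ leaves exactly the points whose minimal prism meets $J$, and in particular that the minimal prism $P$ through a point of $N^\square_-(J)$ is itself crossed by $J$ rather than merely contained in $N(J)$. This is where Lemma~\ref{lem:GoodPrism} and the product decomposition of carriers (Proposition~\ref{prop:InterHyperplanesProduct}, applied with $\mathcal{J} = \{J\}$, so $N(J) \cong F \times C$) do the work: a prism in $N(J)$ projecting to a single vertex of the clique factor lies in a fibre, and otherwise it is crossed by $J$. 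Once that dictionary is in place, the equivalence with transversality is essentially the observation that two hyperplanes cross a common prism if and only if they are transverse, which is the easy direction of Theorem~\ref{thm:PrismsHyp}.
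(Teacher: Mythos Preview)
Your proof is correct, and the forward direction matches the paper's (take an interior point of a prism crossed by both hyperplanes). For the converse you take a different route: you show that the minimal prism $P$ containing a point of $N^\square_-(J_1)\cap N^\square_-(J_2)$ is itself crossed by both $J_1$ and $J_2$, and then observe that two distinct hyperplanes crossing a common prism are transverse. The paper instead invokes the dichotomy ``hyperplanes with intersecting carriers are transverse or tangent'' and rules out tangency by noting that, in the tangent case, $N(J_1)\cap N(J_2)$ lies in a single fibre of $J_1$, hence entirely in the part removed from $N^\square_-(J_1)$. Your approach avoids appealing to that dichotomy at the cost of unpacking the product structure $N(J)\cong F\times C$ a bit more explicitly; the paper's is shorter once that background fact is in hand.

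One presentational remark: your ``cleanest reformulation'' --- that $x\in N^\square_-(J)$ iff the minimal prism $P\ni x$ lies in \emph{some} prism crossed by $J$ --- is too weak as stated, since by Lemma~\ref{lem:GoodPrism} every prism $P\subset N(J)$ lies in such a prism, which would give $N^\square_-(J)=N^\square(J)$. The correct dictionary, which is what you actually use in the body of the argument (and correctly identify in your final paragraph), is that $x\in N^\square_-(J)$ iff the minimal prism containing $x$ is \emph{itself} crossed by $J$. So the substance is fine; just tighten the statement of the reformulation.
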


\noindent
If $J_1$ and $J_2$ are transverse, then there exists a prism $P$ crossed by both $J_1$ and $J_2$. An interior point of $P$ necessarily belongs to $N^\square_-(J_1) \cap N^\square_-(J_2)$. Conversely, assume that $N^\square_-(J_1) \cap N^\square_-(J_2) \neq \emptyset$. A fortiori, $N(J_1)$ and $N(J_2)$ intersect, so $J_1$ and $J_2$ are either transverse or tangent. If they are tangent, then $J_2$ is contained in a single sector delimited by $J_1$, so $N(J_1) \cap N(J_2)$ must be contained in a fibre of $J_1$, say $F$. But this is not possible since $F^\square$ is disjoint from $N^\square_-(J_1)$. Therefore, $J_1$ and $J_2$ must be transverse. 

\begin{claim}\label{claim:ContractibleIntersection}
For every finite collection of pairwise transverse hyperplanes $\mathcal{J}$, the intersection $\bigcap_{J \in \mathcal{J}} N^\square_-(J)$ is contractible.
\end{claim}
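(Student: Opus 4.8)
The plan is to describe $\bigcap_{J \in \mathcal{J}} N^\square_-(J)$ explicitly using the product decomposition from Proposition~\ref{prop:InterHyperplanesProduct} and then show directly that it deformation retracts onto a point. First I would observe that if $\bigcap_{J \in \mathcal{J}} N(J) = \emptyset$, then $\bigcap_{J \in \mathcal{J}} N^\square_-(J) = \emptyset$ as well (this case is vacuously fine, since the empty set is permitted by the nerve theorem hypothesis), so I may assume there is a vertex $o \in \bigcap_{J \in \mathcal{J}} N(J)$. Writing $C_J$ for the clique of $J$ at $o$ and $F$ for the intersection of the fibres of the hyperplanes of $\mathcal{J}$ containing $o$, Proposition~\ref{prop:InterHyperplanesProduct} gives a graph isomorphism $\bigcap_{J \in \mathcal{J}} N(J) \cong F \times \prod_{J \in \mathcal{J}} (C_J, o)$, which passes to an isomorphism of prism-completions $\left( \bigcap_{J \in \mathcal{J}} N(J) \right)^\square \cong F^\square \times \prod_{J \in \mathcal{J}} C_J$ (here each $C_J$ is a simplex, since a clique's prism-completion is a simplex).

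Next I would identify, under this isomorphism, which points are removed when we pass to $N^\square_-(J)$. For a fixed $J_0 \in \mathcal{J}$, the set $N^\square_-(J_0) = N^\square(J_0) \setminus \bigcup_{F' \text{ fibre of } J_0} (F')^\square$ removes exactly the points whose $C_{J_0}$-coordinate is the vertex $o$ — indeed the fibres of $J_0$ meeting $\bigcap_{J \in \mathcal{J}} N(J)$ correspond, under the decomposition, to the ``slices'' where the $C_{J_0}$-coordinate is a fixed vertex of the clique $C_{J_0}$, and a point lies in some fibre's prism-completion iff its $C_{J_0}$-coordinate is a vertex (rather than an interior point of the simplex $C_{J_0}$); the condition of lying in $N^\square(J_0)$ but not in any fibre is precisely that the $C_{J_0}$-coordinate is an interior point of $C_{J_0}$, which includes the constraint of being away from the vertex $o$. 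So intersecting over all $J \in \mathcal{J}$, the space $\bigcap_{J \in \mathcal{J}} N^\square_-(J)$ corresponds to $F^\square \times \prod_{J \in \mathcal{J}} (C_J \setminus \{o\})$, or more precisely $F^\square \times \prod_{J \in \mathcal{J}} (C_J \setminus \partial_o C_J)$ where we remove the face of the simplex $C_J$ opposite to... — I would need to be careful here about whether it is $C_J$ with the single vertex $o$ removed or the open simplex; in either formulation the key point is that $C_J$ minus a vertex is contractible (star-shaped), and crucially an open simplex, a half-open interval, and $F^\square$ are all contractible.

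Then the conclusion follows: $\bigcap_{J \in \mathcal{J}} N^\square_-(J)$ is homeomorphic to a product of contractible spaces ($F^\square$ is contractible by Proposition~\ref{prop:QMcontractible}, and each factor coming from a clique is contractible), hence contractible. One subtlety to handle with care is the infinite-product convention: when $\mathcal{J}$ is finite — which is exactly the hypothesis of the claim — the product $\prod_{J \in \mathcal{J}}$ is an honest finite product and there is no connectedness issue, so this is where finiteness of $\mathcal{J}$ is genuinely used (it is also implicitly needed so that $\mathcal{J}$ spans a prism, though Proposition~\ref{prop:InterHyperplanesProduct} itself does not require finiteness).

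The main obstacle I anticipate is step two: pinning down precisely, in coordinates, which subset of the product $F^\square \times \prod_{J} C_J$ equals $N^\square_-(J_0)$ for a single $J_0$, and verifying that $N^\square_-(J_0) \cap \bigcap_{J \in \mathcal{J}} N(J)^\square$ — as opposed to $N^\square_-(J_0)$ inside all of $X^\square$ — behaves as claimed. Concretely, one must check that the fibres of $J_0$ that matter are exactly the preimages of the vertices of $C_{J_0}$ under the projection $N(J_0) \to C_{J_0}$, and that a prism-completion point lies in a fibre's prism-completion iff its $C_{J_0}$-projection is a $0$-cell; this requires invoking Lemma~\ref{lem:PrismAbsorption} / Lemma~\ref{lem:GoodPrism} to control how prisms of $N(J_0)$ sit relative to the decomposition. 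Once that bookkeeping is done, the contractibility is immediate.
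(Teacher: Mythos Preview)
Your approach is exactly the paper's: use Proposition~\ref{prop:InterHyperplanesProduct} to write $\bigcap_{J\in\mathcal J} N(J)$ as $F\times\prod_J C_J$, pass to prism-completions, identify the image of each fibre slice, and conclude that $\bigcap_J N^\square_-(J)\cong F^\square\times\prod_J\bigl(C_J^\square\setminus C_J\bigr)$ is a product of contractible spaces.

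Two small clarifications where you hesitate. First, the intersection $\bigcap_{J\in\mathcal J}N(J)$ is automatically non-empty: pairwise transverse hyperplanes have pairwise intersecting carriers, so Helly (Lemma~\ref{lem:Helly}) applies --- no need to treat the empty case. Second, your uncertainty about ``$C_J\setminus\{o\}$ versus the open simplex'' resolves in favour of removing \emph{all} vertices of $C_J$: each vertex $x\in C_J$ indexes a distinct fibre of $J$, namely the slice $F\times\{x\}\times\prod_{I\neq J}C_I$ under $\Psi$, so passing to $N^\square_-(J)$ deletes the slice over every vertex of $C_J$, leaving the factor $C_J^\square\setminus C_J$ (simplex minus its $0$-skeleton). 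You do not need Lemmas~\ref{lem:PrismAbsorption} or~\ref{lem:GoodPrism} for this bookkeeping; the description of a fibre $L$ as the preimage of a single vertex under $\mathrm{proj}_{C_J}$ follows directly from the definition of fibres and the product structure.
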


\noindent
Fix a vertex $o \in \bigcap_{J \in \mathcal{J}} N(J)$.  Such a vertex exists because, as gated subgraphs, hyperplane-carriers satisfy the Helly property. For every $J \in \mathcal{J}$, the vertex $o$ belongs to a (unique) clique $C_J \subset J$.  Let $F_J$ denote the fibre of $J$ containing $o$, and set $F:= \bigcap_{J \in \mathcal{J}} F_J$. According to Proposition~\ref{prop:InterHyperplanesProduct}, the map
$$\Psi : \left\{ \begin{array}{ccc} \bigcap\limits_{J \in \mathcal{J}} N(J) & \to & F \times \prod\limits_{J \in \mathcal{J}} C_J \\ x & \mapsto & \left(\mathrm{proj}_F(x), (\mathrm{proj}_{C_J}(x))_{J \in \mathcal{J}} \right) \end{array} \right.$$
is a graph isomorphism. A fortiori, $\Psi$ induces a homeomorphism between the prism-completions of the two quasi-median graphs. Our goal now is to understand the images under $\Psi$ of fibre-hyperplanes in order to conclude that $\Psi$ induces a homeomorphism between our intersection $\bigcap_{J \in \mathcal{J}} N_-^\square(J)$ and some product of contractible complexes. 

\medskip \noindent
Notice that, for every hyperplane $J \in \mathcal{J}$ and for every fibre $L$ of $J$, we have
$$\Psi\left(L\cap \bigcap_{I\in\mathcal{J}}N(I) \right) = F \times \{x\} \times \prod\limits_{I \in \mathcal{J} \backslash \{J\}} C_I$$
where $x$ is the vertex such that $L \cap C_J = \{x\}$. This is due to the fact that $L$ coincides with the vertices of $N(J)$ whose projection on $C_J$ is $x$. It follows that 
$$\Psi\left(L^\square\cap \bigcap_{I\in\mathcal{J}}N(I)^\square\right) = F^\square \times \{x\} \times  \prod\limits_{I \in \mathcal{J} \backslash \{J\}} C_I^\square,$$
where, for convenience, we identify $\Psi$ with the map it induces on the prism-completions. Consequently, the map $\Psi$ induces a homeomorphism
$$\bigcap\limits_{J \in \mathcal{J}} N^\square_-(J) \to F^\square \times \prod\limits_{J \in \mathcal{J}} C_J^\square\backslash C_J.$$
Notice that each $C_J^\square \backslash C_J$ is the complement of the vertices in a simplex (of dimension $\geq 1$) and that $F^\square$ is the prism completion of a quasi-median graph, as $F$ is a gated subgraph of $X$. Therefore, the right-hand side of the expression above is a product of contractible spaces, concluding the proof of Claim~\ref{claim:ContractibleIntersection}. 

\medskip \noindent
Claims~\ref{claim:SmallCarriersCover} and~\ref{claim:ContractibleIntersection} show that the nerve theorem applies, and hence $X^\square \backslash X$ is homotopy equivalent to the nerve complex of $\{ N^\square_-(J) \mid J \text{ hyperplane}\}$. According to Claim~\ref{claim:SimplexTransversality}, the latter coincides with the crossing complex $\mathrm{Cross}^\triangle(X)$, completing the proof of the proposition. 
\end{proof}

\noindent
We are now ready to prove the main theorem of this section, which uses the following fact from algebraic topology.

\begin{fact}\label{fact:AlgTopo}
Let $C$ be a CW-complex and $A, B \subset C$ two subcomplexes satisfying $A \cup B = C$. If $C$ and $B$ are both contractible, then $A$ deformation retracts on $A \cap B$. 
\end{fact}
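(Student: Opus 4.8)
The plan is to use the long exact sequence of the pair together with the fact that a cofibration of a contractible subcomplex into a CW-complex is a homotopy equivalence, but it is cleaner to argue directly with homotopy extension. First I would recall the general principle: if $(C,A)$ is a CW-pair and $B \subset C$ is a contractible subcomplex with $A \cup B = C$, then the inclusion $A \hookrightarrow C$ induces, upon collapsing $B$, a map $C \to C/B$ which is a homotopy equivalence (since $B$ is contractible and $(C,B)$ has the homotopy extension property), and $C/B = A/(A\cap B)$. Combined with contractibility of $C$, this shows $A/(A\cap B)$ is contractible; but to get the stronger statement that $A$ deformation retracts onto $A\cap B$ one needs a genuine deformation, not just a statement about quotients.

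So the cleaner route is the following. Since $B$ is contractible and the inclusion $A\cap B \hookrightarrow B$ is a cofibration (it is an inclusion of CW-subcomplexes), the pair $(B, A\cap B)$ is $0$-connected and in fact the inclusion induces an isomorphism on all homotopy groups iff $A\cap B$ is contractible — which we are *not* assuming. Instead I would use the gluing/pushout structure: $C = A \cup_{A\cap B} B$ is a homotopy pushout of $A \leftarrow A\cap B \to B$. Because $B \to \mathrm{pt}$ is a homotopy equivalence and cofibrations are preserved, the induced map $C = A\cup_{A\cap B} B \to A \cup_{A\cap B} \mathrm{pt} = A/(A\cap B)$ is a homotopy equivalence. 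Hence $A/(A\cap B) \simeq C \simeq \mathrm{pt}$.

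Now I would invoke the standard fact that if $(A, A\cap B)$ is a CW-pair (equivalently a cofibration) with $A/(A\cap B)$ contractible, then the inclusion $A\cap B \hookrightarrow A$ is a homotopy equivalence; and a homotopy equivalence that is also a cofibration is the inclusion of a deformation retract. Concretely: the cofibration $A\cap B \hookrightarrow A$ together with contractibility of the cofiber $A/(A\cap B)$ forces the inclusion to be a weak equivalence (from the long exact sequence of the pair in homotopy, since $\pi_n(A, A\cap B) \cong \tilde H_n$-type vanishing via $A/(A\cap B)$ for the relevant low degrees, or directly by the homotopy excision/Blakers–Massey comparison $\pi_n(A,A\cap B) \to \pi_n(A/(A\cap B))$ being an iso in the stable range and the target vanishing); being a cofibration between CW-complexes it is then a homotopy equivalence by Whitehead, and finally any cofibration which is a homotopy equivalence admits a strong deformation retraction of the total space onto the subspace.

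The main obstacle is purely expository: making sure the "contractible cofiber $\Rightarrow$ inclusion is a homotopy equivalence" step is stated in a form valid in all dimensions (not just a range), for which the cleanest citation is that for a CW-pair $(A,A')$ the quotient map $A \to A/A'$ fits in a cofiber sequence $A' \to A \to A/A'$, and $A/A' \simeq \mathrm{pt}$ forces $A' \hookrightarrow A$ to be a homotopy equivalence by comparing the cofiber sequences (or: $A'$ and $A$ have the same homotopy type because $A$ is obtained from $A'$ by attaching cells in a way that is cancelled up to homotopy by the contractibility of $A/A'$ — formally, apply $- \cup_{A'} \mathrm{pt}$). Once that is in hand, the promised deformation retraction is the standard one associated to any cofibration that is a homotopy equivalence (e.g.\ via the mapping cylinder), and no computation is required.
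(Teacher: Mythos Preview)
Your route differs from the paper's. You pass through the cofiber: collapsing the contractible subcomplex $B$ gives $C \simeq A/(A\cap B)$, so the latter is contractible, and you then try to deduce that $A\cap B \hookrightarrow A$ is a homotopy equivalence. The paper instead argues directly with relative homotopy groups, invoking excision to identify $\pi_n(A, A\cap B)$ with $\pi_n(C, B)$, which vanishes for all $n$ since $B$ and $C$ are both contractible; the long exact sequence of $(A, A\cap B)$ together with Whitehead then finishes the argument.

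The step you yourself flag as the ``main obstacle'' is a genuine gap, and your proposed justifications do not close it. For a CW-pair $(A, A')$, contractibility of $A/A'$ does \emph{not} force $A' \hookrightarrow A$ to be a homotopy equivalence: take $A'$ to be an acyclic CW-complex with $\pi_1(A') \neq 1$ (for instance the presentation $2$-complex of the Higman group) and let $A = CA'$ be the cone; then $A/A' = \Sigma A'$ is simply connected and acyclic, hence contractible, yet $A'$ is not a deformation retract of the contractible $CA'$. Both of your suggested repairs --- the long exact sequence of the pair in homotopy, and the Blakers--Massey comparison $\pi_n(A,A') \to \pi_n(A/A')$ --- give information only in a connectivity range, and the obstruction sits precisely at $\pi_1$, outside that range. (Taking $B$ to be a second cone on $A'$, so that $C = \Sigma A'$, shows in fact that the Fact fails in the generality stated; the paper's excision step, Hatcher's Theorem~4.23, likewise carries connectivity hypotheses that are not checked here.)
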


\begin{proof}
Note that, by the Whitehead Theorem, it suffices to show that the inclusion map $A \cap B \to A$ induces an isomorphism on $\pi_n$ for every $n \geq 0$. Since $B$ and $C$ are contractible,  $\pi_{n}(A, A \cap B)$ is isomorphic to $\pi_{n}(C, B) = \{1\}$  for all $n$ by ~\cite[Theorem~4.23, Chapter~4]{MR1867354}). Then the long exact sequence of homotopy groups for the pair $(A, A\cap B)$ shows that $\pi_n(A\cap B)\to \pi_n(A)$ is an isomorphism, concluding the proof.  
\end{proof}

\begin{proof}[Proof of Theorem~\ref{thm:HomotopyCrossingComplex}.]
Because the decomposition
$$\mathrm{Cross}^\triangle(X)= \bigsqcup\limits_{Y \text{ $2$-conn.\ comp.\ of } X} \mathrm{Cross}^\triangle(Y)$$
is clear, we can and shall assume in the rest of the proof of $X$ is $2$-connected. 

\medskip \noindent
Let $T \subset X^\square$ be a spanning tree in the one-skeleton. Fix a small $\epsilon>0$ and consider the thickening
$$T^+ := T \cup \bigcup\limits_{x \in X} B(x,\epsilon).$$
Here, we endow $X^\square$ with the length metric that extends the natural metrics on the prisms obtained from Euclidean realisations as polytopes. (This is a CAT(0) metric \cite{QM}, but we will not use this fact.) The key point to keep in mind is that the balls $B(x,\epsilon)$ are pairwise disjoint and that each sphere $S(x,\epsilon)$ is homeomorphic to $\mathrm{link}_{X^\square}(x)$. 

\medskip \noindent
Clearly, $T^+$ is contractible. As $X^\square$ is contractible as well, 
Fact~\ref{fact:AlgTopo} implies that $T^+\backslash X$ is a deformation retract of $X^\square \backslash X$. But $T^+\backslash X$ is  homotopy equivalent to $\bigvee_{x \in X} S(x,\epsilon)$, or equivalently to $\bigvee_{x \in X} \mathrm{link}_{X^\square}(x)$. The desired conclusion then follows from Proposition~\ref{prop:PuncturedQM} and Lemma~\ref{lem:Loc}. 
\end{proof}

\subsection{Relative contact complexes}\label{section:RelativeContact}

\noindent
In this section, we introduce and study \emph{relative contact complexes} of quasi-median graphs. See Figure~\ref{RelativeCont} for an example. 
\begin{figure}
\begin{center}
\includegraphics[width=0.6\linewidth]{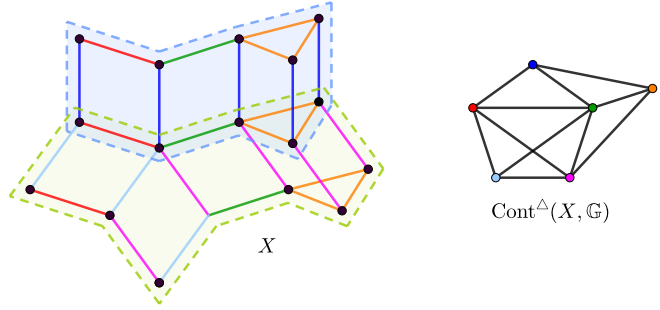}
\caption{A quasi-median graph $X$ endowed with a collection of gated subgraphs $\mathbb{G}$ and the corresponding $\mathbb{G}$-contact complex.}
\label{RelativeCont}
\end{center}
\end{figure}

\begin{definition}
Let $X$ be a quasi-median graph and $\mathbb{G}$ a collection of gated subgraphs. The \emph{$\mathbb{G}$-contact complex} $\mathrm{Cont}^\triangle(X,\mathbb{G})$ is the graph whose vertices are the hyperplanes of $X$ and whose simplices are given by hyperplanes pairwise in contact that all cross a common subgraph from $\mathbb{G}$. 
\end{definition}

\noindent
In practice, we will always assume that $\mathbb{G}$ is \emph{prism-covering}, i.e.\ every prism of $X$ is contained in some subgraph from $\mathbb{G}$. One can think of relative contact complexes as an interpolation between crossing complexes (when $\mathbb{G}= \{ \text{prisms}\}$) and contact complexes (when $\mathbb{G}= \{X\}$). 

\medskip \noindent
A particular case of interest, distinct from crossing and contact complexes, is given by \emph{contiguity complexes}. See Figure~\ref{Contg} for an example. 

\begin{definition}
Let $X$ be a quasi-median graph. Two hyperplanes are \emph{contiguous} whenever their carriers contain a common clique. The \emph{contiguity complex} $\mathrm{Contg}^\triangle(X)$ is the $\mathbb{G}$-contact complex $\mathrm{Cont}^\triangle(X,\mathbb{G})$ where
$$\mathbb{G}:= \{ \text{prisms containing } C \mid C \text{ clique of } X\},$$
 i.e.\ the simplicial complex whose vertices are the hyperplanes of $X$ and whose simplices are given by collections of hyperplanes with a common clique in their carriers.
\end{definition}

\noindent
For instance, the contiguity complex of a $(2 \times 2)$-grid is the two-skeleton of a tetrahedron, so topologically a $2$-sphere. We emphasize that, as illustrated by Figure~\ref{Contg}, a collection of pairwise contiguous hyperplanes may not be globally contiguous. Therefore, contrary to crossing and contact complexes, a contiguity complex may not be flag. 
\begin{figure}
\begin{center}
\includegraphics[width=0.6\linewidth]{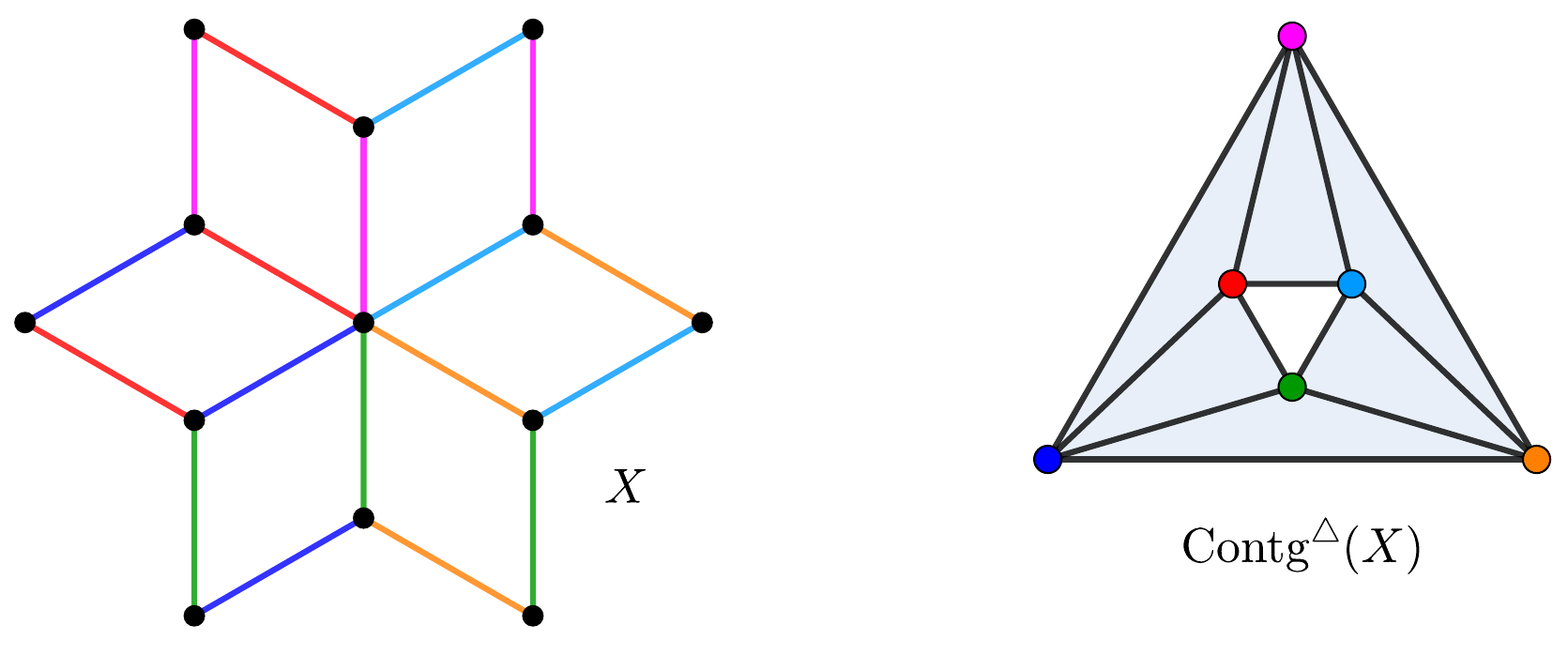}
\caption{A quasi-median graph and its contiguity complex.}
\label{Contg}
\end{center}
\end{figure}

\medskip \noindent
The main result of this section deals with contact complexes relative to \emph{star-covering} collections of gated subgraphs, i.e.\ for every clique $C$, there exists $Y \in \mathbb{G}$ such that $Y$ contains all the prisms containing $C$.  For a star-covering collection $\mathbb G$ and for every vertex $x \in X$, we let $\mathrm{sL}_\mathbb{G}(x)$ be the simplicial graph whose vertices are the cliques of $X$ containing $x$ and whose simplices are given by collections of cliques contained in a common subgraph from $\mathbb{G}$.  Let $\mathrm{L}_\mathbb{G}(x)$ denote the simplicial complex whose vertices are the edges of $X$ containing $x$ and whose simplices are given by collections of edges contained in a common subgraph from $\mathbb{G}$.

\begin{thm}\label{thm:RelContact}
Let $X$ be a $2$-connected quasi-median graph and $\mathbb{G}$ a star-covering collection of gated subgraphs. The relative contact complex $\mathrm{Cont}^\triangle(X,\mathbb{G})$ is homotopically equivalent to the pointed sum $\bigvee_{x \in X} \mathrm{sL}_\mathbb{G}(x).$  Equivalently, $\mathrm{Cont}^\triangle(X,\mathbb{G})$ is homotopy equivalent to the pointed sum $\bigvee_{x \in X} \mathrm{L}_\mathbb{G}(x)$. 
\end{thm}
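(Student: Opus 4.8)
The plan is to follow the same four-step strategy outlined in the introduction to the section, generalising the proof of Proposition~\ref{prop:PuncturedQM} and Theorem~\ref{thm:HomotopyCrossingComplex} from the prism case to an arbitrary star-covering collection $\mathbb{G}$. First I would build the cone-off space: let $\mathrm{ConeOff}(X^\square,\mathbb{G})$ be obtained from $X^\square$ by gluing, for each $Y \in \mathbb{G}$, a cone over $Y^\square$ with apex $c_Y$. Since $X^\square$ is contractible (Proposition~\ref{prop:QMcontractible}) and each $Y^\square$ is contractible (as $Y$ is gated, hence quasi-median), attaching cones over contractible subcomplexes keeps the space contractible; so $\mathrm{ConeOff}(X^\square,\mathbb{G})$ is contractible. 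For each hyperplane $J$, define the enlarged carrier $\widehat{N}(J) := N^\square(J) \cup \bigcup\{\,\text{cone}(Y) : Y \in \mathbb{G}, \ J \text{ crosses } Y\,\}$. The first task is to check that two hyperplanes $J_1,J_2$ are in contact \emph{and} cross a common subgraph of $\mathbb{G}$ if and only if $\widehat{N}(J_1) \cap \widehat{N}(J_2) \neq \emptyset$: the ``if'' direction with $J_1 \neq J_2$ forces a common cone point (pure carriers only intersecting would give contact but not necessarily a common $\mathbb{G}$-subgraph — one has to use that $\mathbb{G}$ is prism-covering to rule out the bad case, exactly as the tangency argument in Claim~\ref{claim:SimplexTransversality}), and more generally one must verify that any finite sub-collection $\mathcal{J}$ of pairwise-$\widehat N$-intersecting hyperplanes has $\bigcap_{J\in\mathcal J}\widehat N(J)\neq\emptyset$ precisely when they pairwise contact and share a common $Y\in\mathbb G$; here star-covering is what upgrades ``pairwise'' to ``globally'' via a Helly-type argument among the relevant cone points. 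So the nerve of $\{\widehat N(J)\}$ is $\mathrm{Cont}^\triangle(X,\mathbb{G})$.

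The second, more delicate task is the contractibility of finite intersections $\bigcap_{J \in \mathcal{J}} \widehat{N}(J)$ when $\mathcal{J}$ is a collection of hyperplanes pairwise in contact all crossing a common $Y \in \mathbb{G}$. The carrier part $\bigcap_J N(J)$ is a gated subgraph (Theorem~\ref{thm:BigQM}, Lemma~\ref{lem:Helly}), hence its prism-completion is contractible; the cone pieces attached along sub-subgraphs of it form a contractible space by the same ``cone-off of contractible along contractible'' principle, provided the combinatorial pattern of which cones get glued is itself well-behaved — this is where I expect to spend real effort, keeping track of the poset of relevant $Y \in \mathbb{G}$ and the subgraphs $Y \cap \bigcap_J N(J)$, and arguing (perhaps via a further nerve argument, or by an explicit deformation retraction onto a single vertex $o \in \bigcap_J N(J)$) that the union is contractible. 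The star-covering hypothesis is precisely what guarantees, for each clique $C$, the existence of a $Y \in \mathbb{G}$ absorbing all prisms on $C$, which is needed both for the cover to make sense and for these local contractibility statements. By Leray's nerve theorem, $\mathrm{Cont}^\triangle(X,\mathbb{G}) \simeq \mathrm{ConeOff}(X^\square,\mathbb{G}) \setminus X$ (one checks the $\widehat N(J)$ cover exactly $\mathrm{ConeOff}(X^\square,\mathbb{G})\setminus X$, the analogue of Claim~\ref{claim:SmallCarriersCover}, using that every point off $X$ lies in the interior of some prism or some cone, and a minimal-prism argument).

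Finally I would run the Fact~\ref{fact:AlgTopo} argument verbatim: pick a spanning tree $T$ in the one-skeleton of $X$, thicken it to $T^+ = T \cup \bigcup_{x \in X} B(x,\epsilon)$ with $\epsilon$ small so the balls are disjoint and each $S(x,\epsilon) \cong \mathrm{link}_{\mathrm{ConeOff}(X^\square,\mathbb{G})}(x)$. Since $X$ is $2$-connected, $T^+$ is contractible, and since $\mathrm{ConeOff}(X^\square,\mathbb{G})$ is contractible, Fact~\ref{fact:AlgTopo} gives that $T^+ \setminus X$ is a deformation retract of $\mathrm{ConeOff}(X^\square,\mathbb{G}) \setminus X$; and $T^+ \setminus X \simeq \bigvee_{x \in X} S(x,\epsilon) = \bigvee_{x \in X} \mathrm{link}_{\mathrm{ConeOff}(X^\square,\mathbb{G})}(x)$. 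It then remains to identify, up to homotopy, $\mathrm{link}_{\mathrm{ConeOff}(X^\square,\mathbb{G})}(x)$ with $\mathrm{sL}_\mathbb{G}(x)$ — equivalently with $\mathrm{L}_\mathbb{G}(x)$, the two being homotopy equivalent by (the proof of) Lemma~\ref{lem:LinkVsSlink}/Lemma~\ref{lem:Loc}. Concretely, the link of $x$ in the cone-off is the link of $x$ in $X^\square$ with an extra cone glued on $\mathrm{link}_{Y^\square}(x)$ for each $Y \in \mathbb{G}$ containing $x$; collapsing each such cone and passing to cliques-rather-than-edges identifies it with the complex $\mathrm{L}_\mathbb{G}(x)$ whose simplices are edge-collections lying in a common $Y \in \mathbb{G}$, and then with $\mathrm{sL}_\mathbb{G}(x)$. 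Assembling the pieces yields $\mathrm{Cont}^\triangle(X,\mathbb{G}) \simeq \bigvee_{x \in X} \mathrm{sL}_\mathbb{G}(x) \simeq \bigvee_{x \in X} \mathrm{L}_\mathbb{G}(x)$. The main obstacle, as indicated, is the contractibility of the finite intersections of enlarged carriers: unlike the crossing case where Proposition~\ref{prop:InterHyperplanesProduct} gives a clean product decomposition, here one must manage the interaction between a gated intersection of carriers and the family of cones glued over its sub-subgraphs.
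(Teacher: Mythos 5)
Your overall architecture (contractible ambient space, nerve theorem applied to enlarged carriers, spanning-tree retraction via Fact~\ref{fact:AlgTopo}, then $\mathrm{link}\simeq \mathrm{L}_\mathbb{G}(x)\simeq \mathrm{sL}_\mathbb{G}(x)$) is the right one and matches the paper's in spirit, but the cover you feed into the nerve theorem does not have $\mathrm{Cont}^\triangle(X,\mathbb{G})$ as its nerve, and this is a genuine gap rather than a detail. Two failure modes. First, since $\widehat N(J)\supseteq \mathrm{cone}(Y)\supseteq Y^\square$ for \emph{every} $Y\in\mathbb{G}$ crossed by $J$, two hyperplanes $J_1,J_2$ crossing distinct subgraphs $Y_1,Y_2\in\mathbb{G}$ with $Y_1^\square\cap Y_2^\square$ containing an edge already satisfy $\widehat N(J_1)\cap\widehat N(J_2)\neq\emptyset$ (even after deleting $X$), without $J_1$ and $J_2$ being in contact at all. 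Concretely, in $A(C_6)$ with $\mathbb{G}$ the cosets of maximal join (= star) subgroups, take $J_1$ the hyperplane of the clique $\langle 6\rangle$, which crosses $Y_1=\langle 6,1,2\rangle$, and $J_2$ the hyperplane of the clique $2^5\langle 4\rangle$, which crosses $Y_2=\langle 2,3,4\rangle$; their carriers $\langle 5,6,1\rangle$ and $2^5\langle 3,4,5\rangle$ are disjoint, yet $Y_1^\square\cap Y_2^\square=\langle 2\rangle^\square$ is a line. Second, even if you shrink the cone pieces, the unperforated carriers create spurious edges: your claim that prism-covering ``rules out the bad case'' is false for tangent hyperplanes whose carriers meet in a single vertex. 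In $A(C_6)$ again, the hyperplanes of the cliques $\langle 1\rangle$ and $\langle 4\rangle$ have carriers $\langle 6,1,2\rangle$ and $\langle 3,4,5\rangle$ meeting exactly in $\{1\}$, so they are in contact, but no coset of a star subgroup is crossed by both (no vertex of $C_6$ is adjacent to both $1$ and $4$), so they span no edge of $\mathrm{Cont}^\triangle(X,\mathbb{G})$.

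The paper's fix is precisely calibrated to avoid both problems: it works in the space $X^\mathbb{G}$ obtained by perforating $X^\square$ at the vertices and gluing each $\mathrm{L}_\mathbb{G}(x)$ along the link $S(x,\epsilon)$, and assigns to $J$ the subspace $S(J)=(N^\square(J)\cap X^\odot)\cup\bigcup_{x\in N(J)}L_J(x)$, where $L_J(x)$ consists only of the simplices of $\mathrm{L}_\mathbb{G}(x)$ containing the direction of $J$ at $x$ (so no whole cone over $Y^\square$ is ever included, and vertex-only carrier intersections are deleted by the perforation and reinstated exactly when a common $Y$ contains the two relevant cliques). With that cover the nerve is $\mathrm{Cont}^\triangle(X,\mathbb{G})$ by Lemma~\ref{lem:GoodPrism} and Corollary~\ref{cor:CrossAndClique}, the intersections $S(J_1)\cap\cdots\cap S(J_n)$ are shown contractible by checking that the glued pieces $\bigcap_i L_{J_i}(x)$ are contractible and contain the link of $x$ in $(\bigcap_i N(J_i))^\square$, so that up to homotopy one is just refilling the perforation of a contractible complex; your worry about managing a poset of cones is thereby avoided. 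Your final two steps (Fact~\ref{fact:AlgTopo} with a spanning tree, and $\mathrm{L}_\mathbb{G}(x)\simeq\mathrm{sL}_\mathbb{G}(x)$) do go through essentially as you describe, but you should replace the cone-off and the $\widehat N(J)$ by the paper's $X^\mathbb{G}$ and $S(J)$, or else redesign your enlarged carriers so that their pairwise intersections detect both contact \emph{and} a common member of $\mathbb{G}$.
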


\noindent
Notice that, when $\mathbb{G}= \{X\}$, each $\mathrm{sL}_\mathbb{G}(x)$ is a simplex, so our theorem is compatible  with Proposition~\ref{prop:ContactContractible}.  It is also worth noticing that the second pointed sum from our proposition makes sense because the complexes $\mathrm{L}_\mathbb{G}(x)$ are all connected:

\begin{lemma}\label{lem:ContConnected}
If $X$ is $2$-connected and $\mathbb{G}$ is prism-covering, then $\mathrm{L}_\mathbb{G}(x)$ is connected for every $x \in X$. 
\end{lemma}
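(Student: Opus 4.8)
The plan is to show that any two edges $e, f$ at $x$ can be joined by a path in $\mathrm{L}_\mathbb{G}(x)$, i.e.\ a sequence of edges $e = e_0, e_1, \ldots, e_k = f$ all containing $x$, in which consecutive edges lie in a common prism (hence in a common subgraph of $\mathbb{G}$, by the prism-covering hypothesis, so that $\{e_{i-1}, e_i\}$ spans an edge of $\mathrm{L}_\mathbb{G}(x)$). Since every prism containing two edges at $x$ is witnessed by some $Y \in \mathbb{G}$, it suffices to work entirely inside $X$ and produce such a prism-path between $e$ and $f$; the collection $\mathbb{G}$ then plays no role beyond this translation. So the real content is: \emph{in a $2$-connected quasi-median graph, the graph on edges at $x$, with adjacency ``lie in a common prism,'' is connected.}

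First I would reduce to the case where $e$ and $f$ lie in distinct cliques $C_e \neq C_f$ at $x$ (if they share a clique they are already prism-adjacent, the clique being a prism). Let $y, z$ be the other endpoints of $e, f$. Because $X$ is $2$-connected, removing $x$ leaves $X \setminus \{x\}$ connected, so there is a path $\gamma$ from $y$ to $z$ avoiding $x$. The idea is to use $\gamma$ together with convexity/gatedness of carriers to build the prism-path. Concretely, I would consider the hyperplanes $J_e, J_f$ dual to $e, f$ and their carriers $N(J_e), N(J_f)$, which are gated (Theorem~\ref{thm:BigQM}(ii)); the cliques $C_e, C_f$ lie in these carriers. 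A clean way to run the induction is on $d(y,z)$: if $y$ and $z$ are adjacent, then $x, y, z$ span a triangle or sit in a $4$-cycle $x,y,w,z$ (depending on whether $yz$ is an edge not already forcing a common clique), and in either case one finds a prism — a clique or a product of two cliques — containing both $e$ and $f$, or at least reduces to a single intermediate edge. For the inductive step, take the first edge $yy'$ of a geodesic (or of $\gamma$) from $y$ toward $z$; the square or triangle spanned by $x, y, y'$ together with quasi-median local structure (Proposition~\ref{lem:LocallyGated}, the fact that $4$-cycles with two adjacent edges at $x$ close up inside any gated subgraph) produces an edge $e'$ at $x$ that is prism-adjacent to $e$ and whose far endpoint is closer to $z$; then recurse.

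The main obstacle I anticipate is that a geodesic from $y$ to $z$ need not avoid $x$, and conversely the $x$-avoiding path supplied by $2$-connectedness need not be geodesic, so one cannot simply induct on $d(y,z)$ along a single well-behaved path — one must interleave ``move along a geodesic toward $z$'' steps with ``use $2$-connectedness to get around $x$'' steps, and check that the edge at $x$ being tracked changes only by prism-moves. I would handle this by choosing, among all paths from $y$ to $z$ avoiding $x$, one of minimal length, and arguing that its first edge $yy'$ together with $x$ spans a triangle or $4$-cycle in $X$; quasi-median triangle/square structure (every $4$-cycle lies in a prism, triangles lie in cliques) then gives a prism $P \ni x$ containing the edge $e = xy$ and an edge $e' = xy'$ or $e' = xw$ with $w$ adjacent to $y'$, so $e, e'$ are prism-adjacent, and the minimal $x$-avoiding path from $y'$ (or $w$) to $z$ is shorter, giving the induction. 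The prism-covering hypothesis is used only at the very end to upgrade each ``common prism'' to a ``common $Y \in \mathbb{G}$,'' which is immediate.
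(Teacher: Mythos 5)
Your reduction is exactly right: the collection $\mathbb{G}$ only enters at the very end, and the real content is that the graph on edges at $x$ with adjacency ``lie in a common prism'' is connected when $X$ is $2$-connected. Note, though, that the paper disposes of this core claim topologically rather than combinatorially: $X^\square$ is contractible, so a local cut-point (disconnected link) would be a global cut-point, and $2$-connectedness therefore forces every link of $X^\square$ to be connected, which is verbatim the prism-path statement. Your plan instead attempts a direct induction along a minimal $x$-avoiding path, and its pivotal step is where the gap lies.

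The unproved step is the claim that the first edge $yy'$ of a minimal $x$-avoiding path from $y$ to $z$ spans, together with $x$, a triangle or a $4$-cycle. The justification you offer --- ``every $4$-cycle lies in a prism, triangles lie in cliques'' --- tells you what to do with such a configuration once you have it, not why it exists; neither the triangle nor the quadrangle condition of weakly modular graphs produces a second common neighbour of $x$ and $y'$ from the bare data $x\sim y\sim y'$ with $d(x,y')=2$. In fact the configuration exists if and only if $y'$ lies in the carrier $N(J)$ of the hyperplane $J$ dual to $xy$ (a $4$-cycle $x,y,y',w$ forces the edge $wy'$ into $J$, and a triangle forces $y'$ into the clique of $J$ at $x$). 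So what you actually need to show is that a minimal $x$-avoiding path does not start by leaving $N(J)$. This is true, but proving it requires the hyperplane machinery: the path must cross $J$ (its endpoints lie in different sectors, and it cannot use the edge $xy$), its initial segment up to the first $J$-crossing is an $X$-geodesic between two vertices of the fibre of $J$ containing $y$, and gatedness of that fibre (Theorem~\ref{thm:BigQM}(ii)) forces that segment --- in particular its first edge --- into $N(J)$, after which Proposition~\ref{prop:InterHyperplanesProduct} supplies the square. Relatedly, your induction measure (length of the minimal $x$-avoiding path to $z$) does not obviously decrease when you replace $y$ by the new vertex $w$; verifying that it does again uses the parallel geodesic in the fibre of $J$ containing $x$. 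None of this is in the proposal, so as written the induction does not get off the ground; either supply this hyperplane argument or switch to the paper's one-line topological argument via contractibility of $X^\square$.
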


\begin{proof}
Fix a vertex $x \in X$ and let $e,f$ be two edges of $X$ containing $x$. As the prism-completion $X^\square$ of $X$ is simply connected, a local cut-point in $X^\square$ is automatically a global cut-point. Therefore, since $X$ is assumed to be $2$-connected, links of vertices in $X^\square$ must be connected, which implies that there exists a sequence of edges containing $x$, say
$$e=a_1,\ a_2,\ \ldots, \ a_{n-1}, \ a_n=f,$$
such that $a_i$ and $a_{i+1}$ are contained in a common prism for every $1 \leq i \leq n-1$. Because $\mathbb{G}$ is prism-covering, every prism must be contained in a subgraph from $\mathbb{G}$. Consequently, $a_i$ and $a_{i+1}$ must be adjacent in $\mathrm{L}_\mathbb{G}(x)$ for every $1 \leq i \leq n-1$. Thus, we have proved that $e$ and $f$ can be connected by a path in $\mathrm{L}_\mathbb{G}(x)$. 
\end{proof}

\noindent
Before turning to the proof of Theorem~\ref{thm:RelContact}, we gather a few facts relating hyperplanes and relative contact complexes.

\begin{prop}\label{prop:GContactSimplices}
Let $X$ be a quasi-median graph and $\mathbb{G}$ be a collection of gated subgraphs. For a finite collection of hyperplanes $\mathcal{J}$ of $X$, the following statements are equivalent:
\begin{itemize}
     \item $\mathcal{J}$ is a simplex of $\mathrm{Cont}^\triangle(X,\mathbb{G})$;
     \item there exists $Y\in\mathbb{G}$ and   $x\in Y\cap   \bigcap_{J\in \mathcal{J}}N(J)$ such that $Y$ contains the clique of $J$ containing $x$  for each $J\in\mathcal{J}$.
\end{itemize}
Moreover, if 
$\mathbb{G}$ is a star-covering collection of gated subgraphs, the above statements are equivalent to: 
 \begin{itemize}
     \item for every $x\in \bigcap_{J\in \mathcal{J}}N(J)$, there exists  $Y \in \mathbb{G}$ that contains the clique of $J$ containing $x$ for each $J\in\mathcal{J}$.
 \end{itemize}
\end{prop}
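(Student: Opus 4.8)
The plan is to prove the two biconditionals in turn. First I would establish the equivalence between "$\mathcal{J}$ is a simplex of $\mathrm{Cont}^\triangle(X,\mathbb{G})$" and the existence of a witness $Y \in \mathbb{G}$ and a point $x \in Y \cap \bigcap_{J \in \mathcal{J}} N(J)$ with all the relevant cliques of $J$ inside $Y$. The direction "simplex $\Rightarrow$ witness" is the substantive one. By definition of $\mathrm{Cont}^\triangle(X,\mathbb{G})$, if $\mathcal{J}$ is a simplex then the hyperplanes in $\mathcal{J}$ are pairwise in contact and all cross a common $Y \in \mathbb{G}$. Since they pairwise cross $Y$ and $Y$ is gated (hence a quasi-median graph in its own right), and since two hyperplanes are in contact iff their carriers meet, the carriers $N(J) \cap Y$ (for $J \in \mathcal{J}$) are pairwise-intersecting gated subgraphs of $Y$; by the Helly property (Lemma~\ref{lem:Helly}) they have a common vertex $x$. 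Then $x \in Y \cap N(J)$ for each $J$, and Corollary~\ref{cor:CrossAndClique} applied inside $Y$ (noting $J$ crosses $Y$, so $x \in N(J) \cap Y$) gives that the clique of $J$ containing $x$ lies in $Y$. Conversely, "witness $\Rightarrow$ simplex" is immediate: if such $Y$ and $x$ exist, then each $J \in \mathcal{J}$ crosses $Y$ (it has an edge of a clique through $x$ lying in $Y$), and any two $J, J' \in \mathcal{J}$ are in contact because their carriers both contain $x$; so $\mathcal{J}$ satisfies the defining condition of a simplex.

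Next I would upgrade the "there exists $x$" in the witness to "for every $x$", under the star-covering hypothesis. Clearly the "for every $x$" statement implies the "there exists $x$" statement (the intersection $\bigcap_{J \in \mathcal{J}} N(J)$ is nonempty by Helly, using pairwise contact), so I only need the converse. Suppose $\mathcal{J}$ is a simplex and let $x \in \bigcap_{J \in \mathcal{J}} N(J)$ be arbitrary. For each $J \in \mathcal{J}$, let $C_J$ be the clique of $J$ containing $x$. The $C_J$ are cliques pairwise in contact (they share the vertex $x$), so by Theorem~\ref{thm:PrismsHyp} — or more directly, since the hyperplanes they carry are pairwise transverse: a collection of pairwise-contact cliques through a common vertex span a prism — the $C_J$ are all contained in a common prism $P$. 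Here I should be slightly careful: I need that the hyperplanes carried by the $C_J$ are pairwise transverse, which follows because $\mathcal{J}$ is a simplex only if... actually this needs the hyperplanes to be transverse, not merely in contact; but distinct cliques sharing a vertex $x$ carry hyperplanes that are transverse (they span a $4$-cycle at $x$ via the product structure of the prism), so the $C_J$ do lie in a common prism $P$ containing $x$. Now apply star-covering: since $P$ is a prism containing the clique $C_{J_0}$ for some (any) fixed $J_0 \in \mathcal{J}$, there is $Y \in \mathbb{G}$ containing all prisms through $C_{J_0}$, in particular $Y \supseteq P \supseteq C_J$ for every $J \in \mathcal{J}$. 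This $Y$ is the required subgraph.

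The main obstacle I anticipate is the middle step: verifying that pairwise-contact cliques through a common vertex actually lie in a single prism. Pairwise contact of cliques is weaker than transversality of the carried hyperplanes in general, but when the cliques share the vertex $x$ the two coincide — distinct cliques $C, C'$ through $x$ carry hyperplanes $J \ne J'$, and picking edges $e \subset C$, $e' \subset C'$ at $x$, local structure of quasi-median graphs forces $e, e'$ to span a $4$-cycle (this is essentially the fact that the link of $x$ in $X^\square$ is a simplicial complex whose cliques are prisms), so $J \pitchfork J'$. Then maximality of a collection of pairwise transverse hyperplanes through the cliques at $x$, together with Theorem~\ref{thm:PrismsHyp}, produces the prism. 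I would want to either cite the appropriate lemma from \cite{QM} about links of vertices being flag-like with prisms as maximal simplices, or give a direct induction; I expect the cleanest route is to invoke that $\{C_J : J \in \mathcal{J}\}$ generates a prism because the carried hyperplanes are pairwise transverse and all cross $C_{J_0}$, hence by Theorem~\ref{thm:PrismsHyp} are contained in a single (finite) prism. Everything else is routine bookkeeping with gatedness, Helly, and Corollary~\ref{cor:CrossAndClique}.
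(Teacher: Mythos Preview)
Your first equivalence is fine and essentially matches the paper's use of Helly on $\{N(J_1),\ldots,N(J_n),Y\}$ together with Corollary~\ref{cor:CrossAndClique}. The gap is in the star-covering direction. The claim that distinct cliques through a common vertex $x$ carry transverse hyperplanes is false: in a tripod (three edges meeting at a point, a median graph), the three cliques at the centre carry pairwise \emph{tangent} hyperplanes and no two of them lie in a common prism. More to the point, the hyperplanes in $\mathcal{J}$ are only assumed pairwise \emph{in contact}, not pairwise transverse, so the cliques $C_J$ at $x$ need not lie in any common prism, and star-covering applied to a fixed $C_{J_0}$ has no reason to produce a $Y$ containing the remaining $C_J$. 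Your justification (``they span a $4$-cycle at $x$ via the product structure of the prism'') presupposes the prism whose existence you are trying to establish.

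The paper's argument is genuinely different here: it keeps the witness $Z \in \mathbb{G}$ (the subgraph crossed by all the $J_i$) in play. If $x \in Z$, one takes $Y := Z$ directly, since Corollary~\ref{cor:CrossAndClique} already forces each $C_{J_i} \subset Z$; star-covering is not even invoked. If $x \notin Z$, one picks a hyperplane $J$ separating $x$ from $Z$ with $x \in N(J)$. Because each $J_i$ crosses $Z$ while $x \in N(J_i)$, the gated subgraph $N(J_i)$ meets both sides of $J$, which forces $J$ to be transverse to every $J_i$. Star-covering is then applied to the clique $C$ of this auxiliary hyperplane $J$ at $x$: each $C_{J_i}$ spans a prism with $C$ (via Proposition~\ref{prop:InterHyperplanesProduct}), so the $Y \in \mathbb{G}$ containing all prisms through $C$ contains every $C_{J_i}$. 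The auxiliary separating hyperplane transverse to all of $\mathcal{J}$ is the idea your plan is missing.
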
 

\begin{proof}
Suppose $\mathcal{J}=\{J_1,\ldots,J_n\}$ is a simplex of $\mathrm{Cont}^\triangle(X,\mathbb{G})$. Then there exists $Y\in \mathbb{G}$ such that $\{N(J_1),\ldots, N(J_n)\}\cup\{Y\}$ is a collection of pairwise intersecting gated subgraphs, and the Helly property implies that $Y\cap \bigcap_{i=1}^n N(J_i)$ is non-empty. Let $x\in Y\cap \bigcap_{i=1}^n N(J_i)$. Since $J_i$ crosses $Y$, the clique of $J_i$ containing $x$ is contained in $Y$ by Corollary~\ref{cor:CrossAndClique}. That the second and third statements imply the first one follows directly from the definitions.  

\medskip \noindent
Let us prove that the first statement implies the third one assuming that  $\mathbb{G}$ is star-covering. Let $\mathcal{J}= \{J_1, \ldots, J_n\}$ be a simplex of $\mathrm{Cont}^\triangle(X,\mathbb{G})$ and $x\in \bigcap_{J\in \mathcal{J}}N(J)$.  
The hyperplanes in $\mathcal{J}$ are pairwise in contact, and  there exists $Z \in \mathbb{G}$ such that   each $J\in\mathcal{J}$ crosses $Z$. If $x \in Z\cap \bigcap_{i=1}^nN(J)$, then
Corollary~\ref{cor:CrossAndClique} implies that the clique of $J$ containing $x$ is contained in $Z$ for each $J\in \mathcal{J}$; in this case, it suffices to set $Y:=Z$. Suppose that $x \not\in Z\cap \bigcap_{i=1}^nN(J_i)$ and let $J$ be a hyperplane separating $x$ from $Z$, which we can choose to be tangent to $x$. Let $C$ be the clique of $J$ containing $x$.  Since $\mathbb{G}$ is star-covering, there exists $Y \in \mathbb{G}$  containing all the prisms that contain $C$. For every $1 \leq i \leq n$, let $C_i$ be the clique of $J_i$ containing $x$. Since $J$ is necessarily transverse to $J_i$, Proposition~\ref{prop:InterHyperplanesProduct} implies that $C$ and $C_i$ span a prism containing $x$, and hence $C_i$ is contained in $Y$.
\end{proof}

\noindent 
The rest of the section is dedicated to the proof of Theorem~\ref{thm:RelContact}.   We fix a quasi-median graph $X$ and a collection $\mathbb{G}$ of gated subgraphs.  The equivalence of the two statements of Theorem~\ref{thm:RelContact} will be proven in Lemma~\ref{lem:LinkVsSlink} by showing that for each vertex $x\in X$, $\mathrm{sL}_\mathbb{G}(x)$ is homotopy equivalent to $\mathrm{L}_\mathbb{G}(x)$.

\medskip \noindent
Let $X^\odot$ be the \emph{perforation} of $X$, i.e.\ the space obtained from the prism-completion $X^\square$ of $X$ by removing a small open ball around each vertex of a fixed radius $\epsilon<1/2$, if we endow $X^\square$ with a length metric that extends the Euclidean metrics on its prisms. Given a vertex $x \in X$, the sphere $S(x,\epsilon)$ can be identified with the link of $x$ in the prism-completion $X^\square$. In other words, we can think of $S(x,\epsilon)$ as the simplicial complex whose vertices are the edges of $X$ containing $x$ and whose simplices are given by collections of edges contained in a common prism of $X$. 

\medskip \noindent
 When $\mathbb{G}$ is prism-covering, the complex $\mathrm{L}_\mathbb{G}(x)$ naturally contains $S(x,\epsilon)$ as a subcomplex, which allows us to define
$$X^\mathbb{G}:= X^\odot \cup \bigcup\limits_{x \in X} \mathrm{L}_\mathbb{G}(x)$$
where each $\mathrm{L}_\mathbb{G}(x)$ is glued to $X^\odot$ over $S(x,\epsilon)$. 

\medskip \noindent
We refer the reader to Figure~\ref{Perfo} for an illustration of our construction in a specific case. In order to prove Theorem~\ref{thm:RelContact}, we start by showing that our relative contact complex is homotopy equivalent to the specific complex just constructed:
\begin{figure}
\begin{center}
\includegraphics[width=0.9\linewidth]{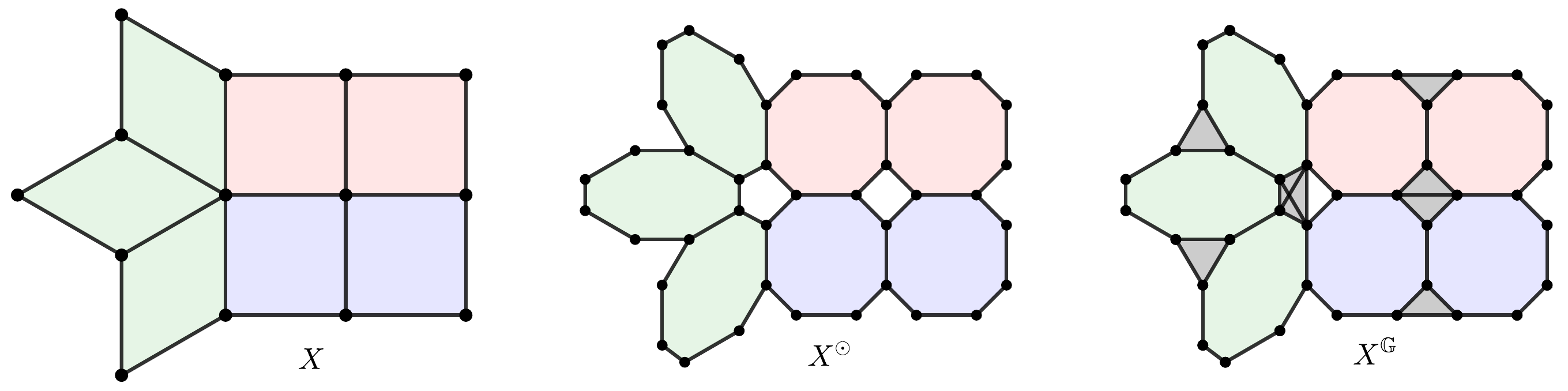}
\caption{A quasi-median graph $X$ endowed with a collection $\mathbb{G}$ of gated subgraphs, the perforation $X^\odot$, and the corresponding complex $X^\mathbb{G}$.}
\label{Perfo}
\end{center}
\end{figure}

\begin{prop}\label{prop:ContactXG}
If $\mathbb{G}$ is star-covering, then the $\mathbb{G}$-contact complex $\mathrm{Cont}^\triangle(X,\mathbb{G})$ is homotopy equivalent to $X^\mathbb{G}$.
\end{prop}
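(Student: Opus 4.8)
The plan is to apply Leray's nerve theorem to a carefully chosen cover of $X^\mathbb{G}$, just as in the proofs of Propositions~\ref{prop:ContactContractible} and~\ref{prop:PuncturedQM}, but now with the relative structure built in. For each hyperplane $J$ of $X$, I would define a subcomplex $S_\mathbb{G}(J) \subset X^\mathbb{G}$ as follows: take the carrier $N^\square(J)$ inside $X^\odot$ (i.e.\ intersected with the perforation), and then, for every vertex $x \in N(J)$, glue in the subcomplex of $\mathrm{L}_\mathbb{G}(x)$ spanned by those edges at $x$ that lie in the clique of $J$ through $x$ (and the prisms inside subgraphs of $\mathbb{G}$ crossed by $J$ that contain that clique). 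The first task is to check that $\{S_\mathbb{G}(J) : J \text{ hyperplane}\}$ covers $X^\mathbb{G}$: a point in $X^\odot$ lies in the small carrier of some hyperplane crossing a minimal-dimension prism through it (as in Claim~\ref{claim:SmallCarriersCover}), and a point of $\mathrm{L}_\mathbb{G}(x)$ lies in a simplex corresponding to a collection of edges inside some $Y \in \mathbb{G}$, hence inside one of the cliques through $x$, so it lies in $S_\mathbb{G}(J)$ for the hyperplane $J$ determined by that clique.

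Second, I would identify the nerve: $S_\mathbb{G}(J_1), \ldots, S_\mathbb{G}(J_n)$ have nonempty common intersection if and only if $\{J_1,\ldots,J_n\}$ is a simplex of $\mathrm{Cont}^\triangle(X,\mathbb{G})$. The "only if" direction is the substantive one, and here I would lean on Proposition~\ref{prop:GContactSimplices}: a common point forces the $J_i$ to be pairwise in contact (their carriers meet) and, by tracking the glued pieces of the $\mathrm{L}_\mathbb{G}(x)$, forces the existence of a vertex $x \in \bigcap_i N(J_i)$ together with a $Y \in \mathbb{G}$ containing all the cliques of the $J_i$ through $x$ — which by Proposition~\ref{prop:GContactSimplices} is exactly the simplex condition (using that $\mathbb{G}$ is star-covering). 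The converse uses the Helly property for carriers plus Corollary~\ref{cor:CrossAndClique} to produce the common point explicitly.

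Third, and this is where I expect the real work, I must verify that every nonempty finite intersection $\bigcap_{i} S_\mathbb{G}(J_i)$ is contractible. By Proposition~\ref{prop:InterHyperplanesProduct}, when $\{J_1,\ldots,J_n\}$ is a simplex with common vertex $o$, the intersection $\bigcap_i N(J_i)$ splits as a product $F \times \prod_i C_{J_i}$; the portion of this lying in $X^\odot$ should, after perforation, look like $F^\square \times \prod_i (C_{J_i}^\square \setminus C_{J_i})$ as in Claim~\ref{claim:ContractibleIntersection}, a product of contractible spaces. The glued-in pieces of the various $\mathrm{L}_\mathbb{G}(x)$ then need to be assembled compatibly with this product decomposition — the key point being that the relevant part of $\mathrm{L}_\mathbb{G}(x)$ for $x$ in this intersection is a cone-like object (it records cliques through $x$ inside a fixed $Y \in \mathbb{G}$, which is itself gated hence quasi-median and thus prism-completion-contractible). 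The main obstacle is checking that gluing these cone-pieces onto the perforated product carrier along the perforation spheres does not destroy contractibility; I would handle this by a deformation-retraction argument, retracting each glued $\mathrm{L}_\mathbb{G}(x)$-piece onto a single point and then using that what remains is the contractible perforated product (invoking Fact~\ref{fact:AlgTopo}-style reasoning or a direct mapping-cylinder homotopy). Once contractibility of all finite intersections is established, Leray's nerve theorem gives $X^\mathbb{G} \simeq \mathcal{N}(\{S_\mathbb{G}(J)\}) = \mathrm{Cont}^\triangle(X,\mathbb{G})$, which is the claim.
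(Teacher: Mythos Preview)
Your overall strategy matches the paper's: cover $X^\mathbb{G}$ by subcomplexes $S(J)$ built from the perforated carrier together with pieces of the $\mathrm{L}_\mathbb{G}(x)$, identify the nerve with $\mathrm{Cont}^\triangle(X,\mathbb{G})$ via Proposition~\ref{prop:GContactSimplices}, and apply the nerve theorem. The covering and nerve-identification steps are fine.

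The gap is in your contractibility step. You invoke Proposition~\ref{prop:InterHyperplanesProduct} to split $\bigcap_i N(J_i)$ as a product $F \times \prod_i C_{J_i}$, but that proposition requires the $J_i$ to be \emph{pairwise transverse}. A simplex of $\mathrm{Cont}^\triangle(X,\mathbb{G})$ consists of hyperplanes that are pairwise \emph{in contact} and cross a common $Y\in\mathbb{G}$; some of them may be merely tangent, in which case no such product decomposition exists. (Relatedly, even in the transverse case your formula $F^\square \times \prod_i (C_{J_i}^\square\setminus C_{J_i})$ is borrowed from Claim~\ref{claim:ContractibleIntersection}, where \emph{fibres} were removed; here $X^\odot$ only removes small balls around vertices, which is a different set.)

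The paper bypasses this entirely. It observes that, for $x\in\bigcap_i N(J_i)$, the piece $\bigcap_i L_{J_i}(x)$ is the union of all simplices of $\mathrm{L}_\mathbb{G}(x)$ containing the simplex $S$ spanned by the cliques of $J_1,\dots,J_n$ through $x$ (this \emph{is} a simplex, since star-covering gives a single $Y\in\mathbb{G}$ containing all these cliques). Such a union deformation retracts onto $S$. Moreover the sphere $S(x,\epsilon)\cap(\bigcap_i N(J_i))^\square$ already sits inside $\bigcap_i L_{J_i}(x)$ (via Lemma~\ref{lem:GoodPrism} and Corollary~\ref{cor:CrossAndClique}), so gluing in this contractible piece is homotopically the same as plugging the perforation at $x$. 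Doing this at every vertex recovers $(\bigcap_i N(J_i))^\square$, which is contractible by Proposition~\ref{prop:QMcontractible}. No product decomposition is needed, and tangency causes no trouble.
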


\begin{figure}
\begin{center}
\includegraphics[width=0.9\linewidth]{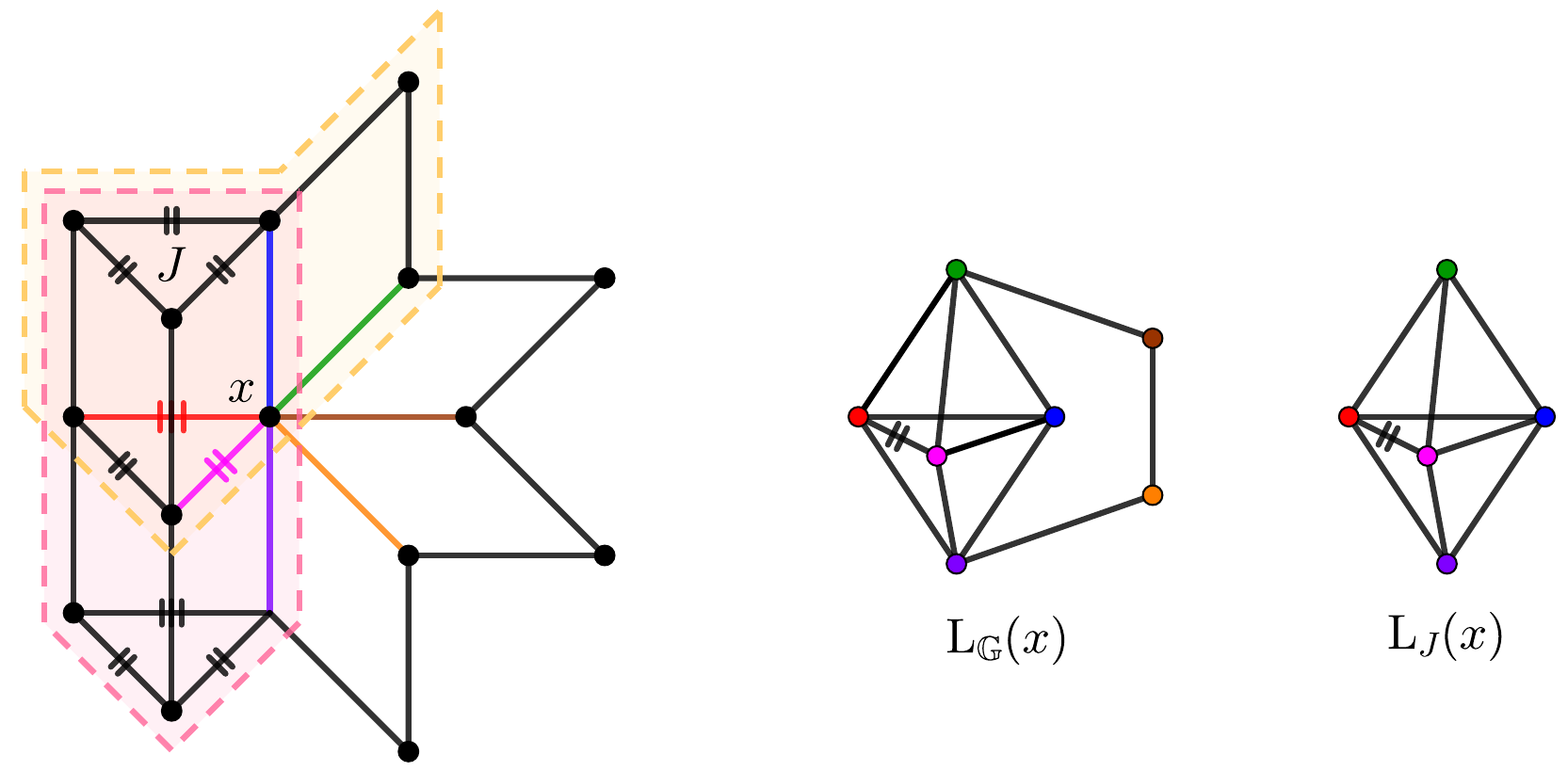}
\caption{The complexes $\mathrm{L}_\mathbb{G}(x)$ and $\mathrm{L}_J(x)$ where $\mathbb{G}$ is the collection of the prisms together with the yellow and pink subgraphs.}
\label{LJ}
\end{center}
\end{figure}
\begin{proof}
For all hyperplane $J$ and vertex $x \in N(J)$, let $L_J(x)$ denote the subcomplex of $\mathrm{L}_\mathbb{G}(x)$ given by the union of all the simplices containing the simplex corresponding to the clique of $J$ that contains $x$. See Figure~\ref{LJ}. Set 
$$S(J):= \left( N(J)^\square \cap X^\odot \right) \cup \bigcup\limits_{x \in N(J)} L_J(x).$$
Notice that $ \Omega:= \{ S(J) \mid J \text{ hyperplane}\}$ is a cover of $X^\mathbb{G}$. Indeed, let $p \in X^\mathbb{G}$ be an arbitrary point. If $p \in X^\odot$, then it belongs to the interior of some prism of cubical dimension $\geq 1$, hence $p \in S(J)$ where $J$ is an arbitrary hyperplane crossing this prism by Lemma~\ref{lem:PrismAbsorption}. Otherwise, if $p \in X^\mathbb{G} \backslash X^\odot$, then $p \in \mathrm{L}_\mathbb{G}(x)$ for some vertex $x \in X$. A simplex of $\mathrm{L}_\mathbb{G}(x)$ containing $p$ yields a subgraph $Y \in \mathbb{G}$ with an edge  containing $x$. Let $J$ be the hyperplane defined by this edge. Since $Y$ is gated, Proposition~\ref{lem:LocallyGated} implies that $Y$ contains the clique of $J$  containing $x$, and therefore $p \in S(J)$. 

\medskip \noindent
Our goal  is to apply the nerve theorem to our cover $\Omega$ of $X^\mathbb{G}$. First, let us verify that our nerve complex is isomorphic to $\mathrm{Cont}^\triangle(X, \mathbb{G})$. In other words, given a collection of hyperplanes $J_1, \ldots, J_n$, we claim that $S(J_1) \cap \cdots \cap S(J_n)$ is non-empty if and only if $J_1, \ldots, J_n$ span a simplex in $\mathrm{Cont}^\triangle(X, \mathbb{G})$. 

\medskip \noindent
If $J_1, \ldots, J_n$ span a simplex in $\mathrm{Cont}^\triangle(X, \mathbb{G})$, then there exist a subgraph $Y \in \mathbb{G}$ crossed by $J_1, \ldots, J_n$ and a vertex $x \in Y$ such that $x \in N(J_1) \cap \cdots \cap N(J_n)$. Then the simplex of $\mathrm{L}_\mathbb{G}(x)$ given by $Y$ is contained in $S(J_1) \cap \cdots \cap S(J_n)$; see Corollary~\ref{cor:CrossAndClique}. Conversely, assume that $S(J_1) \cap \cdots \cap S(J_n)$ is non-empty, and fix a point $p$ in this intersection. If $p \in X^\odot$, then the smallest prism containing $p$, say $P$, must be contained in the carriers of $J_1, \ldots, J_n$. In other words, each $J_i$ crosses a prism containing $P$; see Lemma~\ref{lem:GoodPrism}. Since $\mathbb{G}$ is star-covering, there exists some $Y \in \mathbb{G}$ that contains all the prisms containing $P$. In particular, all $J_i$ cross $Y$. Clearly, $J_1, \ldots, J_n$ are pairwise in contact in $Y$, proving that they span a simplex in $\mathrm{Cont}^\triangle(X, \mathbb{G})$. Otherwise, if $p \notin X^\odot$, then there exists some $x \in X$ such that $p \in \mathrm{L}_\mathbb{G}(x)$. The smallest simplex of $\mathrm{L}_\mathbb{G}(x)$ containing $p$ corresponds to a subgraph $Y \in \mathbb{G}$ satisfying $x \in Y$. Since this simplex must belong to $L_{J_i}(x)$ for every $1 \leq i \leq n$, it follows that $J_1, \ldots, J_n$ all cross $Y$. This implies that $J_1, \ldots, J_n$ span a simplex in $\mathrm{Cont}^\triangle(X,\mathbb{G})$. 

\medskip \noindent
Finally, given a collection of hyperplanes $J_1, \ldots, J_n$ spanning a simplex in $\mathrm{Cont}^\triangle(X,\mathbb{G})$, let us verify that the intersection $S(J_1) \cap \cdots \cap S(J_n)$ is contractible. We can write our intersection as 
\begin{equation}\label{Decomposition}
\left( \left( N(J_1) \cap  \cdots \cap N(J_n) \right) ^\square \cap X^\odot \right) \cup \bigcup\limits_{x \in N(J_1) \cap \cdots \cap N(J_n)}  \bigcap\limits_{i=1}^n L_{J_i}(x).
\end{equation}
The picture to keep in mind is that $(N(J_1) \cap \cdots\cap N(J_n))^\square \cap X^\odot$ is the perforation of $(N(J_1) \cap \cdots \cap N(J_n))^\square$, and that, for every vertex $x \in N(J_1) \cap \cdots \cap N(J_n)$, we glue $L_{J_1}(x) \cap \cdots \cap L_{J_n}(x)$ on the link of $x$ (when thinking of $\mathrm{L}_\mathbb{G}(x)$ as containing the link $S(x,\epsilon)$ of $x$). Our goal is to show that this gluing operation amounts, up to homotopy, to gluing a cone over the link of $x$ in $(N(J_1) \cap \cdots \cap N(J_n))^\square\cap X^\odot$, proving that our intersection is homotopically equivalent to $(N(J_1) \cap \cdots \cap N(J_n))^\square$, and therefore contractible. 


\medskip \noindent
 Let us argue that $L_{J_1}(x) \cap \cdots \cap L_{J_n}(x)$ is contractible. 
 Since $\mathbb{G}$ is star-covering, Proposition~\ref{prop:GContactSimplices} implies that there is 
$Y \in \mathbb{G}$ containing the clique of $J_i$ containing $x$, for every $J_i$.
It follows that, in $\mathrm{L}_\mathbb{G}(x)$, the subcomplex $S$ induced by the edges starting from $x$ and crossed by one of the  $J_1, \ldots, J_n$ is a simplex, since all these edges belong to $Y$. The intersection $L_{J_1}(x) \cap \cdots \cap L_{J_n}(x)$ then corresponds to the union of all the simplices containing this simplex $S$. Thus our intersection deformation retracts onto  $S$. 

\medskip \noindent
 Regard $S(x,\epsilon)$ as a subcomplex of $\mathrm{L}_\mathbb{G}(x)$. Let us show that $S(x,\epsilon)\cap (N(J_1) \cap \cdots \cap N(J_n))^\square$ is a subcomplex of $L_{J_1}(x) \cap \cdots \cap L_{J_n}(x)$. 
Let $\sigma$ be a simplex of $S(x,\epsilon)\cap (N(J_1) \cap \cdots \cap N(J_n))^\square$. Then $\sigma$ consists of all edges containing $x$ 
contained in a prism $P$ of $N(J_1) \cap \cdots \cap N(J_n)$. 
For each $J_i$, the prism $P$
is contained in a prism $Q_i \subset N(J_i)$ such that $Q_i$ is crossed by $J_i$; see Lemma~\ref{lem:GoodPrism}. Since prisms are gated, Corollary~\ref{cor:CrossAndClique} implies that $Q_i$ contains the clique of $J_i$ containing $x$. The prism $Q_i$ thus induces a simplex of $L_{J_i}$ that contains the simplex
$\sigma$ as a face. It follows that $\sigma$ is is simplex of
$L_{J_1}(x) \cap \cdots \cap L_{J_n}(x)$. 

\medskip \noindent
Therefore, in the decomposition (\ref{Decomposition}), adding the intersection $L_{J_1}(x) \cap \cdots \cap L_{J_n}(x)$ to the perforation of $(N(J_1) \cap \cdots \cap N(J_n))^\square$ amounts to putting back the ball of radius $\epsilon$ centred at $x$, proving that $S(J_1) \cap \cdots \cap S(J_n)$ is homotopy equivalent to $(N(J_1) \cap \cdots \cap N(J_n))^\square$, and hence contractible. 
\end{proof}

\noindent
Thanks to Proposition~\ref{prop:ContactXG}, we are now ready to identify our relative contact complex as a pointed sum, namely: 

\begin{prop}\label{prop:ContPointedSum}
If $X$ is $2$-connected and $\mathbb{G}$ star-covering, then $\mathrm{Cont}^\triangle(X,\mathbb{G})$ is homotopy equivalent to $\bigvee_{x \in X} \mathrm{L}_\mathbb{G}(x)$. 
\end{prop}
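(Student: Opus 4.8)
The plan is to combine Proposition~\ref{prop:ContactXG}, which identifies $\mathrm{Cont}^\triangle(X,\mathbb{G})$ up to homotopy with the space $X^\mathbb{G}=X^\odot\cup\bigcup_{x\in X}\mathrm{L}_\mathbb{G}(x)$, with the contractibility of the prism-completion $X^\square$ (Proposition~\ref{prop:QMcontractible}) and the topological extraction trick already used in the proof of Theorem~\ref{thm:HomotopyCrossingComplex}, namely Fact~\ref{fact:AlgTopo}. First I would fix a spanning tree $T$ of the one-skeleton of $X$ and a small $\epsilon<1/2$, and form the thickening $T^+:=T\cup\bigcup_{x\in X}B(x,\epsilon)$ inside $X^\square$, with the length metric extending the Euclidean metrics on prisms. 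Since $T^+$ deformation retracts onto $T$, it is contractible, and since $X^\square$ is contractible, Fact~\ref{fact:AlgTopo} applied to $A=X^\odot$, $B=T^+$, $C=X^\square$ (noting $A\cup B=X^\square$ and $A\cap B=T^+\backslash\bigcup_x B(x,\epsilon)^\circ$, which is $T^+\setminus X$ up to the obvious homotopy) shows that $X^\odot$ deformation retracts onto $T^+\setminus X$, which is homotopy equivalent to $\bigvee_{x\in X}S(x,\epsilon)$, a wedge of the links of the vertices.

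Next I would upgrade this to $X^\mathbb{G}$. The point is that $X^\mathbb{G}$ is obtained from $X^\odot$ by coning off each boundary sphere $S(x,\epsilon)$ not with an actual cone but with the complex $\mathrm{L}_\mathbb{G}(x)$, which contains $S(x,\epsilon)$ as a subcomplex. Working inside $T^+\cup\bigcup_{x\in X}\mathrm{L}_\mathbb{G}(x)$, which is homotopy equivalent to $X^\mathbb{G}$ by the same retraction argument (the retraction of $X^\odot$ onto $T^+\setminus X$ extends over the glued copies of $\mathrm{L}_\mathbb{G}(x)$ since it fixes a neighborhood of each $S(x,\epsilon)$), one sees that this space is the wedge, over $x\in X$, of the mapping cones of the inclusions $S(x,\epsilon)\hookrightarrow\mathrm{L}_\mathbb{G}(x)$ — equivalently, one attaches along the contractible arcs of $T$ a copy of $\mathrm{L}_\mathbb{G}(x)$ for each vertex. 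Because $T$ is a tree, collapsing $T$ gives a homotopy equivalence $X^\mathbb{G}\simeq\bigvee_{x\in X}\bigl(\mathrm{L}_\mathbb{G}(x)/S(x,\epsilon)\bigr)$ if one were naive, but in fact the cleaner statement is that $\mathrm{L}_\mathbb{G}(x)$ is already connected (Lemma~\ref{lem:ContConnected}, using $2$-connectedness of $X$ and that $\mathbb{G}$ is prism-covering, which follows from star-covering), so the spheres $S(x,\epsilon)$ are being filled inside connected complexes and the wedge one obtains is literally $\bigvee_{x\in X}\mathrm{L}_\mathbb{G}(x)$: collapsing the tree $T$ (contractible) identifies all the basepoints and leaves each $\mathrm{L}_\mathbb{G}(x)$ intact up to homotopy.

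Concretely, I would phrase the last step as follows. Let $T^{+\mathbb{G}}:=T\cup\bigcup_{x\in X}\mathrm{L}_\mathbb{G}(x)$, with $\mathrm{L}_\mathbb{G}(x)$ glued to $T$ along the finitely many points $T\cap S(x,\epsilon)$ (the midpoints of the tree-edges at $x$, viewed as vertices of $\mathrm{L}_\mathbb{G}(x)$); since each $\mathrm{L}_\mathbb{G}(x)$ is connected, adding the remaining tree-edges at $x$ and the thickened balls does not change the homotopy type, so $T^{+\mathbb{G}}\simeq X^\mathbb{G}$. Now $T$ is a contractible subcomplex of the CW-complex $T^{+\mathbb{G}}$, hence $T^{+\mathbb{G}}\simeq T^{+\mathbb{G}}/T=\bigvee_{x\in X}\mathrm{L}_\mathbb{G}(x)$, the wedge being taken over the images of the gluing points (one needs that within each $\mathrm{L}_\mathbb{G}(x)$ the collapsed points become a single point, which again uses connectedness of $\mathrm{L}_\mathbb{G}(x)$ together with the fact that collapsing a finite set of points in a connected CW-complex is a homotopy equivalence onto the wedge). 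Combining with Proposition~\ref{prop:ContactXG} gives $\mathrm{Cont}^\triangle(X,\mathbb{G})\simeq\bigvee_{x\in X}\mathrm{L}_\mathbb{G}(x)$, as desired.

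The main obstacle I anticipate is the bookkeeping around the gluing: one must be careful that collapsing $T$ genuinely produces the honest wedge $\bigvee_{x}\mathrm{L}_\mathbb{G}(x)$ rather than some space where the spheres $S(x,\epsilon)$ are still distinguished, and this is exactly where Lemma~\ref{lem:ContConnected} (connectedness of $\mathrm{L}_\mathbb{G}(x)$) is essential — without $2$-connectedness of $X$ the statement fails, as the crossing-complex case already shows. A secondary technical point is checking that the deformation retraction of $X^\odot$ onto $T^+\setminus X$ can be chosen to extend over the attached $\mathrm{L}_\mathbb{G}(x)$'s (equivalently, that the pair $(X^\mathbb{G},X^\odot)$ behaves well); this is routine since the retraction can be taken to be the identity near each $S(x,\epsilon)$, but it should be stated. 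Everything else — Fact~\ref{fact:AlgTopo}, contractibility of $X^\square$, contractibility of $T$ and $T^+$ — is already in place.
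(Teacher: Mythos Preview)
Your approach is essentially the paper's: invoke Proposition~\ref{prop:ContactXG}, choose a spanning tree, apply Fact~\ref{fact:AlgTopo} to retract $X^\mathbb{G}$ onto a tree of spaces with vertex-spaces the $\mathrm{L}_\mathbb{G}(x)$ and point edge-spaces, then use connectedness of each $\mathrm{L}_\mathbb{G}(x)$ (Lemma~\ref{lem:ContConnected}) to identify this with the wedge. The strategy and the ingredients are right; the execution of the last step, however, has a genuine error.

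The space $T^{+\mathbb{G}}:=T\cup\bigcup_{x}\mathrm{L}_\mathbb{G}(x)$ as you define it---with the \emph{full} tree $T$, vertices $x$ included---is \emph{not} homotopy equivalent to $X^\mathbb{G}$. Take $X$ a single $4$-cycle and $\mathbb{G}=\{\text{prisms}\}$: then $X^\mathbb{G}=X^\odot$ is contractible, but your $T^{+\mathbb{G}}$ contains two circles (at each interior vertex $x$ of a spanning path, the segment $p\,{-}\,x\,{-}\,q$ of $T$ together with the edge $pq=\mathrm{L}_\mathbb{G}(x)$ forms a triangle). The point is that the vertices $x$ are \emph{not} in $X^\mathbb{G}$; the retraction coming from Fact~\ref{fact:AlgTopo} lands on the \emph{perforated} tree $T\setminus\bigcup_{x}B(x,\epsilon)$ together with the spheres, not on $T$. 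The correct retract of $X^\mathbb{G}$ is therefore $T^\mathbb{G}:=\bigl(T\setminus\bigcup_{x}B(x,\epsilon)\bigr)\cup\bigcup_{x}\mathrm{L}_\mathbb{G}(x)$, which is precisely the tree of spaces you describe informally in your second paragraph: the $|X|-1$ disjoint arcs of the perforated tree connect the $\mathrm{L}_\mathbb{G}(x)$'s. Collapsing these arcs one at a time yields the wedge with no extra circles, since each collapse identifies one point of some $\mathrm{L}_\mathbb{G}(x)$ with one point of a neighbouring $\mathrm{L}_\mathbb{G}(y)$---never two points of the same piece. Your parenthetical justification (``collapsing a finite set of points in a connected CW-complex is a homotopy equivalence'') is false in general---it adds circles---and is exactly what allows the spurious loops in $T^{+\mathbb{G}}$ to pass unnoticed. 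Replace $T$ by the perforated tree throughout the last paragraph and the argument goes through, matching the paper's.
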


\noindent

\begin{proof}[Proof of Proposition~\ref{prop:ContPointedSum}.]
We know from Proposition~\ref{prop:ContactXG} that $\mathrm{Cont}^\triangle (X,\mathbb{G})$ is homotopy equivalent to $X^\mathbb{G}$, so we  focus on $X^\mathbb{G}$. Fix a maximal subtree $T \subset X$. Fact~\ref{fact:AlgTopo} implies that
\[  
 X^\odot =  
 X^\square\setminus \bigcup_{x\in X} B(x,\epsilon)
\]
deformation retracts onto
\[ T^\odot := \left( T\setminus \bigcup_{x\in X} B(x,\epsilon)\right)\cup\bigcup_{x\in X} S(x,\epsilon),\]
since the contractible space
$X^\square$ is the union of $X^\odot$ and the contractible subspace $T \cup \bigcup_{x\in X} \overline{B(x,\epsilon)}$, and 
$T^\odot$ is the intersection of $X^\odot$ and $T \cup \bigcup_{x\in X} \overline{B(x,\epsilon)}$.   It follows that there is a deformation retraction of 
\[ X^\mathbb{G}= X^\odot \cup \bigcup_{x\in X} L_\mathbb{G}(x) \]
onto 
\[ T^\mathbb{G} := T^\odot \cup \bigcup_{x\in X} L_\mathbb{G}(x) .\]
 Observe that  $T^\mathbb{G}$ is  a tree of spaces whose edge-spaces are single points and whose vertex-spaces are the $\mathrm{L}_\mathbb{G}(x)$. Since the $\mathrm{L}_\mathbb{G}(x)$ are connected according to Lemma~\ref{lem:ContConnected}, we conclude that $X^\mathbb{G}$, and a fortiori $\mathrm{Cont}^\triangle (X, \mathbb{G})$, is homotopy equivalent the $\bigvee_{x \in X} \mathrm{L}_\mathbb{G}(x)$. 
\end{proof}



\noindent
In order to conclude the proof of Theorem~\ref{thm:RelContact}, it only remains to show that the complexes from the two pointed sums in the statement are homotopy equivalent, namely:

\begin{lemma}\label{lem:LinkVsSlink}
Let $X$ be a quasi-median graph and $\mathbb{G}$ a prism-covering collection of gated subgraphs. For every vertex $x \in X$, $\mathrm{L}_\mathbb{G}(x)$ is homotopy equivalent to $\mathrm{sL}_\mathbb{G}(x)$. 
\end{lemma}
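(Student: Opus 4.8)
The plan is to exhibit $\mathrm{sL}_\mathbb{G}(x)$ as a deformation retract of $\mathrm{L}_\mathbb{G}(x)$, or more precisely to relate the two by a nerve-theorem argument in the spirit of Lemma~\ref{lem:Loc}. Recall that the vertices of $\mathrm{L}_\mathbb{G}(x)$ are the edges of $X$ at $x$, while the vertices of $\mathrm{sL}_\mathbb{G}(x)$ are the cliques of $X$ at $x$; in both cases a collection spans a simplex precisely when the corresponding edges (resp.\ cliques) lie in a common subgraph from $\mathbb{G}$. The first observation is that, since cliques are gated (hence satisfy clique absorption), a subgraph $Y\in\mathbb{G}$ containing an edge $e$ at $x$ automatically contains the whole clique $C_e$ spanned by $e$; conversely, a $Y$ containing a clique $C$ at $x$ contains every edge of $C$. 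Thus ``edges in a common $Y$'' and ``cliques in a common $Y$'' carry the same combinatorial data, packaged at two levels of granularity.

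The cleanest implementation is to cover $\mathrm{L}_\mathbb{G}(x)$ by the subcomplexes $\{\,\overline{\mathrm{st}}(e) : e \text{ an edge at } x\,\}$ indexed, after grouping, by the cliques at $x$: for a clique $C$ at $x$ let $U_C \subset \mathrm{L}_\mathbb{G}(x)$ be the union of all closed simplices whose vertex-set contains at least one edge of $C$ (equivalently, all edges of $C$, by clique absorption applied to any $Y\in\mathbb{G}$ witnessing the simplex). Each $U_C$ is a simplicial cone with apex the face spanned by the edges of $C$, hence contractible. For a finite family of cliques $C_1,\dots,C_k$ at $x$, the intersection $U_{C_1}\cap\cdots\cap U_{C_k}$ is nonempty iff there is a simplex of $\mathrm{L}_\mathbb{G}(x)$ meeting each $C_i$, i.e.\ iff there is $Y\in\mathbb{G}$ containing an edge of each $C_i$, i.e.\ (clique absorption again) iff $Y$ contains all the $C_i$, i.e.\ iff $C_1,\dots,C_k$ span a simplex in $\mathrm{sL}_\mathbb{G}(x)$; and in that case the intersection is again a nonempty simplicial cone, so contractible. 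Leray's nerve theorem then identifies $\mathrm{L}_\mathbb{G}(x)$ up to homotopy with the nerve of $\{U_C\}$, which is exactly $\mathrm{sL}_\mathbb{G}(x)$.

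The main point to be careful about is verifying that $\{U_C : C \text{ clique at } x\}$ genuinely covers $\mathrm{L}_\mathbb{G}(x)$ and that each nonempty intersection is contractible rather than merely nonempty --- i.e.\ that it is a cone, which requires checking it is closed under passing to simplices that still meet every $C_i$ and contains the "core" face spanned by one chosen edge from each $C_i$ together with everything above it. Here I would use, crucially, that $\mathbb{G}$ is prism-covering so that every simplex of $\mathrm{L}_\mathbb{G}(x)$ lies in some $U_C$ (take $C$ to be the clique of any edge in that simplex), and that for a common witness $Y\in\mathbb{G}$ the edges of $C_1,\dots,C_k$ together with any prism of $Y$ at $x$ containing them span a simplex, so the relevant unions are star-shaped about that core face. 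The rest is the routine bookkeeping of the nerve theorem applied to a cover of a simplicial complex by subcomplexes, exactly as in the proof of Proposition~\ref{prop:PuncturedQM}. This gives $\mathrm{L}_\mathbb{G}(x)\simeq\mathrm{sL}_\mathbb{G}(x)$ and completes the proof of Theorem~\ref{thm:RelContact}.
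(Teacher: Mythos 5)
Your overall strategy (cover $\mathrm{L}_\mathbb{G}(x)$ by star-like sets indexed by the cliques at $x$, check that the nerve is $\mathrm{sL}_\mathbb{G}(x)$, apply Leray) is the paper's strategy, and your use of clique absorption to pass between ``a $Y\in\mathbb{G}$ containing an edge of $C$'' and ``a $Y$ containing all of $C$'' is correct. But your choice of cover is wrong: you take $U_C$ to be the \emph{closed} star of the face $\Delta(C)$ (the union of all closed simplices containing an edge of $C$), and for closed stars both of your key claims fail. A point of $U_{C_1}\cap\cdots\cap U_{C_k}$ only needs its carrier to be a \emph{face} of some simplex containing $\Delta(C_i)$, for each $i$ separately, with possibly different ambient simplices for different $i$; so nonemptiness of the intersection does not produce one simplex, hence one $Y\in\mathbb{G}$, containing all the $C_i$. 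Concretely, let $X$ be the tree with centre $x$ and three edges $e_1,e_2,e_3$, and let $\mathbb{G}$ consist of the three gated paths $e_i\cup e_j$ together with the single edges (this is prism-covering, even star-covering). Then $\mathrm{L}_\mathbb{G}(x)=\mathrm{sL}_\mathbb{G}(x)$ is a $3$-cycle, but $U_{C_1}\cap U_{C_2}$ is the disjoint union of the closed edge $\{e_1,e_2\}$ and the isolated vertex $e_3$ --- nonempty, disconnected, hence not contractible --- and the nerve of $\{U_{C_1},U_{C_2},U_{C_3}\}$ is a $2$-simplex, which is not homotopy equivalent to the circle. So the nerve theorem's hypotheses fail and its conclusion is false for this cover; the argument cannot be repaired by ``bookkeeping.''

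The fix is the one the paper uses: replace the closed star by $\Delta^+(C):=\Delta(C)\cup\bigcup\{\operatorname{int}(\sigma):\sigma\supseteq\Delta(C)\}$, i.e.\ keep only the points whose \emph{carrier} contains $\Delta(C)$. Then a point of $\Delta^+(C_1)\cap\cdots\cap\Delta^+(C_k)$ lies in the interiors of simplices containing $\Delta(C_1),\dots,\Delta(C_k)$ respectively, and since two simplices with intersecting interiors coincide, a single simplex contains all the $\Delta(C_i)$; this simultaneously identifies the nerve with $\mathrm{sL}_\mathbb{G}(x)$ and shows each nonempty intersection is the union of the interiors of all simplices containing $\Delta(C_1),\dots,\Delta(C_k)$, which is contractible. (In my example above, $\Delta^+(C_1)\cap\Delta^+(C_2)$ is just the open edge together with nothing extraneous, and the nerve is the $3$-cycle, as it should be.) You should also note that these $\Delta^+(C)$ are not subcomplexes, so one needs the version of the nerve theorem applicable to such open-star covers rather than the subcomplex version you cite.
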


\begin{proof}
For every clique $C$ of $X$ containing $x$, let $\Delta(C)$ denote the simplex of $\mathrm{L}_\mathbb{G}(x)$ given by the edges of $C$ containing $x$, and let $\Delta^+(C)$ denote the union of $\Delta(C)$ with the interiors of all the simplices in $\mathrm{L}_{\mathbb G}(x)$ containing $\Delta(C)$. Our goal is to apply the nerve theorem to $\{\Delta^+(C) \mid C \text{ clique containing } x\}$. 

\begin{claim}\label{claim:NerveDeltaOne}
For any collection of cliques $C_1, \ldots, C_n$ containing $x$, the intersection $\Delta^+(C_1)\cap \cdots \cap \Delta^+(C_n)$ is non-empty if and only if $C_1, \ldots, C_n$ span a prism contained in a subgraph from $\mathbb{G}$.
\end{claim}

\noindent
We can assume without loss of generality that $C_1, \ldots, C_n$ are pairwise distinct, which implies that $\Delta(C_1), \ldots, \Delta(C_n)$ are pairwise disjoint by Proposition~\ref{lem:LocallyGated}. A point in $\Delta^+(C_1) \cap \cdots \cap \Delta^+(C_n)$ must then belong to the interiors of $n$ simplices containing  $\Delta(C_1), \ldots, \Delta(C_n)$ respectively. But, in a simplicial complex, two simplices whose interiors intersect must coincide, so it follows that there must exist a simplex in $\mathrm{L}_\mathbb{G}(x)$ containing all $\Delta(C_1), \ldots, \Delta(C_n)$. We conclude that $C_1, \ldots, C_n$ are contained in a common prism lying in some subgraph from $\mathbb{G}$. Conversely, it is clear that, if $C_1, \ldots, C_n$ are contained in a common prism lying in some subgraph from $\mathbb{G}$, say $P$, then $P$ defines a simplex in $\mathrm{L}_\mathbb{G}(x)$ containing all $\Delta(C_1), \ldots, \Delta(C_n)$. 

\begin{claim}\label{claim:NerveDeltaTwo}
If $C_1, \ldots, C_n$ are cliques containing $x$, then $\Delta^+(C_1) \cap \cdots \cap \Delta^+(C_n)$ is either empty or contractible.
\end{claim}

\noindent
Assume that the intersection is non-empty. Also, we assume without loss of generality that $C_1, \ldots, C_n$ are pairwise distinct. If $n=1$, then the conclusion is clear, so we assume that $n \geq 2$. It follows from Claim~\ref{claim:NerveDeltaOne} that $C_1,\ldots, C_n$ pairwise span a prism contained in a subgraph from $\mathbb{G}$, from which we deduce that $C_1, \ldots, C_n$ span a prism, say $P$. Let $\Delta(P)$ denote the simplex of $\mathrm{L}_\mathbb{G}(x)$ given by $P$. This is the simplex spanned by $\Delta(C_1), \ldots, \Delta(C_n)$. A point in $\Delta^+(C_1) \cap \cdots \cap \Delta^+(C_n)$ has to belong to the interiors of simplices containing respectively $\Delta(C_1), \ldots, \Delta(C_n)$. As before, using the fact that, in a simplicial complex, two simplices whose interiors intersect must coincide, we deduce that $\Delta^+(C_1) \cap \cdots \cap \Delta^+(C_n)$ coincides with the union of the interiors of all the simplices that contain $\Delta(C_1), \ldots, \Delta(C_n)$, or equivalently, that contain $\Delta(P)$. Thus this subspace must be contractible, concluding the proof of Claim~\ref{claim:NerveDeltaTwo}.

\medskip \noindent
As a consequence of Claim~\ref{claim:NerveDeltaTwo}, the nerve theorem applies to the covering $\{\Delta^+(C) \mid C \text{ clique containing } x\}$, and so $\mathrm{L}_\mathbb{G}(x)$ is homotopy equivalent to the corresponding nerve complex. According to Claim~\ref{claim:NerveDeltaOne}, the latter coincides with $\mathrm{sL}_\mathbb{G}(x)$. 
\end{proof}

\begin{proof}[Proof of Theorem~\ref{thm:RelContact}.]
The theorem is an immediate consequence of Proposition~\ref{prop:ContPointedSum} and Lemma~\ref{lem:LinkVsSlink}. 
\end{proof}

\section{Applications to coset intersection complexes}

\noindent
Given a graph $\Gamma$ and a collection of groups $\mathcal{G}= \{ G_u \mid u \in V(\Gamma)\}$ indexed by the vertices of $\Gamma$, the \emph{graph product} $\Gamma \mathcal{G}$ is defined as the quotient
$$\left( \Conv_{u \in V(\Gamma)} G_u \right) / \langle\langle [g,h] = 1, \ \{u,v\} \in E(\Gamma), g \in G_u, h \in G_v \rangle\rangle,$$
where $E(\Gamma)$ denotes the edge-set of $\Gamma$ \cite{GreenGP}. Usually, one says that graph products interpolate between direct sums (when the vertex-groups pairwise commute, i.e.\ $\Gamma$ is a complete graph) and free products (when no two distinct vertex-groups commute, i.e.\ $\Gamma$ has no edge). They include right-angled Artin groups (when all the vertex-groups are infinite cyclic) and right-angled Coxeter groups (when all the vertex-groups are cyclic of order two). 

\medskip \noindent
\textbf{Convention.} In all our article, vertex-groups of graph products will be always assumed to be non-trivial. There is no real loss of generality since every graph product can be naturally described as a graph product all of whose vertex-groups are non-trivial (and isomorphic to the previous vertex-groups---we just need to remove the trivial vertex-groups). 

\begin{definition}
Let $\Gamma$ be a graph and $\mathcal{G}$ a collection of groups indexed by $V(\Gamma)$. A \emph{parabolic subgroup} of $\Gamma \mathcal{G}$ is a subgroup of the form
$$\langle \Lambda \rangle : = \langle G_u, \ u \text{ vertex of } \Lambda \rangle$$
for some subgraph $\Lambda$ of $\Gamma$.
\end{definition}

\noindent
 In this section, our goal is to investigate the structure of certain \emph{coset intersection complexes} of graph products (with respect to some parabolic subgroups), defined by the first and third authors in \cite{CIC}.

\begin{definition}
Let $G$ be a group and $\mathcal{P}$ a collection of subgroups. The \emph{coset intersection complex} $\mathcal{K}(G,\mathcal{P})$ is the simplicial complex whose vertices are the cosets of subgroups from $\mathcal{P}$ and whose simplices are given by collections $g_1P_1, \ldots, g_nP_n$ of cosets such that $g_1P_1g_1^{-1} \cap \cdots \cap g_n P_n g_n^{-1}$ is infinite. 
\end{definition}

\noindent
Our study will be based on the quasi-median geometry of graph products combined with the results proved in the previous sections of this article.

\paragraph{Quasi-median geometry of graph products.} As shown in \cite{QM, QMspecial}, there is a close connection between quasi-median graphs and graph products of groups. On the one hand, every graph product $\Gamma \mathcal{G}$ admits a natural quasi-median Cayley graph, namely
$$\mathrm{QM}(\Gamma, \mathcal{G}):= \mathrm{Cayl} \left( \Gamma \mathcal{G}, \bigcup\limits_{u \in V(\Gamma)} G_u \backslash \{1\} \right).$$
Conversely, it can be proved that every quasi-median graph can be realised as a gated subgraph of such a Cayley graph. 

\medskip \noindent
The cliques and prisms of $\mathrm{QM}(\Gamma, \mathcal{G})$ can be described as follows. We refer the reader to \cite[Section~8]{QM} for more details.

\begin{lemma}\label{lem:CliquesPrismsGP}
Let $\Gamma$ be a graph and $\mathcal{G}$ a collection of groups indexed by the vertices of $\Gamma$. The cliques of $\mathrm{QM}(\Gamma, \mathcal{G})$ are given by the cosets $g \langle u \rangle$ where $g \in \Gamma \mathcal{G}$ and $u \in V(\Gamma)$. The prisms of $\mathrm{QM}(\Gamma, \mathcal{G})$ are given by the cosets $g \langle \Lambda \rangle$ where $g \in \Gamma \mathcal{G}$ and  $\Lambda \subset \Gamma$ is a complete subgraph.
\end{lemma}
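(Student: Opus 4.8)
The statement is the explicit description of cliques and prisms in the quasi-median Cayley graph $\mathrm{QM}(\Gamma,\mathcal{G})$, so the proof is a direct combinatorial analysis of edges, triangles, and $4$-cycles in $\mathrm{Cayl}(\Gamma\mathcal{G}, \bigcup_u G_u\setminus\{1\})$. The plan is to first understand the local structure around the identity vertex $1$, then translate by left-multiplication to get the global picture. An edge of the Cayley graph from $1$ goes to some $s \in G_u\setminus\{1\}$ for a vertex $u\in V(\Gamma)$; two such vertices $s\in G_u\setminus\{1\}$ and $t\in G_v\setminus\{1\}$ are adjacent precisely when $s^{-1}t$ lies in some $G_w\setminus\{1\}$. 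Using the normal form for graph products (an element is determined by a reduced syllable decomposition), I would check that $s^{-1}t\in G_w\setminus\{1\}$ forces either $u=v=w$ (when $s\neq t$ in the same $G_u$) or $u$ and $v$ adjacent in $\Gamma$ with $w\in\{u,v\}$. This shows that a maximal complete subgraph containing $1$ consists of $\{1\}\cup (G_u\setminus\{1\}) = \langle u\rangle$ for a single vertex $u$: one cannot mix two different vertices since $\{1, s, t\}$ with $s\in G_u\setminus\{1\}$, $t\in G_v\setminus\{1\}$, $u\neq v$, is a triangle only if $u\sim v$, but then adding a second element of $G_u$ breaks completeness. Hence cliques through $1$ are exactly the $\langle u\rangle$, and translating by $g$ gives that all cliques are the cosets $g\langle u\rangle$.

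For prisms, I would use Theorem~\ref{thm:PrismsHyp} (or directly the definition of a prism as a product of cliques together with the local characterisation of gated subgraphs, Proposition~\ref{lem:LocallyGated}). A prism containing $1$ decomposes as a product of cliques $C_1\times\cdots\times C_n$ all containing $1$; by the previous paragraph each $C_i = \langle u_i\rangle$ for some vertex $u_i$, and for the product to sit inside the Cayley graph the factors must pairwise commute, forcing the $u_i$ to be pairwise adjacent in $\Gamma$, i.e.\ $\Lambda := \{u_1,\ldots,u_n\}$ spans a complete subgraph and the prism is $\langle\Lambda\rangle$. Conversely, for a complete subgraph $\Lambda$, the subgroup $\langle\Lambda\rangle$ is the internal direct product $\prod_{u\in\Lambda}\langle u\rangle$, and its Cayley graph (with the induced generating set) is the product of the cliques $\langle u\rangle$, hence a prism; one checks it is gated using clique-absorption and local convexity, both of which follow from the normal-form description of which elements are adjacent. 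Translating by $g\in\Gamma\mathcal{G}$ yields that every prism is a coset $g\langle\Lambda\rangle$ with $\Lambda$ complete.

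The only subtlety — and the step I expect to require the most care — is verifying the adjacency criterion ``$s^{-1}t \in \bigcup_w G_w\setminus\{1\}$ iff ($s,t$ in a common $G_u$) or ($s\in G_u$, $t\in G_v$ with $u\sim v$)'' rigorously from the graph product normal form; this is where the combinatorics of reduced words and the relation $[G_u,G_v]=1$ for $u\sim v$ actually gets used. Since the paper explicitly says ``We refer the reader to \cite[Section~8]{QM} for more details'', the cleanest route is simply to cite \cite[Section~8]{QM} (or the relevant propositions therein) for this local computation rather than reproving it, and then the clique/prism descriptions follow formally. I would therefore present the proof as: (1) recall from \cite{QM} that $\mathrm{QM}(\Gamma,\mathcal{G})$ is quasi-median; (2) identify cliques through $1$ via the adjacency criterion, then translate; (3) identify prisms through $1$ as products of commuting cliques via Proposition~\ref{lem:LocallyGated} and Theorem~\ref{thm:PrismsHyp}, then translate.
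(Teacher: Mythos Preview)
The paper does not give its own proof of this lemma; it simply refers the reader to \cite[Section~8]{QM}. Your instinct at the end of the proposal---that the cleanest route is to cite \cite{QM}---is exactly what the paper does, so in that sense there is nothing to compare.

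Your direct argument is essentially right in outline, but there is a genuine slip in the adjacency analysis that you should correct. You claim that for $s\in G_u\setminus\{1\}$ and $t\in G_v\setminus\{1\}$ the condition $s^{-1}t\in G_w\setminus\{1\}$ forces ``either $u=v=w$ \emph{or} $u$ and $v$ adjacent in $\Gamma$ with $w\in\{u,v\}$''. The second alternative never occurs: if $u\neq v$ then $s^{-1}t$ has two nontrivial syllables in its normal form (commuting them when $u\sim v$ does not reduce the syllable length), so it cannot lie in any single vertex group. Consequently there is \emph{no} triangle $\{1,s,t\}$ with $s\in G_u$, $t\in G_v$ and $u\neq v$, adjacent or not, and your subsequent sentence ``is a triangle only if $u\sim v$, but then adding a second element of $G_u$ breaks completeness'' is arguing against a phantom case. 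Once this is fixed the clique argument becomes shorter, not longer: any complete subgraph through $1$ has all its non-identity vertices in a single $G_u$, so the cliques through $1$ are precisely the $\langle u\rangle$.

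Your treatment of prisms is fine in spirit. One detail worth making explicit is why the clique factors of a prism through $1$ are exactly the full $\langle u_i\rangle$ rather than proper complete subgraphs: this uses that every edge of the prism is, via a chain of $4$-cycles, in the same hyperplane as an edge through $1$, together with the fact that all cliques of a given hyperplane have the same cardinality. With that in hand, your identification of prisms through $1$ with the subgroups $\langle\Lambda\rangle$ for $\Lambda$ complete goes through.
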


\begin{cor}
Let $\Gamma$ be a graph and $\mathcal{G}$ a collection of groups indexed by the vertices of $\Gamma$. The cubical dimension of $\mathrm{QM}(\Gamma, \mathcal{G})$ is $\mathrm{clique}(\Gamma):= \max \{ \# \Lambda \mid \Lambda \subset \Gamma \text{ complete}\}$. 
\end{cor}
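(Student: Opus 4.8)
The plan is to simply unwind the definition of the cubical dimension and feed it the description of prisms furnished by Lemma~\ref{lem:CliquesPrismsGP}. Recall that the cubical dimension of a quasi-median graph is the maximal number of clique-factors occurring in one of its prisms, so what we must compute is, over all prisms of $\QM$, the largest number of cliques into which a prism decomposes.

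First I would recall from Lemma~\ref{lem:CliquesPrismsGP} that every prism of $\QM$ is a coset $g\langle \Lambda \rangle$ with $g \in \Gamma \mathcal{G}$ and $\Lambda \subset \Gamma$ a complete subgraph. Since the vertices of $\Lambda$ are pairwise adjacent, $\langle \Lambda \rangle$ is the internal direct product $\prod_{u \in V(\Lambda)} G_u$, and the corresponding subgraph of $\QM$ is isomorphic to the Cartesian product $\prod_{u \in V(\Lambda)} \langle u \rangle$ of the $\# V(\Lambda)$ cliques $\langle u \rangle$. Thus a prism $g\langle \Lambda \rangle$ admits a decomposition into exactly $\# V(\Lambda)$ cliques, and taking the maximum over complete subgraphs $\Lambda$ shows that the cubical dimension is at least $\mathrm{clique}(\Gamma)$.

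For the reverse inequality I would observe that a clique, being a complete graph on at least two vertices (here $\# G_u \geq 2$ by our standing convention that vertex-groups are non-trivial), is prime with respect to the Cartesian product of graphs; hence, by uniqueness of the prime factorization of connected graphs with respect to the Cartesian product, the number of clique-factors of $g\langle \Lambda \rangle$ is intrinsically $\# V(\Lambda)$, so no prism can split into more cliques than $\mathrm{clique}(\Gamma)$. Combining the two inequalities yields the claim.

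The only delicate point—and hence the step I expect to require the most care—is the bookkeeping behind the reverse inequality: one must justify that the clique-decomposition of $g\langle \Lambda \rangle$ is the finest possible, i.e.\ that no clique is itself a non-trivial Cartesian product. This is exactly where the convention that all vertex-groups are non-trivial is used, ensuring each clique has at least two vertices and is therefore indecomposable. Alternatively, one can sidestep the uniqueness statement entirely by noting that via Lemma~\ref{lem:CliquesPrismsGP} the maximal prisms of $\QM$ correspond to the maximal complete subgraphs of $\Gamma$, so that the supremum of the number of factors is visibly attained by a largest complete subgraph of $\Gamma$.
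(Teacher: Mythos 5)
Your argument is correct and is exactly the immediate deduction from Lemma~\ref{lem:CliquesPrismsGP} that the paper intends (the corollary is stated there without proof). Both inequalities are handled properly, and your attention to the indecomposability of cliques (via the non-triviality convention for vertex-groups) is the right point to flag; counting the hyperplanes crossing a prism would serve equally well in place of unique prime factorisation.
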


\noindent
The structure of hyperplanes can also be described. A useful observation is that all the edges of a given hyperplane are labelled by generators coming from the same vertex-group. Consequently, hyperplanes are naturally labelled by the vertices of $\Gamma$. 

\begin{lemma}[{\cite[Lemma~8.12]{QM}}]\label{lem:LabelHyp}
Let $\Gamma$ be a graph and $\mathcal{G}$ a collection of groups indexed by the vertices of $\Gamma$. In $\mathrm{ QM}(\Gamma, \mathcal{G})$, two transverse hyperplanes have adjacent labels.
\end{lemma}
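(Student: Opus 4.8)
The plan is to translate transversality into a group-theoretic relation in $\Gamma\mathcal{G}$ and then invoke the normal form theory for graph products. Recall that $\mathrm{QM}(\Gamma,\mathcal{G})$ is the Cayley graph of $\Gamma\mathcal{G}$ with generating set $\bigcup_{u} G_u\setminus\{1\}$, so every edge joins vertices $g$ and $gs$ with $s\in G_u\setminus\{1\}$ for a unique $u\in V(\Gamma)$, which we call the \emph{label} of the edge; and, as recorded above, all edges of a given hyperplane share a common label, so each hyperplane carries a well-defined label.

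First I would make the witnessing configuration explicit. Let $J_1,J_2$ be transverse, with labels $u$ and $v$, and let $e_1\subset J_1$, $e_2\subset J_2$ be edges spanning a $4$-cycle $C$. Write $C$ as $g,\, ga,\, gab,\, gabc$, so that its four edges are $[g,ga]$, $[ga,gab]$, $[gab,gabc]$, $[gabc,g]$, and set $d:=(abc)^{-1}$, so that closing up the cycle gives the relation $abcd=1$ in $\Gamma\mathcal{G}$. Since $C$ is a genuine $4$-cycle its four vertices are pairwise distinct, hence $a,b,c,d$ are all nontrivial syllables. By the defining relation for hyperplanes, opposite edges of $C$ lie in a common hyperplane, hence (being label-constant) carry the same label; thus the cyclic sequence of labels around $C$ is $(p,q,p,q)$ for some $p,q\in V(\Gamma)$. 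As $e_1\in J_1$ and $e_2\in J_2$ with $J_1\neq J_2$, the edges $e_1$ and $e_2$ cannot be opposite in $C$ (opposite edges lie in the same hyperplane), so they are adjacent; hence $\{p,q\}=\{u,v\}$, and after relabelling we may assume $a,c\in G_u\setminus\{1\}$ and $b,d\in G_v\setminus\{1\}$.

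Next I would rule out $u=v$: in that case the four vertices of $C$ all lie in the single clique $g\langle u\rangle$ (Lemma~\ref{lem:CliquesPrismsGP}), so every edge of $C$, in particular $e_1$ and $e_2$, lies in the one hyperplane determined by that clique, contradicting $J_1\neq J_2$. So $u\neq v$. Finally, suppose $\{u,v\}\notin E(\Gamma)$. Then the word $abcd$ cannot be shortened: the two $G_u$-syllables $a,c$ are separated only by the $G_v$-syllable $b$, the two $G_v$-syllables $b,d$ are separated only by the $G_u$-syllable $c$, and $G_u$ and $G_v$ do not commute, so no shuffle brings two syllables from the same vertex-group together. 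Hence $abcd$ is a reduced word of syllable length four and therefore represents a nontrivial element of $\Gamma\mathcal{G}$ by the normal form theorem for graph products, contradicting $abcd=1$. Therefore $\{u,v\}\in E(\Gamma)$, i.e.\ the labels of $J_1$ and $J_2$ are adjacent.

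The only substantive external ingredient is the reduced-word (syllable-shuffling) criterion for graph products; everything else is bookkeeping. The two points that need a little care are the non-degeneracy of the witnessing $4$-cycle — which is what guarantees $a,b,c,d$ are genuinely nontrivial, so that $abcd$ has four syllables — and the observation that $e_1$ and $e_2$ must be adjacent rather than opposite edges of $C$, which pins down the label pattern as $u,v,u,v$. Neither is difficult, and I do not anticipate a serious obstacle beyond invoking the graph-product normal form precisely.
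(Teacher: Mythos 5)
Your proof is correct. Note that the paper itself gives no argument for this lemma --- it simply cites \cite[Lemma~8.12]{QM} --- so there is no internal proof to compare against; your reduction to the relation $abcd=1$ and the syllable-shuffling normal form criterion is the standard argument and it is carried out correctly, including the two delicate points you flag (non-degeneracy of the $4$-cycle, and the fact that $e_1,e_2$ must be adjacent rather than opposite edges, which forces the label pattern $u,v,u,v$ and lets the clique argument dispose of the case $u=v$).
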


\noindent
Given a group $G$ acting on a quasi-median graph, the \emph{rotative-stabiliser} of a hyperplane $J$ is
$$\mathrm{stab}_\circlearrowright(J):= \{ g \in G \mid gC=C \text{ for every clique } C \subset J \}.$$
We again refer the reader to \cite[Section~8]{QM} for more details. 

\begin{lemma}\label{lem:HypInQMGP}
Let $\Gamma$ be a graph and $\mathcal{G}$ a collection of groups indexed by the vertices of $\Gamma$. Fix a vertex $u \in \Gamma$, and let $J_u$ denote the hyperplane of $\mathrm{QM}(\Gamma, \mathcal{G})$ containing the clique $\langle u\rangle$. The following hold.
\begin{itemize}
	\item The carrier of $J_u$ is the subgraph $\langle \mathrm{star}(u) \rangle \subset \mathrm{QM}(\Gamma, \mathcal{G})$.
	\item The stabiliser of $J_u$ is the subgroup $\langle \mathrm{star}(u) \rangle \leq \Gamma \mathcal{G}$. 
	\item The rotative-stabiliser of $J_u$ is the subgroup $\langle u \rangle \leq \Gamma \mathcal{G}$. It freely and transitively permutes the sectors delimited by $J_u$. 
\end{itemize}
\end{lemma}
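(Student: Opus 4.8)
The plan is to deduce all four assertions from one structural claim: \emph{the cliques of $\mathrm{QM}(\Gamma,\mathcal{G})$ crossed by $J_u$ are exactly the cosets $w\langle u\rangle$ with $w\in\langle \mathrm{link}(u)\rangle$}. Throughout I would exploit that, since $u$ is adjacent to every vertex of $\mathrm{link}(u)$, the parabolic subgroup $\langle \mathrm{star}(u)\rangle$ is the internal direct product $\langle u\rangle\times\langle \mathrm{link}(u)\rangle$, so that the subgraph $\langle \mathrm{star}(u)\rangle\subset\mathrm{QM}(\Gamma,\mathcal{G})$ is the product of the clique $\langle u\rangle$ with $\mathrm{QM}(\Gamma[\mathrm{link}(u)],\mathcal{G})$. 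Recall also (Lemma~\ref{lem:CliquesPrismsGP} together with \cite[Section~8]{QM}) that every clique is a coset $g\langle v\rangle$ and that all edges of a given hyperplane carry the same vertex-group label; hence each clique crossed by $J_u$ has the form $g\langle u\rangle$, and the claim is really about which $g$ occur.

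For the easy inclusion I would use that $\langle \mathrm{link}(u)\rangle$ is connected: if $w,w'$ are adjacent there, joined by an edge labelled by some $z\in\mathrm{link}(u)$, then choosing a $u$-edge inside $w\langle u\rangle$ and one inside $w'\langle u\rangle$ together with the edge $\{w,w'\}$ produces a $4$-cycle in which the two $u$-edges are opposite (using $u\sim z$), so $w\langle u\rangle$ and $w'\langle u\rangle$ lie in a common hyperplane; walking along a path from $1$ to an arbitrary $w\in\langle \mathrm{link}(u)\rangle$ shows $w\langle u\rangle$ is crossed by $J_u$. The reverse inclusion is the step that requires actual work, and I expect it to be the main obstacle: one must show $J_u$ does not escape $\langle \mathrm{star}(u)\rangle$. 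For this I would use the (routine) description of small cycles in $\mathrm{QM}(\Gamma,\mathcal{G})$: every $3$-cycle lies inside a single clique, and every $4$-cycle lies inside a coset $g\langle\{v,z\}\rangle$ with $v\sim z$, its two pairs of opposite edges being labelled $v$ and $z$. Consequently, starting from $\langle u\rangle$ and closing up under the edge-relation defining $J_u$, the cliques one reaches form a gallery $\langle u\rangle=C_0,C_1,\dots$ in which passing from $C_i=g_i\langle u\rangle$ to $C_{i+1}$ replaces $g_i$ by $g_i$ times an element of $G_u$ and an element of some $G_z$ with $z\in\mathrm{link}(u)$; hence $g_i\in\langle \mathrm{star}(u)\rangle=\langle u\rangle\times\langle \mathrm{link}(u)\rangle$ by induction, and writing $g_i=cw$ with $c\in\langle u\rangle$, $w\in\langle \mathrm{link}(u)\rangle$ gives $C_i=g_i\langle u\rangle=w\langle u\rangle$, as $c$ and $w$ commute with $\langle u\rangle$. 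This proves the claim.

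Granting the claim, the first bullet follows since $N(J_u)$ is the union of the cliques crossed by $J_u$, so $N(J_u)=\bigcup_{w\in\langle \mathrm{link}(u)\rangle}w\langle u\rangle=\langle \mathrm{link}(u)\rangle\langle u\rangle=\langle \mathrm{star}(u)\rangle$ (the two factors commute and generate), and $N(J_u)$ is the full induced subgraph on this vertex set. For the stabiliser: a hyperplane is determined by the set of cliques it crosses, and left multiplication permutes cliques preserving labels, so $g\in\mathrm{stab}(J_u)$ iff $g$ permutes $\{w\langle u\rangle : w\in\langle \mathrm{link}(u)\rangle\}$; applying this to the clique $\langle u\rangle$ forces $g\in\langle \mathrm{star}(u)\rangle$, while conversely any $g=cw\in\langle u\rangle\times\langle \mathrm{link}(u)\rangle$ sends $w'\langle u\rangle$ to $(ww')\langle u\rangle$, so $\mathrm{stab}(J_u)=\langle \mathrm{star}(u)\rangle$. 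For the rotative-stabiliser: requiring $gC=C$ already for the single clique $C=\langle u\rangle$ gives $g\in\langle u\rangle$, and conversely every $a\in\langle u\rangle$ fixes each $w\langle u\rangle$ setwise since $a$ commutes with $w$ and normalises $\langle u\rangle$; hence $\mathrm{stab}_\circlearrowright(J_u)=\langle u\rangle$.

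Finally, for the action on sectors I would work inside $N(J_u)=\langle u\rangle\times\langle \mathrm{link}(u)\rangle$: the edges of $J_u$ all lie in $N(J_u)$ and are exactly the edges changing the $\langle u\rangle$-coordinate, so deleting them leaves, for each $c\in\langle u\rangle$, the copy $c\langle \mathrm{link}(u)\rangle$ of $\langle \mathrm{link}(u)\rangle$; thus the fibres of $J_u$ are the cosets $c\langle \mathrm{link}(u)\rangle$, $c\in\langle u\rangle$, which are pairwise distinct and in natural bijection with $\langle u\rangle$. Distinct fibres are separated by $J_u$, hence lie in distinct sectors; each fibre is connected in $\mathrm{QM}(\Gamma,\mathcal{G})\backslash\backslash J_u$, hence lies in a single sector; and by Theorem~\ref{thm:ProjQM} every vertex shares its sector with its gate in $N(J_u)$, because $J_u$ crosses the connected subgraph $N(J_u)$ and so separates no vertex from $N(J_u)$ — so every sector contains a fibre. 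Therefore the sectors are also indexed by $\langle u\rangle$, and the left-multiplication action of $\langle u\rangle=\mathrm{stab}_\circlearrowright(J_u)$ on them, sending $c\langle \mathrm{link}(u)\rangle$ to $(ac)\langle \mathrm{link}(u)\rangle$, is the left-regular action — free and transitive. Apart from the reverse inclusion in the structural claim, every step is bookkeeping with cosets in the direct product $\langle u\rangle\times\langle \mathrm{link}(u)\rangle$.
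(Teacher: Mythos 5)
The paper does not prove this lemma at all: it is quoted from \cite[Section~8]{QM} (the surrounding text says ``We again refer the reader to \cite[Section~8]{QM} for more details''), so there is no in-paper argument to compare against. Judged on its own terms, your proof is correct and is essentially the standard argument one finds in \cite{QM}. The one genuinely load-bearing input you do not prove is the classification of small cycles in $\mathrm{QM}(\Gamma,\mathcal{G})$ --- every $3$-cycle lies in a single clique $g\langle v\rangle$ and every $4$-cycle lies in a prism $g\langle\{v,z\}\rangle$ with $v$ adjacent to $z$, with opposite edges carrying the labels $v$ and $z$. You flag this correctly as the crux; it follows from the normal form theorem for graph products and is exactly the content of \cite[Section~8]{QM}, so invoking it is legitimate here, but be aware that without it your induction establishing $N(J_u)\subset\langle\mathrm{star}(u)\rangle$ has no foundation. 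The remaining steps check out: the $4$-cycle $(w,wa,wz_0a,wz_0)$ argument for the easy inclusion, the identification $\langle\mathrm{star}(u)\rangle=\langle u\rangle\times\langle\mathrm{link}(u)\rangle$ and the resulting coset bookkeeping for the stabiliser and rotative-stabiliser, and the fibre/sector analysis (distinct vertices $c\neq c'$ of the clique $\langle u\rangle$ are separated by $J_u$ since $d(c,c')=1$ and the unique separating hyperplane is the one containing the edge $[c,c']$; every sector contains a fibre via the first bullet of Theorem~\ref{thm:ProjQM}). So the proposal is a sound reconstruction of the omitted proof.
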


\noindent
Let us conclude our preliminaries with the following straightforward observation:

\begin{lemma}\label{lem:MQTwoConnected}
Let $\Gamma$ be a graph and $\mathcal{G}$ a collection of groups indexed by $\Gamma$. The graph $\mathrm{QM}(\Gamma, \mathcal{G})$ is $2$-connected if and only if $\Gamma$ is connected.
\end{lemma}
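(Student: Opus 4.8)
\textbf{Proof strategy for Lemma~\ref{lem:MQTwoConnected}.}
The plan is to prove the contrapositive in one direction and a direct structural argument in the other. First I would recall from Theorem~\ref{thm:BigQM}(i) that every hyperplane of $\mathrm{QM}(\Gamma,\mathcal{G})$ separates the graph, so the obstruction to $2$-connectedness is exactly the presence of a cut-vertex. Since $\mathrm{QM}(\Gamma,\mathcal{G})$ is a connected graph (it is a Cayley graph of the graph product with respect to a generating set, using the convention that vertex-groups are non-trivial), $2$-connectedness is equivalent to the absence of cut-vertices. So the task reduces to: $\mathrm{QM}(\Gamma,\mathcal{G})$ has a cut-vertex if and only if $\Gamma$ is disconnected.

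For the easy direction, suppose $\Gamma$ is disconnected, say $\Gamma = \Gamma_1 \sqcup \Gamma_2$ with both parts non-empty. Then $\Gamma\mathcal{G} = \langle \Gamma_1 \rangle * \langle \Gamma_2 \rangle$ is a non-trivial free product (both factors are non-trivial by the standing convention), and the Cayley graph $\mathrm{QM}(\Gamma,\mathcal{G})$ is then a tree of spaces glued along single vertices: concretely, the vertex $1$ is a cut-vertex, since any path from a non-trivial element of $\langle \Gamma_1 \rangle$ to a non-trivial element of $\langle \Gamma_2 \rangle$ must pass through $1$ (this is just the normal form in a free product, or equivalently the fact that the hyperplane labelled by a vertex of $\Gamma_1$ and passing through the clique $\langle u \rangle$ for $u \in \Gamma_1$ has all of $\langle \Gamma_2 \rangle$ in a single sector). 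Hence $\mathrm{QM}(\Gamma,\mathcal{G})$ is not $2$-connected.

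For the converse, assume $\Gamma$ is connected; I want to show $\mathrm{QM}(\Gamma,\mathcal{G})$ has no cut-vertex. By vertex-transitivity of the Cayley graph it suffices to show $1$ is not a cut-vertex, i.e.\ that $\mathrm{QM}(\Gamma,\mathcal{G}) \setminus \{1\}$ is connected. Take any two vertices $g, h \neq 1$; I would connect each of them to a neighbour of $1$ while avoiding $1$, and then connect those two neighbours of $1$ to each other avoiding $1$. The first reduction uses that each non-trivial element can be written in normal form, and one can ``peel off'' syllables without ever returning to $1$ once one has left it (moving within a clique $g\langle u \rangle$ avoids $1$ as long as one stays away from the identity coset representative). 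The crucial point is the last step: two distinct cliques $\langle u \rangle$ and $\langle v \rangle$ through $1$ can be joined by a path avoiding $1$. If $u$ and $v$ are adjacent in $\Gamma$, the prism $\langle u, v\rangle$ (a product of two cliques, by Lemma~\ref{lem:CliquesPrismsGP}) contains a $4$-cycle through $1$ whose opposite vertex $u v$ gives a detour. If $u$ and $v$ are not adjacent, use a path $u = w_0, w_1, \ldots, w_k = v$ in $\Gamma$ (which exists since $\Gamma$ is connected) and concatenate the detours through the prisms $\langle w_i, w_{i+1}\rangle$, each time landing on a non-trivial vertex of the next clique. This produces a path in $\mathrm{QM}(\Gamma,\mathcal{G}) \setminus \{1\}$ between an arbitrary non-trivial vertex of $\langle u \rangle$ and an arbitrary non-trivial vertex of $\langle v \rangle$, and stringing everything together gives connectedness of the punctured graph.

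\textbf{Main obstacle.} The delicate part is the converse: making the ``peeling off syllables without returning to $1$'' argument precise, since one must argue that the greedy path obtained by successively reducing the normal-form length of a word never passes back through the identity, and then that the finitely many cliques through $1$ are pairwise joinable off $1$ using connectedness of $\Gamma$. An alternative, perhaps cleaner, route would avoid normal forms entirely: show directly via Lemma~\ref{lem:HypInQMGP} that no hyperplane $J_u$ isolates a single vertex in one of its sectors --- each sector delimited by $J_u$, being the union of cosets of $\langle \mathrm{star}(u)\rangle$ minus the carrier side, is connected and meets the carrier in more than one vertex precisely because $\mathrm{star}(u)$ contains $u$ together with at least one neighbour (as $\Gamma$ is connected with more than one vertex) --- and conclude that removing any single vertex leaves the graph connected by a hyperplane-surgery argument analogous to the standard one for CAT(0) cube complexes. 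I would present the hyperplane-based argument as the main proof, as it dovetails with the rest of the paper's machinery.
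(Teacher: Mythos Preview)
Your proposal is correct, but the paper takes a much shorter route. Rather than building paths by hand, the paper invokes an external fact (from \cite{GTranslation}) that in a quasi-median graph a vertex $x$ is a cut-vertex if and only if its simplified link is disconnected; it then observes via Lemma~\ref{lem:CliquesPrismsGP} that the one-skeleton of $\mathrm{slink}(x)$ is canonically isomorphic to $\Gamma$, and the lemma falls out in one line. Your argument is essentially an unpacking of this: your ``peeling off syllables'' step shows that any vertex can be joined to a neighbour of $1$ without hitting $1$ (this is the general fact that connectedness of the punctured graph is governed by the link), and your prism-detour step is precisely the computation that $\mathrm{slink}(1)\cong\Gamma$ together with the hypothesis that $\Gamma$ is connected. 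So the two proofs have the same skeleton; yours is self-contained and more elementary, while the paper's is terser but depends on an imported lemma. Your proposed alternative hyperplane-surgery argument would also work but is farther from what the paper does.
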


\begin{proof}
By \cite[Claim~3.3]{GTranslation},  a vertex of $\mathrm{QM}(\Gamma, \mathcal{G})$ is a cut-vertex if and only if its simplified link is disconnected. But, according to Lemma~\ref{lem:CliquesPrismsGP}, the cliques containing a vertex $x$ are the cosets $x \langle u\rangle$, with $u \in \Gamma$, and two such cliques $x \langle u \rangle$ and $x \langle v \rangle$ span a prism if and only if $u$ and $v$ are adjacent in $\Gamma$. Therefore, the one-skeleton of the simplified link of $x$ is isomorphic to $\Gamma$. The desired conclusion follows. 
\end{proof}

\paragraph{A word about the crossing complex.} The crossing complex $\mathrm{Cross}^\triangle (\Gamma, \mathcal{G})$ of the quasi-median graph $\mathrm{QM}(\Gamma, \mathcal{G})$ can be described purely algebraically as the simplicial complex whose vertices are the conjugates of the vertex-groups and whose simplices are given by pairwise commuting subgroups. 

\medskip \noindent
It is worth mentioning that, when restricting to right-angled Artin groups, the crossing graph coincides with Kim and Koberda's \emph{extension graph}. In \cite{MR3039768}, extension graphs are introduced and exploited to deal with the \emph{embedding problem} for right-angled Artin groups: given two graphs $\Phi$ and $\Psi$, is it possible to determine whether $A(\Phi)$ is isomorphic to a subgroup of $A(\Psi)$? The results in \cite{MR3039768} are reproved and generalised to graph products by the second-named author in \cite{QM} from a geometric perspective thanks to the quasi-median geometry of graph products. Thus, our crossing graphs here can be thought of as a generalisation of the extension graphs from \cite{MR3039768}. (Note that, independently, this generalisation appears in \cite{ThGP}.)

\subsection{Clique subgroups}

\noindent
We start our investigation of coset intersection complexes of graph products with respect to parabolic subgroups by considering clique subgroups. Our main result in this direction is:

\begin{thm}\label{thm:IntroCosetInterComplex}
Let $\Gamma$ be a finite connected graph and $\mathcal{G}$ a collection of infinite groups indexed by the vertices of $\Gamma$. Set $\mathcal{P}:= \{ \langle \Lambda \rangle \mid \Lambda \subset \Gamma \text{ clique}\}$. The coset intersection complex $\mathcal{K}(\Gamma \mathcal{G},\mathcal{P})$ is a quasi-tree and is homotopy equivalent to a bouquet of infinitely many copies of the flag completion $\Gamma^\triangle$ of $\Gamma$. 
\end{thm}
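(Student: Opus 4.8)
The plan is to interpret $\mathcal{K}(\Gamma\mathcal{G},\mathcal{P})$ as a nerve built from the crossing complex $\mathrm{Cross}^\triangle(\Gamma,\mathcal{G})$ of $\mathrm{QM}(\Gamma,\mathcal{G})$, to read off its homotopy type from Theorem~\ref{thm:HomotopyCrossingComplex}, and to establish the quasi-tree assertion separately by comparison with the contact graph. First I would set up the dictionary. By Lemma~\ref{lem:CliquesPrismsGP}, the vertices of $\mathcal{K}(\Gamma\mathcal{G},\mathcal{P})$ are exactly the maximal prisms $g\langle\Lambda\rangle$ of $\mathrm{QM}(\Gamma,\mathcal{G})$, where $\Lambda\subseteq\Gamma$ is a clique. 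The key claim is that, for maximal prisms $P_i=g_i\langle\Lambda_i\rangle$ with $1\le i\le n$, the subgroup $g_1\langle\Lambda_1\rangle g_1^{-1}\cap\cdots\cap g_n\langle\Lambda_n\rangle g_n^{-1}$ is infinite if and only if some hyperplane of $\mathrm{QM}(\Gamma,\mathcal{G})$ crosses all of $P_1,\dots,P_n$. For the ``if'' direction, a hyperplane crossing a prism $g\langle\Lambda\rangle$ has a label in $\Lambda$, and since $\langle\Lambda\rangle\le\langle\mathrm{star}(v)\rangle$ for every $v\in\Lambda$, the latter being the stabiliser of $J_v$ (Lemma~\ref{lem:HypInQMGP}), such a hyperplane must be of the form $gJ_v$ with $v\in\Lambda$; hence its rotative-stabiliser $g\langle v\rangle g^{-1}$ lies in $g\langle\Lambda\rangle g^{-1}$, so a common crossing hyperplane contributes an infinite subgroup (the vertex groups are infinite) to the intersection. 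For the ``only if'' direction I would invoke the structure theory of parabolic subgroups of graph products: the intersection of finitely many parabolics is again a parabolic, say $f\langle\Xi\rangle f^{-1}$, which is infinite precisely when $\Xi\ne\emptyset$; and, using that a parabolic contained in $g_i\langle\Lambda_i\rangle g_i^{-1}$ is ``standard'' inside it together with well-definedness of supports, one gets for any $v\in\Xi$ that $fJ_v=g_iJ_v$ for every $i$, so that $fJ_v$ crosses every $P_i$.

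Granting the claim, $\mathcal{K}(\Gamma\mathcal{G},\mathcal{P})$ is precisely the nerve of the family $\{\mathcal{H}(P)\}$ indexed by the maximal prisms $P$, where $\mathcal{H}(P)$ is the set of hyperplanes crossing $P$, which by Theorem~\ref{thm:PrismsHyp} is a maximal simplex of $\mathrm{Cross}^\triangle(\Gamma,\mathcal{G})$. Since $\Gamma$ is finite, $\mathrm{QM}(\Gamma,\mathcal{G})$ has finite cubical dimension, so by Lemma~\ref{lem:LabelHyp} every simplex of $\mathrm{Cross}^\triangle(\Gamma,\mathcal{G})$ (whose vertices carry pairwise adjacent, hence distinct, labels spanning a complete subgraph of $\Gamma$, and so form a bounded set) lies in some $\mathcal{H}(P)$; thus $\{\mathcal{H}(P)\}$ is a cover of $\mathrm{Cross}^\triangle(\Gamma,\mathcal{G})$ by subcomplexes whose nonempty finite intersections are faces, hence contractible. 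Leray's nerve theorem then yields $\mathcal{K}(\Gamma\mathcal{G},\mathcal{P})\simeq\mathrm{Cross}^\triangle(\Gamma,\mathcal{G})$. As $\Gamma$ is connected, $\mathrm{QM}(\Gamma,\mathcal{G})$ is $2$-connected (Lemma~\ref{lem:MQTwoConnected}), so Theorem~\ref{thm:HomotopyCrossingComplex} identifies $\mathrm{Cross}^\triangle(\Gamma,\mathcal{G})$ with $\bigvee_{x\in\mathrm{QM}(\Gamma,\mathcal{G})}\mathrm{slink}_{\mathrm{QM}(\Gamma,\mathcal{G})^\square}(x)$. Finally, by Lemma~\ref{lem:CliquesPrismsGP} the cliques through a vertex $x$ correspond to the vertices of $\Gamma$, and a family of them spans a simplex of the simplified link if and only if the corresponding vertices span a complete subgraph; hence each $\mathrm{slink}(x)\cong\Gamma^\triangle$, and there are infinitely many vertices $x$ since the vertex groups are infinite. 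This produces the claimed bouquet.

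For the quasi-tree statement it is enough to consider the $1$-skeleton of $\mathcal{K}(\Gamma\mathcal{G},\mathcal{P})$. Reading a path in $\mathcal{K}(\Gamma\mathcal{G},\mathcal{P})$ as the sequence of common crossing hyperplanes of consecutive vertices—which are then transverse, since they cross a common prism—and, conversely, converting a path in the crossing graph $\mathrm{Cross}(\Gamma,\mathcal{G})$ into a path through maximal prisms realising its consecutive transverse pairs, shows that this $1$-skeleton is quasi-isometric to $\mathrm{Cross}(\Gamma,\mathcal{G})$. Next, $\mathrm{Cross}(\Gamma,\mathcal{G})$ is quasi-isometric to the contact graph $\mathrm{Cont}(\Gamma,\mathcal{G})$: the only pairs of hyperplanes adjacent in the latter but not the former are tangent ones, and if $H,H'$ are hyperplanes whose carriers share a vertex $x$, then a path of length at most $\mathrm{diam}(\Gamma)$ in $\Gamma$ joining the labels of the cliques of $H$ and $H'$ through $x$ produces—via successive cliques through $x$ spanning prisms (Lemma~\ref{lem:CliquesPrismsGP}), whose hyperplanes are therefore transverse—a path of length at most $\mathrm{diam}(\Gamma)$ from $H$ to $H'$ in $\mathrm{Cross}(\Gamma,\mathcal{G})$; so $d_{\mathrm{Cross}}\le\mathrm{diam}(\Gamma)\cdot d_{\mathrm{Cont}}$. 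Since $\mathrm{Cont}(\Gamma,\mathcal{G})$ is a quasi-tree by Theorem~\ref{thm:CrossQM}(i), so is $\mathrm{Cross}(\Gamma,\mathcal{G})$, and therefore so is $\mathcal{K}(\Gamma\mathcal{G},\mathcal{P})$.

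The step I expect to be the main obstacle is the ``only if'' half of the key claim in the first paragraph: this is the single place where one must leave the quasi-median picture and appeal to the structure of parabolic subgroups of graph products—that the intersection of parabolics is parabolic, that a parabolic contained in another is standard inside it, and that the support of a parabolic is well defined. Everything else is a bookkeeping combination of Leray's nerve theorem, Theorem~\ref{thm:HomotopyCrossingComplex}, and the elementary comparison of the crossing and contact graphs sketched above.
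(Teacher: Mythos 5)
Your proposal is correct and its overall architecture coincides with the paper's: identify $\mathcal{K}(\Gamma\mathcal{G},\mathcal{P})$ with the nerve of the cover of $\mathrm{Cross}^\triangle(\Gamma,\mathcal{G})$ by its maximal simplices (which correspond to maximal prisms via Theorem~\ref{thm:PrismsHyp}), apply Leray's nerve theorem, and then combine Theorem~\ref{thm:HomotopyCrossingComplex}, Lemma~\ref{lem:MQTwoConnected}, and the identification of simplified links of $\mathrm{QM}(\Gamma,\mathcal{G})$ with $\Gamma^\triangle$. You diverge from the paper in two places. First, for the criterion ``$\bigcap_i g_i\langle\Lambda_i\rangle g_i^{-1}$ infinite iff some hyperplane crosses all the $g_i\langle\Lambda_i\rangle$,'' the paper stays entirely inside the quasi-median picture: its Lemma~\ref{lem:InterStabPrisms} computes the full intersection of prism stabilisers as the subgroup generated by the rotative stabilisers of the hyperplanes crossing all the prisms (via the decomposition $\mathrm{stab}(P)=\bigoplus_J\mathrm{stab}_\circlearrowright(J)$ and a separation argument), whereas you import the structure theory of parabolic subgroups (intersections of parabolics are parabolic, well-definedness of supports, normalisers). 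Your route is viable --- the paper itself uses exactly these inputs from \cite{MR3365774} in Lemma~\ref{lem:SelfCommensurated} and a close cousin of your claim is its Lemma~\ref{lem:ForSkewer} --- but it trades a self-contained geometric computation for external references; note also that your step ``$fJ_v=g_iJ_v$'' implicitly uses that $\Xi$ is a clique so that $N(\langle\Xi\rangle)\subset\langle\mathrm{star}(v)\rangle=\mathrm{stab}(J_v)$, which is worth making explicit. Second, for the quasi-tree assertion the paper simply cites Theorem~\ref{thm:CrossQM}, whose relevant items (iii)--(iv) are stated under a bounded-degree hypothesis that $\mathrm{QM}(\Gamma,\mathcal{G})$ does not literally satisfy when the vertex groups are infinite; your direct comparison $d_{\mathrm{Cont}}\leq d_{\mathrm{Cross}}\leq\mathrm{diam}(\Gamma)\cdot d_{\mathrm{Cont}}$, obtained by connecting the labels of the two cliques at a common carrier vertex by a path in $\Gamma$, together with Theorem~\ref{thm:CrossQM}(i), is a cleaner and fully applicable argument here, and is a genuine improvement over the bare citation.
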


\noindent
The key observation is that the coset intersection complex we are interested in turns out to be closely connected to the crossing complex. More precisely:

\begin{prop}\label{prop:CosetInterCrossing}
Let $\Gamma$ be a finite connected graph and $\mathcal{G}$ a collection of infinite groups indexed by the vertices of $\Gamma$. Set $\mathcal{P}:= \{ \langle \Lambda \rangle \mid \Lambda \subset \Gamma \text{ clique}\}$. The coset intersection complex $\mathcal{K}(\Gamma \mathcal{G},\mathcal{P})$ is quasi-isometric and homotopy equivalent to the crossing complex $\mathrm{Cross}^\triangle(\Gamma, \mathcal{G})$ of the quasi-median graph $\mathrm{QM}(\Gamma , \mathcal{G})$. 
\end{prop}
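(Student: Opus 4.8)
The plan is to give $\mathcal{K}(\Gamma\mathcal{G},\mathcal{P})$ a purely geometric reading inside $\mathrm{QM}(\Gamma,\mathcal{G})$ and then recognise it as the nerve of a natural cover of the crossing complex. By Lemma~\ref{lem:CliquesPrismsGP}, the cosets $g\langle\Lambda\rangle$ with $\Lambda\subset\Gamma$ a clique are exactly the maximal prisms of $\mathrm{QM}(\Gamma,\mathcal{G})$, so the vertex-set of $\mathcal{K}(\Gamma\mathcal{G},\mathcal{P})$ is canonically the set of maximal prisms of $\mathrm{QM}(\Gamma,\mathcal{G})$. The first, and main, step is to establish the dictionary: given cosets $g_1\langle\Lambda_1\rangle,\dots,g_n\langle\Lambda_n\rangle$ corresponding to maximal prisms $P_1,\dots,P_n$, the intersection $g_1\langle\Lambda_1\rangle g_1^{-1}\cap\cdots\cap g_n\langle\Lambda_n\rangle g_n^{-1}$ is infinite if and only if there is a single hyperplane crossing all of $P_1,\dots,P_n$.

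To prove this dictionary I would first note that, via left multiplication, $g_i\langle\Lambda_i\rangle g_i^{-1}$ is exactly the (setwise) stabiliser of the prism $P_i$, so the intersection in question is $\bigcap_i\mathrm{stab}(P_i)$. If a hyperplane $H$ crosses every $P_i$, then $P_i\subset N(H)$ by Lemma~\ref{lem:PrismAbsorption}; using the product decomposition $N(H)\cong F_H\times C_H$ of Proposition~\ref{prop:InterHyperplanesProduct} together with Lemma~\ref{lem:HypInQMGP} (the rotative-stabiliser of $H$ is a conjugate of a vertex-group, acting trivially on the fibre factor $F_H$ and by translations on the clique factor $C_H$), one sees that $P_i=\mathrm{proj}_{F_H}(P_i)\times C_H$ is preserved by $\mathrm{stab}_\circlearrowright(H)$, so $\mathrm{stab}_\circlearrowright(H)$ is an infinite subgroup of $\bigcap_i\mathrm{stab}(P_i)$. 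Conversely, an intersection of parabolic subgroups of $\Gamma\mathcal{G}$ is again a parabolic subgroup, and if it is infinite it contains a conjugate $h\langle v\rangle h^{-1}$ of a vertex-group; by the standard description of the conjugates of a vertex-group contained in a standard parabolic, $h\langle v\rangle h^{-1}\subset g_i\langle\Lambda_i\rangle g_i^{-1}$ forces $v\in\Lambda_i$ and $h\langle v\rangle h^{-1}=g_i x_i'\langle v\rangle x_i'^{-1}g_i^{-1}$ for some $x_i'\in\langle\Lambda_i\rangle$, so the hyperplane containing the clique $g_ix_i'\langle v\rangle\subset g_i\langle\Lambda_i\rangle$ crosses $P_i$; and it is the \emph{same} hyperplane for each $i$, namely the one with rotative-stabiliser $h\langle v\rangle h^{-1}$. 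Carefully assembling this, using the material of \cite[Section~8]{QM}, is where the real work lies; the remaining steps are formal.

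With the dictionary in hand, Theorem~\ref{thm:PrismsHyp} applies (as $\Gamma$ is finite, $\mathrm{QM}(\Gamma,\mathcal{G})$ has finite cubical dimension) and gives a bijection between maximal prisms $P$ and maximal simplices $\sigma(P)$ of $\mathrm{Cross}^\triangle(\Gamma,\mathcal{G})$, where $\sigma(P)$ is the set of hyperplanes crossing $P$. The dictionary then says precisely that $g_1\langle\Lambda_1\rangle,\dots,g_n\langle\Lambda_n\rangle$ span a simplex of $\mathcal{K}(\Gamma\mathcal{G},\mathcal{P})$ if and only if $\sigma(P_1)\cap\cdots\cap\sigma(P_n)\neq\emptyset$; in other words, $\mathcal{K}(\Gamma\mathcal{G},\mathcal{P})$ is the nerve of the cover of $\mathrm{Cross}^\triangle(\Gamma,\mathcal{G})$ by its maximal simplices. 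Every simplex of $\mathrm{Cross}^\triangle(\Gamma,\mathcal{G})$ has at most $\mathrm{clique}(\Gamma)$ vertices, hence lies in a maximal one, so this is a genuine cover by subcomplexes; each maximal simplex is contractible, and a non-empty intersection of maximal simplices is a common face, hence contractible. Leray's nerve theorem therefore yields $\mathcal{K}(\Gamma\mathcal{G},\mathcal{P})\simeq\mathrm{Cross}^\triangle(\Gamma,\mathcal{G})$.

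Finally, for the quasi-isometry it suffices to exhibit a quasi-isometry between the $1$-skeleta. Choose, for each hyperplane $J$, a maximal prism $\phi(J)$ crossed by $J$ (one exists since $J$ crosses some prism and $\Gamma$ is finite). If $J,J'$ are transverse they cross a common prism by Theorem~\ref{thm:PrismsHyp}, which extends to a maximal prism $P$ crossed by both, so $\phi(J)$ and $\phi(J')$ are each adjacent to $P$ in $\mathcal{K}(\Gamma\mathcal{G},\mathcal{P})$; inductively $d_{\mathcal{K}}(\phi(J),\phi(J'))\le 2\,d_{\mathrm{Cross}}(J,J')$. Conversely, along a geodesic $\phi(J)=P_0,P_1,\dots,P_k=\phi(J')$ in $\mathcal{K}(\Gamma\mathcal{G},\mathcal{P})$ the consecutive prisms $P_{i-1},P_i$ share a crossing hyperplane $H_i$ by the dictionary, and each of the pairs $\{J,H_1\}$, $\{H_i,H_{i+1}\}$, $\{H_k,J'\}$ consists of hyperplanes crossing a common prism, hence transverse or equal, so $d_{\mathrm{Cross}}(J,J')\le k+1=d_{\mathcal{K}}(\phi(J),\phi(J'))+1$. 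Moreover $\phi$ is coarsely surjective: any maximal prism $P$ is crossed by some hyperplane $J$ and is then adjacent to $\phi(J)$. Thus $\phi$ is a quasi-isometry from $\mathrm{Cross}^\triangle(\Gamma,\mathcal{G})$ onto $\mathcal{K}(\Gamma\mathcal{G},\mathcal{P})$, completing the proof.
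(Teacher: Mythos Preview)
Your proof is correct and follows essentially the same architecture as the paper's: identify $\mathcal{K}(\Gamma\mathcal{G},\mathcal{P})$ with the nerve of the cover of $\mathrm{Cross}^\triangle(\Gamma,\mathcal{G})$ by its maximal simplices, then invoke Leray's nerve theorem. The only notable differences are that the paper packages your ``dictionary'' as a separate lemma (Lemma~\ref{lem:InterStabPrisms}, proved via sector geometry rather than the parabolic-subgroup structure you use), and that the paper asserts the quasi-isometry without the explicit coarse-Lipschitz estimates you supply.
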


\noindent
We start by proving the following preliminary observation:

\begin{lemma}\label{lem:InterStabPrisms}
Let $\Gamma$ be a graph and $\mathcal{G}$ a collection of groups indexed by the vertices of $\Gamma$. For every collection of prisms $\{ P_i \subset \mathrm{QM}(\Gamma, \mathcal{G}), \ i \in I \}$, 
$$\bigcap\limits_{i \in I} \mathrm{stab}(P_i) = \langle \mathrm{stab}_\circlearrowright (J), \ J \text{ crosses all the $P_i$} \rangle.$$
\end{lemma}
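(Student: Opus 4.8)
The plan is to prove both inclusions separately, the easier one first. For the inclusion $\supseteq$, observe that if $g \in \mathrm{stab}_\circlearrowright(J)$ for some hyperplane $J$ crossing every $P_i$, then by Lemma~\ref{lem:PrismAbsorption} each $P_i$ is contained in $N(J)$, and since $g$ fixes every clique of $J$ it in particular stabilises $N(J)$; one then checks that $g$ stabilises each $P_i$. Indeed, this is essentially the content of the preliminary structure theory: the rotative-stabiliser of $J$ acts on $N(J)$ preserving the product decomposition $N(J) \cong (F,o) \times (C,o)$ from Proposition~\ref{prop:InterHyperplanesProduct}, acting only on the clique factor $C$, while $P_i \subset N(J)$ decomposes compatibly with this product (using Lemma~\ref{lem:GoodPrism} to see $P_i$ sits inside a prism crossed by $J$, hence a product of $C$ with a prism of $F$). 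So $g$ preserves $P_i$, giving $g \in \mathrm{stab}(P_i)$ for all $i$, whence the generating set of the right-hand side lies in the left-hand side, and since the left-hand side is a group, the inclusion follows.

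For the reverse inclusion $\subseteq$, take $g \in \bigcap_{i \in I} \mathrm{stab}(P_i)$. The strategy is to reduce to the case where the $P_i$ share a common vertex: using the quasi-median (indeed CAT(0)) geometry of $\mathrm{QM}(\Gamma,\mathcal{G})^\square$, an element stabilising a prism stabilises the gated subgraph $P_i$ and hence has a well-defined action; after a suitable reduction I would like to assume that the intersection $\bigcap_i P_i$ is a single vertex $x$ fixed by $g$ (replacing each $P_i$ by a translate or passing to faces if necessary — here the fact that prisms are gated and that $g$ permutes them is used). Once $g$ fixes a vertex $x$, it acts on the simplified link $\mathrm{sL}(x)$, equivalently on the copy of $\Gamma$ from Lemma~\ref{lem:MQTwoConnected}; since $g$ moreover stabilises each clique of each $P_i$ containing $x$ — because $g$ stabilises the prism $P_i$ and fixes $x$, so it permutes the cliques of $P_i$ through $x$, and a standard argument with the labelling of hyperplanes (Lemma~\ref{lem:LabelHyp}) forces this permutation to be trivial when $g$ also fixes the neighbouring vertices appropriately — one concludes $g$ lies in the subgroup generated by rotations about the hyperplanes crossing all the $P_i$. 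More precisely, writing $g$ as a product of generators, each letter rotates a single clique, hence lies in some $\mathrm{stab}_\circlearrowright(J)$; the constraint that $g$ stabilises every $P_i$ forces each such $J$ to cross every $P_i$.

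The main obstacle I anticipate is the last reduction: controlling an arbitrary element $g$ of the intersection of prism-stabilisers and showing it decomposes as a product of rotations about hyperplanes common to all the $P_i$, rather than a product of rotations about various unrelated hyperplanes whose net effect happens to preserve each $P_i$. The clean way around this is to exploit the product decomposition: a prism-stabiliser $\mathrm{stab}(P_i)$ splits as $\mathrm{stab}_\circlearrowright$-type rotations of the cliques of $P_i$ together with a part acting on a complementary gated subgraph, and intersecting over all $i$ kills everything except rotations about hyperplanes crossing all the $P_i$. Making this "splitting and intersecting" argument precise — in particular handling the case $I$ infinite, where no single prism is crossed by all the relevant hyperplanes (cf.\ the caveat after Theorem~\ref{thm:PrismsHyp}) — is where the real work lies, and I would lean on Proposition~\ref{prop:InterHyperplanesProduct} applied to $\bigcap_i N(J)$ over the common hyperplanes to organise it.
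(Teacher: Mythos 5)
Your $\supseteq$ direction is fine (and can be shortened: the paper first proves that for a single prism $P$ one has $\mathrm{stab}(P)=\bigoplus_{J\,\text{crossing}\,P}\mathrm{stab}_\circlearrowright(J)$, from which the containment of each generator is immediate). The problem is the $\subseteq$ direction, where your proposal has two genuine gaps. First, the reduction to a common vertex fixed by $g$ is not available: the prisms $P_i$ need not pairwise intersect at all, and even for a single prism an element of $\mathrm{stab}(P)$ typically fixes no vertex --- e.g.\ $P=\langle u\rangle$ is a clique and a nontrivial $g\in G_u=\mathrm{stab}_\circlearrowright(J_u)$ permutes its vertices freely. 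So the entire ``act on the link of $x$'' portion of your argument has no starting point. Second, and more importantly, the step you yourself flag as the main obstacle --- that after decomposing $g$ as a product of rotations, the constraint of stabilising every $P_i$ forces each rotation's hyperplane to cross every $P_i$ --- is exactly the content of the lemma, and you only assert it. Writing $g$ as a word in the generators does not help, since the individual letters need not stabilise anything.

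The paper closes this gap as follows. Fix one index $\ell$ and use the direct-sum decomposition of $\mathrm{stab}(P_\ell)$ to write $g=g_1\cdots g_n h_1\cdots h_m$, where the $g_i$ are rotations about the hyperplanes crossing all the $P_i$ and the $h_j$ are rotations about hyperplanes $H_j$ crossing $P_\ell$ but missing some $P_{k_j}$. Two facts then finish the argument: (a) by Lemma~\ref{lem:HypInQMGP}, $\mathrm{stab}_\circlearrowright(H_j)$ acts freely and transitively on the sectors of $H_j$, so if $h_j\neq 1$ then $H_j$ separates $P_{k_j}$ from $h_jP_{k_j}$ (here one uses that $P_{k_j}$ lies in a single sector of $H_j$, since $H_j$ does not cross it); and (b) the rotative stabiliser of any hyperplane transverse to $H_j$ preserves each sector of $H_j$ (a short $4$-cycle argument), so the remaining factors of $g$ cannot undo this displacement, and $g$ fails to stabilise $P_{k_j}$. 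This sector-separation argument is the missing ingredient; your ``splitting and intersecting'' plan is pointed in the right direction but needs precisely this dynamical input on sectors to go through. (Your worry about infinite $I$ is a non-issue once one decomposes relative to the single prism $P_\ell$.)
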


\begin{proof}
We begin by proving a couple of general observations.

\begin{claim}\label{claim:PrismStabRot}
For every prism $P$ in $\mathrm{QM}(\Gamma, \mathcal{G})$,
$$\mathrm{stab}(P)= \bigoplus\limits_{J \text{ hyp.\ inter.\ } P} \mathrm{stab}_\circlearrowright (J).$$
\end{claim}

\noindent
According to Lemma~\ref{lem:CliquesPrismsGP}, up to translating $P$ by an element of $\Gamma \mathcal{G}$, we can write $P= \langle \Lambda \rangle$ for some complete subgraph $\Lambda \subset \Gamma$. Then, it follows from Lemma~\ref{lem:HypInQMGP} that
$$\mathrm{stab}(P)= \langle \Lambda \rangle = \bigoplus\limits_{u \in \Lambda} \langle u \rangle = \bigoplus\limits_{u \in \Lambda} \mathrm{stab}_\circlearrowright (J_u).$$
Since the $J_u$ for $u \in \Lambda$ are exactly the hyperplanes intersecting $P$, the desired conclusion follows.

\begin{claim}\label{claim:RotStabSector}
If $K$ and $L$ are two transverse hyperplanes, then $\mathrm{stab}_\circlearrowright (L)$ stabilises each sector delimited by $K$. 
\end{claim}

\noindent
Because $K$ and $L$ are transverse, there must exist a prism crossed by both $K$ and $L$. According to Lemma~\ref{lem:CliquesPrismsGP}, up to translating such a prism by an element of $\Gamma \mathcal{G}$, we can assume that $K=J_u$ and $L=J_v$ for some adjacent vertices $u,v \in \Gamma$. We already know from Lemma~\ref{lem:HypInQMGP} that $\mathrm{stab}_\circlearrowright(L)=\langle v \rangle$ stabilises $K$. Given an element $g \in \mathrm{stab}_\circlearrowright(L)$ and a vertex $x \in \langle u \rangle$, there is a $4$-cycle $(1,x,g \cdot x,g \cdot 1)$ in $\mathrm{QM}(\Gamma , \mathcal{G})$. Since the edges $[1,x]$ and $[g \cdot 1,g \cdot x]$ both belong to $J_u=K$, the vertices $x$ and $g \cdot x$ belong to the same sector delimited by $K$. Thus, $g$ stabilises the sector delimited by $K$ that contains $x$. Since this is true for every element $g \in \mathrm{stab}_\circlearrowright(L)$ and every vertex $x \in \langle u \rangle$, this concludes the proof of Claim~\ref{claim:RotStabSector}.

\medskip \noindent
Now, let us prove the equality from the statement of the lemma. The inclusion 
$$\langle \mathrm{stab}_\circlearrowright (J), \ J \text{ crosses all the $P_i$} \rangle \subset \bigcap\limits_{i \in I} \mathrm{stab}(P_i)$$
follows immediately from Claim~\ref{claim:PrismStabRot}. For the reverse inclusion, let $g \in \bigcap_{i \in I} \mathrm{stab}(P_i)$ be an element. Fix an $\ell \in I$. Let $J_1, \ldots, J_n$ denote the hyperplanes crossing all the $P_i$, and let $H_1, \ldots, H_m$ denote the hyperplanes crossing $P_\ell$ but not all the $P_i$. According to Claim~\ref{claim:PrismStabRot}, we can decompose $g$ as a product $g_1 \cdots g_n h_1 \cdots h_m$ where $g_i \in \mathrm{stab}_\circlearrowright(J_i)$ for every $1 \leq i \leq n$ and $h_i \in \mathrm{stab}_\circlearrowright(H_i)$ for every $1 \leq i \leq m$. Moreover, the $g_i$ and $h_j$ pairwise commute. Assume for contradiction that one $h_j$ is non-trivial, say $h_1$. We know that $H_1$ does not cross some $P_k$, $k \in I$. Because $h_1$ sends the sector delimited by $H_1$ that contains $P_k$ to a distinct sector delimited by $H_1$, we know that $H_1$ separates $P_k$ and $h_1P_k$. We deduce from Claim~\ref{claim:RotStabSector} that $H_1$ also separates $P_k$ and $gP_k$. Thus, $g$ does not stabilise $P_k$, contrary to our assumption. Therefore, all the $h_j$ must be trivial, and so $g$ must belong to $\langle \mathrm{stab}_\circlearrowright(J_i), \ 1 \leq i \leq n \rangle$, as desired.
\end{proof}

\begin{proof}[Proof of Proposition~\ref{prop:CosetInterCrossing}.]
As a consequence of Lemma~\ref{lem:CliquesPrismsGP}, the cosets of the form $g \langle \Lambda \rangle$ with $\Lambda \subset \Gamma$ a clique coincide with the maximal prisms in $\mathrm{QM}(\Gamma, \mathcal{G})$. Hence, according to Theorem~\ref{thm:PrismsHyp}, there is a natural bijection between such cosets and the maximal collections of pairwise transverse hyperplanes in $\mathrm{QM}(\Gamma, \mathcal{G})$; or equivalently, with the maximal simplices in the crossing complex $\mathrm{Cross}^\triangle(\Gamma, \mathcal{G})$. Notice that, as a consequence of Lemma~\ref{lem:InterStabPrisms}, for any collection of cliques $\{ \Phi_i \subset \Gamma, \ i \in I \}$ and any collection of elements $\{ g_i \in \Gamma \mathcal{G}, \ i \in I\}$, the conjugates $g_i \langle \Phi_i \rangle g_i^{-1}$ have an infinite total intersection if and only if there exists a hyperplane crossing all the maximal prisms $g_i \langle \Phi_i \rangle$, which amounts to saying that the corresponding maximal simplices in $\mathrm{Cross}^\triangle(\Gamma, \mathcal{G})$ intersect.

\medskip \noindent
Thus, we have proved that the coset intersection complex $\mathcal{K}(\Gamma \mathcal{G}, \mathcal{P})$ coincides with the nerve complex of $\mathrm{Cross}^\triangle(\Gamma, \mathcal{G})$ relative to its maximal simplices. This implies that $\mathcal{K}(\Gamma \mathcal{G}, \mathcal{P})$ and $\mathrm{Cross}^\triangle(\Gamma, \mathcal{G})$ are both quasi-isometric and homotopy equivalent, by Leray's nerve theorem. 
\end{proof}

\noindent
In order to deduce Theorem~\ref{thm:IntroCosetInterComplex}, it remains to describe the crossing complexes of the quasi-median graphs associated to graph products.

\begin{prop}\label{prop:CrossingQMGamma}
Let $\Gamma$ be a connected graph and $\mathcal{G}$ a collection of groups indexed by the vertices of $\Gamma$. The crossing complex of $\mathrm{QM}(\Gamma, \mathcal{G})$ is homotopy equivalent to a bouquet of infinitely many copies of $\Gamma^\triangle$, the flag completion of $\Gamma$. 
\end{prop}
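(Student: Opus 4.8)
The plan is to apply Theorem~\ref{thm:HomotopyCrossingComplex} to the quasi-median graph $X = \mathrm{QM}(\Gamma, \mathcal{G})$. Since $\Gamma$ is connected, Lemma~\ref{lem:MQTwoConnected} tells us that $X$ is $2$-connected, so it has a single $2$-connected component, namely itself. Therefore Theorem~\ref{thm:HomotopyCrossingComplex} gives that $\mathrm{Cross}^\triangle(\Gamma, \mathcal{G})$ is homotopy equivalent to the pointed sum $\bigvee_{x \in X} \mathrm{slink}_{X^\square}(x)$ over all vertices $x$ of $X$. The first key step is to identify each simplified link $\mathrm{slink}_{X^\square}(x)$ with $\Gamma^\triangle$, the flag completion of $\Gamma$. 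By definition, the vertices of $\mathrm{slink}_{X^\square}(x)$ are the cliques of $X$ containing $x$, and by Lemma~\ref{lem:CliquesPrismsGP} these are exactly the cosets $x\langle u \rangle$ for $u \in V(\Gamma)$; this gives a bijection between vertices of $\mathrm{slink}_{X^\square}(x)$ and vertices of $\Gamma$. A collection of such cliques $x\langle u_1\rangle, \ldots, x\langle u_k\rangle$ spans a simplex of $\mathrm{slink}_{X^\square}(x)$ precisely when it is contained in a common prism; by Lemma~\ref{lem:CliquesPrismsGP} the prisms through $x$ are the cosets $x\langle \Lambda \rangle$ with $\Lambda$ complete in $\Gamma$, and $x\langle u_i\rangle \subset x\langle \Lambda \rangle$ for all $i$ if and only if $\{u_1, \ldots, u_k\}$ spans a complete subgraph of $\Gamma$. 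Hence the simplices of $\mathrm{slink}_{X^\square}(x)$ correspond exactly to the cliques of $\Gamma$, which is the definition of $\Gamma^\triangle$. (This identification is essentially the argument already used in the proof of Lemma~\ref{lem:MQTwoConnected}, extended from the one-skeleton to all simplices.)

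The second key step is to observe that the index set $V(X)$ is infinite: since each vertex-group $G_u$ is non-trivial (by the standing convention) and $\Gamma$ is non-empty, the graph product $\Gamma\mathcal{G}$ is infinite, so $X = \mathrm{QM}(\Gamma, \mathcal{G})$ has infinitely many vertices. Combining the two steps, $\mathrm{Cross}^\triangle(\Gamma, \mathcal{G})$ is homotopy equivalent to $\bigvee_{x \in X} \Gamma^\triangle$, a pointed sum of infinitely many copies of $\Gamma^\triangle$, which is exactly the asserted conclusion.

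I do not expect any serious obstacle here: the proposition is a direct specialisation of Theorem~\ref{thm:HomotopyCrossingComplex} together with the explicit combinatorial description of cliques and prisms in $\mathrm{QM}(\Gamma, \mathcal{G})$ from Lemma~\ref{lem:CliquesPrismsGP}. The only minor point requiring care is the bookkeeping in the identification $\mathrm{slink}_{X^\square}(x) \cong \Gamma^\triangle$ — one must check that the simplex condition "contained in a common prism" matches "spans a complete subgraph of $\Gamma$" in both directions — but this is immediate from Lemma~\ref{lem:CliquesPrismsGP}. If one prefers, the same conclusion can be reached by first applying Proposition~\ref{prop:PuncturedQM} to replace $\mathrm{Cross}^\triangle(\Gamma,\mathcal{G})$ with $X^\square \setminus X$ and then running the spanning-tree deformation-retract argument from the proof of Theorem~\ref{thm:HomotopyCrossingComplex}, but invoking Theorem~\ref{thm:HomotopyCrossingComplex} directly is cleaner.
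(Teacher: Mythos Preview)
Your proposal is correct and follows essentially the same approach as the paper: apply Theorem~\ref{thm:HomotopyCrossingComplex} and then identify each simplified link with $\Gamma^\triangle$ via Lemma~\ref{lem:CliquesPrismsGP}. The paper phrases the identification at the single vertex $x=1$ after invoking vertex-transitivity of the $\Gamma\mathcal{G}$-action, whereas you carry it out at a general vertex $x$ using the cosets $x\langle u\rangle$; these are the same argument.

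One small quibble: your justification that $\Gamma\mathcal{G}$ is infinite (``each vertex-group is non-trivial and $\Gamma$ is non-empty'') is not quite complete---a complete graph with finite vertex-groups gives a finite graph product. This does not actually damage the proposition, since in exactly those cases $\Gamma^\triangle$ is a simplex and hence contractible, so a finite wedge is already homotopy equivalent to an infinite one; but it is worth noting. The paper's proof is silent on this point as well.
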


\begin{proof}
As a consequence of Theorem~\ref{thm:HomotopyCrossingComplex} and Lemma~\ref{lem:Loc}, it suffices to verify that, for every vertex $x$ of $\mathrm{QM}(\Gamma,\mathcal{G})$, $\mathrm{slink}(x)$ is homeomorphic to the flag completion $\Gamma^\triangle$ of $\Gamma$. Since $\Gamma \mathcal{G}$ acts vertex-transitively on $\mathrm{QM}(\Gamma, \mathcal{G})$, we can assume without loss of generality that $x=1$. By definition, $\mathrm{slink}(1)$ coincides with the simplicial complex whose vertices are the cliques containing $1$ and whose simplices are given by collections of cliques spanning prisms. But, according to Lemma~\ref{lem:CliquesPrismsGP}, these cliques are given by the cosets $\langle u \rangle$ for $u \in V(\Gamma)$, and a collection of cliques $\langle u_1 \rangle, \ldots, \langle u_n \rangle$ span a prism if and only if $u_1, \ldots, u_n$ span a complete subgraph in $\Gamma$. This is the description of the flag completion of $\Gamma$. 
\end{proof}

\begin{proof}[Proof of Theorem~\ref{thm:IntroCosetInterComplex}.]
The desired conclusion follows by combining Theorem~\ref{thm:CrossQM} with Propositions~\ref{prop:CosetInterCrossing} and~\ref{prop:CrossingQMGamma}. 
\end{proof}

\subsection{Other parabolic subgroups}

\noindent
We now turn to other parabolic subgroups of graph products. The main result of this section is the following:

\begin{thm}\label{thm:CosetInterComplexRelativeCont}
Let $\Gamma$ be a connected graph and $\mathcal{G}$ a collection of infinite groups indexed by $\Gamma$. Fix a collection $\mathscr{S}$ of subgraphs of $\Gamma$ such that: 
\begin{itemize}
	\item for every $\Lambda \in \mathscr{S}$, there is no vertex $v \in \Gamma$ such that $\Lambda \subset \mathrm{link}(v)$;
	\item for every $v \in \Gamma$, there exists $\Lambda \in \mathscr{S}$ such that $\mathrm{star}(v) \subset \Lambda$.
\end{itemize}
The coset intersection complex $\mathcal{K}(\Gamma \mathcal{G}, \{ \langle \Lambda \rangle ,\Lambda \in \mathscr{S} \})$ is homotopy equivalent to the pointed sum of infinitely many copies of the simplicial complex $\Gamma^\mathscr{S}$ whose vertex-set is $\Gamma$ and whose simplices are given by the subgraphs in $\mathscr{S}$. 
\end{thm}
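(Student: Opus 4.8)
The plan is to deduce Theorem~\ref{thm:CosetInterComplexRelativeCont} from Theorem~\ref{thm:RelContact} in exactly the same way that Theorem~\ref{thm:IntroCosetInterComplex} was deduced from Theorem~\ref{thm:HomotopyCrossingComplex}, i.e.\ by first identifying the coset intersection complex with a suitable relative contact complex of the quasi-median graph $\mathrm{QM}(\Gamma, \mathcal{G})$ up to homotopy (and quasi-isometry), and then computing the homotopy type of that relative contact complex via the pointed-sum formula. The first step is to set $\mathbb{G}$ to be the collection of all cosets $g \langle \Lambda \rangle$ with $g \in \Gamma\mathcal{G}$ and $\Lambda \in \mathscr{S}$. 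By \cite[Section~8]{QM} each such coset is a gated subgraph of $\mathrm{QM}(\Gamma,\mathcal{G})$, and the two hypotheses on $\mathscr{S}$ translate into quasi-median language: the condition that no $\Lambda \in \mathscr{S}$ lies in a link $\mathrm{link}(v)$ will be used to guarantee that the relative contact complex genuinely ``sees'' all the relevant hyperplanes, while the condition that every $\mathrm{star}(v)$ is contained in some $\Lambda \in \mathscr{S}$ is exactly what makes $\mathbb{G}$ star-covering: for the clique $\langle u\rangle$ based at a vertex $g$, the prisms containing it correspond to complete subgraphs of $\mathrm{star}(u)$ (Lemmas~\ref{lem:CliquesPrismsGP} and~\ref{lem:HypInQMGP}), hence all sit inside the coset of a suitable $\langle \Lambda\rangle$. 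Since $\Gamma$ is connected, $\mathrm{QM}(\Gamma,\mathcal{G})$ is $2$-connected by Lemma~\ref{lem:MQTwoConnected}, so Theorem~\ref{thm:RelContact} applies.

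The second step is the algebraic identification. Using Lemma~\ref{lem:HypInQMGP}, a hyperplane of $\mathrm{QM}(\Gamma,\mathcal{G})$ labelled by $u$ and based (in some translate) at $g$ has rotative-stabiliser a conjugate of $\langle u\rangle$; and by the analogue of Lemma~\ref{lem:InterStabPrisms} — which I would prove by the same argument, replacing ``prisms'' by ``cosets of subgraph-subgroups'' and using that these cosets are gated — a finite family of conjugates $g_i\langle\Lambda_i\rangle g_i^{-1}$ has infinite total intersection precisely when there is a hyperplane crossing all the cosets $g_i\langle\Lambda_i\rangle$, i.e.\ when the corresponding vertices span a simplex in $\mathrm{Cont}^\triangle(\mathrm{QM}(\Gamma,\mathcal{G}),\mathbb{G})$. (The ``infinite'' hypothesis on the vertex-groups is what makes a single shared hyperplane contribute an infinite intersection, via Lemma~\ref{lem:HypInQMGP}.) This shows that $\mathcal{K}(\Gamma\mathcal{G},\{\langle\Lambda\rangle : \Lambda\in\mathscr{S}\})$ is the nerve complex of $\mathrm{Cont}^\triangle(\mathrm{QM}(\Gamma,\mathcal{G}),\mathbb{G})$ with respect to a cover by simplices coming from the cosets in $\mathbb{G}$, so Leray's nerve theorem gives that the two complexes are homotopy equivalent (and the usual nerve-comparison argument gives quasi-isometry, though only homotopy equivalence is claimed).

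The third step is to compute $\bigvee_{x} \mathrm{sL}_\mathbb{G}(x)$. By vertex-transitivity of the $\Gamma\mathcal{G}$-action it suffices to look at $x = 1$: the cliques through $1$ are the $\langle u\rangle$, $u \in V(\Gamma)$, and a collection $\langle u_1\rangle,\ldots,\langle u_n\rangle$ lies in a common member of $\mathbb{G}$ iff $\{u_1,\ldots,u_n\}$ is contained in some $\Lambda\in\mathscr{S}$ — so $\mathrm{sL}_\mathbb{G}(1)$ is exactly the complex $\Gamma^\mathscr{S}$ of the statement. Since the index set (the vertex set of $\mathrm{QM}(\Gamma,\mathcal{G})$, i.e.\ $\Gamma\mathcal{G}$) is infinite, the pointed sum is over infinitely many copies of $\Gamma^\mathscr{S}$, which is the claimed answer. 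Finally I would note that $\Gamma^\mathscr{S}$ is connected (equivalently, $\mathrm{L}_\mathbb{G}(1)$ is connected, which follows from Lemma~\ref{lem:ContConnected} since $\mathbb{G}$ is prism-covering), so the pointed sum is well-defined up to homotopy.

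The main obstacle is the second step: pinning down precisely which hyperplanes of $\mathrm{QM}(\Gamma,\mathcal{G})$ cross a coset $g\langle\Lambda\rangle$ and verifying that an infinite total intersection of conjugates forces a common crossing hyperplane (rather than some subtler coincidence of parabolics) — this is where the hypothesis ``no $\Lambda\in\mathscr{S}$ is contained in a link'' has to be used, to rule out situations where the relevant subgroup intersection is infinite for reasons not detected by the contact complex. I would handle this by citing and lightly adapting the parabolic-intersection machinery of \cite{QM} together with Lemma~\ref{lem:InterStabPrisms}; the rest is bookkeeping.
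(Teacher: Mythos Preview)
Your overall architecture matches the paper's: set $\mathbb{G} = \{g\langle\Lambda\rangle : g \in \Gamma\mathcal{G},\ \Lambda \in \mathscr{S}\}$, translate the coset intersection complex into a hyperplane-crossing condition, pass to $\mathrm{Cont}^\triangle(\mathrm{QM}(\Gamma,\mathcal{G}),\mathbb{G})$ via a nerve argument, check star-covering from the second hypothesis, apply Theorem~\ref{thm:RelContact}, and identify $\mathrm{sL}_\mathbb{G}(1)$ with $\Gamma^\mathscr{S}$. Steps one and three are fine.

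There are, however, two genuine problems in your second step.

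\medskip
\textbf{The cover is not by simplices.} For a coset $Y = g\langle\Lambda\rangle$ with $\Lambda$ not complete, the set of hyperplanes crossing $Y$ is not a simplex of $\mathrm{Cont}^\triangle$; it is a full subcomplex $\mathcal{J}_Y$. So you cannot invoke the ``nerve of a cover by maximal simplices'' argument used for clique subgroups. The paper formalises this via the \emph{skewering complex} $\mathrm{Skew}^\triangle(X,\mathbb{G})$ and proves (Proposition~\ref{prop:SkewCont}) that it is homotopy equivalent to $\mathrm{Cont}^\triangle(X,\mathbb{G})$ by showing that each finite intersection $\mathcal{J}_{Y_1}\cap\cdots\cap\mathcal{J}_{Y_n}$ is the contact complex of a single gated subgraph (Lemma~\ref{lem:SectionHyp}), hence contractible by Proposition~\ref{prop:ContactContractible}. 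This contractibility step is the actual content you are missing.

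\medskip
\textbf{The first hypothesis is misplaced.} The equivalence ``$\bigcap g_i\langle\Lambda_i\rangle g_i^{-1}$ infinite $\Leftrightarrow$ some hyperplane crosses every $g_i\langle\Lambda_i\rangle$'' (Lemma~\ref{lem:ForSkewer}) holds \emph{without} any hypothesis on $\mathscr{S}$; your worry about ``infinite intersections not detected by the contact complex'' is unfounded. The condition ``no $\Lambda \in \mathscr{S}$ lies in a link'' is used elsewhere: it guarantees that $\mathbb{G}$ is \emph{parallelism-free}, i.e.\ no two distinct cosets in $\mathbb{G}$ are crossed by exactly the same set of hyperplanes. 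Concretely, if $g\langle\Phi\rangle$ and $h\langle\Psi\rangle$ were parallel, one first gets $\Phi=\Psi$ from the labels, and then any hyperplane $J$ separating them (tangent to the nearest point) is transverse to every hyperplane of $g\langle\Phi\rangle$, forcing $\Phi \subset \mathrm{link}(\mathrm{label}(J))$ --- contradiction. Parallelism-freeness is what makes the map $Y \mapsto \mathcal{J}_Y$ a bijection onto the vertices of the nerve, so that the nerve is genuinely isomorphic to $\mathcal{K}$.

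\medskip
A smaller point: the adaptation of Lemma~\ref{lem:InterStabPrisms} you propose does not go through verbatim, since its proof relies on the direct-sum decomposition of prism stabilisers (Claim~\ref{claim:PrismStabRot}), which fails for general parabolic cosets. The paper instead proves Lemma~\ref{lem:ForSkewer} by induction on $n$, using that the projection of one parabolic coset to another is again a parabolic coset and that the hyperplanes crossing the projection are exactly those crossing both.
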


\noindent
For instance, the theorem applies to maximal join subgroups, which we will consider in Section~\ref{section:QIinvariant}. The first step towards the proof of Theorem~\ref{thm:CosetInterComplexRelativeCont} is to give a geometric interpretation of the coset intersection complexes under consideration. This will be possible thanks to the following observation:

\begin{lemma}\label{lem:ForSkewer}
Let $\Gamma$ be a graph and $\mathcal{G}$ a collection of infinite groups indexed by $\Gamma$. For all subgraphs $\Lambda_1, \ldots, \Lambda_n \subset \Gamma$ and all elements $g_1, \ldots, g_n \in \Gamma \mathcal{G}$, the intersection
$$g_1 \langle \Lambda_1 \rangle g_1^{-1} \cap \cdots \cap g_n \langle \Lambda_n \rangle g_n^{-1}$$
is infinite if and only if there exists a hyperplane in $\mathrm{QM}(\Gamma, \mathcal{G})$ that crosses all the subgraphs $g_1 \langle \Lambda_1 \rangle, \ldots, g_n \langle \Lambda_n \rangle$. 
\end{lemma}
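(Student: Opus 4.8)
The plan is to reduce the statement to a fact about parabolic subgroups of graph products, using the dictionary between parabolic subgroups and gated subgraphs of $\mathrm{QM}(\Gamma,\mathcal{G})$, together with the characterization of which hyperplanes cross a given coset. Recall that for a subgraph $\Lambda\subset\Gamma$ and $g\in\Gamma\mathcal{G}$, the coset $g\langle\Lambda\rangle$ is a gated subgraph of $\mathrm{QM}(\Gamma,\mathcal{G})$, and it is isomorphic (as a quasi-median graph) to $\mathrm{QM}(\Lambda,\mathcal{G}|_\Lambda)$; moreover $g\langle\Lambda\rangle g^{-1}$ is precisely the stabilizer of this gated subgraph (or at least contains it with finite-index-type control, but in fact it stabilizes it), and more relevantly the hyperplanes crossing $g\langle\Lambda\rangle$ are exactly the $\Gamma\mathcal{G}$-translates $gJ_u$ for $u$ a vertex of $\Lambda$ lying ``inside'' the coset. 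So ``a hyperplane crosses all of $g_1\langle\Lambda_1\rangle,\dots,g_n\langle\Lambda_n\rangle$'' is a concrete combinatorial condition.

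First I would handle the easy direction: if a hyperplane $J$ crosses all the subgraphs $g_i\langle\Lambda_i\rangle$, then since each $g_i\langle\Lambda_i\rangle$ is gated and hence has gated carrier-intersections, one can show the rotative-stabilizer $\mathrm{stab}_\circlearrowright(J)$ is contained in $g_i\langle\Lambda_i\rangle g_i^{-1}$ for every $i$. Indeed, conjugating by $g_i^{-1}$ reduces to the case where the subgraph is $\langle\Lambda_i\rangle$ and $J$ is some hyperplane crossing it; then $J$ is a translate $h J_u$ with $h\in\langle\Lambda_i\rangle$ and $u\in\Lambda_i$, so $\mathrm{stab}_\circlearrowright(J)=h\langle u\rangle h^{-1}\subset\langle\Lambda_i\rangle$ by Lemma~\ref{lem:HypInQMGP}. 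Since $\mathcal{G}$ consists of infinite groups, $\mathrm{stab}_\circlearrowright(J)=h\langle u\rangle h^{-1}$ is infinite, so the total intersection is infinite. This is essentially an application of Lemma~\ref{lem:InterStabPrisms} in spirit, extended from prisms to general gated cosets.

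For the converse — which I expect to be the main obstacle — I would argue contrapositively or directly via the structure of the intersection of parabolic subgroups. The key fact is that an intersection of (conjugates of) parabolic subgroups of a graph product is again a parabolic subgroup (this is standard for graph products, e.g. via Antol\'in--Minasyan or via the quasi-median geometry), and more precisely $g_1\langle\Lambda_1\rangle g_1^{-1}\cap\cdots\cap g_n\langle\Lambda_n\rangle g_n^{-1}$ is the pointwise stabilizer, hence the stabilizer, of the gated subgraph $Y:=g_1\langle\Lambda_1\rangle\cap\cdots\cap g_n\langle\Lambda_n\rangle$ (which is gated as an intersection of gated subgraphs, and nonempty is not required here — if empty the intersection of conjugates is finite and we are done, though one must be slightly careful since conjugates need not literally be stabilizers of the cosets unless we center appropriately). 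If $Y$ contains an edge, then the hyperplane $J$ through that edge crosses $Y$, hence crosses every $g_i\langle\Lambda_i\rangle$, and we are done. So the remaining case is $Y$ having no edges, i.e.\ $Y$ is a single vertex (or empty): then I claim the stabilizer of $Y$, and hence the intersection of conjugates, is finite — indeed if $Y=\{v\}$ then any element fixing $v$ and lying in the intersection must fix a neighborhood, but if it had infinite order / the intersection were infinite, a standard ping-pong or ``no element of a graph product fixes only one vertex of $\mathrm{QM}$'' argument (using that infinite-order elements, or even any nontrivial element of a conjugate of a vertex group, move hyperplanes and act with unbounded orbits unless they rotate around some clique) forces an invariant edge or clique. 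The cleanest route: show that any infinite subgroup of $\Gamma\mathcal{G}$ that stabilizes a gated subgraph $Y$ must stabilize a gated subgraph containing an edge, because an infinite group acting on a quasi-median graph fixing a single vertex would have to fix that vertex's link pointwise up to finite index, contradicting infiniteness when $\mathcal{G}$-vertex groups generate enough. Concretely I would invoke that the intersection of parabolics is parabolic and a parabolic $\langle\Phi\rangle^{g}$ is infinite iff $\Phi\neq\emptyset$ (since vertex groups are infinite), and a nonempty $\Phi$ gives a clique, hence an edge, hence a crossing hyperplane.

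In summary: the two directions both come down to the correspondence ``hyperplane crossing $Y$ $\leftrightarrow$ infinite parabolic inside $\mathrm{stab}(Y)$'', with the forward direction using rotative-stabilizers (Lemma~\ref{lem:HypInQMGP}) and the reverse direction using the facts that intersections of gated cosets are gated, that $\mathrm{stab}(Y)$ for gated $Y$ recovers the intersection of the conjugates (after centering the $g_i$ suitably, which is harmless), and that infinite parabolic subgroups of $\Gamma\mathcal{G}$ with all $G_u$ infinite must contain a conjugate of a vertex group and hence fix an edge of $\mathrm{QM}(\Gamma,\mathcal{G})$. I expect the bookkeeping around ``conjugate of a coset vs.\ stabilizer of the gated subgraph'' and the precise statement that an infinite intersection forces a nonempty $Y$ with an edge to be the fiddly part, but these are standard facts about the quasi-median geometry of graph products from \cite{QM}.
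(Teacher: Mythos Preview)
Your easy direction (via rotative-stabilisers) is correct. The hard direction has a genuine gap, not just bookkeeping. Your route through $Y:=\bigcap_i g_i\langle\Lambda_i\rangle$ does not work: $Y$ can be empty while the intersection of conjugates is infinite. Take $\Gamma$ a single edge $\{u,v\}$, $\Lambda_1=\Lambda_2=\{u\}$, $g_1=1$, $g_2\in G_v\setminus\{1\}$; then $G_u\cap g_2G_u=\emptyset$ as cosets, but both conjugates equal $G_u$. So the assertion that $\bigcap_i g_i\langle\Lambda_i\rangle g_i^{-1}$ equals $\mathrm{stab}(Y)$ is simply false (the left side is $\bigcap_i\mathrm{stab}(g_i\langle\Lambda_i\rangle)$, which only maps into $\mathrm{stab}(Y)$, and the latter is everything when $Y=\emptyset$). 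Your fallback---the intersection of parabolics is a parabolic $h\langle\Phi\rangle h^{-1}$ with $\Phi\neq\emptyset$, so pick $w\in\Phi$ and use the hyperplane $hJ_w$---is the right idea, but you never say why $hJ_w$ crosses each $g_i\langle\Lambda_i\rangle$; the coset $h\langle\Phi\rangle$ need not lie in any $g_i\langle\Lambda_i\rangle$ (same example). The step you are missing: since $hG_wh^{-1}\subset g_i\langle\Lambda_i\rangle g_i^{-1}=\mathrm{stab}(g_i\langle\Lambda_i\rangle)$, the group $hG_wh^{-1}$ acts on the projection of the clique $hG_w$ to $g_i\langle\Lambda_i\rangle$; if that projection were a single vertex it would be fixed by $hG_wh^{-1}$, contradicting freeness of the action on the Cayley graph, so the projection contains an edge and $hJ_w$ crosses $g_i\langle\Lambda_i\rangle$ by Theorem~\ref{thm:ProjQM}.

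The paper avoids this altogether by induction on $n$: it invokes \cite[Theorem~1.7]{Mediangle} to identify the projection of $g_2\langle\Lambda_2\rangle$ onto $g_1\langle\Lambda_1\rangle$ with a parabolic coset $g\langle\Xi\rangle$ satisfying $g\langle\Xi\rangle g^{-1}=g_1\langle\Lambda_1\rangle g_1^{-1}\cap g_2\langle\Lambda_2\rangle g_2^{-1}$, and since hyperplanes crossing a projection are exactly those crossing both subgraphs (Theorem~\ref{thm:ProjQM}), the first two cosets are replaced by one and the induction runs.
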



\begin{proof}
We argue by induction on $n$. If $n=1$, then $g_1 \langle \Lambda_1 \rangle g_1^{-1}$ is infinite if and only if $\Lambda_1$ is non-empty, which amounts to saying that the subgraph $g_1 \langle \Lambda_1 \rangle$ is not reduced to a single vertex of $\mathrm{QM}(\Gamma,\mathcal{G})$, or equivalently that $g_1 \langle \Lambda_1 \rangle$ is crossed by at least one hyperplane. Now, assume that $n \geq 2$. It follows from \cite[Theorem~1.7]{Mediangle} and its proof that there exist $g \in g_1 \langle \Lambda_1 \rangle$ and $\Xi \subset \Lambda_1$ such that the projection of $g_2 \langle \Lambda_2 \rangle$ on $g_1 \langle \Lambda_1 \rangle$ agrees with $g \langle \Xi \rangle$ and 
$$g_1 \langle \Lambda_1 \rangle g_1^{-1} \cap g_2 \langle \Lambda_2 \rangle g_2^{-1} = g \langle \Xi \rangle g^{-1}.$$
By induction, we know that
$$g \langle \Xi \rangle g^{-1} \cap g_3 \langle \Lambda_3 \rangle g_3^{-1} \cap \cdots \cap g_n \langle \Lambda_n \rangle g_n^{-1}$$
is infinite if and only if there exists a hyperplane crossing all $g \langle \Xi \rangle, g_3 \langle \Lambda_3 \rangle, \ldots, g_n \langle \Lambda_n \rangle$. But, by Theorem~\ref{thm:ProjQM}, the hyperplanes crossing the projection $g \langle \Xi \rangle$ are exactly the hyperplanes crossing both $g_1 \langle \Lambda_1 \rangle$ and $g_2 \langle \Lambda_2 \rangle$. Consequently, there exists a hyperplane crossing all $g \langle \Xi \rangle, g_3 \langle \Lambda_3 \rangle, \ldots, g_n \langle \Lambda_n \rangle$ if and only if there exists a hyperplanes crossing all $g_1 \langle \Lambda_1 \rangle, \ldots, g_n \langle \Lambda_n \rangle$. This concludes the proof. 
\end{proof}

\noindent
Lemma~\ref{lem:ForSkewer} motivates the following definition. See Figure~\ref{Skew} for an example.
\begin{figure}
\begin{center}
\includegraphics[width=0.6\linewidth]{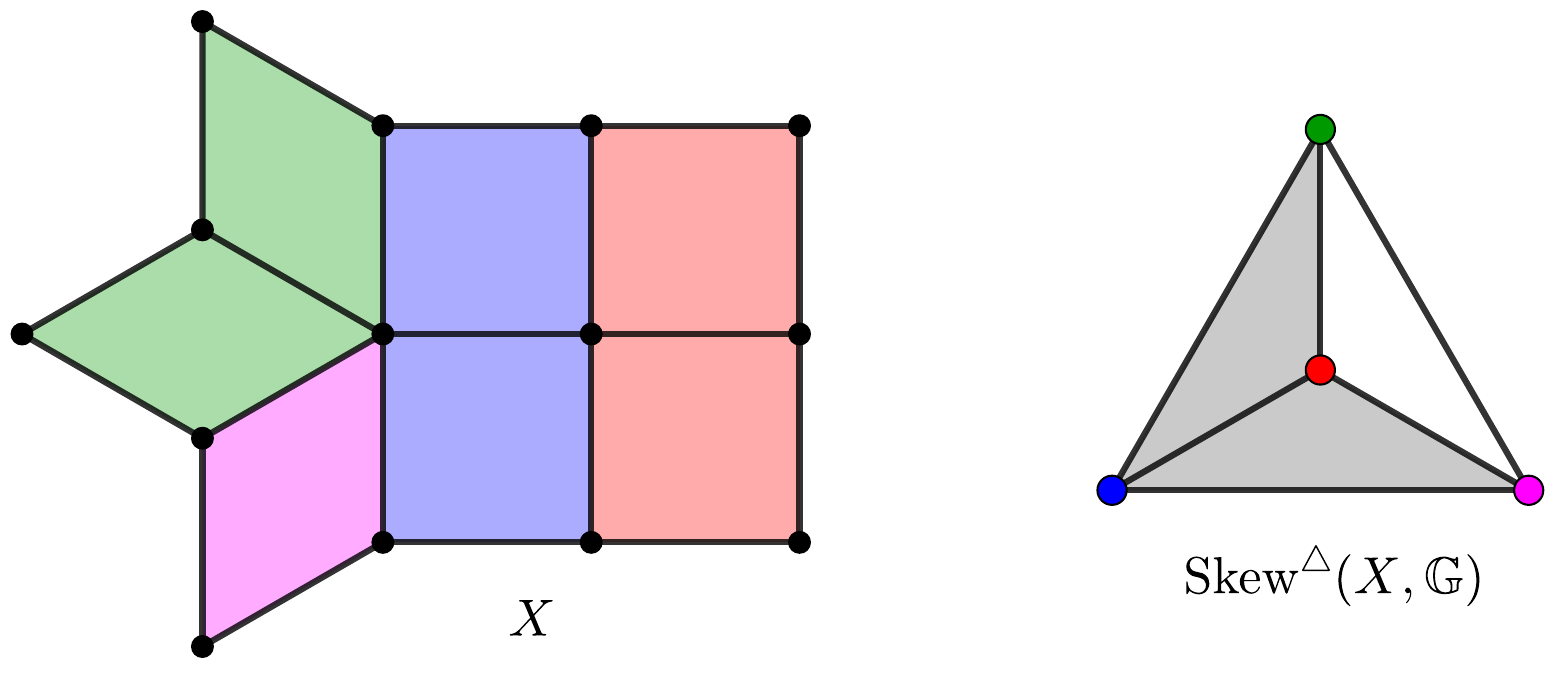}
\caption{A quasi-median graph $X$ endowed with a collection $\mathbb{G}$ of gated subgraphs and the corresponding skewering complex $\mathrm{Skew}^\triangle(X,\mathbb{G})$.}
\label{Skew}
\end{center}
\end{figure}

\begin{definition}
Let $X$ be a quasi-median graph and $\mathbb{G}$ a collection of gated subgraphs. The \emph{skewering complex} $\mathrm{Skew}^\triangle(X,\mathbb{G})$ is the simplicial complex whose vertex-set is $\mathbb{G}$ and whose simplices are given collections of subgraphs crossed by a common hyperplane.
\end{definition}

\noindent
Using this vocabulary, Lemma~\ref{lem:ForSkewer} then implies that coset intersection complexes with respect to parabolic subgroups can be described as skewering complexes of the corresponding quasi-median graphs. The connection with relative contact complexes is described by the following statement:

\begin{prop}\label{prop:SkewCont}
Let $X$ be a quasi-median graph and $\mathbb{G}$ a parallellism-free collection of gated subgraphs. The skewering complex $\mathrm{Skew}^\triangle(X,\mathbb{G})$ and the $\mathbb{G}$-contact complex $\mathrm{Cont}^\triangle(X, \mathbb{G})$ are homotopy equivalent.
\end{prop}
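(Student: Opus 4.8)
The plan is to realize both complexes as nerves of a common cover, or---more precisely---to exhibit a natural simplicial map between them that a nerve-type argument shows to be a homotopy equivalence. The key dictionary is this: a simplex of $\mathrm{Cont}^\triangle(X,\mathbb{G})$ is a finite collection of pairwise-contacting hyperplanes all crossing a common $Y\in\mathbb{G}$, while a simplex of $\mathrm{Skew}^\triangle(X,\mathbb{G})$ is a finite collection of subgraphs from $\mathbb{G}$ all crossed by a common hyperplane. These two incidence structures are, roughly, transposes of one another with respect to the bipartite ``crossing'' relation between hyperplanes and members of $\mathbb{G}$. So the natural move is to build an auxiliary bipartite-type object---concretely, the simplicial complex $Z$ whose vertices are $\mathcal{H}(X)\sqcup\mathbb{G}$ and whose simplices are sets $\{J_1,\dots,J_k,Y_1,\dots,Y_\ell\}$ with the $J_i$ pairwise in contact, all crossing each $Y_j$ (equivalently, using Lemma~\ref{lem:Helly} and Corollary~\ref{cor:CrossAndClique}, such that $Y_1\cap\cdots\cap Y_\ell\cap\bigcap_i N(J_i)\neq\emptyset$ with the appropriate clique condition)---and then show that both projections $Z\to\mathrm{Cont}^\triangle(X,\mathbb{G})$ and $Z\to\mathrm{Skew}^\triangle(X,\mathbb{G})$ are homotopy equivalences, via the nerve theorem or via a fiberwise-contractibility (Quillen-type) argument on posets of simplices.

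Concretely, here are the steps I would carry out in order. First, fix the cover: for each $Y\in\mathbb{G}$, let $\sigma(Y)$ be the subcomplex of $\mathrm{Cont}^\triangle(X,\mathbb{G})$ spanned by all hyperplanes crossing $Y$; by the very definition of the $\mathbb{G}$-contact complex this $\sigma(Y)$ is a (full) simplex, in particular contractible, and the $\sigma(Y)$ cover $\mathrm{Cont}^\triangle(X,\mathbb{G})$ since every hyperplane crosses some prism hence (when $\mathbb{G}$ is prism-covering, which one should assume here as elsewhere) some $Y\in\mathbb{G}$. Second, compute the nerve: a finite subfamily $\{Y_1,\dots,Y_\ell\}\subset\mathbb{G}$ has $\sigma(Y_1)\cap\cdots\cap\sigma(Y_\ell)\neq\emptyset$ precisely when there is a hyperplane crossing all of $Y_1,\dots,Y_\ell$---but that is exactly the condition that $\{Y_1,\dots,Y_\ell\}$ spans a simplex of $\mathrm{Skew}^\triangle(X,\mathbb{G})$. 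So the nerve of $\{\sigma(Y):Y\in\mathbb{G}\}$ is literally $\mathrm{Skew}^\triangle(X,\mathbb{G})$. Third, check the hypothesis of Leray's nerve theorem: one needs every finite intersection $\sigma(Y_1)\cap\cdots\cap\sigma(Y_\ell)$ to be empty or contractible. This intersection is the subcomplex of $\mathrm{Cont}^\triangle(X,\mathbb{G})$ spanned by the hyperplanes crossing all of $Y_1,\dots,Y_\ell$; when non-empty I claim it is again a simplex. Indeed, if $H_1,\dots,H_m$ all cross each $Y_j$, then any two $H_a,H_b$ cross a common $Y_{j_0}$ hence are in contact, and all of $H_1,\dots,H_m$ cross the same $Y_{j_0}$, so $\{H_1,\dots,H_m\}$ spans a simplex of $\mathrm{Cont}^\triangle(X,\mathbb{G})$; thus the intersection is a full simplex and contractible. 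Applying the nerve theorem then gives $\mathrm{Cont}^\triangle(X,\mathbb{G})\simeq\mathrm{Skew}^\triangle(X,\mathbb{G})$.

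The one genuine subtlety---and the place where the hypothesis ``parallelism-free'' must be used---is in the third step: I was cavalier in claiming ``$H_a,H_b$ cross a common $Y$ implies in contact,'' which is fine, but I must be careful that the intersection subcomplex is non-empty exactly when predicted and, more importantly, that the map on nerves is induced by an honest correspondence of complexes rather than collapsing distinct vertices of $\mathbb{G}$. If two members $Y,Y'\in\mathbb{G}$ are parallel (crossed by exactly the same hyperplanes), they would give the same vertex-star $\sigma(Y)=\sigma(Y')$ yet be distinct vertices of $\mathrm{Skew}^\triangle(X,\mathbb{G})$, and the nerve would only see one of them; the parallelism-free assumption is precisely what rules this out, ensuring $Y\mapsto\sigma(Y)$ is injective on vertices and that the nerve of the cover is $\mathrm{Skew}^\triangle(X,\mathbb{G})$ on the nose. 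So the bulk of the write-up is the routine verification of the two incidence translations plus the contractibility of finite intersections; the real content to be stated carefully is why parallelism-freeness makes the cover ``honest,'' and I expect that to be the main point requiring attention rather than difficulty.
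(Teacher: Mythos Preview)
Your overall strategy---cover $\mathrm{Cont}^\triangle(X,\mathbb{G})$ by the subcomplexes $\sigma(Y)$ and apply the nerve theorem---is exactly the paper's approach, and your identification of where parallelism-freeness enters is correct. However, there is a genuine error in your argument for contractibility.

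You assert that $\sigma(Y)$ is a simplex, and more generally that any finite intersection $\sigma(Y_1)\cap\cdots\cap\sigma(Y_\ell)$ is a simplex, on the grounds that ``any two $H_a,H_b$ cross a common $Y_{j_0}$ hence are in contact.'' This inference is wrong: two hyperplanes can both cross a gated subgraph without their carriers intersecting. Already for $\mathbb{G}=\{X\}$ and $X$ a path on four vertices, the two extreme hyperplanes both cross $X$ but are not in contact, so $\sigma(X)=\mathrm{Cont}^\triangle(X)$ is a path of length two, not a simplex. The definition of a simplex in $\mathrm{Cont}^\triangle(X,\mathbb{G})$ requires \emph{both} that all hyperplanes cross a common member of $\mathbb{G}$ \emph{and} that they be pairwise in contact; the second condition does not follow from the first.

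The repair is exactly what the paper does. The intersection $\sigma(Y_1)\cap\cdots\cap\sigma(Y_\ell)$ is the full subcomplex of $\mathrm{Cont}^\triangle(X,\mathbb{G})$ on the hyperplanes crossing every $Y_i$. Using iterated projections (Lemma~\ref{lem:SectionHyp}), one produces a single gated subgraph $Y$ whose hyperplanes are precisely these; a short Helly argument then identifies the induced subcomplex with the ordinary contact complex $\mathrm{Cont}^\triangle(Y)$. Contractibility now follows from Proposition~\ref{prop:ContactContractible}, which is a genuine (if short) input, not a triviality. So your outline becomes a correct proof once you replace ``the intersection is a simplex'' by ``the intersection is a contact complex of a gated subgraph, hence contractible.''
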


\noindent
In this statement, we refer to two gated subgraphs as \emph{parallel} if they are crossed by exactly the same hyperplanes. The collection $\mathbb{G}$ is \emph{parallelism-free} whenever no two distinct subgraphs from $\mathbb{G}$ are parallel. 

\medskip \noindent
In order to prove Proposition~\ref{prop:SkewCont}, the following observation will be needed:

\begin{lemma}\label{lem:SectionHyp}
Let $X$ be a quasi-median graph and $Y_1, \ldots, Y_n$ gated subgraphs. There exists a gated subgraph $Y$ such that a hyperplane of $X$ crosses $Y$ if and only if it crosses all $Y_1, \ldots, Y_n$. 
\end{lemma}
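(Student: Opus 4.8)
The plan is to take $Y$ to be an iterated projection: project $Y_1$ onto $Y_2$, then project the result onto $Y_3$, and so on, proving by induction on $n$ that the subgraph produced has the desired property.

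First I would recall two ingredients. \emph{(a)} In a quasi-median graph, the projection $\mathrm{proj}_Z(W)$ of a gated subgraph $W$ onto a gated subgraph $Z$ is again a gated subgraph; this is standard (see \cite{QM}) and is already used implicitly in this article, for instance in the proof of Lemma~\ref{lem:ForSkewer}, where the projection of one coset onto another is identified with a coset. \emph{(b)} By the fourth item of Theorem~\ref{thm:ProjQM}, a hyperplane crosses $\mathrm{proj}_Z(W)$ if and only if it crosses both $W$ and $Z$; writing $\mathcal{H}(\cdot)$ for the set of hyperplanes crossing a subgraph, this says $\mathcal{H}(\mathrm{proj}_Z(W)) = \mathcal{H}(W) \cap \mathcal{H}(Z)$.

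Now the induction. For $n=1$, set $Y := Y_1$ and there is nothing to prove. For $n \geq 2$, apply the induction hypothesis to $Y_1, \ldots, Y_{n-1}$ to obtain a gated subgraph $Y'$ with $\mathcal{H}(Y') = \mathcal{H}(Y_1) \cap \cdots \cap \mathcal{H}(Y_{n-1})$, and set $Y := \mathrm{proj}_{Y_n}(Y')$. Then $Y$ is gated by \emph{(a)}, and \emph{(b)} gives $\mathcal{H}(Y) = \mathcal{H}(Y') \cap \mathcal{H}(Y_n) = \mathcal{H}(Y_1) \cap \cdots \cap \mathcal{H}(Y_n)$, which is exactly the assertion that a hyperplane crosses $Y$ if and only if it crosses all of $Y_1, \ldots, Y_n$. (Degenerate cases are harmless: if some $Y_i$ has no edge then no hyperplane crosses all the $Y_i$, and one simply takes $Y$ to be a single vertex; so we may assume all $Y_i$ are non-empty, whence all the iterated projections involved are non-empty projections of non-empty gated subgraphs and are therefore non-empty.)

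Since every step is either immediate or quoted, there is no real obstacle here. The only delicate point is the gatedness of the iterated projection, and this is dealt with by repeatedly invoking \emph{(a)}; the rest is bookkeeping of hyperplane sets under projection via Theorem~\ref{thm:ProjQM}.
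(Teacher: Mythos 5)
Your proof is correct and follows essentially the same route as the paper's: both arguments iterate projections of gated subgraphs onto one another and invoke the fourth item of Theorem~\ref{thm:ProjQM} to track the hyperplane sets, the only (immaterial) difference being the order in which the projections are performed. The gatedness of projections, which you rightly flag as the one point needing citation, is likewise taken for granted in the paper's proof.
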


\begin{proof}
Let $Y_1'$ denote the projection of $Y_n$ on $Y_1$. According to Theorem~\ref{thm:ProjQM}, a hyperplane crosses $Y_1'$ if and only if it crosses both $Y_1$ and $Y_n$. Consequently, a hyperplane crosses all $Y_1, \ldots, Y_n$ if and only if it crosses all $Y_1', Y_2, \ldots, Y_ {n-1}$. Similarly, if $Y_1''$ denotes the projection of $Y_{n-1}$ on $Y_1'$, then a hyperplane crosses all $Y_1'', Y_2, \ldots, Y_{n-2}$ if and only if it crosses all $Y_1', Y_2, \ldots, Y_{n-1}$. After $n-1$ iterations of this argument, we find a gated subgraph $Y$ such that a hyperplane crosses $Y$ if and only if it crosses all $Y_1, \ldots, Y_n$. 
\end{proof}

\begin{proof}[Proof of Proposition~\ref{prop:SkewCont}.]
For every $Y \in \mathbb{G}$, let $\mathcal{J}_Y$ denote the subcomplex of the contact complex $\mathrm{Cont}^\triangle(X, \mathbb{G})$ induced by the set of the hyperplanes of $X$ that cross $Y$. Set $\mathcal{J}:= \{ \mathcal{J}_Y \mid Y \in \mathbb{G} \}$. First, notice that $\mathcal{J}$ covers $\mathrm{Cont}^\triangle(X, \mathbb{G})$. Indeed, a simplex in $\mathrm{Cont}^\triangle(X,\mathbb{G})$ corresponds to a collection of hyperplanes  pairwise in contact in $\mathcal{J}_Y$ for some $Y \in \mathbb{G}$. Then, notice that the nerve complex of $\mathcal{J}$ is isomorphic to $\mathrm{Skew}^\triangle(X,\mathbb{G})$. More precisely, the map $Y \mapsto \mathcal{J}_Y$ induces an isomorphism from $\mathrm{Skew}^\triangle(X,\mathbb{G})$ to the nerve complex of $\mathcal{J}$. Indeed, $Y_1, \ldots, Y_n \in \mathbb{G}$ span a simplex in $\mathrm{Skew}^\triangle(X,\mathbb{G})$ if and only if there exists a hyperplane crossing all of them, which amounts to saying that $\mathcal{J}_{Y_1} \cap \cdots \cap \mathcal{J}_{Y_n} \neq \emptyset$, or that $\mathcal{J}_{Y_1}, \ldots, \mathcal{J}_{Y_n}$ span a simplex in the nerve complex of $\mathcal{J}$.

\medskip \noindent
Finally, given $Y_1, \ldots, Y_n \in \mathbb{G}$, let us justify that $\mathcal{J}_{Y_1} \cap \cdots \cap \mathcal{J}_{Y_n}$ is either empty or contractible. If the intersection is empty, there is nothing to prove; so we assume that it is non-empty. Clearly, the intersection coincides with the contact complex of the gated subgraph given by Lemma~\ref{lem:SectionHyp}. The desired contractibility follows from Proposition~\ref{prop:ContactContractible}. The lemma then follows from the nerve theorem applied to the covering $\mathcal{J}$ of $\mathrm{Cont}^\triangle(X,\mathbb{G})$. 
\end{proof}

\begin{proof}[Proof of Theorem~\ref{thm:CosetInterComplexRelativeCont}.]
For convenience, set $\mathcal{P}:= \{ \langle \Lambda \rangle, \Lambda \in \mathscr{S} \}$. It follows from Lemma~\ref{lem:ForSkewer} that the coset intersection complex $\mathcal{K}(\Gamma \mathcal{G}, \mathcal{P})$ is isomorphic to the skewering complex $\mathrm{Skew}^\triangle(\mathrm{QM}(\Gamma, \mathcal{G}), \mathbb{G})$, where $\mathbb{G}:= \{ g \langle \Lambda \rangle \mid g \in \Gamma \mathcal{G}, \Lambda \in \mathscr{S} \}$. Let us verify that $\mathbb{G}$ is parallelism-free.

\medskip \noindent
Suppose there exist two distinct subgraphs $g \langle \Phi \rangle, h \langle \Psi \rangle \in \mathbb{G}$ that are parallel. First of all, notice that $\Phi = \Psi$. Indeed, $\Phi$ (resp.\ $\Psi$) can be described as the subgraph of $\Gamma$ given by the vertices that label the hyperplanes crossing $g \langle \Phi \rangle$ (resp.\ $h \langle \Psi \rangle$), so the fact that $g \langle \Phi \rangle$ and $h \langle \Psi \rangle$ are crossed by exactly the same hyperplanes immediately implies that $\Phi= \Psi$. Next, fix two vertices $x \in g \langle \Phi \rangle$ and $y \in h \langle \Psi \rangle$ minimising the distance between $g \langle \Phi \rangle$ and $h \langle \Psi \rangle$. Since we chose distinct cosets, $x \neq y$. Let $J$ be a hyperplane containing the first edge of some geodesic connecting $x$ to $y$. By Theorem~\ref{thm:ProjQM}, $J$ separates $g \langle \Phi \rangle$ and $h \langle \Psi \rangle$. Since $g \langle \Phi \rangle$ and $h \langle \Psi \rangle$ are crossed by exactly the same hyperplanes, it follows that all the hyperplanes crossing $g \langle \Phi \rangle$ are transverse to $J$. We deduce from Lemma~\ref{lem:LabelHyp} that the vertex of $\Gamma$ labelling $J$ is adjacent to all the vertices in $\Phi$. In other words, $\Phi \subset \mathrm{link}(\mathrm{label}(J))$. This contradicts our assumptions about $\mathscr{S}$. 

\medskip \noindent
Thus, Proposition~\ref{prop:SkewCont} applies and shows that $\mathrm{Skew}^\triangle(\mathrm{QM}(\Gamma, \mathcal{G}), \mathbb{G})$ is homotopy equivalent to the $\mathbb{G}$-contact complex $\mathrm{Cont}^\triangle(\mathrm{QM}(\Gamma, \mathcal{G}), \mathbb{G})$. 

\medskip \noindent
To see that $\mathbb{G}$ is star-covering, let $C$ be a clique of $\mathrm{QM}(\Gamma, \mathcal{G})$. Up to translating by an element of $\Gamma \mathcal{G}$, we know from Lemma~\ref{lem:CliquesPrismsGP} that we can write $C= \langle u \rangle$ for some vertex $u \in \Gamma$. Also by Lemma~\ref{lem:CliquesPrismsGP}, the prisms containing $C$ are exactly the $\langle \Lambda \rangle$ with $\Lambda \subset \Gamma$ a complete subgraph containing $u$. Therefore, the union of all the prisms containing $C$ is contained in $\langle \mathrm{star}(u) \rangle$. But we know by assumption that there exists some $\Xi \in \mathscr{S}$ such that $\mathrm{star}(u) \subset \Xi$, so our union must be contained in $\langle \Xi \rangle \in \mathbb{G}$.

\medskip \noindent
Thus, Theorem~\ref{thm:RelContact} applies (also thanks to Lemma~\ref{lem:MQTwoConnected}) and shows that the contact complex $\mathrm{Cont}^\triangle(\mathrm{QM}(\Gamma , \mathcal{G}), \mathbb{G})$ is homotopy equivalent  to $\bigvee_{x \in \mathrm{QM}(\Gamma, \mathcal{G})} \mathrm{sL}_\mathbb{G}(x)$. Recall that $\mathrm{sL}_\mathbb{G}(x)$ is the simplicial complex with vertices corresponding to cliques of $\mathrm{QM}(\Gamma,\mathcal G)$ containing $x$ and simplices corresponding to collections of cliques contained in some common subgraph $\Lambda\in \mathscr{S}$.  Since $\Gamma \mathcal{G}$ acts vertex-transitively on $\mathrm{QM}(\Gamma, \mathcal{G})$, each $\mathrm{sL}_\mathbb{G}(x)$ is isomorphic to $\mathrm{sL}_\mathbb{G}(1)$, so it only remains to verify that $\mathrm{sL}_\mathbb{G}(1)$ is isomorphic to $\Gamma^{\mathscr{S}}$. 

\medskip \noindent
But, according to Lemma~\ref{lem:CliquesPrismsGP}, the cliques containing $1$ are exactly  $\langle u \rangle$ for $u \in \Gamma$. Finitely many such cliques $\langle u_1 \rangle, \ldots, \langle u_n \rangle$ are contained in some subgraph of $\mathbb{G}$ if and only if there exists $\Lambda \in \mathscr{S}$ containing $u_1, \ldots, u_n$. We conclude that $\mathrm{sL}_\mathbb{G}(1)$ can indeed be identified with $\Gamma^\mathscr{S}$. 
\end{proof}

\section{Applications to large-scale geometry of right-angled Artin groups}

The problem of distinguishing right-angled Artin groups up to quasi-isometry is still open in full generality.  Here, we use the characterisation of the homotopy types of coset intersection complexes obtained in Theorem~\ref{thm:CosetInterComplexRelativeCont} to construct new quasi-isometry invariants.  As illustrated in the introduction, these invariants can be used to distinguish the quasi-isometry types of right-angled Artin groups that could not previously be distinguished. We also consider, more generally, graph products of infinite groups.  In this context, we use coset intersection complexes to construct new commensurability invariants of such groups.

\subsection{QI-characteristic parabolic subgroups}

 \noindent In this section, we show that a quasi-isometry between right-angled Artin groups induces an isomorphism of their respective coset intersection complexes with respect to maximal join subgroups.  First, we show that maximal join subgroups are coarsely preserved under quasi-isometry. 

\medskip\noindent We thank Jingyin Huang for pointing out how the following statement follows from results in \cite{HuangI}.

\begin{thm}\label{thm:RAAGmaxJoin}
Let $\Gamma_1,\Gamma_2$ be two finite   graphs. For every quasi-isometry $\eta \colon A(\Gamma_1) \to A(\Gamma_2)$, there exists a constant $C \geq 0$ such that the following holds. For every element $g \in A(\Gamma_1)$ and every maximal join $\Phi \subset \Gamma_1$, there exist an element $h \in A(\Gamma_2)$ and a maximal join $\Psi \subset \Gamma_2$ such that $\eta (g \langle \Phi \rangle)$ lies at Hausdorff distance $\leq C$ from $h \langle \Psi \rangle$.
\end{thm}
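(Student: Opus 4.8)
The plan is to deduce this from the quasi-isometric rigidity of right-angled Artin groups established by Huang~\cite{HuangII}, after translating between cosets of parabolic subgroups and convex subcomplexes of the universal cover $X_\Gamma$ of the Salvetti complex of $A(\Gamma)$. The starting observation is that, for a join $\Phi = \Phi_1 \ast \Phi_2$ of $\Gamma$ with $\Phi_1,\Phi_2$ non-empty, one has $\langle \Phi \rangle = \langle \Phi_1 \rangle \times \langle \Phi_2 \rangle$, a direct product of two infinite groups; hence each coset $g\langle\Phi\rangle$ with $\Phi$ a maximal join is a \emph{standard product region}, and, up to bounded Hausdorff distance, these cosets are precisely the \emph{maximal} standard product regions of $A(\Gamma)$: they cannot be coarsely enlarged inside the family of cosets of product parabolic subgroups with two infinite factors.

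First I would recall from \cite{HuangI,HuangII} that the action of $A(\Gamma)$ on $X_\Gamma$ carries a canonical $A(\Gamma)$-invariant family of standard product regions, and that Huang's results show that any quasi-isometry between right-angled Artin groups coarsely permutes the maximal members of these families: there is a constant, depending only on the quasi-isometry constants and on the finite combinatorial data of the two defining graphs, such that each maximal standard product region on one side is carried into a uniformly bounded neighbourhood of a maximal standard product region on the other. The point is that maximal product regions are detected by coarse-geometric features of the group --- the geometry of its quasiflats and the coarse intersection pattern of product regions --- all of which are manifestly invariant under quasi-isometry.

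It then remains to transfer the conclusion back to parabolic cosets. On each side a maximal standard product region lies at Hausdorff distance bounded in terms of $\Gamma_i$ alone from a coset $g\langle\Phi\rangle$ with $\Phi\subset\Gamma_i$ a maximal join, and conversely every such coset is one of these regions. Composing with the previous step, for every $g\in A(\Gamma_1)$ and every maximal join $\Phi\subset\Gamma_1$ we get $h\in A(\Gamma_2)$ and a maximal join $\Psi\subset\Gamma_2$ with $\eta(g\langle\Phi\rangle)$ within Hausdorff distance $\leq C$ of $h\langle\Psi\rangle$, the constant $C$ being uniform over $g$ and $\Phi$, which is the assertion of the theorem. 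If $\Gamma_i$ is disconnected, $A(\Gamma_i)$ is a non-trivial free product and one first reduces to the connected (one-ended) case via the Papasoglu--Whyte description of quasi-isometries of free products; this reduction is routine.

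The main obstacle is bookkeeping rather than conceptual. One must carefully match Huang's combinatorial objects (standard flats, parallel sets, standard product regions in $X_\Gamma$) with the coset subgraphs $g\langle\Phi\rangle$, and, above all, isolate from \cite{HuangII} the part of the argument valid for an \emph{arbitrary} finite graph, since several statements there are made under standing hypotheses on $\Gamma$. Once the required fact is extracted in the form ``a quasi-isometry between right-angled Artin groups coarsely permutes the maximal standard product regions, with uniform constants,'' the theorem follows at once.
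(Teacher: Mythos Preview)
Your overall strategy matches the paper's: both deduce the theorem from Huang's work \cite{HuangI,HuangII} by arguing that quasi-isometries coarsely preserve the family of cosets of maximal join subgroups. Where you diverge is in what you treat as ``bookkeeping.'' The paper does not simply invoke a black-box statement that quasi-isometries permute maximal product regions; instead it proves, using specific lemmas from Huang, that every maximal join $\Phi\subset\Gamma$ is \emph{stable} in the sense of \cite[Definition~3.13]{HuangI}, and this is the real content of the argument. The proof proceeds via the minimal join decomposition $\Phi=\Phi_1\ast\cdots\ast\Phi_k$ and splits into two cases according to whether the clique factor $\Phi_1$ is empty. In each case one needs particular structural lemmas (\cite[Lemma~5.1]{HuangI}, \cite[Lemmas~3.14, 3.16, 3.24]{HuangII}) together with maximality to conclude. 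There is also a separate step, handled via the auxiliary Lemma~\ref{lem:FiniteHaussRAAG}, showing that the join $\Phi'$ one lands on in $\Gamma_2$ is itself maximal: a priori stability only gives a join, and one must push it up to a maximal one and check nothing is lost on the return trip.

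So the gap in your proposal is that the sentence ``maximal product regions are detected by coarse-geometric features'' is precisely what has to be proved, and it is not routine: Huang's papers carry standing hypotheses on $\Gamma$, as you yourself note, and extracting a statement valid for arbitrary finite graphs requires the stability argument above. The authors themselves credit Huang for pointing out how to assemble the pieces, which is a signal that this is more than clerical. Your outline is correct in spirit, but to make it a proof you need to actually establish stability of maximal joins (or an equivalent statement) rather than asserting it.
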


\noindent
 Here, a maximal join refers to a subgraph that is maximal with respect to  inclusion among all joins.  Before turning to Theorem~\ref{thm:RAAGmaxJoin}, we prove the following observation:

\begin{lemma}\label{lem:FiniteHaussRAAG}
Let $\Gamma$ be a finite graph, $\Phi,\Psi \subset \Gamma$ two subgraphs, and $g \in A(\Gamma)$ an element. The following statements are equivalent:
\begin{itemize}
    \item[(i)] in $A(\Gamma)$, $\langle \Phi \rangle$ is contained in a neighbourhood of $g \langle \Psi \rangle$;
    \item[(ii)] $\langle \Phi \rangle \subset g \langle \Psi \rangle$;
    \item[(iii)] $\Phi \subset \Psi$ and $g \in \langle \mathrm{star}(\Phi) \rangle \langle \Psi \rangle$.
\end{itemize}
\end{lemma}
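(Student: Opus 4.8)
The plan is to prove the cycle of implications (ii)$\Rightarrow$(i)$\Rightarrow$(iii)$\Rightarrow$(ii), reading (ii) as the inclusion $\langle\Phi\rangle\subseteq g\langle\Psi\rangle g^{-1}$ of $\langle\Phi\rangle$ into the conjugate parabolic subgroup; since every point of $g\langle\Psi\rangle g^{-1}$ lies within distance $|g|$ of the coset $g\langle\Psi\rangle$, the implication (ii)$\Rightarrow$(i) is immediate. Throughout I would use three standard facts about parabolic (i.e.\ conjugates of special) subgroups of $A(\Gamma)$: the intersection of two parabolic subgroups is again parabolic; a parabolic subgroup conjugate to $\langle\Phi\rangle$ has a well-defined ``type'' equal to $\Phi$, so in particular two conjugate special subgroups are equal; and $N_{A(\Gamma)}(\langle\Phi\rangle)=\langle\Phi\rangle\times\langle\mathrm{link}(\Phi)\rangle$, where $\mathrm{link}(\Phi):=\bigcap_{v\in\Phi}\mathrm{link}(v)$. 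It is also convenient to note, with $\mathrm{star}(\Phi):=\bigcap_{v\in\Phi}\mathrm{star}(v)$, that $\langle\mathrm{star}(\Phi)\rangle$ is exactly the centraliser of $\langle\Phi\rangle$ in $A(\Gamma)$ (from the classical computation $C_{A(\Gamma)}(v)=\langle\mathrm{star}(v)\rangle$ together with the fact that an intersection of special subgroups is the special subgroup of the intersection), and that it contains $\langle\mathrm{link}(\Phi)\rangle$.

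For (iii)$\Rightarrow$(ii) I would write $g=cp$ with $c\in\langle\mathrm{star}(\Phi)\rangle$ and $p\in\langle\Psi\rangle$; since $\Phi\subseteq\Psi$ gives $\langle\Phi\rangle\subseteq\langle\Psi\rangle$, and $c$ centralises $\langle\Phi\rangle$, one gets $g\langle\Psi\rangle g^{-1}=c\langle\Psi\rangle c^{-1}\supseteq c\langle\Phi\rangle c^{-1}=\langle\Phi\rangle$. For (ii)$\Rightarrow$(iii), suppose $g^{-1}\langle\Phi\rangle g\subseteq\langle\Psi\rangle$. For a vertex $w\in\Gamma\setminus\Psi$, the retraction $\rho_w\colon A(\Gamma)\to\langle w\rangle\cong\mathbb{Z}$ killing every generator other than $w$ is a homomorphism vanishing on $\langle\Psi\rangle$, while $\rho_w(g^{-1}vg)=\rho_w(v)$ for $v\in\Phi$; hence no $v\in\Phi$ can lie outside $\Psi$, i.e.\ $\Phi\subseteq\Psi$. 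Then $g^{-1}\langle\Phi\rangle g$ is a parabolic subgroup of $A(\Gamma)$ of type $\Phi$ contained in $\langle\Psi\rangle$, hence a parabolic subgroup of type $\Phi$ of $\langle\Psi\rangle\cong A(\Psi)$, so $g^{-1}\langle\Phi\rangle g=p\langle\Phi\rangle p^{-1}$ for some $p\in\langle\Psi\rangle$. Therefore $gp$ normalises $\langle\Phi\rangle$, so $gp\in\langle\Phi\rangle\times\langle\mathrm{link}(\Phi)\rangle$, and consequently $g\in\langle\Phi\rangle\langle\mathrm{link}(\Phi)\rangle\langle\Psi\rangle=\langle\mathrm{link}(\Phi)\rangle\langle\Psi\rangle\subseteq\langle\mathrm{star}(\Phi)\rangle\langle\Psi\rangle$, where the middle equality uses that $\mathrm{link}(\Phi)$ commutes with $\Phi$ and that $\langle\Phi\rangle\subseteq\langle\Psi\rangle$.

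The implication (i)$\Rightarrow$(iii) is the heart of the argument. Suppose $\langle\Phi\rangle$ lies in the $C$-neighbourhood of $g\langle\Psi\rangle$, and let $B$ be the ball of radius $C$ about $1$, which is finite because $\Gamma$ is finite; then $\langle\Phi\rangle\subseteq\bigcup_{b\in B}g\langle\Psi\rangle b$. For each $b$, the set $\langle\Phi\rangle\cap g\langle\Psi\rangle b$ is either empty or a left coset of the subgroup $\langle\Phi\rangle\cap b^{-1}\langle\Psi\rangle b$ (if $f_0$ lies in it then $x\mapsto f_0^{-1}x$ identifies it with $\langle\Phi\rangle\cap b^{-1}\langle\Psi\rangle b$). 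Thus $\langle\Phi\rangle$ is a finite union of cosets of subgroups of this form, and B.\,H.~Neumann's lemma yields $b_0\in B$ with $\langle\Phi\rangle\cap b_0^{-1}\langle\Psi\rangle b_0$ of finite index in $\langle\Phi\rangle$. That subgroup is an intersection of two parabolic subgroups, hence parabolic, hence a parabolic subgroup of $\langle\Phi\rangle\cong A(\Phi)$; since a proper parabolic subgroup of $A(\Phi)$ has infinite index (again by a retraction $\rho_v$, $v\in\Phi$), we conclude $\langle\Phi\rangle\subseteq b_0^{-1}\langle\Psi\rangle b_0$. Applying the already-established implication (ii)$\Rightarrow$(iii) to the element $b_0^{-1}$ gives $\Phi\subseteq\Psi$ and $b_0^{-1}\in\langle\mathrm{star}(\Phi)\rangle\langle\Psi\rangle$; and since $1\in\langle\Phi\rangle=\langle\Phi\rangle\cap g\langle\Psi\rangle b_0$ we get $g\in b_0^{-1}\langle\Psi\rangle\subseteq\langle\mathrm{star}(\Phi)\rangle\langle\Psi\rangle$, which is (iii).

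The main obstacle lies entirely in the two imported ingredients: the structure theory of parabolic subgroups of right-angled Artin groups (intersections of parabolics are parabolic, conjugate special subgroups coincide, and $N_{A(\Gamma)}(\langle\Phi\rangle)=\langle\Phi\rangle\times\langle\mathrm{link}(\Phi)\rangle$), and the covering-by-cosets lemma of B.\,H.~Neumann; once these are granted, the deductions above are formal. A minor point of care is that the equivalence genuinely requires (ii) to be read as containment in the conjugate $g\langle\Psi\rangle g^{-1}$ (equivalently, as a coarse rather than an on-the-nose containment in the coset $g\langle\Psi\rangle$), and the write-up should say so explicitly.
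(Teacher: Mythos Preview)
Your argument is correct, and your observation about (ii) is well taken: as written in the paper, the literal coset inclusion $\langle\Phi\rangle\subset g\langle\Psi\rangle$ is \emph{not} equivalent to (i) and (iii) (take $\Gamma$ a path $a\text{--}b\text{--}c$, $\Phi=\Psi=\{b\}$, $g=a$; then (i) and (iii) hold while $1\notin a\langle b\rangle$). Reading (ii) as $\langle\Phi\rangle\subset g\langle\Psi\rangle g^{-1}$, as you do, restores the equivalence, and the way the lemma is applied in the paper (with $\Phi$ a maximal join, hence $\mathrm{link}(\Phi)=\emptyset$) is unaffected. One small mismatch: you define $\mathrm{star}(\Phi)=\bigcap_{v\in\Phi}\mathrm{star}(v)$ (the centraliser), whereas the paper uses $\mathrm{star}(\Phi)=\Phi\cup\mathrm{link}(\Phi)$ (the normaliser); under the hypothesis $\Phi\subset\Psi$ the product $\langle\mathrm{star}(\Phi)\rangle\langle\Psi\rangle$ is the same either way, so (iii) is unambiguous, but you should align your notation with the paper's.

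Your route for (i)$\Rightarrow$(iii) is genuinely different from the paper's. The paper argues geometrically in the median Cayley graph: after the retraction argument gives $\Phi\subset\Psi$, it takes closest points $a\in\langle\Phi\rangle$, $b\in g\langle\Psi\rangle$, shows every hyperplane crossing $\langle\Phi\rangle$ must also cross $g\langle\Psi\rangle$ (else translates of it would push points of $\langle\Phi\rangle$ arbitrarily far from $g\langle\Psi\rangle$), and deduces that the geodesic from $a$ to $b$ is labelled by $\mathrm{link}(\Phi)$, yielding $g\in\langle\Phi\rangle\langle\mathrm{link}(\Phi)\rangle\langle\Psi\rangle$. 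You instead cover $\langle\Phi\rangle$ by finitely many cosets $g\langle\Psi\rangle b$, invoke B.\,H.~Neumann to extract one whose associated subgroup $\langle\Phi\rangle\cap b_0^{-1}\langle\Psi\rangle b_0$ has finite index, use that parabolic intersections are parabolic and proper parabolics have infinite index to upgrade this to $\langle\Phi\rangle\subset b_0^{-1}\langle\Psi\rangle b_0$, and then feed this into (ii)$\Rightarrow$(iii). Your approach is purely algebraic and transparently reduces the coarse containment to an exact one; the cost is that it imports the structural facts about parabolics (intersections, types, normalisers) as black boxes, while the paper's argument stays within the hyperplane machinery already set up in Section~\ref{section:QM}.
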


\begin{proof}
The implications $(iii) \Rightarrow (ii)$ and $(ii) \Rightarrow (i)$ are clear, so it suffices to prove that $(i) \Rightarrow (iii)$. 

\medskip \noindent
If $\Phi$ is not contained in $\Psi$, fix a vertex $u$ that belongs to $\Phi$ but not to $\Psi$. If $\pi_u \colon A(\Gamma) \to \langle u \rangle$ denotes the canonical retraction that sends all the generators but $u$ to $1$, then $\pi_u$ sends $\langle \Phi \rangle$ to the entire line $\langle u \rangle$ but it sends $g \langle \Psi \rangle$ to the single point $\{ \pi_u(g) \}$. Since $\pi_u$ is $1$-Lipschitz, this contradicts the fact that $\langle \Phi \rangle$ is contained in a neighbourhood of $g \langle \Psi \rangle$. Thus, we have proved that $\Phi \subset \Psi$. It remains to verify that $g \in \langle \mathrm{star}(\Phi) \rangle \langle \Psi \rangle$. 

\medskip \noindent
If $\langle \Phi \rangle$ and $g \langle \Psi \rangle$ intersect, then the inclusion $\langle \Phi \rangle \subset g \langle \Psi \rangle$ must hold since $\Phi \subset \Psi$. If so, then $g \langle \Psi \rangle$ contains $1$, hence $g \in \langle \Psi \rangle$. From now on, we assume that $\langle \Phi \rangle$ and $g \langle \Psi \rangle$ are disjoint. Also, we think of $A(\Gamma)$ as the median graph $\mathrm{Cayl}(A(\Gamma),\Gamma)$. In the rest of the proof, we assume some familiarity with the geometry of this graph, which is similar to those of the quasi-median graphs associated to graph products. 

\medskip \noindent
Notice that every hyperplane crossing $\langle \Phi \rangle$ has to cross $g \langle \Psi \rangle$. Otherwise, let $J$ be a hyperplane of $\langle \Phi \rangle$ that does not cross $g \langle \Psi \rangle$. If $u \in \Phi$ denotes the generator labelling the edges of $J$ and if $h \in N(J) \cap \langle \Phi \rangle$, then the $\langle huh^{-1} \rangle$-translates of $J$ are pairwise non-transverse and they all cross $\langle \Phi \rangle$. Then, for every $n \geq 1$, either $hu^n$ or $hu^{-n}$ is separated from $g \langle \Psi \rangle$ by at least $n-1$ hyperplanes, namely $J, hu^{\pm 1} J, \ldots, hu^{\pm (n-1)} J$. Since such a vertex belongs to $\langle \Phi \rangle$, this contradicts the assumption that $\langle \Phi \rangle$ is contained in a neighbourhood of $g \langle \Psi \rangle$, proving our claim.

\medskip \noindent
Fix two vertices $a \in \langle \Phi \rangle$ and $b \in g \langle \Psi \rangle$ at minimal distance. Consider a path connecting $1$ to $g$ that decomposes as the concatenation of a path 
$\alpha_1$ connecting $1$ to $a$ in $\langle \Phi \rangle$, a geodesic $\alpha_2$ connecting $a$ to $b$, and a path $\alpha_3$ connecting $b$ to $g$ in $g \langle \Psi \rangle$. Clearly, $\alpha_1$ is labelled by a word in $\langle \Phi \rangle$ and $\alpha_3$ is labelled by a word in $\langle \Psi \rangle$. As a consequence of Theorem~\ref{thm:ProjQM}, the hyperplanes crossing   $\alpha_2$  separate $\langle \Phi \rangle$ and $g \langle \Psi \rangle$, and consequently must be transverse to all the hyperplanes of $\langle \Phi \rangle$. Therefore, the hyperplanes crossing  $\alpha_2$  are labelled by vertices in $\mathrm{link}(\Phi )$. Thus, $\alpha_2$ is labelled by a word in $\mathrm{link}(\Phi)$. We conclude that $g$, which is equal in $A(\Gamma)$ to the word labelling $\alpha_1 \alpha_2 \alpha_3$, belongs to
$$\langle \Phi \rangle \langle \mathrm{link}(\Phi) \rangle \langle \Psi \rangle = \langle \mathrm{star}( \Phi) \rangle \langle \Psi \rangle,$$
as desired. 
\end{proof}

\begin{proof}[Proof of Theorem~\ref{thm:RAAGmaxJoin}.]
By \cite[Theorem~2.10]{HuangI}, it suffices to show that every maximal join $\Phi$ of a right-angled Artin group $A(\Gamma)$ is stable in the sense of \cite[Definition~3.9]{HuangI}.  To see that this is indeed sufficient, note that the definition of stability implies that, for any $g\in G$, if $\eta\colon A(\Gamma) \to A(\Gamma')$ is a quasi-isometry, there exists some $\Phi'\subseteq \Gamma'$ and $h\in A(\Gamma')$ such that $\eta(g\langle \Phi\rangle)$ and $h\langle \Phi'\rangle$ are at uniformly finite Hausdorff distance.  By \cite[Theorem~2.9]{HuangI}, the subgraph $\Phi'$ is a join.  Suppose $\Omega\supseteq \Phi'$ is a maximal join, and let $\eta^{-1}$ be any quasi-inverse of $\eta$.  Again by stability, we see that $\eta^{-1}(h\langle\Omega\rangle)$ is at finite Hausdorff distance from a join $k \langle\Phi''\rangle$ in $A(\Gamma)$. So $\langle \Phi \rangle$ is contained in a neighbourhood of $k \langle \Phi'' \rangle$. It follows from Lemma~\ref{lem:FiniteHaussRAAG} that $\langle \Phi \rangle \subset k \langle \Phi'' \rangle$, and in particular that $\Phi \subset \Phi''$. Hence $\Phi = \Phi''$ by maximality of $\Phi$. Then, we also deduce from Lemma~\ref{lem:FiniteHaussRAAG} that $\langle \Phi \rangle = k \langle \Phi'' \rangle$. Necessarily, the Hausdorff distance between $h \langle \Phi' \rangle$ and $h \langle \Omega \rangle$ must be finite. We deduce from Lemma~\ref{lem:FiniteHaussRAAG} that $\Omega= \Phi'$. A fortiori, $\Phi'$ is a maximal join, and the result follows.

\medskip\noindent 
Thus our goal is to show that a maximal join $\Phi\subseteq \Gamma$ is stable.  A key fact that we will use repeatedly is that, for any vertex $v\in \Gamma$, either $\mathrm{star}(v)$ or $\mathrm{link}(v)$ is stable \cite[Lemma~5.1]{HuangI}.

\medskip\noindent 
Let $\Phi=\Phi_1 * \Phi_2 * \cdots * \Phi_k$ be the minimal join decomposition of $\Phi$, that is, a decomposition such that $\Phi_1$ is a complete graph, and, if $i\neq 1$, then $\Phi_i$ is not a complete graph and does not split as a join.  We consider two cases, depending on whether $\Phi_1=\emptyset$ or not. \\

\noindent{\bf Case 1: $\Phi_1\neq \emptyset$.}  Fix $v \in \Phi_1$.  Then $\Phi\subseteq \mathrm{star}(v)$, and, since $\mathrm{star}(v)$ is a join, the maximality of $\Phi$ implies that $\Phi=\mathrm{star}(v)$.  If $\mathrm{star}(v)$ is stable, then we are done, so suppose that $\mathrm{link}(v)=\Phi-\{v\}$ is stable.  Let 
\[
V=\{w\in \Gamma \mid w \textrm{ is adjacent (but not equal) to every vertex in }\mathrm{link}(v)\}
\]
be the full orthogonal complement of $\mathrm{link}(v)$.  Then \cite[Lemma~3.12]{HuangI} implies that $V * \mathrm{link}(v)$ is stable.  
The vertex $v$ is in  $V$, and so $\Phi\subseteq V * \mathrm{link}(v)$.  Maximality of $\Phi$ again implies that $\Phi=V * \mathrm{link} (v)$, and we conclude that $\Phi$ is stable. \\

\noindent{\bf Case 2: $\Phi_1=\emptyset$.}  For each $i$, let $W_i=\Phi-\Phi_i$, and, for each $w\in W_i$, let $\Omega_w$ be either $\mathrm{star}(w)$ or $\mathrm{link}(w)$, whichever is stable.  Finally, let \[
\Omega_i=\bigcap_{w\in W_i}\Omega_w.
\] 
Since $\Omega_i$ is a finite intersection of stable subgraphs, it is stable by \cite[Lemma~3.10]{HuangI}.  Our first goal is to show that $\Phi_i=\Omega_i$.  

\medskip\noindent The first step is to show that $\Omega_i\cap W_i=\emptyset$. Let $v\in W_i$, so that $v\in \Phi_j$ for some $j\neq i$.  There exists a vertex $w\in \Phi_j-\{v\}$ that is not adjacent to $v$, as otherwise $\Phi_j$ would split as a join $\{v\} * (\Phi_j - \{v\})$, contradicting that $j\neq 1$.  Since $w$ is not adjacent to $v$, it follows that $v\not\in \Omega_w$.  Since $w\in W_i$, this implies that $v\not\in \Omega_i$, and so $\Omega_i\cap W_i=\emptyset$, as desired.

\medskip\noindent  Since $\Omega_i\cap W_i=\emptyset$, it follows from the definition of $\Omega_i$ that every vertex of $\Omega_i$ is in the link of every vertex in $W_i$.  Thus $\Omega_i$ and $W_i$ form a join $W_i * \Omega_i$. Notice that
$$\Phi_i \subset \bigcap\limits_{w \in W_i} \mathrm{link}(w) \subset \bigcap\limits_{w \in W_i} \Omega_w = \Omega_i.$$
Therefore $\Phi\subseteq W_i * \Omega_i$, and the maximality of $\Phi$ implies that $\Phi = W_i * \Omega_i$.  The definition of $W_i$ thus implies that $\Phi_i = \Omega_i$.

\medskip\noindent We have shown that each $\Phi_i$ is stable.  Since $\Phi$ is a maximal join, it is also stable by \cite[Lemma~3.18]{HuangI}.
\end{proof}

\noindent We are now ready to show  the invariance of coset intersection complexes.
\begin{cor}\label{cor:CosetInterComplexJoinQI}
Let $\Gamma_1,\Gamma_2$ be two finite graphs. Every quasi-isometry $\eta \colon A(\Gamma_1) \to A(\Gamma_2)$ induces an isomorphism between the coset intersection complexes
$$\mathcal{K} \left( A(\Gamma_1), \{ \langle \Phi \rangle, \Phi \subset \Gamma_1 \text{ maximal join} \} \right) \to \mathcal{K} \left( A(\Gamma_2), \{ \langle \Psi \rangle, \Psi \subset \Gamma_2 \text{ maximal join} \} \right).$$
\end{cor}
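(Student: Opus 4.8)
The plan is to combine Theorem~\ref{thm:RAAGmaxJoin} (stability of maximal join subgroups under quasi-isometry) with Lemma~\ref{lem:ForSkewer}, which characterises when a finite intersection of conjugates of parabolic subgroups is infinite in purely geometric terms. First I would set $\mathcal{P}_i := \{ \langle \Lambda \rangle \mid \Lambda \subset \Gamma_i \text{ maximal join}\}$ for $i=1,2$ and, given a quasi-isometry $\eta \colon A(\Gamma_1) \to A(\Gamma_2)$ with quasi-inverse $\eta^{-1}$, use Theorem~\ref{thm:RAAGmaxJoin} to produce a constant $C \geq 0$ such that every coset $g\langle \Phi \rangle$ of a maximal join parabolic subgroup is sent by $\eta$ to within Hausdorff distance $C$ of a unique coset $h\langle \Psi \rangle$ of a maximal join parabolic subgroup (uniqueness because two distinct cosets of maximal join subgroups lie at infinite Hausdorff distance, as maximal joins are not nested by maximality, cf. Lemma~\ref{lem:FiniteHaussRAAG}). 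This gives a well-defined vertex map $\eta_* \colon \mathcal{K}(A(\Gamma_1), \mathcal{P}_1) \to \mathcal{K}(A(\Gamma_2), \mathcal{P}_2)$, and applying the same construction to $\eta^{-1}$ gives a vertex map in the reverse direction; the quasi-inverse relation together with the Hausdorff-distance uniqueness shows these are mutually inverse bijections on vertex sets.

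The second step is to check that $\eta_*$ preserves simplices. A collection $g_1\langle \Phi_1 \rangle, \ldots, g_n\langle \Phi_n \rangle$ spans a simplex in $\mathcal{K}(A(\Gamma_1),\mathcal{P}_1)$ precisely when $g_1\langle\Phi_1\rangle g_1^{-1} \cap \cdots \cap g_n\langle\Phi_n\rangle g_n^{-1}$ is infinite. The key point is that this condition is a coarse geometric invariant: an intersection of conjugates of parabolic subgroups is infinite if and only if the corresponding cosets have a ``common coarse direction'', which is preserved under quasi-isometry. Concretely, I would argue that $\bigcap_i g_i\langle\Phi_i\rangle g_i^{-1}$ is infinite if and only if there is a bi-infinite quasigeodesic (equivalently, by Lemma~\ref{lem:ForSkewer} applied to $\mathrm{QM}(\Gamma_1,\mathcal{G})$ with the trivial-group-free reformulation $\mathcal{G}=\{\mathbb{Z}\}$, a hyperplane) crossing all the cosets $g_i\langle\Phi_i\rangle$ simultaneously; a quasi-isometry carries such a configuration to a uniformly-close configuration in $A(\Gamma_2)$, and since $\eta$ moves each $g_i\langle\Phi_i\rangle$ to within $C$ of $\eta_*(g_i\langle\Phi_i\rangle)$, the images still admit a common transversal, so $\bigcap_i \eta_*(g_i\langle\Phi_i\rangle)$-conjugates have infinite intersection. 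Running the argument with $\eta^{-1}$ gives the converse implication, so $\eta_*$ is a simplicial isomorphism.

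The main obstacle I anticipate is the following subtlety in passing between ``infinite intersection of conjugates'' and ``common coarse transversal'': a priori the intersection $\bigcap_i g_i\langle\Phi_i\rangle g_i^{-1}$ being infinite is a statement about literal subgroups, whereas after applying $\eta$ we only control the cosets up to bounded Hausdorff distance, not the subgroups themselves. One must verify that infiniteness of the subgroup intersection is detected by the coarse geometry of the coset intersection --- that is, that $\bigcap_i g_i\langle\Phi_i\rangle g_i^{-1}$ infinite is equivalent to $\bigcap_i N_C(g_i\langle\Phi_i\rangle)$ being coarsely unbounded for every (equivalently, some) $C$, uniformly. For right-angled Artin groups (or graph products of infinite groups) this follows from Lemma~\ref{lem:ForSkewer}: the subgroup intersection is infinite iff a hyperplane of $\mathrm{QM}(\Gamma_1,\mathcal{G})$ crosses all the $g_i\langle\Phi_i\rangle$, and the carrier of such a hyperplane is an unbounded gated subgraph uniformly close to each coset; conversely an unbounded coarse common transversal forces such a hyperplane by a pigeonhole/convexity argument using Theorem~\ref{thm:ProjQM}. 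Once this equivalence is in place, the quasi-isometry invariance is formal, and the corollary follows by checking that $\eta_*$ and $(\eta^{-1})_*$ are simplicial and mutually inverse.
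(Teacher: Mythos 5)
Your proposal is correct in outline, but it takes a different route from the paper at the decisive step. The paper's proof is short: Theorem~\ref{thm:RAAGmaxJoin} upgrades $\eta$ to a quasi-isometry of pairs, the self-commensuration of maximal join subgroups (via \cite{Godelle} and the normaliser computation $N(\langle\Lambda\rangle)=\langle\Lambda\rangle\times\langle\mathrm{link}(\Lambda)\rangle$, with $\mathrm{link}(\Lambda)=\emptyset$ for a maximal join) shows both pairs are \emph{reduced}, and then \cite[Proposition~4.9]{CIC} is cited to conclude that the induced simplicial map is an isomorphism. You instead re-prove the content of that proposition by hand in this special case: your use of Lemma~\ref{lem:FiniteHaussRAAG} to get uniqueness of the target coset is exactly the right substitute for the paper's reducedness hypothesis (both boil down to maximal joins having empty link), and your construction of mutually inverse vertex maps from $\eta$ and a quasi-inverse matches what the general machinery produces. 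What your approach buys is a self-contained, geometrically explicit argument; what the paper's approach buys is that all the coarse-intersection bookkeeping is already done in \cite{CIC} in the generality of reduced group pairs.

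The one step you should be honest is only a sketch is the crux of the simplex-preservation argument: the claim that $\bigcap_i g_i\langle\Phi_i\rangle g_i^{-1}$ being infinite is equivalent to $\bigcap_i N_C(g_i\langle\Phi_i\rangle)$ being unbounded, uniformly in $C$. The easy direction follows since the intersection subgroup translates a point of $g_1\langle\Phi_1\rangle$ within a fixed distance of every $g_i\langle\Phi_i\rangle$. For the converse, your pigeonhole idea does work and is worth writing out: if $x_0,x_n$ lie in $\bigcap_i N_C(g_i\langle\Phi_i\rangle)$, then by Theorem~\ref{thm:ProjQM} at most $2C$ of the hyperplanes separating $x_0$ from $x_n$ fail to cross a given $g_i\langle\Phi_i\rangle$, so once $d(x_0,x_n)>2Cn$ some hyperplane crosses all $n$ cosets, and Lemma~\ref{lem:ForSkewer} converts this back into infiniteness of the subgroup intersection. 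One caveat: a quasi-isometry does not carry hyperplanes to hyperplanes, so the invariant you transport is the unbounded coarse intersection, not the ``common transversal'' itself; your write-up should route the implication through the coarse intersection on both sides, as your final paragraph in fact does.
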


\begin{proof}
By Theorem~\ref{thm:RAAGmaxJoin}, the quasi-isometry $\eta$ is a quasi-isometry of pairs 
\[
\eta\colon \left( A(\Gamma_1), \{ \langle \Phi \rangle, \Phi \subset \Gamma_1 \text{ maximal join} \} \right) \to  \left( A(\Gamma_2), \{ \langle \Psi \rangle, \Psi \subset \Gamma_2 \text{ maximal join} \} \right) 
\]
in the sense of~\cite[Definition 2.4]{CIC}. 
Let $\dot \eta\colon \mathcal{K} \left( A(\Gamma_1), \{ \langle \Phi \rangle, \Phi \subset \Gamma_1 \text{ maximal join} \} \right) \to \mathcal{K} \left( A(\Gamma_2), \{ \langle \Psi \rangle, \Psi \subset \Gamma_2 \text{ maximal join} \} \right)$ be an induced map on  coset intersection complexes.  By \cite[Proposition~4.9]{CIC}, the map $\dot \eta$ is a simplicial map, and we will show that it is an isomorphism.  

\medskip\noindent 
First, note that, for any finite graph $\Gamma$ and any subgraph $\Lambda\subset \Gamma$, the commensurator $\mathrm{Comm}(\langle \Lambda \rangle)$ of $\langle \Lambda \rangle$ is equal to its normalizer $\mathrm{N}(\langle \Lambda \rangle)$  in $A(\Gamma)$ \cite[Theorem~0.1]{Godelle} (see also Lemma~\ref{lem:SelfCommensurated} below).  Moreover, $\mathrm{N}(\langle \Lambda \rangle)=\langle \Lambda \rangle \times \langle \mathrm{link}(\Lambda) \rangle$ \cite[Proposition~3.13]{MR3365774}.  If $\Lambda$ is a maximal join, then $\mathrm{link}(\Lambda)=\emptyset$, and so $N(\langle \Lambda \rangle)=\langle \Lambda \rangle$.  In the terminology of \cite{CIC}, this is equivalent to saying that the group pairs 
$$\left( A(\Gamma_1), \{ \langle \Phi \rangle, \Phi \subset \Gamma_1 \text{ maximal join} \} \right) \text{ and } \left( A(\Gamma_2), \{ \langle \Psi \rangle, \Psi \subset \Gamma_2 \text{ maximal join} \} \right)$$ 
are both reduced.  Therefore, by \cite[Proposition~4.9]{CIC}, the map $\dot \eta$ is an isomorphism, as desired.
\end{proof}

\noindent
A finite collection of subgroups $\mathcal{P}$ of a finitely generated group $G$ is called \emph{qi-characteristic} if each subgroup $P$ has finite index in $\mathsf{Comm}_G(P)$ and the collection of left cosets $\{gP\mid g\in G,\quad P\in\mathcal{P}\}$ is preserved up to uniform Hausdorff distance by any self quasi-isometry of $G$, see~\cite{MR4520684}. 
Since any maximal join subgroup of a $A(\Gamma)$ is self commensurated~\cite[Theorem~0.1]{Godelle},  Theorem~\ref{thm:RAAGmaxJoin} implies that the collection of maximal join subgroups  $\{ \langle \Phi \rangle, \Phi \subset \Gamma \text{ maximal join} \}$  of $A(\Gamma)$ is qi-characteristic. This property has the following consequence.

\begin{cor}
 Let $\Gamma$ be a finite  graph. If $G$ is a finitely generated group quasi-isometric to $A(\Gamma)$, then $G$ admits a cellular action on  
$\mathcal{K}\left( A(\Gamma), \{ \langle \Phi \rangle, \Phi \subset \Gamma \text{ maximal join} \} \right)$ with finitely many $G$-orbits of $0$-cells. 
\end{cor}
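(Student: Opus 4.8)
The plan is to build the required cellular action directly from the definition of a qi\nobreakdash-characteristic collection. Recall from the discussion above that $\mathcal{P} := \{\langle\Phi\rangle \mid \Phi\subset\Gamma\text{ maximal join}\}$ is qi\nobreakdash-characteristic in $A(\Gamma)$, and note that $\mathcal{P}$ is finite since $\Gamma$ is. Write $\mathcal{K} := \mathcal{K}(A(\Gamma),\mathcal{P})$, so that the $0$-cells of $\mathcal{K}$ are the cosets $g\langle\Phi\rangle$ with $\Phi$ a maximal join. A preliminary observation, extracted from Lemma~\ref{lem:FiniteHaussRAAG}, is that two distinct such cosets lie at infinite Hausdorff distance: if $g\langle\Phi\rangle$ and $h\langle\Psi\rangle$ lie at finite Hausdorff distance, then each is contained in a neighbourhood of the other, so Lemma~\ref{lem:FiniteHaussRAAG} forces $\Phi\subset\Psi$ and $\Psi\subset\Phi$, hence $\Phi=\Psi$; then the same lemma gives $g^{-1}h\in\langle\mathrm{star}(\Phi)\rangle\langle\Phi\rangle=\langle\Phi\rangle$, using that $\mathrm{link}(\Phi)=\emptyset$ by maximality of $\Phi$, so the two cosets coincide.

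Next I would construct the action. Fix a quasi-isometry $\phi\colon G\to A(\Gamma)$ together with a quasi-inverse $\bar\phi\colon A(\Gamma)\to G$. For $g\in G$, the composition $\Phi_g := \phi\circ\lambda_g\circ\bar\phi$, where $\lambda_g$ denotes left translation by $g$ on $G$, is a self-quasi-isometry of $A(\Gamma)$ whose constants depend only on $\phi$ and $\bar\phi$, not on $g$. Since $\mathcal{P}$ is qi-characteristic, $\Phi_g$ sends every coset of a member of $\mathcal{P}$ to within a uniformly bounded Hausdorff distance of another such coset; by the previous paragraph that target coset is unique, so $\Phi_g$ induces an honest permutation $(\Phi_g)_\ast$ of the $0$-cells of $\mathcal{K}$. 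Because $\bar\phi\circ\phi$ lies at bounded distance from the identity of $G$, we have $\Phi_{gh}$ at bounded distance from $\Phi_g\circ\Phi_h$, and two self-quasi-isometries at bounded distance induce the same permutation of these cosets; likewise $\Phi_{1}$ induces the identity. Hence $g\mapsto(\Phi_g)_\ast$ is a homomorphism from $G$ to the permutation group of the vertex set, defining a left action by $g\cdot c := (\Phi_g)_\ast(c)$. Moreover each $\Phi_g$ is a quasi-isometry of the pair $(A(\Gamma),\mathcal{P})$ to itself (the pair being reduced, since maximal join subgroups are self-commensurated by \cite[Theorem~0.1]{Godelle}), so, exactly as in the proof of Corollary~\ref{cor:CosetInterComplexJoinQI}, \cite[Proposition~4.9]{CIC} shows that $\Phi_g$ induces a simplicial map on $\mathcal{K}$; applying this to $\Phi_g$ and to $\Phi_{g^{-1}}$ shows that $(\Phi_g)_\ast$ is a simplicial automorphism. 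Thus $G$ acts cellularly on $\mathcal{K}$.

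Finally, I would check that the action has finitely many orbits of $0$-cells by a short counting argument. Let $C$ be the uniform Hausdorff constant associated to the quasi-isometries $\Phi_g$, and let $c = g_0\langle\Phi\rangle$ be an arbitrary $0$-cell. Choosing $y\in\bar\phi(c)\subset G$ and setting $g := y^{-1}$, the set $\Phi_g(c)=\phi\bigl(g\,\bar\phi(c)\bigr)$ contains the point $\phi(1)$, so the coset $g\cdot c=(\Phi_g)_\ast(c)$ lies within Hausdorff distance $C$ of $\phi(1)$; in particular it meets the ball $B(\phi(1),C)$. Since $A(\Gamma)$ is locally finite this ball is finite, so it is met by only finitely many cosets of members of $\mathcal{P}$ (at most $|\mathcal{P}|\cdot|B(1,C)|$ of them), and every $G$-orbit of $0$-cells contains a representative from this finite set; the corollary follows. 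I expect the only delicate point to be the passage from the coarse statement built into the definition of "qi-characteristic" to an honest simplicial action: this is exactly where the infinite-Hausdorff-distance property of the cosets (giving uniqueness of the nearby coset) and \cite[Proposition~4.9]{CIC} (giving that the vertex bijections respect simplices) are needed. Once these are in place, the finiteness of the orbit set is immediate from local finiteness of $A(\Gamma)$.
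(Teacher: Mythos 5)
Your proof is correct, but it takes a genuinely different route from the paper's. The paper quotes the structural result \cite[Theorem~1.1]{MR4520684}: since $\mathcal{P}$ is qi-characteristic, there is a \emph{finite} collection $\mathcal{Q}$ of subgroups of $G$ (which may be taken self-commensurated) such that the pairs $(A(\Gamma),\mathcal{P})$ and $(G,\mathcal{Q})$ are quasi-isometric and reduced; then \cite[Proposition~4.9]{CIC} gives an isomorphism $\mathcal{K}(G,\mathcal{Q})\cong\mathcal{K}(A(\Gamma),\mathcal{P})$, and the left-multiplication action of $G$ on $\mathcal{K}(G,\mathcal{Q})$, which visibly has $|\mathcal{Q}|$ orbits of vertices, is transported through this isomorphism. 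You instead construct the action by hand: conjugating the left translations $\lambda_g$ through a quasi-isometry and its quasi-inverse gives a uniform quasi-action of $G$ on $A(\Gamma)$, the qi-characteristic property together with your preliminary observation (distinct cosets of maximal join subgroups are at infinite Hausdorff distance, via Lemma~\ref{lem:FiniteHaussRAAG} and $\mathrm{link}(\Phi)=\emptyset$) promotes this to an honest permutation action on the vertex set, \cite[Proposition~4.9]{CIC} upgrades each permutation to a simplicial automorphism, and a ball-counting argument gives finite vertex-orbit count. In effect you reprove the special case of \cite[Theorem~1.1]{MR4520684} that is needed here, which makes your argument more self-contained and elementary; the price is that you do not recover the extra algebraic information the paper's route provides, namely the identification of the vertex stabilisers with the (commensurated) subgroups in $\mathcal{Q}$. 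All the steps you flag as delicate are handled correctly: the uniqueness of the nearby coset is exactly what converts the coarse statement into a well-defined homomorphism to the simplicial automorphism group, and your orbit count via local finiteness of $A(\Gamma)$ is sound.
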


\begin{proof}
Since $\mathcal{P}=\{ \langle \Phi \rangle, \Phi \subset \Gamma \text{ maximal join} \}$ of $A(\Gamma)$ is a qi-characteristic collection of subgroups of $A(\Gamma)$, a consequence of~\cite[Theorem 1.1]{MR4520684} is  that there is a finite collection of subgroups $\mathcal{Q}$ of $G$ such that the group pairs $(A(\Gamma), \mathcal{P})$ and   $(G,\mathcal{Q})$ are quasi-isometric. By~\cite[Theorem~1.5 \& Proposition~2.9]{CIC}, we can assume that $Q=\mathsf{Comm}_G(Q)$ for each $Q\in \mathcal{Q}$. It follows that the group pairs $(A(\Gamma),\mathcal{P})$ and $(G,\mathcal{Q})$ are quasi-isometric and reduced. By~\cite[Proposition~4.9]{CIC}, the coset intersection complexes   $\mathcal{K}(A(\Gamma),\mathcal{P})$ and $\mathcal{K}(G,\mathcal{Q})$ are isomorphic as simplicial complexes. Since $\mathcal{Q}$ is a finite collection, $G$ acts with finitely many orbits of vertices in $\mathcal{K}(G,\mathcal{Q})$.
\end{proof}

\subsection{Quasi-isometric and commensurability invariants}\label{section:QIinvariant}

\noindent 
In this section, we gather quasi-isometry and commensurability invariants that can be read off the defining graph of a graph product.  We begin with the case of right-angled Artin groups.  Recall that, given a simplicial graph $\Gamma$, the space $\Gamma^{\bowtie}$ is the simplicial complex whose vertex-set is $\Gamma$ and whose simplices are given by joins.

\begin{thm}\label{thm:BowtieQIinv}
Let $\Gamma_1,\Gamma_2$ be two finite connected graphs. If $A(\Gamma_1)$ and $A(\Gamma_2)$ are quasi-isometric, then $\bigvee_\mathbb{N} \Gamma_1^{\bowtie}$ and $\bigvee_\mathbb{N} \Gamma_2^{\bowtie}$ are homotopy equivalent.
\end{thm}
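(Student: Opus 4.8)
The plan is to assemble the statement from two inputs established above: the quasi-isometric invariance of coset intersection complexes with respect to maximal join subgroups (Corollary~\ref{cor:CosetInterComplexJoinQI}), and the homotopical description of such complexes (Theorem~\ref{thm:CosetInterComplexRelativeCont}). For $i\in\{1,2\}$, write $\mathcal{P}_i:=\{\langle\Phi\rangle\mid\Phi\subset\Gamma_i\text{ a maximal join}\}$.

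First I would note that a quasi-isometry $A(\Gamma_1)\to A(\Gamma_2)$ induces, by Corollary~\ref{cor:CosetInterComplexJoinQI}, a simplicial isomorphism $\mathcal{K}(A(\Gamma_1),\mathcal{P}_1)\cong\mathcal{K}(A(\Gamma_2),\mathcal{P}_2)$, which is in particular a homotopy equivalence. So it remains only to identify the homotopy type of $\mathcal{K}(A(\Gamma_i),\mathcal{P}_i)$ with $\bigvee_{\mathbb{N}}\Gamma_i^{\bowtie}$ for each $i$.

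For this I would invoke Theorem~\ref{thm:CosetInterComplexRelativeCont}, applied with $\Gamma:=\Gamma_i$, with $\mathcal{G}$ the collection of infinite cyclic groups indexed by the vertices of $\Gamma$ (so that $\Gamma\mathcal{G}=A(\Gamma)$), and with $\mathscr{S}$ the set of maximal joins of $\Gamma$. The two hypotheses on $\mathscr{S}$ hold: if a maximal join $\Lambda$ satisfied $\Lambda\subset\mathrm{link}(v)$ for some vertex $v$, then $v\notin\Lambda$ and the subgraph induced on $\Lambda\cup\{v\}$ would be the join $\Lambda*\{v\}$, strictly larger than $\Lambda$, contradicting maximality; and for every vertex $v$ the star $\mathrm{star}(v)=\{v\}*\mathrm{link}(v)$ is a join, hence is contained in some maximal join. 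Theorem~\ref{thm:CosetInterComplexRelativeCont} then yields that $\mathcal{K}(A(\Gamma_i),\mathcal{P}_i)$ is homotopy equivalent to a pointed sum of infinitely many (and, since $A(\Gamma_i)$ is countable, countably many) copies of the complex $\Gamma_i^{\mathscr{S}}$ on the vertex-set $\Gamma_i$ whose simplices are the subsets contained in a maximal join. Since $\Gamma_i$ is finite, every join extends to a maximal join, so $\Gamma_i^{\mathscr{S}}$ is exactly the complex $\Gamma_i^{\bowtie}$ whose simplices are given by joins. Hence $\mathcal{K}(A(\Gamma_i),\mathcal{P}_i)\simeq\bigvee_{\mathbb{N}}\Gamma_i^{\bowtie}$, and combining this with the simplicial isomorphism of the previous paragraph gives
$$\bigvee_{\mathbb{N}}\Gamma_1^{\bowtie}\ \simeq\ \mathcal{K}(A(\Gamma_1),\mathcal{P}_1)\ \cong\ \mathcal{K}(A(\Gamma_2),\mathcal{P}_2)\ \simeq\ \bigvee_{\mathbb{N}}\Gamma_2^{\bowtie},$$
as desired.

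At this stage there is essentially no obstacle left: the real work has been front-loaded into Theorem~\ref{thm:RAAGmaxJoin} (quasi-isometric rigidity of maximal join subgroups of right-angled Artin groups, via Huang's work) and into Theorem~\ref{thm:CosetInterComplexRelativeCont} (the reduction of the coset intersection complex to a relative contact complex, then the nerve-theoretic computation of its homotopy type). The only points requiring any care in the final assembly are the routine checks that the collection of maximal joins satisfies the hypotheses of Theorem~\ref{thm:CosetInterComplexRelativeCont} and the identification $\Gamma_i^{\mathscr{S}}=\Gamma_i^{\bowtie}$, both of which are elementary.
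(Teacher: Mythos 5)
Your proof is correct and follows exactly the paper's argument: combine Corollary~\ref{cor:CosetInterComplexJoinQI} (the quasi-isometry induces an isomorphism of the coset intersection complexes for maximal join subgroups) with Theorem~\ref{thm:CosetInterComplexRelativeCont} applied to $\mathscr{S}=\{\text{maximal joins}\}$. The only difference is that you spell out the verification of the hypotheses on $\mathscr{S}$ and the identification $\Gamma_i^{\mathscr{S}}=\Gamma_i^{\bowtie}$, which the paper leaves implicit; these checks are correct.
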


\begin{proof}
If $A(\Gamma_1)$ and $A(\Gamma_2)$ are quasi-isometric, then we know from Corollary~\ref{cor:CosetInterComplexJoinQI} that the corresponding coset intersection complexes relative to maximal join subgroups are isomorphic. But, according to Theorem~\ref{thm:CosetInterComplexRelativeCont}, these two complexes are respectively homotopy equivalent to $\bigvee_\mathbb{N} \Gamma_1^{\bowtie}$ and $\bigvee_\mathbb{N} \Gamma_2^{\bowtie}$, concluding the proof.
\end{proof}

\noindent We now turn our attention to more general graph products of infinite groups.  In this setting, we again use the coset intersection complex with respect to maximal join subgroups, and we obtain the following commensurability invariant, rather than the quasi-isometry invariant obtained for right-angled Artin groups.  

\begin{thm}\label{thm:CommensurableGP}
Let $\Gamma_1$ (resp.\ $\Gamma_2$) be a finite connected graph and $\mathcal{G}_1$ (resp.\ $\mathcal{G}_2$) a collection of  infinite groups indexed by $\Gamma_1$ (resp.\ $\Gamma_2$). If $\Gamma_1 \mathcal{G}_1$ and $\Gamma_2 \mathcal{G}_2$ are commensurable, then $\bigvee_\mathbb{N} \Gamma_1^{\bowtie}$ and $\bigvee_\mathbb{N} \Gamma_2^{\bowtie}$ are homotopy equivalent.
\end{thm}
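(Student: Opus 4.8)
The plan is to run exactly the same argument as for right-angled Artin groups above, replacing the quasi-isometric rigidity of maximal join subgroups (which, through Corollary~\ref{cor:CosetInterComplexJoinQI}, rested on Huang's work) by a commensurability argument. Write $G_i:=\Gamma_i\mathcal{G}_i$ and $\mathcal{P}_i:=\{\langle\Phi\rangle\mid\Phi\subset\Gamma_i\text{ maximal join}\}$. As in the right-angled Artin case, Theorem~\ref{thm:CosetInterComplexRelativeCont} applies with $\mathscr{S}_i$ the collection of maximal joins of $\Gamma_i$ (a maximal join cannot be contained in the link of a vertex, and every star sits inside a maximal join, so the two bullet conditions hold), and it shows that $\mathcal{K}(G_i,\mathcal{P}_i)$ is homotopy equivalent to $\bigvee_\mathbb{N}\Gamma_i^{\bowtie}$. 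So it suffices to produce a simplicial isomorphism $\mathcal{K}(G_1,\mathcal{P}_1)\cong\mathcal{K}(G_2,\mathcal{P}_2)$.

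The next step is to record that maximal join subgroups are self-commensurated. Lemma~\ref{lem:SelfCommensurated} (the graph-product analogue of Godelle's theorem, which is where the hypothesis that the vertex-groups are infinite enters beyond Lemma~\ref{lem:ForSkewer}) identifies $\mathrm{Comm}_{G_i}(\langle\Lambda\rangle)$ with $\langle\mathrm{star}(\Lambda)\rangle$ for every join $\Lambda$; when $\Lambda$ is a maximal join one has $\mathrm{link}(\Lambda)=\emptyset$, whence $\mathrm{Comm}_{G_i}(\langle\Lambda\rangle)=\langle\Lambda\rangle$. In the language of \cite{CIC} the group pairs $(G_i,\mathcal{P}_i)$ are therefore reduced.

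I would then prove a finite-index invariance statement for coset intersection complexes: if $\mathcal{P}$ is a finite collection of infinite self-commensurated subgroups of a group $G$ and $H\leq G$ has finite index, then, writing $\mathcal{P}|_H$ for a set of representatives of the $H$-conjugacy classes of the subgroups $g\langle\Phi\rangle g^{-1}\cap H$ (a finite collection, since there are at most $[G:H]$ double cosets $H\backslash G/\langle\Phi\rangle$, and each member is again self-commensurated in $H$), one has $\mathcal{K}(G,\mathcal{P})\cong\mathcal{K}(H,\mathcal{P}|_H)$. The vertex sets match because each $G/\langle\Phi\rangle$ decomposes $H$-equivariantly as a disjoint union of the $H$-sets $H/(g_j\langle\Phi\rangle g_j^{-1}\cap H)$, and the simplices match because an intersection of conjugates of the $\langle\Phi\rangle$ is infinite if and only if its intersection with the finite-index subgroup $H$ is infinite. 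Equivalently this can be read off from \cite[Proposition~4.9]{CIC}, since the inclusion $H\hookrightarrow G$ is a quasi-isometry of the reduced pairs $(H,\mathcal{P}|_H)\to(G,\mathcal{P})$.

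Finally, commensurability of $G_1$ and $G_2$ furnishes finite-index subgroups $H_i\leq G_i$ and an isomorphism $\varphi\colon H_1\to H_2$; combined with the previous step this reduces the problem to checking that $\varphi$ carries $\mathcal{P}_1|_{H_1}$ to a collection equivalent to $\mathcal{P}_2|_{H_2}$ (agreeing with it up to $H_2$-conjugacy and commensurability, which is all a coset intersection complex sees). This is the heart of the matter, and I expect it to be the main obstacle: it amounts to saying that the peripheral structure induced by maximal join subgroups is a commensurability invariant, i.e. that maximal join subgroups of a graph product of infinite groups over a connected graph are commensurability-characteristic in the spirit of \cite{MR4520684}. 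I would establish this through an internal, graph-product-free description of the commensurability classes of traces of maximal join subgroups — for instance as the maximal elements, up to commensurability, among the self-commensurated subgroups that virtually split as a direct product of two infinite subgroups each acting with bounded orbits on the quasi-median geometry — a description manifestly preserved by the abstract isomorphism $\varphi$. With this in hand, $\varphi$ induces $\mathcal{K}(H_1,\mathcal{P}_1|_{H_1})\cong\mathcal{K}(H_2,\mathcal{P}_2|_{H_2})$, and chaining the isomorphisms together with Theorem~\ref{thm:CosetInterComplexRelativeCont} completes the proof.
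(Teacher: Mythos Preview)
Your overall architecture matches the paper's exactly: reduce to showing $\mathcal{K}(G_1,\mathcal{P}_1)\cong\mathcal{K}(G_2,\mathcal{P}_2)$ and then invoke Theorem~\ref{thm:CosetInterComplexRelativeCont}. The self-commensuration input (Lemma~\ref{lem:SelfCommensurated}) and the finite-index bookkeeping are also used in the paper's argument (Lemma~\ref{lem:CommensurableCIC}).

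The genuine gap is precisely where you flag it. You propose to characterise the traces of maximal join subgroups on a finite-index subgroup $H$ by an intrinsic property ``manifestly preserved by the abstract isomorphism $\varphi$'', but the candidate you offer --- maximal self-commensurated subgroups that virtually split as a product of two infinite subgroups \emph{each acting with bounded orbits on the quasi-median geometry} --- is not intrinsic: the quasi-median graph is built from the ambient graph-product structure, and an abstract isomorphism $\varphi\colon H_1\to H_2$ has no reason to respect it. Dropping that clause does not obviously help either, since one would then need to rule out other virtually-product subgroups not commensurable with any trace of a join subgroup. So as written the key step is a hope rather than an argument.

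The paper bypasses any intrinsic characterisation. Working with the common finite-index subgroup $G=A_1\cap A_2$, it shows directly that for every maximal join subgroup $H=t\langle\Delta\rangle t^{-1}$ of $A_1$, the trace $H\cap G$ lies inside some join subgroup of $A_2$. The mechanism is a centraliser dichotomy (Lemma~\ref{lem:vcyclic_centralizer}): if $H\cap G$ were contained in no conjugate of a vertex group and no join subgroup of $A_2$, then $H\cap G$ would contain an element all of whose nonzero powers have virtually cyclic centraliser in $A_2$, hence in $H\cap G$; but $H\cong H_1\times H_2$ with both factors infinite, so centralisers in $H$ (and thus in the finite-index subgroup $H\cap G$) are never virtually cyclic. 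This yields a $G$-equivariant map $f\colon A_1/\mathcal{J}_1\to A_2/\mathcal{J}_2$; simpliciality follows because infinite intersections stay infinite after intersecting with a finite-index subgroup, and the symmetric map $f'$ is checked to be the inverse using maximality and self-commensuration. So the missing idea in your proposal is this centraliser argument, which replaces the intrinsic characterisation you were reaching for.
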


\noindent The proof of the above theorem relies on the relationship between the respective coset intersection complexes with respect to maximal join subgroups; see Lemma~\ref{lem:CommensurableCIC}. The proof of the lemma assumes familiarity with the well-known  \emph{normal form theorem} for graph products, see~\cite{GreenGP}.

\begin{lemma}\label{lem:SelfCommensurated}
 Let $\Gamma$ be a a finite connected graph and $\mathcal{G}$ a collection of infinite groups indexed by $\Gamma$. Then any maximal join subgroup of $\Gamma\mathcal{G}$ is self-commensurated.
\end{lemma}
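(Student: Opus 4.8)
The plan is to fix a maximal join $\Phi = \Phi_1 \ast \cdots \ast \Phi_k \subset \Gamma$ (with the convention that $\Phi_1$ is the complete factor, possibly empty), set $P := \langle \Phi \rangle$, and show $\mathsf{Comm}_{\Gamma\mathcal{G}}(P) = P$. Since $P \leq \mathsf{Comm}_{\Gamma\mathcal{G}}(P)$ always holds, the content is the reverse inclusion. First I would recall that maximality of the join $\Phi$ means precisely that $\mathrm{link}_\Gamma(\Phi) = \emptyset$: if some vertex $v$ were adjacent to every vertex of $\Phi$, then $\Phi \ast \{v\}$ would be a strictly larger join (using that $\Gamma$ is connected, so $\Phi \neq \Gamma$ forces such considerations to be non-vacuous unless $\Phi = \Gamma$, in which case $P = \Gamma\mathcal{G}$ is its own commensurator trivially). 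This is the structural fact I will lean on throughout.

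The key step is to bring in the quasi-median geometry. By Lemma~\ref{lem:HypInQMGP} and the discussion of parabolic subgroups, $P = \langle \Phi \rangle$ is the stabiliser of the gated subgraph $\langle \Phi \rangle \subset \mathrm{QM}(\Gamma, \mathcal{G})$, and the hyperplanes crossing $\langle \Phi \rangle$ are exactly those labelled by vertices of $\Phi$. Suppose $g \in \mathsf{Comm}_{\Gamma\mathcal{G}}(P)$, so $P \cap g P g^{-1}$ has finite index in both $P$ and $gPg^{-1}$. The subgroup $P \cap gPg^{-1}$ stabilises both $\langle \Phi \rangle$ and $g\langle \Phi \rangle$, and since the vertex-groups are infinite, each $\langle u \rangle$ with $u \in \Phi$ contributes an infinite rotative part; a finite-index subgroup of $P$ must therefore still act with unbounded translation along every clique direction of $\langle \Phi \rangle$, forcing $P \cap gPg^{-1}$ to contain, for each $u \in \Phi$, a finite-index subgroup of $\mathrm{stab}_\circlearrowright(J_u)$ up to conjugacy. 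Running this through Lemma~\ref{lem:InterStabPrisms} (or directly via Theorem~\ref{thm:ProjQM} applied to the projection of $g\langle\Phi\rangle$ onto $\langle\Phi\rangle$, as in Lemma~\ref{lem:ForSkewer}), I would conclude that every hyperplane crossing $\langle \Phi \rangle$ must also cross $g\langle \Phi \rangle$ and conversely, i.e.\ $\langle \Phi \rangle$ and $g\langle \Phi \rangle$ are parallel gated subgraphs in $\mathrm{QM}(\Gamma,\mathcal{G})$.

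Once parallelism is established, I would argue exactly as in the parallelism-free verification inside the proof of Theorem~\ref{thm:CosetInterComplexRelativeCont}: if $\langle \Phi \rangle$ and $g\langle \Phi \rangle$ were distinct, pick $x \in \langle \Phi \rangle$ and $y \in g\langle \Phi \rangle$ realising the distance between them, with $x \neq y$; a hyperplane $J$ containing the first edge of a geodesic from $x$ to $y$ separates $\langle \Phi \rangle$ from $g\langle \Phi \rangle$ by Theorem~\ref{thm:ProjQM}, hence is transverse to every hyperplane crossing $\langle \Phi \rangle$, so by Lemma~\ref{lem:LabelHyp} the label of $J$ is adjacent to every vertex of $\Phi$, contradicting $\mathrm{link}_\Gamma(\Phi) = \emptyset$. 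Therefore $\langle \Phi \rangle = g\langle \Phi \rangle$, which by Lemma~\ref{lem:CliquesPrismsGP}-type reasoning (translates of $\langle \Phi \rangle$ are distinct cosets) gives $g \in \langle \Phi \rangle = P$. I expect the main obstacle to be making the "finite-index forces rotative parts survive" step fully rigorous — that is, carefully deducing parallelism of the two gated subgraphs from commensurability of their stabilisers, rather than just from equality; the cleanest route is probably to observe that if some hyperplane $J_u$ ($u \in \Phi$) crosses $\langle\Phi\rangle$ but not $g\langle\Phi\rangle$, then $\mathrm{stab}_\circlearrowright(J_u) \cong G_u$ intersects $g P g^{-1}$ trivially (it moves $g\langle\Phi\rangle$ off itself), so $P \cap gPg^{-1}$ misses an infinite direct factor of $P$ and cannot have finite index — a clean contradiction that sidesteps delicate index computations.
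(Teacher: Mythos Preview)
Your argument is correct, but it takes a different route from the paper's. The paper argues purely algebraically, citing structural facts about parabolic subgroups from the literature: it first shows (via the retraction $\pi_u \colon \Gamma\mathcal{G} \to G_u$) that no parabolic subgroup can sit with finite index inside a strictly larger one, then uses that intersections of parabolics are parabolic \cite{MR3365774} to conclude $\mathrm{Comm}(P)=N(P)$ for every parabolic $P$, and finally invokes the normaliser formula $N(\langle\Phi\rangle)=\langle\Phi,\mathrm{link}(\Phi)\rangle$ from \cite{MR3365774} together with $\mathrm{link}(\Phi)=\emptyset$. Your approach instead stays inside the quasi-median geometry already set up in Section~\ref{section:QM}: you show that commensurability of $P$ and $gPg^{-1}$ forces the gated subgraphs $\langle\Phi\rangle$ and $g\langle\Phi\rangle$ to be parallel, and then recycle the parallelism-free argument from the proof of Theorem~\ref{thm:CosetInterComplexRelativeCont} to conclude they coincide. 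The ``cleanest route'' you sketch at the end is exactly right and is what makes the parallelism step rigorous: if a hyperplane $J$ crosses $\langle\Phi\rangle$ but not $g\langle\Phi\rangle$, its rotative stabiliser is an infinite subgroup of $P$ meeting $gPg^{-1}$ trivially, so $P\cap gPg^{-1}$ cannot have finite index in $P$. (Your phrase ``misses an infinite direct factor'' is imprecise---the rotative stabiliser need not be a direct factor---but the actual argument you give does not use this.) Your proof has the advantage of being self-contained within the paper, avoiding the external citations; the paper's proof is shorter and isolates the more general fact $\mathrm{Comm}(P)=N(P)$ for arbitrary parabolics along the way.
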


\begin{proof}
    Suppose first that $P_1$ and $P_2$ are two parabolic subgroups of $\Gamma\mathcal{G}$ such that $P_1$ is a finite-index subgroup of $P_2$.  We will show that $P_1=P_2$.  By conjugating, we may assume that $P_1=\langle \Phi \rangle$ and $P_2=g \langle \Lambda \rangle g^{-1}$ for $\Phi,\Lambda \subseteq \Gamma$ and $g\in \Gamma\mathcal{G}$. It follows from~\cite[Corollary~3.8]{MR3365774} that $\Phi \subseteq \Lambda$.  If $\Phi \subsetneq \Lambda$, then there exists a vertex $u\in \Lambda \setminus \Phi$.  Define a map $\pi_u\colon \Gamma\mathcal{G} \to G_u$ that is the identity map on $G_u$ and sends $G_v$ to $1$ for all $v\neq  u$. Since $u\in \Lambda$, we see that $\pi_u(P_2)
    = G_u$, which is infinite by assumption.  On the other hand, $u\not\in \Phi$ implies that $\pi_u(P_1)=1$, which contradicts that $P_1$ is finite index in $P_2$.  Therefore $\Lambda = \Phi$, and so $P_1=P_2$. 

    \medskip\noindent We now show that, if $P$ is a parabolic subgroup, then $\textrm{Comm}(P)=N(P)$.  It is clear that $N(P)\subseteq \textrm{Comm}(P)$, so suppose $g\in \textrm{Comm}(P)$.  Then $P'= gPg^{-1} \cap P$ is a finite-index subgroup of $P$.  Since the intersection of parabolic subgroups is parabolic \cite[Proposition~3.4]{MR3365774}, the subgroup $P'$ is parabolic.  By the argument above, it follows that $P'=P$, and so $g\in N(P)$.

    \medskip\noindent  Finally, suppose $P$ is a maximal join subgroup.  By conjugating, we may assume $P=\langle \Phi \rangle$.  We have $N(P)=\langle \Phi, \textrm{lk}(\Phi) \rangle$ \cite[Proposition~3.13]{MR3365774}, but $\textrm{lk}(\Phi)=\emptyset$, since $\Phi$ is a maximal join.  Therefore, we have $\textrm{Comm}(P)=N(P)=\langle \Phi \rangle =P$, and $P$ is self-commensurated, as desired.
\end{proof}

\begin{lemma}\label{lem:vcyclic_centralizer}
    Let $\Gamma$ be a finite graph and $G=\Gamma\mathcal G$ be the graph product of a family of non-trivial groups $\mathcal G$ indexed by $\Gamma$. If $H \leq G$ is a subgroup such that no conjugate of $H$ is contained in a vertex group or a join subgroup, then there exists $h \in H$ such that the centraliser of $h^k$ is virtually cyclic for every $k \neq 0$.
\end{lemma}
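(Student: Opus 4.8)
The plan is to realise $h$ as a loxodromic element for an acylindrical action of $G$ on a hyperbolic space, and then to invoke the standard fact that the centraliser of such an element, and of each of its non-trivial powers, is virtually cyclic. The space I would use is the contact graph of the quasi-median graph $X:=\mathrm{QM}(\Gamma,\mathcal{G})$.

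First I would extract from the hypothesis what it really provides. If $\Gamma$ were a non-trivial join $\Gamma_1*\Gamma_2$ then $G=\langle\Gamma_1\rangle\times\langle\Gamma_2\rangle$ would be a join subgroup containing $H$, and if $\Gamma$ were a single vertex then $G$ would be a vertex group containing $H$; both are excluded, so $\Gamma$ has at least two vertices and is not a join. If $\Gamma$ is disconnected, then $G$ splits as a non-trivial free product along the components of $\Gamma$, and the argument below simplifies — one replaces the contact graph by the Bass--Serre tree of this splitting, on which $G$ acts acylindrically, or simply inducts on $|\Gamma|$ — so I will concentrate on the case $\Gamma$ connected and not a join. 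In this case $G$ acts acylindrically on the contact graph $Y:=\mathrm{Cont}(X)$, which by Theorem~\ref{thm:CrossQM} is connected, $3$-hyperbolic, and quasi-isometric to a tree \cite{QM,GPacyl,GTranslation}. By Lemma~\ref{lem:HypInQMGP} the $G$-stabiliser of a vertex of $Y$, that is, of a hyperplane $J$ of $X$, is a conjugate of $\langle\mathrm{star}(v)\rangle=\langle v\rangle\times\langle\mathrm{link}(v)\rangle$; since vertex groups are non-trivial, this is a join subgroup when $\mathrm{link}(v)\neq\emptyset$ and the vertex group $G_v$ otherwise. Likewise, by Lemma~\ref{lem:CliquesPrismsGP}, any subgroup fixing a point of the prism-completion $X^\square$ lies in a conjugate of a prism subgroup $\langle\Lambda\rangle$ with $\Lambda\subset\Gamma$ a clique, hence again in a vertex group, a join subgroup, or the trivial group. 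Consequently the hypothesis forces $H$ to fix no point of $X^\square$ and no vertex of $Y$.

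It now suffices to find $h\in H$ acting loxodromically on $Y$. Indeed, for such an $h$ and any $k\neq0$, the power $h^k$ is again loxodromic and has the same pair of fixed points in $\partial Y$; anything commuting with $h^k$ stabilises this pair, and by acylindricity the stabiliser of the fixed-point pair of a loxodromic element is virtually cyclic, so $C_G(h^k)$ is virtually cyclic, which is precisely the conclusion of the lemma. To produce $h$, suppose for contradiction that every element of $H$ is elliptic on $Y$. Since the action of $G$ on $Y$ is acylindrical — and acylindrical actions admit no parabolic subgroups — a subgroup all of whose elements are elliptic must itself have bounded orbits on $Y$. One then wants to conclude that a subgroup with bounded orbits on $Y$ is conjugate into a vertex-stabiliser of $Y$, i.e.\ into a star subgroup: this is immediate for actions on genuine trees, and the quasi-tree $Y$ is dealt with most cleanly by descending to $X^\square$. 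If $H$ has bounded orbits on $X^\square$ as well, then it fixes a point and so is conjugate into a prism subgroup, contradicting the hypothesis. If instead $H$ has unbounded orbits on $X^\square$, then — using that $H$ fixes no point of $X^\square$ and stabilises no hyperplane — a rank-rigidity argument (the quasi-median analogue of the Caprace--Sageev rank-rigidity theorem for CAT(0) cube complexes, in the spirit of the skewering analysis of \cite{GTranslation}) yields an element of $H$ that strictly skewers some hyperplane, and such an element is loxodromic on $Y$; again a contradiction. Hence $H$ contains a loxodromic element, which we take to be $h$.

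The genuinely delicate point is this extraction of the loxodromic element, i.e.\ ruling out that $H$ is ``elliptic'' on the contact graph. Two ingredients require care: the acylindricity of the $G$-action on $Y$ (this is what upgrades ``$H$ contains no loxodromic element'' to ``$H$ has bounded orbits on $Y$'', rather than merely ``$H$ has no global fixed point''), and the passage from a bounded-orbit subgroup of $G$ to a subgroup conjugate into a vertex-stabiliser of $Y$, where the quasi-tree — as opposed to genuine tree — nature of $Y$ is the only real subtlety and is absorbed by the detour through $X^\square$ above. A purely algebraic alternative to the third paragraph is to use the explicit description of centralisers of cyclically reduced elements of a graph product: one shows that the hypothesis forces $H$ to contain a conjugate of a cyclically reduced element whose support $\Lambda$ has empty link in $\Gamma$ and is ``irreducible'', in the sense that $\langle\Lambda\rangle$ does not split as a direct product with both factors unbounded, and then checks that such an element, and all of its powers (which have the same support $\Lambda$), have virtually cyclic centralisers. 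The case analysis there — according as $\Lambda$ is disconnected, or is a join with a finite direct factor, and so on — is of roughly the same difficulty, which is why I would favour the contact-graph approach.
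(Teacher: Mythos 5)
Your overall strategy (find a loxodromic element for a suitable acylindrical or WPD action, then quote that centralisers of its powers are virtually cyclic) matches the paper's in spirit, and your final step is fine. But the heart of the lemma is the extraction of the loxodromic element from the hypothesis that $H$ is not conjugate into a vertex or join subgroup, and that is exactly where your argument has a genuine gap. First, the claim that an element of $H$ ``strictly skewering some hyperplane'' is loxodromic on the contact graph is false: a standard generator $v$ of $A(\Gamma)$ skewers the hyperplanes $v^kJ_v$, yet it stabilises $J_v$ and hence fixes a vertex of $\mathrm{Cont}(X)$. Loxodromy on the contact graph requires skewering a pair of well-separated (strongly separated) hyperplanes, i.e.\ being a contracting isometry, which is a much stronger conclusion. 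Second, the rank-rigidity theorem you invoke is a dichotomy: its other alternative is that the ($H$-essential core of the) complex splits as a product, and that alternative is precisely the case that must be ruled out using the ``no join subgroup'' hypothesis. Relating a product decomposition of the essential core of the $H$-action to an actual join in $\Gamma$ is not a formality (it essentially forces you through the parabolic closure of $H$), and you would also need to arrange the essentiality and no-fixed-point-at-infinity hypotheses of rank rigidity, plus a precise citation for a quasi-median version of the theorem. The acylindricity of the $G$-action on $\mathrm{Cont}(X)$ also needs a reference rather than an assertion.

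For comparison, the paper sidesteps all of this: it passes to the parabolic closure $G_0$ of $H$, which by hypothesis is a conjugate of $\langle\Lambda\rangle$ with $\Lambda$ neither a single vertex nor a join, and then quotes Minasyan--Osin (Corollary~6.20 of their paper on groups acting on trees) to produce directly an element $h\in H$ that is loxodromic and WPD for an action of $G_0$ on a genuine tree; Osin's Corollary~6.9 then gives that $C_{G_0}(h^k)$ is virtually cyclic, and the last step observes that $C_G(h^k)=C_{G_0}(h^k)\times\langle\mathrm{lk}(G_0)\rangle$ with $\mathrm{lk}(G_0)=\emptyset$ since otherwise $H$ would lie in a join subgroup. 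If you want to keep your contact-graph route, the honest fix is to replace your third paragraph by an appeal to an existing classification of which subgroups of $\Gamma\mathcal{G}$ contain a contracting isometry of $\mathrm{QM}(\Gamma,\mathcal{G})$ (equivalently, a loxodromic for the crossing/contact graph), which again runs through parabolic closures; as written, the step from ``$H$ elliptic on $Y$'' to a contradiction does not go through.
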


\begin{proof}
    Let $G_0$ denote the parabolic closure of $H$ in $G$.  The assumption on $H$ implies that  $G_0$ is neither conjugate to a vertex group nor a join subgroup. Applying~\cite[Corollary~6.20]{MR3368093}  to $G_0$ yields an isometric action of $G_0$ on a tree and an element $h\in H$ that is a loxodromic WPD element with respect to this action.  By \cite[Corollary~6.9]{Osin}, the centraliser $C_{G_0}(h^k)$ of $h^k$ in $G_0$ is virtually cyclic for all $k\neq 0$.  

    \medskip\noindent The centraliser $C_G(h^k)$ of $h^k$ in $G$ is the product of $C_{G_0}(h^k)$ and $\langle \textrm{lk}(G_0)\rangle$.  If $\textrm{lk}(G_0)$ is non-empty, then $H$ is contained in a join subgroup, which contradicts our assumption.  Thus $C_G(h^k)=C_{G_0}(h^k)$ is virtually cyclic.
\end{proof}

\begin{lemma}\label{lem:CommensurableCIC}
Let $\Gamma_1$ (resp.\ $\Gamma_2$) be a finite connected graph with at least two vertices and $\mathcal{G}_1$ (resp.\ $\mathcal{G}_2$) a collection of  infinite  groups indexed by $\Gamma_1$ (resp.\ $\Gamma_2$). If $\Gamma_1 \mathcal{G}_1$ and $\Gamma_2 \mathcal{G}_2$ are commensurable, then the coset intersection complexes
$$\mathcal{K} \left( \Gamma_1 \mathcal{G}_1, \{ \langle \Phi \rangle, \Phi \subset \Gamma_1 \text{ maximal join}\} \right) \quad \text{and} \quad\mathcal{K} \left( \Gamma_2 \mathcal{G}_2, \{ \langle \Psi \rangle, \Psi \subset \Gamma_2 \text{ maximal join}\} \right)$$
are isomorphic. 
\end{lemma}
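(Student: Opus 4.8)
The plan is to characterise, inside a graph product of infinite groups, the subgroups commensurable‑up‑to‑conjugacy to a maximal join subgroup by a group‑theoretic property insensitive to passing to finite‑index subgroups, and then to feed this into the coset‑intersection‑complex machinery of \cite{CIC}.

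Call a subgroup $H$ of a group $G$ \emph{wide} if for every $h \in H$ there is some $k \neq 0$ with $C_G(h^k)$ not virtually cyclic. The first step is the dichotomy: for $G = \Gamma\mathcal{G}$ with $\Gamma$ finite, connected and with at least two vertices, $H \leq G$ is wide if and only if some conjugate of $H$ lies in a maximal join subgroup. The forward direction is the contrapositive of Lemma~\ref{lem:vcyclic_centralizer} (if no conjugate of $H$ lies in a vertex group or a join subgroup, that lemma exhibits an $h \in H$ violating wideness), combined with the fact that, $\Gamma$ being connected with at least two vertices, every vertex group lies in a star subgroup $\langle\mathrm{star}(u)\rangle$ — a genuine join subgroup — and every join subgroup lies in a maximal one. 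The backward direction is a centraliser computation: a maximal join $\Phi$ is a genuine join $A * B$, so $\langle\Phi\rangle = \langle A\rangle \times \langle B\rangle$; for $h = (a,b) \in \langle\Phi\rangle$ of infinite order, choosing $k$ to be the order of the torsion coordinate when one exists and $k = 1$ otherwise, $h^k$ is supported in a single factor (giving $C_G(h^k) \supseteq \mathbb{Z} \times \langle A\rangle$ or $\mathbb{Z} \times \langle B\rangle$) or else $h^k$ centralises a copy of $\mathbb{Z}^2$, and in either case $C_G(h^k)$ is not virtually cyclic; a torsion $h$ is handled by $k = |h|$ since $C_G(1) = G$ is not virtually cyclic. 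Since ``virtually cyclic'' passes to finite‑index sub‑ and overgroups, for a finite‑index subgroup $L \leq G$ and $h \in L$ the condition on $C_G(h^k)$ is equivalent to the one on $C_L(h^k)$; thus ``wide'' is intrinsic to subgroups of $L$ as an abstract group.

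The second step is the commensurability reduction. Given $G_1 := \Gamma_1\mathcal{G}_1$ and $G_2 := \Gamma_2\mathcal{G}_2$ commensurable, fix a group $L$ embedded with finite index in both. Let $\mathcal{P}_i$ be the set of maximal join subgroups of $G_i$ and $\mathcal{P}_i^L := \{ L \cap gPg^{-1} : g \in G_i,\ P \in \mathcal{P}_i \}$, taken up to $L$‑conjugacy (finite, since $L\backslash G_i/P$ is finite). By Lemma~\ref{lem:SelfCommensurated} the pair $(G_i,\mathcal{P}_i)$ is reduced; since $L \cap gPg^{-1}$ has finite index in the self‑commensurated parabolic $gPg^{-1}$, its commensurator in $L$ is itself, so $(L,\mathcal{P}_i^L)$ is reduced. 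As $L \hookrightarrow G_i$ is a quasi‑isometry of the pair $(L,\mathcal{P}_i^L)$ onto $(G_i,\mathcal{P}_i)$, \cite[Proposition~4.9]{CIC} gives $\mathcal{K}(G_i,\mathcal{P}_i) \cong \mathcal{K}(L,\mathcal{P}_i^L)$. It therefore suffices to prove $\mathcal{P}_1^L = \mathcal{P}_2^L$ as collections of subgroups of $L$ up to conjugacy; I would do this by showing each equals the set of maximal wide subgroups of $L$. By the dichotomy, $K \leq L$ is wide iff $K \subseteq L \cap gPg^{-1}$ for some $P \in \mathcal{P}_i$, so each member of $\mathcal{P}_i^L$ is wide; and if $L \cap gPg^{-1} \subseteq M$ with $M$ wide, then $M \subseteq L \cap g'P'g'^{-1}$, hence $L \cap gPg^{-1} \subseteq L \cap g'P'g'^{-1}$. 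Since $L \cap gPg^{-1}$ has finite index in the parabolic $gPg^{-1}$, the parabolic closure of $L \cap gPg^{-1}$ in $G_i$ is $gPg^{-1}$ (a parabolic of finite index in a parabolic equals it), so $gPg^{-1} \subseteq g'P'g'^{-1}$; then \cite[Corollary~3.8]{MR3365774}, the maximality of these joins, and the fact that a parabolic of full type is the whole graph product force $gPg^{-1} = g'P'g'^{-1}$, so $M = L \cap gPg^{-1}$. Hence $\mathcal{P}_i^L$ is exactly the set of maximal wide subgroups of $L$, which is intrinsic to $L$, and so $\mathcal{P}_1^L = \mathcal{P}_2^L$, finishing the proof.

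The step I expect to be hardest is the last comparison: upgrading a containment between a finite‑index subgroup of one maximal join subgroup and another maximal join subgroup into an equality of the two parabolics. This relies on the structure theory of parabolic subgroups of graph products — stability of parabolicity under intersection, rigidity of finite‑index parabolic subgroups, and the support‑monotonicity of \cite[Corollary~3.8]{MR3365774} — and one must be careful that the collections $\mathcal{P}_i^L$ are only canonical up to conjugacy and that $L$ need not be normal. A secondary point is verifying that $L \hookrightarrow G_i$ is a quasi‑isometry of pairs in the precise sense of \cite{CIC}, matching cosets of $\mathcal{P}_i$ in $G_i$ with cosets of $\mathcal{P}_i^L$ in $L$ up to uniformly bounded Hausdorff distance.
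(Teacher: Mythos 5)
Your proof is correct, but it is organised quite differently from the paper's. The paper works with a common finite-index subgroup $G = A_1 \cap A_2$ and directly constructs a $G$-equivariant map $f$ from the cosets of maximal join subgroups of $A_1$ to those of $A_2$ (sending $t\langle\Delta\rangle$ to a coset $g\langle\Lambda\rangle$ with $t\langle\Delta\rangle t^{-1}\cap G \leq g\langle\Lambda\rangle g^{-1}$), builds the symmetric map $f'$, and verifies by hand that both are simplicial and mutually inverse, using \cite[Corollary~3.8]{MR3365774} and the self-commensuration of maximal join subgroups exactly as you do in your final comparison step. You instead package the key centraliser computation into an intrinsic, finite-index-insensitive property (``wide''), prove it characterises the subgroups conjugate into a maximal join parabolic, identify $\mathcal{P}_i^L$ with the maximal wide subgroups of the common finite-index subgroup $L$, and then outsource the isomorphism of complexes to \cite[Proposition~4.9]{CIC} applied to the reduced pairs $(L,\mathcal{P}_i^L)\to(G_i,\mathcal{P}_i)$. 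The mathematical core is shared: both arguments rest on Lemma~\ref{lem:vcyclic_centralizer} (your forward direction is its contrapositive; the paper's ``third case'' is the same contradiction), on the product-of-infinite-groups centraliser computation (your backward direction is essentially the paper's $C_H(h)=C_{H_1}(h_1)\times C_{H_2}(h_2)$ step), and on parabolic rigidity via Lemma~\ref{lem:SelfCommensurated}. What the paper's route buys is self-containedness: it never needs to verify that $L\hookrightarrow G_i$ is a quasi-isometry of pairs in the precise sense of \cite{CIC}, that the induced pairs are reduced, or that the coset intersection complex depends only on the set of conjugates rather than on chosen representatives --- the three bookkeeping points you correctly flag as needing care. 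What your route buys is modularity and a cleaner conceptual statement (the collection of maximal join parabolics, intersected down to any finite-index subgroup, is the set of maximal wide subgroups, hence a commensurability invariant), at the cost of leaning more heavily on the machinery of \cite{CIC}. Both treatments leave the same standard fact about parabolic subgroups partially implicit, namely that $g\langle\Phi\rangle g^{-1}\subseteq\langle\Phi\rangle$ forces equality; your ``full type'' remark and the paper's appeal to self-commensuration are at the same level of rigour there.
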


\begin{proof}
Let $A_i=\Gamma_i\mathcal{G}_i$ for $i=1,2$, let $G=A_1 \cap A_2$,   
let $\mathcal{J}_i=\{ \langle \Phi \rangle \mid \Phi \subset \Gamma_i \text{ maximal join}\}$  and let $T_i$ be a right transversal of the subgroup $G$ of $\Gamma_i\mathcal{G}_i$. Observe that $\{tJ\colon t \in T_i,\ J\in \mathcal{J}_i\}$ is a collection of representatives of $G$-orbits of the $G$-set of left cosets $A_i/\mathcal{J}_i=\{gJ\mid g\in  A_i,\ J\in\mathcal{J}_i\}$. 

\medskip\noindent 
Let us argue that, for each left coset $t\langle\Delta\rangle$, where $t\in T_1$ and $\Delta$ is a maximal join of $\Gamma_1$, there exist $g\in A_2$ and a maximal join $\Lambda$ of $\Gamma_2$ such that 
\begin{equation}\label{eq:MaximalJoinsMap}
   t\langle \Delta \rangle t^{-1} \cap G \leq g \langle \Lambda \rangle g^{-1}. 
\end{equation}
To simplify notation, let $H$ denote the subgroup $   t\langle \Delta \rangle t^{-1}$ of $A_1$. Since $\Gamma_2$ is a finite graph, it suffices to show that $H\cap G$ is contained in a join subgroup of $A_2$.  There are three cases to consider. First, if $\Gamma_2$ is a join, then trivially $H\cap G$ is a subgroup of a join subgroup of $A_2$. Second, if  $H\cap G$ is conjugate to a subgroup of a vertex of group of $A_2$, then $H\cap G$ is a subgroup of a join subgroup of $A_2$ since $\Gamma_2$ is a connected graph with at least two vertices. For the third case, suppose
$H\cap G$ is not a subgroup of a conjugate of a vertex group of $A_2$ and that $\Gamma_2$ is not a join. Suppose, by contradiction, that $H\cap G$ is not contained in a join subgroup of $A_2$. Applying Lemma~\ref{lem:vcyclic_centralizer} to the subgroup $H\cap G$, 
we obtain  $h\in  H\cap G$ such that the centraliser $C_{A_2}(h^k)$ is virtually infinite cyclic for every $k\neq 0$. In particular, $C_{H\cap G}(h^k)$ is virtually  infinite cyclic,  as well.  
Since $\Delta$ is a join of $\Gamma_1$, it follows that the group $H$ is a product $H_1\times H_2$ of infinite subgroups. Let us assume that $h=(h_1,h_2)$.  Up to replacing $h$ with some power $h^k$, we can assume that $h_1$ and $h_2$ are each either infinite-order or trivial. Since $H\cap G$ is a finite-index subgroup of $H$, we have that $C_{H}(h)=C_{H_1}(h_1)\times C_{H_2}(h_2)$ is a virtually infinite cyclic subgroup. But  $C_{H_1}(h_1)$ and $C_{H_2}(h_2)$ are both infinite groups, and so  $C_H(h)$ is not virtually cyclic, which contradicts that $C_{H\cap G}(h)$ is virtually cyclic. Therefore $H\cap G$ must be contained in a join subgroup of $A_2$. This completes the proof of the  claim at the beginning of the paragraph.  

\medskip\noindent It follows that there is a $G$-equivariant function   $f\colon A_1/J_1 \to A_2/J_2$ such that if $t\in T_1$ and $\Delta$ is a maximal join of $\Gamma_1$, then     $f(t\langle \Delta \rangle)=g\langle \Lambda \rangle$ where $\Lambda$ is a maximal join of $\Gamma_2$,  $g\in A_2$,  and~\eqref{eq:MaximalJoinsMap} holds. By symmetry, there is a $G$-equivariant map  $f' \colon A_2/J_2 \to A_1/J_1$ such that, for any $t\langle \Delta \rangle \in A_2/\mathcal{J}_2$, if $f'(t\langle \Delta \rangle)= g\langle \Lambda \rangle$ then~\eqref{eq:MaximalJoinsMap} holds as well.  To conclude the proof, we prove that $f$ is a simplicial isomorphism  
\[ f\colon \mathcal{K} \left( \Gamma_1 \mathcal{G}_1, \{ \langle \Phi \rangle, \Phi \subset \Gamma_1 \text{ maximal join}\} \right) \to  \mathcal{K} \left( \Gamma_2 \mathcal{G}_2, \{ \langle \Psi \rangle, \Psi \subset \Gamma_2 \text{ maximal join}\} \right).\]
Let us first argue that $f$ is a simplicial map. Let $\{a_1J_1,\ldots , a_nJ_n\} \subset A_1/\mathcal{J}_1$ be a simplex of $\mathcal{K} \left(A_1, \mathcal{J}_1\right)$, and suppose that $f(a_iJ_i)=b_iK_i$.  Then $\bigcap_{i=1}^n a_iJ_ia_i^{-1}$ is an infinite subgroup of $A_1$. Since $G$ is finite index in $A_1$, the intersection $G\cap \bigcap_{i=1}^n a_iJ_ia_i^{-1}$ is infinite as well. By  definition of $f$, we have that $\bigcap_{i=1}^n a_iJ_ia_i^{-1} \cap G \leq \bigcap_{i=1}^n b_iK_ib_i^{-1}$ and hence $\{b_1K_1, \ldots , b_nK_n\}$ is a simplex of $\mathcal{K}\left( A_2, \mathcal{J}_2\right)$. Hence $f$ is a simplicial map and, by an analogous argument, $f'$ is as well. To conclude that $f$ is an isomorphism, we show that $f'$ is its inverse. Let $aJ\in A_1/\mathcal{J}_1$ and suppose that $f(aJ)=bK$ and $f'(bK)=cL$. By the definitions of $f$ and $g$, we have that
\[ aJa^{-1}\cap G \leq bKb^{-1}\cap G \leq cLc^{-1}.\]
Since $J$ and $L$ are maximal join subgroups of $A_1$, it follows that $aJa^{-1}=cLc^{-1}$. Indeed, it follows from \cite[Corollary~3.8]{MR3365774} that the support of $J$ is contained in that of $L$, and hence by maximality $J=L$. The equality $aJa^{-1}=cLc^{-1}$ follows by the fact that maximal join subgroups are self-commensurated; see Lemma~\ref{lem:SelfCommensurated}.  Therefore $f'\circ f$ is the identity map on $A_1/\mathcal{J}_1$, and similarly $f\circ f'$ is the identity map on $A_2/\mathcal{J}_2$. It follows that $f$ and $g$ are simplicial isomorphisms. 
\end{proof}

\noindent 
We are now ready to prove Theorem~\ref{thm:CommensurableGP}.
\begin{proof}[Proof of Theorem~\ref{thm:CommensurableGP}.]
If our graph products $\Gamma_1 \mathcal{G}_1$ and $\Gamma_2 \mathcal{G}_2$ are commensurable, then we know from Lemma~\ref{lem:CommensurableCIC} that the corresponding coset intersection complexes relative to maximal join subgroups are isomorphic. But, according to Theorem~\ref{thm:CosetInterComplexRelativeCont}, these two complexes are respectively homotopy equivalent to $\bigvee_\mathbb{N} \Gamma_1^{\bowtie}$ and $\bigvee_\mathbb{N} \Gamma_2^{\bowtie}$. This concludes the proof of our theorem.
\end{proof}

\noindent 
We end with one final quasi-isometry invariant of right-angled Artin groups.

\begin{thm}\label{thm:RAAGCrossingQI}
Let $\Gamma_1,\Gamma_2$ be two finite connected graphs that do not contain two non-adjacent vertices $u,v$ satisfying $\mathrm{link}(u) \subset \mathrm{star}(v)$. If $A(\Gamma_1)$ and $A(\Gamma_2)$ are quasi-isometric, then $\bigvee_\mathbb{N} \Gamma_1^{\triangle}$ and $\bigvee_\mathbb{N} \Gamma_2^{\triangle}$ are homotopically equivalent.
\end{thm}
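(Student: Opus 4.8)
The plan is to follow the template of Corollary~\ref{cor:CosetInterComplexJoinQI} and the theorem preceding it, with \emph{maximal clique} subgroups playing the role that maximal join subgroups played there. Write $\mathcal{J}_i:=\{\langle\Lambda\rangle\mid\Lambda\subset\Gamma_i\text{ a maximal complete subgraph}\}$. Since the link of a maximal complete subgraph is empty, $\mathrm{Comm}(\langle\Lambda\rangle)=\mathrm{N}(\langle\Lambda\rangle)=\langle\Lambda\rangle$ in $A(\Gamma_i)$ by \cite[Theorem~0.1]{Godelle} together with \cite[Proposition~3.13]{MR3365774}, so the group pairs $(A(\Gamma_i),\mathcal{J}_i)$ are reduced in the sense of \cite{CIC}. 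Consequently, exactly as in the proof of Corollary~\ref{cor:CosetInterComplexJoinQI}, it suffices to show that any quasi-isometry $\eta\colon A(\Gamma_1)\to A(\Gamma_2)$ is a quasi-isometry of pairs $(A(\Gamma_1),\mathcal{J}_1)\to(A(\Gamma_2),\mathcal{J}_2)$: then \cite[Proposition~4.9]{CIC} yields a simplicial isomorphism $\mathcal{K}(A(\Gamma_1),\mathcal{J}_1)\cong\mathcal{K}(A(\Gamma_2),\mathcal{J}_2)$, and Theorem~\ref{thm:IntroCosetInterComplex} (with all vertex groups equal to $\mathbb{Z}$) identifies these complexes up to homotopy with $\bigvee_\mathbb{N}\Gamma_1^\triangle$ and $\bigvee_\mathbb{N}\Gamma_2^\triangle$. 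If some $\Gamma_i$ has a single vertex then $A(\Gamma_i)\cong\mathbb{Z}$, which forces $\Gamma_{3-i}$ to be a single vertex as well and both wedges to be contractible; so we may assume below that $\Gamma_1,\Gamma_2$ are connected with at least two vertices.

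The heart of the matter, and the only place the hypothesis on $\Gamma_1,\Gamma_2$ is used, is that maximal complete subgraphs of $\Gamma_i$ are \emph{stable} in the sense of \cite[Definition~3.13]{HuangI}. I would first establish that the hypothesis forces $\mathrm{star}(v)$ to be stable for \emph{every} vertex $v$ of $\Gamma_i$. By \cite[Lemma~5.1]{HuangI}, $\mathrm{star}(v)$ or $\mathrm{link}(v)$ is stable; in the second case \cite[Lemma~3.16]{HuangII} makes $V\ast\mathrm{link}(v)$ stable, where $V$ is the full orthogonal complement of $\mathrm{link}(v)$. Now $v\in V$, and any other vertex $w\in V$ would be non-adjacent to $v$ and adjacent to every vertex of $\mathrm{link}(v)$, so $\mathrm{link}(v)\subset\mathrm{link}(w)\subset\mathrm{star}(w)$, contradicting the hypothesis; hence $V=\{v\}$ and $\mathrm{star}(v)=V\ast\mathrm{link}(v)$ is stable. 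Then, for a maximal complete subgraph $\Lambda$ one has $\Lambda=\bigcap_{v\in\Lambda}\mathrm{star}(v)$ (a vertex lying in every $\mathrm{star}(v)$ but outside $\Lambda$ would be adjacent to all of $\Lambda$ and so enlarge it), so $\Lambda$ is a finite intersection of stable subgraphs and is therefore stable by \cite[Lemma~3.14]{HuangII}.

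Granting stability, \cite[Theorem~2.10]{HuangI} shows that $\eta(g\langle\Lambda\rangle)$ lies within finite Hausdorff distance of some coset $h\langle\Lambda'\rangle$ of a standard parabolic subgroup; it remains to argue that $\Lambda'$ is again a maximal complete subgraph, and likewise for a quasi-inverse of $\eta$. Because standard parabolics are undistorted, $\langle\Lambda'\rangle\cong A(\Lambda')$ is quasi-isometric to $\langle\Lambda\rangle\cong\mathbb{Z}^{|\Lambda|}$, hence has polynomial growth; since a right-angled Artin group is virtually nilpotent precisely when its defining graph is complete, this forces $A(\Lambda')\cong\mathbb{Z}^{|\Lambda|}$, i.e.\ $\Lambda'$ is a complete subgraph of $\Gamma_2$ with $|\Lambda'|=|\Lambda|$. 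To upgrade this to maximality, pick a maximal complete subgraph $\Omega'\supset\Lambda'$ of $\Gamma_2$; it is stable by the argument above applied to $\Gamma_2$, so running the same reasoning for a quasi-inverse $\eta^{-1}$ on $h\langle\Omega'\rangle$ produces a complete subgraph $\Lambda''\subset\Gamma_1$ with $|\Lambda''|=|\Omega'|$ and a coset $k\langle\Lambda''\rangle$ whose bounded neighbourhood contains $g\langle\Lambda\rangle$. Lemma~\ref{lem:FiniteHaussRAAG} then gives $\Lambda\subset\Lambda''$, whence $\Lambda=\Lambda''$ by maximality of $\Lambda$, so $|\Omega'|=|\Lambda''|=|\Lambda|=|\Lambda'|$ and $\Omega'=\Lambda'$. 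The same argument applied to $\eta^{-1}$ shows that $\eta$ and $\eta^{-1}$ coarsely preserve the families of maximal clique cosets, so $\eta$ is a quasi-isometry of pairs and the proof concludes.

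The step I expect to be most delicate is the stability input of the second paragraph: one must invoke Huang's formalism correctly (\cite[Definition~3.13, Theorem~2.10, Lemma~5.1]{HuangI} and \cite[Lemmas~3.14 \& 3.16]{HuangII}) and verify carefully that the condition on $\Gamma_1,\Gamma_2$ is exactly what makes every vertex star stable. The rigidity argument of the third paragraph is then routine, relying only on Lemma~\ref{lem:FiniteHaussRAAG} and elementary growth considerations, and the passage to homotopy types is an immediate citation of Theorem~\ref{thm:IntroCosetInterComplex}.
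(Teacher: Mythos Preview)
Your proof is correct, but it takes a substantially longer route than the paper's. The paper's argument is essentially two lines: under the stated hypothesis on $\Gamma_1,\Gamma_2$, \cite[Theorem~2.20]{HuangII} already asserts that a quasi-isometry $A(\Gamma_1)\to A(\Gamma_2)$ induces an isomorphism between the crossing complexes (equivalently, extension graphs) of $\mathrm{QM}(\Gamma_1)$ and $\mathrm{QM}(\Gamma_2)$; one then applies Proposition~\ref{prop:CrossingQMGamma} (or Theorem~\ref{thm:IntroCosetInterComplex}) to identify these with $\bigvee_\mathbb{N}\Gamma_i^\triangle$. No stability argument, no coset intersection complex, no growth considerations are needed.

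What you do instead is effectively re-derive the relevant consequence of \cite[Theorem~2.20]{HuangII} from first principles within Huang's stability framework: you show that the hypothesis forces every $\mathrm{star}(v)$ to be stable, hence every maximal clique $\Lambda=\bigcap_{v\in\Lambda}\mathrm{star}(v)$ is stable, and then run the same Hausdorff-distance bootstrapping as in Theorem~\ref{thm:RAAGmaxJoin} (with growth replacing the join argument) to conclude that maximal clique cosets are coarsely preserved. This is a perfectly valid and self-contained alternative, and it has the merit of making transparent exactly where the hypothesis is used (namely, to force $V=\{v\}$ in the orthogonal-complement step). Its cost is considerable extra length and some duplication of arguments already packaged inside Huang's theorem. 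One small misattribution: what gives you a parabolic coset at finite Hausdorff distance is the \emph{definition} of stability \cite[Definition~3.13]{HuangI}, not \cite[Theorem~2.10]{HuangI}; the latter is what the paper uses to identify the target as a join in the maximal-join setting, and you replace that step by your growth argument.
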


\begin{proof}
Under our assumptions, \cite[Theorem~2.20]{HuangII} shows that the crossing complexes of $\mathrm{QM}(\Gamma_1,\mathcal{G}_1)$ and $\mathrm{QM}(\Gamma_2, \mathcal{G}_2)$ are isomorphic, and a fortiori homotopy equivalent. Thus, Theorem~\ref{thm:IntroCosetInterComplex} yields the desired conclusion. 
\end{proof}

\subsection{Many bowtie complexes}\label{section:ManyBowtie}

\noindent
It is natural to ask how many different values the quasi-isometric invariant provided by Theorem~\ref{thm:BowtieQIinv} can take. In other words, are there many simplicial complexes in 
$$\{\Gamma^{\bowtie} \mid \Gamma \text{ finite graph} \}$$ 
that are not homotopy equivalent? In this section, we propose an elementary construction that shows that, up to homotopy equivalence, every finite simplicial complex belongs to this family. This is the content of Corollary~\ref{cor:ManyBowtie}. As a consequence, it follows from Theorem~\ref{thm:BowtieQIinv} that, in some sense, there are ``as many'' right-angled Artin groups as there are one-ended finitely presented groups. See Corollary~\ref{cor:ManyRAAG} for a precise statement. 

\medskip \noindent
Our proof of Corollary~\ref{cor:ManyBowtie} is based on the following theorem. Recall that, given a poset $(P,\leq)$, the \emph{comparability graph} $\kappa(P,\leq)$ is the graph whose vertex-set is $P$ and whose edges connect two elements whenever they are $\leq$-comparable.

\begin{thm}\label{thm:LatticeHomotopy}
Let $(P, \leq)$ be a finite join-semilattice. The complexes $\kappa(P, \leq)^{\bowtie}$ and $\kappa(P,\leq)^\triangle$ are homotopy equivalent. 
\end{thm}

\noindent
We start by recording a couple of preliminary lemmas.

\begin{lemma}\label{lem:JoinLattice}
Let $(P,\leq)$ be a finite join-semilattice. In $\kappa(P,\leq)$, joins are contained in stars of vertices.
\end{lemma}

\begin{proof}
Let $A \ast B$ be a join in $\kappa(P, \leq)$. 

\medskip \noindent
As a first case, assume that $A$ is bounded below, which amounts to saying that the meet $\wedge A$ exists. Set $B^-:= \{ b \in B \mid b \leq \wedge A\}$. If $B^-$ is non-empty, define $x:= \vee B^-$; otherwise, set $x:= \wedge A$. We claim that $A \ast B$ is contained in the star of $x$.

\medskip \noindent
It is clear that, for every $a \in A$, $x \leq \wedge A \leq a$; hence $x$ and $a$ adjacent in $\kappa(P , \leq)$. Given a $b \in B$, either $b \leq a$ for every $a \in A$, in which case $b \leq \wedge A$, hence $b \in B^-$ and consequently $b \leq x$; or $b \geq a$ for some $a \in A$, in which case $x \leq a \leq b$. In either case, $x$ and $b$ are adjacent in $\kappa(P,\leq)$. Thus, we have proved that $x$ is adjacent or equal to all the points in $A \cup B$. In other words, our join $A \ast B$ is contained in the star of $x$ in $\kappa(P,\leq)$, as desired.

\medskip \noindent
Symmetrically, if $B$ is bounded below, we show that our join $A \ast B$ is contained in the star of some vertex in $\kappa(P,\leq)$. From now on, we assume that neither $A$ nor $B$ is bounded below.

\medskip \noindent
Fix a $b \in B$. Since $b$ cannot be a lower bound of $A$, there must exist some $a \in A$ such that $a < b$. Since $a$ cannot be a lower bound of $B$, there must exist some $b' \in B$ such that $b'<a$. Since $b'$ cannot be a lower bound of $A$, there must exist some $a' \in A$ such that $a'<b'$. Thus, from a pair $a < b$, we have found an even smaller pair $a'<b'$. By iterating the argument, we contradict the fact that $P$ is finite. 
\end{proof}

\begin{lemma}\label{lem:ContractibleComp}
Let $(P,\leq)$ be a finite poset. For every subset $S \subset P$, $\kappa(\mathrm{Comp}(S),\leq)^\triangle$ is either empty or contractible, where 
$$\mathrm{Comp}(S):= \{ p \in P \mid p \text{ is $\leq$-comparable to all the points in } S\}.$$ 
\end{lemma}

\begin{proof}
First, assume that $S \cap \mathrm{Comp}(S) \neq \emptyset$. Fixing a point $s_0 \in S \cap \mathrm{Comp}(S)$, $\kappa(\mathrm{Comp}(S),\leq)$ decomposes as a join between $\{s_0\}$ and $\kappa(\mathrm{Comp}(S)\backslash \{s_0\}, \leq)$. Thus, $\kappa(\mathrm{Comp}(S), \leq)^\triangle$ is a cone. A fortiori, it is either empty or contractible, as desired.

\medskip \noindent
Next, assume that $S \cap  \mathrm{Comp}(S) = \emptyset$. Fix an arbitrary $s_0 \in S$ and decompose $\mathrm{Comp}(S)$ as the union of
$$C^-:= \{ p \in \mathrm{Comp}(S) \mid p \leq s_0\} \text{ and } C^+:= \{ p \in \mathrm{Comp}(S) \mid p \geq s_0\}.$$
Notice that, if $s \in C^- \cap C^+$, then $s \leq s_0 \leq s$ hence $s_0 = s \in S \cap  \mathrm{Comp}(S)$, contradicting our assumption $S \cap  \mathrm{Comp}(S) = \emptyset$. Thus, $C^-$ and $C^+$ are disjoint. Notice also that every element in $C^-$ is $\leq$-smaller than every element in $C^+$. Consequently, $\kappa(\mathrm{Comp}(S),\leq)$ decomposes as a join between $\kappa(C^-,\leq)$ and $\kappa(C^+,\leq)$. Notice that $C^-$ (resp.\ $C^+$) coincides with $\mathrm{Comp}(S\backslash \{s_0\})$ in the poset $(\{p \in P \mid p \leq s_0\},\leq)$ (resp.\ $(\{p \in P \mid p \geq s_0\},\leq)$), so, by arguing by induction, we can assume that $\kappa(C^-,\leq)^\triangle$ and $\kappa(C^+,\leq)^\triangle$ are empty or contractible. This implies that $\kappa(\mathrm{Comp}(S),\leq)^\triangle$ is also either empty or contractible, as desired. 
\end{proof}

\begin{proof}[Proof of Theorem~\ref{thm:LatticeHomotopy}.]
As a consequence of Lemma~\ref{lem:JoinLattice}, the complex $\kappa(P,\leq)^{\bowtie}$ coincides with nerve complex of $\kappa(P,\leq)^\triangle$ covered by $\{ \kappa(\mathrm{Comp}(p), \leq)^\triangle, p \in P\}$. According to Lemma~\ref{lem:ContractibleComp}, the Nerve Theorem applies and shows that the two simplicial complexes we are interested in are indeed homotopy equivalent. 
\end{proof}

\begin{cor}\label{cor:ManyBowtie}
Every finite simplicial complex $X$ is homotopy equivalent to 
$$\kappa(\{ \text{simplices of } X\}, \subseteq)^{\bowtie}.$$
\end{cor}

\begin{proof}
The complex $\kappa(\{ \text{simplices of } X\}, \subseteq)^\triangle$ coincides with the first barycentric subdivision of $X$, so it is homeomorphic to $X$. Then, the desired conclusion follows from Theorem~\ref{thm:LatticeHomotopy}. 
\end{proof}

\noindent
By combining Corollary~\ref{cor:ManyBowtie} with Theorem~\ref{thm:BowtieQIinv}, we obtain the following consequence, which proves Corollary~\ref{cor:InjMap}:

\begin{cor}\label{cor:ManyRAAG}
For every finitely presented group $G$, fix a finite simplicial complex $K_G$ whose fundamental group is $G$. Then, the map
$$\left\{ \begin{array}{ccc} \{\text{one-ended f.\ p.\ group}\} / \text{isom.} & \to & \{\text{right-angled Artin groups}\}/ \text{quasi-isometry} \\ G & \mapsto & A( \kappa( \{ \text{simplices of } K_G\}, \subset)) \end{array} \right.$$
is injective.
\end{cor}

\begin{proof}
Let $\Lambda$ denote the map given in the statement of our corollary. If, given two one-ended finitely presented groups $G$ and $H$, the right-angled Artin groups $\Lambda(G)$ and $\Lambda(H)$ are quasi-isometric, it follows from Theorem~\ref{thm:BowtieQIinv} that $\bigvee_\mathbb{N} K_G$ and $\bigvee_\mathbb{N} K_H$ are homotopy equivalent. Consequently, the fundamental groups of these two pointed sums, namely the free products $\ast_\mathbb{N} G$ and $\ast_\mathbb{N} H$ respectively, must be isomorphic. Since $G$ and $H$ are one-ended, this amounts to saying that $G$ and $H$ are isomorphic, as desired. 
\end{proof}

\noindent
The construction of a right-angled Artin group from a given finitely presented, one-ended group $G$ is quite flexible, as one can choose \textit{any} simplicial complex $K_G$ with fundamental group $G$. We give two families of examples to demonstrate this flexibility and prove Corollary~\ref{cor:InjMap}. 

\begin{ex}\label{ex:BowtieOne}
    Let $\{G_g\mid g\in \mathbb N\}$ be any collection of one-ended, finitely presented groups that are not isomorphic if $g\neq h$.  For concreteness, one could take, for example, $G_g$ to be the fundamental group of a surface of genus $g$ with $g\geq 2$.  Let $K_g'$ be a simplicial complex with fundamental group $G_g$, fix a vertex $v\in K_g'$, and let $K_g$ be formed from $K_g'$ by adding a single edge $\{v,w\}$ with the vertex $w$ of degree 1.  In the comparability graph $\Gamma_g=\kappa(\{\textrm{simplices of }K_g, \subseteq\})$, the star of $w$ does not contain $w$, and hence separates $\Gamma$.  Thus the corresponding right-angled Artin group $A(\Gamma_g)$ has infinite outer automorphism group \cite{HuangI} and is not of weak type I, in the sense of \cite{HuangII}.  Moreover, the intersection $\lk(v)\cap\lk(w)$ in $\Gamma_g$ is the edge $\{v,w\}$, which separates $\Gamma_g$, and so $A(\Gamma_g)$ is not type II, in the sense of \cite{HuangII}.  In particular, the groups $A(\Gamma_g)$ do not fit into Huang's framework \cite{HuangI,HuangII}.  By Corollary~\ref{cor:ManyRAAG}, however, $A(\Gamma_g)$ and $A(\Gamma_h)$ are not quasi-isometric when $g\neq h$. 
\end{ex}

\begin{ex}\label{ex:BowtieTwo}
    Let $\{G_g\mid g\in \mathbb N\}$ be any collection of one-ended, finitely presented groups that are not isomorphic if $g\neq h$. Embed the Cayley 2-complex $K_g$ of $G_g$  associated to a finite presentation into $\mathbb R^N$ for some sufficiently large $N$.  By taking a neighbourhood, we can assume that $K_g$ is a manifold with boundary.  Suppose $K_g$ is $d$--dimensional, and fix a $(d-1)$--simplex $S$ in the boundary $\partial K_g$, so that $S$ is  contained in a unique $d$--simplex $\sigma$.  In $\Gamma_g=\kappa(\{\textrm{simplices of }K_g, \subseteq\})$, the star of $S$ is contained in the star of $\sigma$, and so $\textrm{Out}(A(\Gamma_g))$ contains transvections and hence is infinite order.  Hence $A(\Gamma_g)$ does not fit into the framework of \cite{HuangI}.  Modify $K_g$ by adding a vertex $z$ and forming the join $z*S$; this adds one additional $d$--simplex $\tau$ to form a new complex $K_g'$ with the same fundamental group. In the corresponding comparability graph $\Gamma_g'$, the star of $S$ separates, and so $A(\Gamma_g')$ still has infinite outer automorphism group and is not of type I.  Moreover, $\lk(\sigma)\cap \lk(\tau)=\{S\}$, which disconnects $\Gamma_g'$, and so $A(\Gamma_g')$ is not of type II.  Hence the groups $A(\Gamma_g')$ do not fit into Huang's framework \cite{HuangI,HuangII}.
\end{ex}

\section{Open questions}

\noindent
In this final section, we record some questions that are naturally raised by the results proved so far.

\paragraph{Other relative contact complexes.} Given a quasi-median graph $X$ and a prism-covering collection $\mathbb{G}$ of gated subgraphs, we have identified the homotopy type of the $\mathbb{G}$-contact complex $\mathrm{Cont}(X,\mathbb{G})$ when $\mathbb{G}= \{ \text{prisms}\}$ (Theorem~\ref{thm:HomotopyCrossingComplex}) and when $\mathbb{G}$ is star-covering (Theorem~\ref{thm:RelContact}). Roughly speaking, we have 
$$\text{crossing complex} \subset \text{contiguity complex} \subset \text{contact complex},$$
and our results deal with crossing complexes and everything lying between contiguity and contact complexes. However, relative contact graphs lying between crossing and contiguity complexes remain to be investigated. 

\medskip \noindent
A difference between the proofs of Theorems~\ref{thm:HomotopyCrossingComplex} and~\ref{thm:RelContact} comes from the definitions of the subspaces associated to hyperplanes: in the first case, fibres are ignored, but they are included in our subspaces in the second case. In both cases, the key property to have is that two subspaces intersect if and only if the corresponding hyperplanes are adjacent in the relative contact complex under consideration.

\medskip \noindent
\begin{minipage}{0.3\linewidth}
\includegraphics[width=0.9\linewidth]{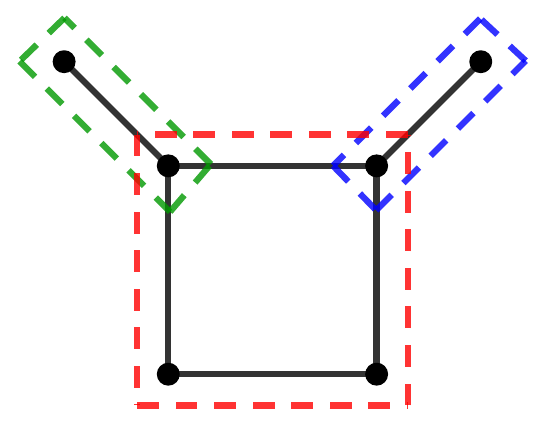}
\end{minipage}
\begin{minipage}{0.69\linewidth}
However, in full generality, three hyperplanes may have a fibre in common while only two of them are adjacent in the relative contact complex. As a consequence, one is tempted to both keep and remove the fibre from our subspaces. As a concrete example, our results do not allow us to identify the homotopy type of the coset intersection complex of the right-angled Artin group illustrated on the left.
\end{minipage}

\begin{problem}
Generalise Theorem~\ref{thm:RelContact} to arbitrary prism-covering collections. 
\end{problem}

\paragraph{Other quasi-isometric invariants.} Most of our results apply to graph products of infinite groups. The only step that is specific to right-angled Artin groups is Theorem~\ref{thm:RAAGmaxJoin}. Therefore, it is natural to ask:

\begin{question}
Let $\Gamma_1,\Gamma_2$ be two finite graphs and $\mathcal{G}_1,\mathcal{G}_2$ two collections of finitely generated infinite groups indexed by $\Gamma_1,\Gamma_2$. Given a quasi-isometry $\eta : \Gamma_1 \mathcal{G}_1 \to \Gamma_2 \mathcal{G}_2$, does there exist a constant $C \geq 0$ such that $\eta$ sends every coset of a maximal join subgroup of $\Gamma_1 \mathcal{G}_1$ at Hausdorff distance $\leq C$ from the coset of a maximal join subgroup of $\Gamma_2 \mathcal{G}_2$?
\end{question} 

\noindent
A positive answer is plausible. It would allow one to generalise Theorem~\ref{thm:IntroQI} to graph products of arbitrary infinite groups.

\medskip \noindent
In another direction, Theorem~\ref{thm:IntroQI} is obtained by considering coset intersection complexes relative to maximal join subgroups, but, in practice, there usually exist other collections of parabolic subgraphs that are preserved by quasi-isometries  (see Theorem~\ref{thm:RAAGCrossingQI}). Thus, our techniques may lead to other quasi-isometric invariants. This is, in our opinion, an interesting direction to investigate.

\medskip \noindent
For instance, the second-named author introduced in \cite{GPautacyl} the \emph{small crossing graph} of the quasi-median graph $\mathrm{QM}(\Gamma,\mathcal{G})$ associated to the graph product $\Gamma \mathcal{G}$ and noticed that, whenever $\Gamma$ does not have two vertices with the same link or the same star (which can always be assumed), the automorphism group $\mathrm{Aut}(\Gamma \mathcal{G})$ naturally acts on it. Is the small crossing graph also preserved by quasi-isometries?  Can we identify the homotopy type of its flag completion?

\medskip \noindent
More geometrically, given a quasi-median graph $X$, say a hyperplane is \emph{maximal} if none of its fibres is contained in the carrier of another hyperplane; and define the \emph{small crossing complex} $\mathrm{sCross}^\triangle(X)$ to be the simplicial complex whose vertices are the maximal hyperplanes of $X$ and whose simplices are given by pairwise transverse hyperplanes.

\begin{question}
Given a quasi-median graph $X$, what is the homotopy type of its small crossing complex $\mathrm{sCross}^\triangle(X)$?
\end{question}

\noindent
It is worth noticing that the proof of Proposition~\ref{prop:PuncturedQM} generalises, proving that $\mathrm{sCross}^\triangle(X)$ is homotopy equivalent to the complement in the prism-completion $X^\square$ of the edges contained in hyperplanes that are not maximal. Can this description be further simplified?

\paragraph{Right-angled Coxeter groups.} The characterisation of the homotopy type of coset intersection complexes provided by Theorem~\ref{thm:CosetInterComplexRelativeCont} only deals with graph products of infinite groups. It would be interesting to a have a similar result for graph products of finite groups, such as right-angled Coxeter groups. This would allow us to compare two right-angled Coxeter groups up to quasi-isometry, but also to compare right-angled Coxeter groups to right-angled Artin groups, which is a widely open problem. 

\begin{question}\label{question:GPfinite}
Let $\Gamma$ be a finite graph and $\mathcal{G}$ a collection of finite groups indexed by $\Gamma$. What is the homotopy type of the coset intersection complex
$$\mathcal{K} \left( \Gamma \mathcal{G}, \{ \langle \Lambda \rangle, \Lambda \subset \Gamma \text{ maximal large join} \} \right)?$$
\end{question}

\noindent
Here, we refer to a join $\Lambda_1 \ast \cdots \ast \Lambda_n$ as \emph{large} if each $\Lambda_i$ contains at least two non-adjacent vertices. Such coset intersection complexes also have a geometric interpretation, in the same vein as Lemma~\ref{lem:ForSkewer}. Given a quasi-median graph $X$ and a collection $\mathbb{G}$ of gated subgraphs, define the \emph{double skewering complex} $\mathrm{dSkew}(X,\mathbb{G})$ as the simplicial complex whose vertex-set is $\mathbb{G}$ and whose simplices are given by collections of subgraphs all crossed by two non-transverse hyperplanes. Then the proof of Lemma~\ref{lem:ForSkewer} generalises and shows that our coset intersection complexes can be described as double skewering complexes. However, identifying the homotopy type of a double skewering complex does not seem to be straightforward, and the problem remains to be investigated.

\medskip \noindent
Notice that a solution to Question~\ref{question:GPfinite} would already provide invariants allowing us to compare right-angled Coxeter groups with other right-angled Coxeter groups or with right-angled Artin groups up to commensurability. For a comparison up to quasi-isometry, another difficulty towards the generalisation of our results to graph products of finite groups is Theorem~\ref{thm:RAAGmaxJoin}. An extension of this quasi-isometric rigidity is plausible, though. In fact, we can ask much more generally:

\begin{question}
Let $G_1,G_2$ be two groups acting geometrically and specially on two (quasi-)median graphs $X_1,X_2$. Given a quasi-isometry $\eta : X_1 \to X_2$, does there exist a constant $C \geq 0$ such that $\eta$ sends every maximal product subgraph of $X_1$ at Hausdorff distance $\leq C$ from some maximal product subgraph of $X_2$?
\end{question}

\noindent
Theorem~\ref{thm:RAAGmaxJoin} gives a positive answer for median graphs of right-angled Artin groups and \cite{MR4365047} also yields a positive answer for median graphs of cubical dimension two.

\paragraph{Rigid homotopy equivalence.} Despite the fact that we consider coset intersection complexes up to homotopy, Corollary~\ref{cor:CosetInterComplexJoinQI} shows that we do not only have a homotopy equivalence between our complexes but a genuine isomorphism. Extracting metric (compared to homotopy) invariants from such an isomorphism would be interesting. In this direction:

\begin{problem}
Let $X$ be a quasi-median graph and $\mathbb{G}$ a collection of gated subgraphs. Find metric connections between $\mathrm{Cont}(X,\mathbb{G})$ and the simplicial graphs $\mathrm{sL}_\mathbb{G}(x)$, $x \in X$.
\end{problem}

\noindent
For instance, it would be interesting to compare up to quasi-isometry two right-angled Artin groups $A(\Gamma_1), A(\Gamma_2)$ where $\Gamma_1$ is ``made of'' cycles of length $5$ while $\Gamma_2$ is ``made of'' cycles of length $6$.

\medskip \noindent
As justified in the introduction, we focus on the homotopy types of our coset intersection complexes because they are much easier to identify than their metric types. But, of course, we lose a lot of information in the process. It would be interesting to identify a relevant intermediate notion of equivalence between isomorphism and homotopy equivalences, keeping more information than homotopy equivalence but easier to identify than isomorphism.

\begin{question}
Is there a ``more rigid'' notion of homotopy equivalence such that the rigid homotopy equivalence type of relative contact complexes of quasi-median graphs can be identified?
\end{question}

\noindent
For instance, let $\Gamma_1,\Gamma_2,\Gamma_3$ denote the three graphs below. Homotopically, $\bigvee_\mathbb{N} \Gamma_1^{\bowtie}$, $\bigvee_\mathbb{N} \Gamma_2^{\bowtie}$, and $\bigvee_\mathbb{N} \Gamma_3^{\bowtie}$ are identical. However, $A(\Gamma_1)$ is not quasi-isometric to $A(\Gamma_2)$ but it is quasi-sometric to $A(\Gamma_3)$. (In fact, $A(\Gamma_3)$ can be realised as a subgroup of index two in $A(\Gamma_1)$.) It would be interesting to distinguish $\Gamma_2^{\bowtie}$ and $\Gamma_3^{\bowtie}$ up to ``rigid homotopy.'' 

\begin{center}
\includegraphics[width=0.8\linewidth]{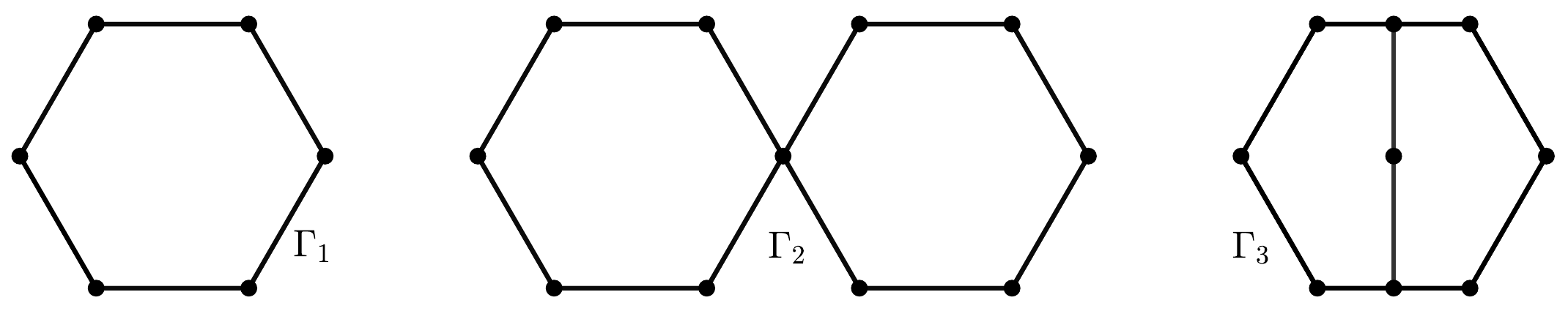}
\end{center}



\addcontentsline{toc}{section}{References}

\bibliographystyle{alpha}
{\footnotesize\bibliography{CrossingComplexes}}

\Addresses

%

\end{document}